\title[High-energy eigenfunctions of point perturbations of the Laplacian on $\mathbb{S}^{2}$ and $\mathbb{S}^{3}$]{High-energy eigenfunctions of point perturbations of the Laplacian on the spheres $\mathbb{S}^2$ and $\mathbb{S}^3$}
\author[S. Verdasco]{Santiago Verdasco}
\address{M$^2$ASAI. Universidad Politécnica de Madrid, ETSI Navales, Avda. de la Memoria, 4, 28040, Madrid, Spain.}
\email{santiago.verdasco@upm.es}
\theoremstyle{plain}
\declaretheorem[parent = section]{Theorem}
\declaretheorem[sibling = Theorem]{Lemma}
\declaretheorem[sibling = Theorem]{Corollary}
\declaretheorem[name = Proposition, sibling = Theorem]{Prop}
\theoremstyle{definition}
\declaretheorem[sibling = Theorem]{Remark}
\declaretheorem[numbered = no]{Claim}
\numberwithin{equation}{section} % Number equations with section notation
\def \eps{ \varepsilon }
\DeclareDocumentCommand{\R}{O{} O{}}{ \mathbb{R}_{#2}^{#1} }
\DeclareDocumentCommand{\C}{O{} O{}}{ \mathbb{C}_{#2}^{#1} }
\DeclareDocumentCommand{\S}{O{} O{}}{ \mathbb{S}_{#2}^{#1} }
\DeclareDocumentCommand{\T}{O{} O{}}{ \mathbb{T}_{#2}^{#1} }
\DeclareDocumentCommand{\Z}{O{} O{}}{ \mathbb{Z}_{#2}^{#1} }
\def \N{\mathbb{N}}
\def \Cont{\mathcal{C}}
\def \Co{\mathcal{C}_{0} }
\def \Cinf{\mathcal{C}^{\infty}}
\def \CinfK{\mathcal{C}_{c}^{\infty}}
\def \Dist{\mathcal{D}'}
\def \weakstar{weak-$\star$ }
\def \charf{\mathds{1}}
\newcommand \littleo{
  \mathchoice
    {\scalebox{0.9}{$\scriptstyle\mathcal{O}$}}% \displaystyle
    {\scalebox{0.9}{$\scriptstyle\mathcal{O}$}}% \textstyle
    {\scalebox{0.9}{$\scriptscriptstyle\mathcal{O}$}}% \scriptstyle
    {\scalebox{0.7}{$\scriptscriptstyle\mathcal{O}$}}%\scriptscriptstyle
  }
\def \BigO{\mathcal{O}}
\newcommand \Haarmeasure{
  \mathchoice
    {\scalebox{1.2}{$\scriptstyle\mathbb{H}$}}% \displaystyle
    {\scalebox{1.2}{$\scriptstyle\mathbb{H}$}}% \textstyle
    {\scalebox{1.2}{$\scriptscriptstyle\mathbb{H}$}}% \scriptstyle
    {\scalebox{0.9}{$\scriptscriptstyle\mathbb{H}$}}%\scriptscriptstyle
  }
\DeclarePairedDelimiter{\abs}{\lvert}{\rvert}
\NewDocumentCommand{\norm}{s O{} m O{}}
    {
        \IfBooleanTF{#1}
            {\ensuremath{{\left\lVert #3 \right\rVert}_{#4}}}
            {{#2\lVert {#3} #2\rVert}_{#4}}
    } % norma {#4} del vector #3, de tamaño opcional #2 y opcion #1 para hacerlo adaptable
\NewDocumentCommand{\ip}{s O{} m m O{}}
    {
        \IfBooleanTF{#1}
            {\ensuremath{{\left\langle #3 \, \middle| \, #4 \right\rangle}_{#5}}}
            {{#2\langle #3 \, #2| \, #4 #2\rangle}_{#5}}
    }
\newcommand*{\conj}[1]{ \mkern 2mu \overline{\mkern-1mu #1 \mkern-1mu} \mkern 2mu}
\newcommand*{\D}[1]{\mathop{\mathrm{d}#1}}
\DeclareMathOperator{\spn}{Span}
\NewDocumentCommand{\Span}{O{} O{} m}{\spn_{#1} #2\{ #3 #2\}}
\let\Re\relax
\let\Im\relax
\DeclareMathOperator{\Re}{\mathfrak{Re}}
\DeclareMathOperator{\Im}{\mathfrak{Im}}
\DeclareMathOperator{\dom}{dom}
\DeclareMathOperator{\Lap}{\Delta}
\DeclareMathOperator{\supp}{supp} %Soporte de una función
\DeclareMathOperator{\Opp}{Op}
\DeclareMathOperator{\vol}{vol}
\DeclareMathOperator{\Spec}{Spec}
\DeclareMathOperator{\card}{\#}
\NewDocumentCommand{\Op}{O{} O{} m}{\ensuremath{\Opp_{#1}^{#2} \left( #3 \right)}}
\def \LapV{\Lap {}+ V}
\newcommand{\SDM}{semiclassical defect measure}
\newcommand{\SDMs}{semiclassical defect measures}
\newcommand{\SSDMs}{Semiclassical defect measures}
\NewDocumentCommand{\sympf}{O{} m m O{}}{{#1\{ #2 \, , #3 #1\}}_{#4}}
\begin{document}

\begin{abstract}

We study the set of Quantum Limits, and more generally, of semiclassical measures of sequences of eigenfunctions of perturbations of the Laplacian on the spheres $\S[2]$ and $\S[3]$ by point-scatterers. In the unperturbed case, it is known that the set of semiclassical measures coincides with the set of measures that are invariant under the geodesic flow; on the other hand, when the Laplacian is perturbed by a generic smooth potential, the set of semiclassical measures turns out to be strictly contained within that of invariant measures. In this article, we prove that the addition of a perturbation by a finite set of point-scatterers has a different effect: (i) all invariant measures are semiclassical measures for some sequence of eigenstates of the perturbed operator, and (ii) as soon as the set of scatterers contains a pair of antipodal points, it is possible to construct a sequence of eigenfunctions whose semiclassical measure is not invariant under the geodesic flow. We also show that this geometric condition is sharp: if the set of scatterers does not contain a pair of antipodal points, then the sets of invariant and semiclassical measures coincide.
\end{abstract}

\maketitle

%%%%%%%%%%%%%%%%%%%%%%%%%%%%%%%%%%%%%%%%%%%%%%%%%%%%%%%%%%%%%%%%%%%%%%%%%%%
\section{Introduction}

The study of high-frequency eigenfunctions of self-adjoint Schrödinger operators on a smooth compact Riemannian manifold $(M, g)$ has been the subject of extensive research over the last 60 years, with a special interest in eigenfunctions of the positive Laplace-Beltrami operator $\Lap$, or simply Laplacian (see \cite{Zelditch2017} for instance).

If $V \in L^{\infty}(M; \R)$, the Schrödinger operator $\LapV$ is self-adjoint on $L^2(M)$ with compact resolvent; thus, its spectrum $\Spec(\LapV)$ is an increasing sequence of real numbers tending to $\infty$. For each $\lambda^2 \in \Spec(\LapV)$, the corresponding normalized eigenfunctions $u_{\lambda}$ satisfy
\begin{equation} \label{Eq: eigenfunction equation}
    (\LapV) u_{\lambda} = \lambda^2 u_{\lambda} \ , \qquad \norm{u_{\lambda}}[L^2(M)] = 1 \ ,
\end{equation}
and one is interested in understanding what the structure of an eigenfunction $u_\lambda$ is when $\lambda$ is large. One way to tackle this problem is to study what kind of concentration effects in phase-space such eigenfunctions might develop. Usually, this is done by studying their Wigner distributions, which are defined for any $u \in L^2(M)$ as
\begin{equation} \label{Eq: Wigner distribution}
    W_{h}[u](a) = \ip{u}{\Op[h]{a} u}[L^2(M)] \ , \qquad a \in \CinfK(T^*M) \ ,
\end{equation}
where $\Opp_{h}$ stands for the semiclassical Weyl quantization.

Given a sequence of eigenfunctions $(u_{n})_{n \in \N}$ satisfying \eqref{Eq: eigenfunction equation} for some increasing sequence $(\lambda_n)_{n \in \N}$ tending to $\infty$, one considers the corresponding sequence $(W_{h_n}[u_n])_{n \in \N}$  of Wigner distributions scaled with the semiclassical parameters $h_n^{-1} = \lambda_n$. This sequence of distributions is bounded, and its accumulation points are called \SDMs{} of $\LapV$. It can be proved that \SDMs{} are always probability measures on $T^*M$ that are supported on $S^*M$, the unit cosphere bundle over $M$.

Characterizing the set $\mathcal{M}_{\mathrm{sc}}(\LapV)$ of all \SDM{} of $\LapV$ is a very difficult task; however, it is well-known that as soon as $V \in \Cinf(M; \R)$, any \SDM{} of $\LapV$ belongs to $\mathcal{P}_{\mathrm{inv}} (S^*M)$, the set of all probability measures $\mu$ on $T^*M$ satisfying
\begin{enumerate}
    \item $\supp \mu \subseteq S^*M$.
    \item $\mu$ is invariant under the geodesic flow $\{\phi_t\}_{t \in \R}$ on $T^*M$: $(\phi_t)_{*} \mu = \mu$ for all $t \in \R$.
\end{enumerate}
In general, the set $\mathcal{M}_{\mathrm{sc}}(\LapV)$ is strictly contained in $\mathcal{P}_{\mathrm{inv}} (S^*M)$, even when $V=0$: this is the case, for instance, when $(M,g)$ has negative sectional curvature \cite{Anantharaman2008, AnantharamanNonnenmacher2007Anosovman, Riviere2010, DyatlovJin2018, DyatlovJinNonnenmacher2022}, or $(M,g) = (\T[d], \mathrm{flat})$ \cite{Jakobson1997}. A consequence of the results in those references is that that uniform measures along closed geodesics are not semiclassical defect measures. 

On the other hand, when $(M,g) = (\S[d], \mathrm{can})$, it was shown in \cite{JakobsonZelditch1999} that $\mathcal{M}_{\mathrm{sc}}(\Lap) = \mathcal{P}_{\mathrm{inv}}(S^*\S[d])$. This also holds on Compact Rank-One Symmetric Spaces \cite{Macia2008} and their quotients \cite{AzagraMacia2010}. This is a consequence of the fact that the multiplicities of eigenspaces $\Lap$ are very large: they grow polynomially as the eigenvalue increases.

This characterization no longer holds when a smooth potential is added. When $(M,g)$ is a sphere, or more generally, a Zoll manifold, it was shown in   \cite{MaciaRiviere2016} that measures in $\mathcal{M}_{\mathrm{sc}}(\LapV)$ are invariant under the Hamiltonian flow associated with the X-Ray transform of the potential (or by the flow of a higher order ray-like transform when $V$ is in the kernel of the X-Ray transform, \cite{MaciaRiviere2019}). As a consequence, for a large class of smooth potentials the set of \SDMs{} is strictly contained in that of invariants measures, \cite[Corollary 1.10]{MaciaRiviere2016}. A by product of these results, that is somewhat close to the framework we are interested in, is that for a positive smooth potential that is supported on a neighborhood of a point $q$ in the sphere and is a function of the distance to that point, the family of uniform measures along closed geodesics through $q$ are not \SDMs{} anymore. The case of point-point perturbations we study here can be seen as a limiting case of superpositions of such potentials, as we discuss below.

In this paper we study the problem of characterization of \SDMs{} on $\S[d]$, $d = 2,3$, of a singular perturbation of $\Lap$, namely, point-perturbations of $\Lap$. These perturbations model Dirac delta potentials, $\delta_{q}$, on a given set of points $Q \subseteq \S[d]$. These are usually interpreted as the limiting model of a sequence of smooth perturbations $(\LapV_n)_{n \in \N}$ as the support of $V_n$ reduces to the set $Q$, $\cap_{n \in \N} \supp V_n = Q$. A point-perturbation of $\Lap$ is defined as a self-adjoint extension of the symmetric operator $\Lap|_{\CinfK(\S[d] \setminus Q)}$; if $d \geq 4$, $\Lap$ is the only self-adjoint extension of $\Lap|_{\CinfK(\S[d] \setminus Q)}$.

If $d = 2, 3$, and $N \coloneqq \card Q$, the family of point-perturbations of $\Lap$ on $Q$ is parametrized by the space of Lagrangian subspaces $L$ of the symplectic vector space $\C[N] \times \C[N]$ (see \cite[Section 2]{Verdasco2026}), hence we denote $\Lap_L$ for such a point perturbation of $\Lap$. The operators $\Lap_L$ have compact resolvent (see \cite[Theorem 2.15]{Verdasco2026}), thus its spectrum is a sequence of eigenvalues tending to infinity. Moreover, from a spectral point of view, the spectrum of $\Lap_L$ looks like the spectrum of a rank $\leq N$ perturbation of $\Lap$: every eigenfunction of $\Lap$ that vanishes on $Q$ is an eigenfunction of $\Lap_L$. The perturbation creates a sequence of \emph{new eigenvalues} whose corresponding eigenfunctions have a Green's function type singularity at the points of $Q$. Sequences of \emph{new eigenfunctions} are the main interest in this setting, however, on the spheres $\S[2]$ and $\S[3]$, sequences of \emph{old eigenfunctions} are of great importance too because every $\Lap_L$ has large eigenspaces formed by those.

We say that a measure $\mu$ on $T^*\S[d]$ is a \SDM{} of a sequence of point-perturbations $(\Lap_{L_n})_{n \in \N}$ if there exist sequences $(h_n)_{n \in \N} \subseteq (0, \infty)$, $h_n \to 0^+$, and $(u_n)_{n \in \N}$, such that $u_n \in D(\Lap_{L_n})$ and
\begin{equation} \label{Eq: ef eq for seq of point-pert}
    (h_n^{2}\Lap_{L_n}) u_n = u_n \ , \quad \norm{u_n}[L^2(\S[d])] = 1 \ , \qquad \forall \, n \in \N \ ,
\end{equation}
and $\mu$ is an accumulation point of the sequence of Wigner distributions $(W_{h_n}[u_n])_{n \in \N}$ as defined in \eqref{Eq: Wigner distribution}. We denote by $\mathcal{M}_{sc}(\Lap_{L_n})$ the set of all possible \SDMs{} of $(\Lap_{L_n})_{n \in \N}$. Our main result is the following.

\begin{Theorem} \label{Thm: main theorem}
    Assume $d = 2, 3$, and let $Q \subseteq \S[d]$ be finite. 
    \begin{enumerate}
        \item If $Q$ does not contain a pair of antipodal points, for any sequence of point-perturbations $(\Lap_{L_n})_{n \in \N}$ on $Q$
        \[
        \mathcal{M}_{\mathrm{sc}}(\Lap_{L_n}) = \mathcal{P}_{\mathrm{inv}} (S^*\S[d])
        \]
        \item If $Q$ contains a pair of antipodal points, there exists a sequence $(\Lap_{L_n})_{n \in \N}$ of point perturbations of $\Lap$ on $Q$ such that
        \[
        \mathcal{P}_{\mathrm{inv}} (S^*\S[d]) \subsetneq \mathcal{M}_{\mathrm{sc}}(\Lap_{L_n}) \ .
        \]
    \end{enumerate}
\end{Theorem}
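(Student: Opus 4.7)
The plan is to combine two kinds of eigenfunctions of $\Lap_L$: \emph{old eigenfunctions}, which are eigenfunctions of $\Lap$ vanishing on $Q$ and therefore belong to $D(\Lap_L)$ for every Lagrangian $L$, and \emph{new eigenfunctions}, which are linear combinations of the Green's functions $G_\lambda(\cdot, q_i)$ with $q_i \in Q$.

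For the inclusion $\mathcal{P}_{\mathrm{inv}}(S^*\S[d]) \subseteq \mathcal{M}_{\mathrm{sc}}(\Lap_{L_n})$, which in fact holds regardless of the geometric condition on $Q$, I would exploit the large-multiplicity structure of spherical harmonics. If $\mathcal{H}_n$ is the $n$-th spherical-harmonic eigenspace of $\Lap$, the subspace $\mathcal{H}_n^Q := \{v \in \mathcal{H}_n : v|_Q = 0\}$ has codimension at most $N = \card Q$ inside $\mathcal{H}_n$; since $\dim \mathcal{H}_n \to \infty$ polynomially while $N$ is fixed, $\mathcal{H}_n^Q$ is asymptotically full. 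Starting from the Jakobson--Zelditch construction of a sequence $v_n \in \mathcal{H}_n$ realizing a prescribed $\mu \in \mathcal{P}_{\mathrm{inv}}(S^*\S[d])$, one chooses the $v_n$ to have controlled $L^\infty$-size and projects orthogonally onto $\mathcal{H}_n^Q$; the projection yields old eigenfunctions of $\Lap_{L_n}$ whose $L^2$-norm and semiclassical measure coincide with the original ones in the limit.

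For the opposite inclusion under hypothesis~(1), I would split each normalized eigenfunction $u_n$ of $\Lap_{L_n}$ according to whether its eigenvalue belongs to $\Spec(\Lap)$. For old eigenvalues, $u_n$ lies in some $\mathcal{H}_m^Q$, so its semiclassical measure is invariant by the usual commutator identity. For new eigenvalues one has $u_n = \sum_{i=1}^{N} c_{i,n}\, G_{\lambda_n}(\cdot, q_i)$ after normalization, and the Wigner distribution splits into diagonal terms and off-diagonal cross terms $c_{i,n}\, \conj{c_{j,n}}\, \ip{G_{\lambda_n}(\cdot, q_j)}{\Op[h_n]{a}\, G_{\lambda_n}(\cdot, q_i)}$. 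The diagonal terms produce invariant measures supported on the great circles through each $q_i$, via the spectral expansion of $G_\lambda$ on $\S[d]$ and a standard stationary-phase argument. For the off-diagonal terms with $i \neq j$, the relevant phase involves the combinations $d(x, q_i) \pm d(x, q_j)$; under hypothesis~(1) the sum $d(x, q_i) + d(x, q_j)$ is a genuine non-constant function of $x$, so the resulting oscillatory integral admits only lower-dimensional stationary sets and vanishes in the semiclassical limit, leaving invariance intact.

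For case~(2), I would exploit the identity $d(x, q) + d(x, -q) \equiv \pi$ on the sphere: when $q, -q \in Q$, the point $-q$ is the full focal point of $q$ and every geodesic starting at $q$ refocuses at $-q$ at time $\pi$. Consequently the off-diagonal term $\ip{G_{\lambda_n}(\cdot, -q)}{\Op[h_n]{a}\, G_{\lambda_n}(\cdot, q)}$ acquires a fully non-oscillatory component, whose semiclassical limit depends coherently on the relative phase of the coefficients multiplying $G_{\lambda_n}(\cdot, q)$ and $G_{\lambda_n}(\cdot, -q)$ in the normalized new eigenfunction. Tuning a sequence of Lagrangians $L_n$ through the new-eigenvalue equation of \cite[Section 2]{Verdasco2026} so that these coefficients imprint a prescribed asymmetry along the great circles through $\{q, -q\}$ yields a sequence of new eigenfunctions whose limiting Wigner distribution has density varying along each geodesic-flow orbit, hence is not invariant. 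The main difficulty throughout is precisely this off-diagonal stationary-phase analysis: one must verify that antipodality is the \emph{unique} geometric condition under which coherent interference between distinct singular contributions survives the semiclassical limit, and quantitatively control how the Lagrangian data propagates through the normalization of the Green's-function combinations to the leading term of the Wigner distribution.
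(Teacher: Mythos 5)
Your plan captures the correct overall architecture (old eigenfunctions give $\mathcal{P}_{\mathrm{inv}}\subseteq\mathcal{M}_{\mathrm{sc}}$, new eigenfunctions built from Green's functions give the converse and, with antipodes, the non-invariant measures), but there are several genuine gaps. The most structural one is in your dichotomy for case~(1): you split the eigenfunctions of $\Lap_{L_n}$ by whether the eigenvalue lies in $\Spec(\Lap)$ and then assert that at eigenvalues in $\Spec(\Lap)$ the eigenfunction is an old one, i.e.\ lies in $\mathcal{H}_m^Q$. This does not follow. The general form is $u_h = w_h + G_h^{Q,\beta}$ with $w_h\in\ker(h^2\Lap-1)$ and $\beta$ constrained by $\sum_q \beta_q\delta_q\equiv 0$ on $\ker(h^2\Lap-1)$, so a priori a mixed eigenfunction with $w_h\neq 0$ and $\beta\neq 0$ can occur at an eigenvalue of $\Lap$. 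The paper rules this out (for all large $\ell$) precisely by proving that $\{z_\ell^q\}_{q\in Q}$ are linearly independent for all large $\ell$ \emph{if and only if} $Q$ contains no antipodal pair, which hinges on the mutual singularity of the semiclassical measures $\nu_q$ and a weak-convergence lemma. This is exactly where the geometric hypothesis enters and your proposal never addresses it; without it the decomposition underlying your case analysis fails.

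A second gap is in your construction for the inclusion $\mathcal{P}_{\mathrm{inv}}\subseteq\mathcal{M}_{\mathrm{sc}}$. Projecting $v_n\in\mathcal{H}_n$ onto $\mathcal{H}_n^Q$ does not automatically preserve the semiclassical measure: the projection error is governed by the values $v_n(q)$, $q\in Q$, divided by the conditioning of the Gram-type matrix $(z_\ell^q(p))_{p,q}$. When $\mu$ charges geodesics through a point of $Q$, the values $|Y_\ell^\gamma(q)|$ are \emph{not} exponentially small, and the projection changes $v_n$ substantially. The paper circumvents this by first treating measures whose support avoids $\cup_{q}S_q^*\S^d$ (where an estimate $|Y_\ell^\gamma(q)|\lesssim \ell^{(d-1)/2}(1-\varepsilon)^\ell$ gives control), then running a diagonal extraction over approximating geodesics; your sketch elides this issue entirely. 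Finally, while the stationary-phase picture of the cross terms $\ip{G^p_{h}}{\Op[h]{a}G^q_h}$ is a reasonable heuristic, it is not what the paper does, and you would have to work with the full spectral series for $G_h^q$ on the sphere in a regime where $h^{-2}$ may sit at any distance (encoded by the $\sigma_n\in[0,1)$ parameter) from $\Spec(\Lap)$. The paper instead constructs explicit quasimodes for $g_{h_n}^{Q,\beta_n}$ as finite linear combinations of zonal harmonics in a window around $\ell_n$, rewrites them as $\Op[h_n]{\Gamma}z_{\ell_n}^q$ for an explicit symbol built from ladder operators, and evaluates the resulting Fourier series on the periodic geodesic; the non-invariance in case~(2) and the precise weights $m_{q,\pm}^{\sigma,\beta,\rho}$ emerge from that Fourier analysis rather than from a stationary-phase computation, and the surjectivity onto all admissible weight profiles (Remark on surjectivity) is what lets one hit a given $\nu_{Q,m}$. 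Your sketch names the right mechanism (coherent interference at the conjugate point) but does not supply the quantitative control over how the spectral parameter and the boundary conditions enter the leading term, which is the bulk of the work.
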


\begin{Remark}
    In a related article \cite{Verdasco2026}, we investigated the invariance property of semiclassical measures of point-perturbations of the Laplacian in a general compact Riemannian manifold of dimension two or three. \cite[Theorem 1.1]{Verdasco2026} shows that, under a certain spectral condition on the set of point-scatterers, invariance always holds. Theorem \ref{Thm: main theorem} in this article shows that the aforementioned result is optimal.
\end{Remark}

Theorem \ref{Thm: main theorem} will be obtained from Theorem \ref{Thm: 2nd theorem old ef}, which shows that \emph{old eigenfunctions} (as opposed to the new ones) can be used to obtain all invariant measures as semiclassical measures, and Theorem \ref{Thm: 2nd theorem new ef}, which gives a precise construction using new eigenfunctions that can produce non-invariant semiclassical measures.

\begin{Theorem} \label{Thm: 2nd theorem old ef}
    Assume $d \geq 2$. Let $Q \subseteq \S[d]$ be a finite set, and $\mu \in \mathcal{P}_{\mathrm{inv}}(S^*\S[d])$. There exist an increasing sequence $(\ell_n)_{n \in \N} \subseteq \N$ and a sequence $(u_n)_{n \in \N} \subseteq L^2(\S[d])$ satisfying
    \begin{equation*}
        \Lap u_n = \lambda_{\ell_n}^2 u_n \ , \qquad \norm{u_n}[L^2(\S[d])] = 1 \ , \qquad u_n(q) = 0 \quad \forall \, q \in Q \ ,
    \end{equation*}
    such that for all $a \in \CinfK(T^*\S[d])$,
    \begin{equation*}
        \lim_{n \to \infty} \ip[\Big]{u_n}{ \Op[\lambda_{\ell_n}^{-1}]{a} u_n }[L^2(\S[d])] = \int_{T^*\S[d]} a(x, \xi) \, \mu(\D{x}, \D{\xi}) \ .
    \end{equation*}
    In particular, $(u_n)_{n \in \N}$ satisfies \eqref{Eq: ef eq for seq of point-pert} for any sequence of point-perturbations $(\Lap_{L_n})_{n \in \N}$. 
\end{Theorem}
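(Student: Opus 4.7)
I take as input the Jakobson--Zelditch/Mac\'ia theorem \cite{JakobsonZelditch1999, Macia2008}, $\mathcal{M}_{\mathrm{sc}}(\Lap) = \mathcal{P}_{\mathrm{inv}}(S^*\S[d])$, which furnishes, for the prescribed $\mu$, an increasing sequence $(\ell_n)_{n \in \N}$ and $L^2$-normalized eigenfunctions $v_n \in \mathcal{H}_{\ell_n}$ with $\Lap v_n = \lambda_{\ell_n}^2 v_n$ whose Wigner distributions at scale $h_n = \lambda_{\ell_n}^{-1}$ converge weak-$\star$ to $\mu$. The idea is to correct each $v_n$ within its own eigenspace $\mathcal{H}_{\ell_n}$ so that the correction vanishes on $Q$ while the semiclassical limit is preserved. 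Let $V_\ell := \mathcal{H}_\ell \cap \ker \mathrm{ev}_Q$ with $\mathrm{ev}_Q(f) = (f(q))_{q \in Q}$; this has codimension at most $N := \card Q$ in $\mathcal{H}_\ell$, and its $L^2$-orthogonal complement is $\spn\{K_\ell(\fdot, q) : q \in Q\}$, with $\norm{K_\ell(\fdot, q)}[L^2]^2 = K_\ell(q, q) = \dim \mathcal{H}_\ell / \vol(\S[d]) \sim c_d\, \ell^{d-1}$. Writing $P_n$ for the $L^2$-orthogonal projection onto $V_{\ell_n}$ and invoking the reproducing identity,
\[
    \norm{(I - P_n) v_n}[L^2] \leq C \max_{q \in Q} \abs{v_n(q)}\, \ell_n^{-(d-1)/2}.
\]
Therefore, as soon as $\abs{v_n(q)} = \littleo(\ell_n^{(d-1)/2})$ for every $q \in Q$, the renormalization $u_n := P_n v_n/\norm{P_n v_n}[L^2]$ belongs to $V_{\ell_n}$, satisfies $\norm{u_n - v_n}[L^2] \to 0$, and hence inherits $\mu$ as its unique semiclassical defect measure.

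\textbf{Securing the pointwise bound.} The central issue is to realize $\mu$ by a sequence with $\abs{v_n(q)} = \littleo(\ell_n^{(d-1)/2})$: this is nontrivial because Sogge's $L^\infty$ bound $\norm{\fdot}[L^\infty] \lesssim \ell^{(d-1)/2}$ is generically saturated (for example by zonal harmonics at their poles). I would proceed by density. Parametrizing $\mathcal{P}_{\mathrm{inv}}(S^*\S[d])$ by probability measures on the space $\mathcal{G}$ of oriented geodesics, the geodesics that meet the finite set $Q$ form a strict, positive-codimension subvariety of $\mathcal{G}$; approximating the representing measure of $\mu$ by ones supported in a compact set uniformly away from this subvariety (e.g., by convolution with a mollifier on $\mathcal{G}$ and truncation) produces a sequence $(\mu_k) \subseteq \mathcal{P}_{\mathrm{inv}}(S^*\S[d])$ with $\mu_k \rightharpoonup \mu$. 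For each $\mu_k$, the Jakobson--Zelditch realization, written as a superposition of Lagrangian coherent eigenstates $\psi_\ell^\gamma$ concentrating on the geodesics $\gamma \in \supp \mu_k$, produces a realizing sequence $(v_n^{(k)})$ whose pointwise value at any $q \in Q$ is controlled by a weighted average of the values $\psi_\ell^\gamma(q)$, each being the evaluation of a Lagrangian state at a point uniformly off its associated Lagrangian submanifold and hence decaying rapidly in $\ell$; thus $\max_{q \in Q} \abs{v_n^{(k)}(q)} = O_k(1)$, well below the required threshold.

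\textbf{Diagonal extraction and conclusion.} Applying the projection step to each $(v_n^{(k)})_n$ yields a sequence $(u_n^{(k)})_n$ of unit eigenfunctions of $\Lap$ in $V_{\ell_n^{(k)}}$ with $W_{h_n^{(k)}}[u_n^{(k)}] \rightharpoonup \mu_k$. A standard diagonal extraction in $(k,n)$ then delivers a single sequence $(u_n)$ whose unique \SDM{} is $\mu$. Since each $u_n$ is a smooth $\Lap$-eigenfunction vanishing on $Q$, it lies in the domain of every self-adjoint extension $\Lap_L$ of $\Lap|_{\CinfK(\S[d] \setminus Q)}$ and acts as an eigenfunction thereof with the same eigenvalue $\lambda_{\ell_n}^2$; thus $(u_n)$ satisfies \eqref{Eq: ef eq for seq of point-pert} for every sequence $(\Lap_{L_n})$ after rescaling by $h_n = \lambda_{\ell_n}^{-1}$. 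The principal technical obstacle is Step 2: the Jakobson--Zelditch/Mac\'ia construction must be unpacked enough to yield the quantitative pointwise control on $Q$, given that the approximating measures $\mu_k$ keep the geodesic support uniformly away from $Q$; the remaining ingredients---the reproducing-kernel projection bound and the diagonal extraction---are routine.
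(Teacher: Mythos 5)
Your proposal follows essentially the same route as the paper: start from a realization of $\mu$ by $\Lap$-eigenfunctions (the Jakobson--Zelditch/Maci\`a result), force vanishing on $Q$ by a small correction within the eigenspace, and, when $\supp\mu$ meets $\bigcup_{q\in Q} S_q^*\S[d]$, first approximate $\mu$ by invariant measures whose supporting geodesics avoid $Q$ and conclude by diagonal extraction. Your orthogonal-projection bound $\norm{(I-P_n)v_n}[L^2]\lesssim \max_q\abs{v_n(q)}\,\ell_n^{-(d-1)/2}$ is exactly the paper's zonal-harmonic correction step, justified there via diagonal dominance and Gershgorin for the evaluation matrix $\mathcal{Z}_\ell=(z_\ell^q(p))_{p,q}$ (this is where $d\geq 2$ enters, and where the near-orthogonality of the $z_\ell^q$ that your bound implicitly needs is established). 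The only place you are vaguer than the paper is the pointwise control $\abs{v_n(q)}=\littleo(\ell_n^{(d-1)/2})$: to get it one should, as the paper does, first reduce via Krein--Milman to finite convex combinations $\sum b_\gamma\delta_\gamma$ and realize each $\delta_\gamma$ by the explicit highest-weight state $Y_\ell^\gamma\propto(x_1+ix_2)^\ell$, for which the decay away from $\gamma$ is explicit, $\abs{Y_\ell^\gamma(q)}\sim \ell^{(d-1)/2}(1-\eps_{\gamma,q})^\ell$; treating the Jakobson--Zelditch realization of a general $\mu_k$ directly as a single ``superposition of Lagrangian states'' skips this reduction, which is needed to make the pointwise estimate quantitative.
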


In order to state our result for sequences of new eigenfunctions, we first need to introduce some notation. For any point $q \in \S[d]$, we define the measure $\nu_{q, 1/2}$ on $T^*\S[d]$ as
\begin{equation*} % \label{Eq intro: half measure on flow-out defintion}
    \int_{T^*\S[d]} a(x, \xi) \nu_{q, 1/2}(\D{x}, \D{\xi}) \coloneqq \int_{S_{q}^*\S[d]} \int_{0}^{\pi} a(\phi_t(q, \xi)) \frac{\D{t}}{2\pi} \frac{\D{\xi}}{\vol(\S[d-1])} \ , \qquad \forall \ a \in \Co(T^*\S[d]) \ .
\end{equation*}
This measure $\nu_{q, 1/2}$ averages the test function $a$ along the geodesic flow from $S_{q}^*\S[d]$ to $S_{-q}^*\S[d]$. These are some properties of the measure $\nu_{q, 1/2}$:
\begin{enumerate}
    \item $\supp \nu_{q, 1/2} \subseteq S^*\S[d]$,
    \item $\nu_{q, 1/2}(S^*\S[d]) = \frac{1}{2}$,
    \item $\nu_{q, 1/2}(S^*\S[d])$ is not invariant under the geodesic flow.
\end{enumerate}
Nevertheless, $\nu_{q} \coloneqq \nu_{q, 1/2} + \nu_{-q, 1/2}$ belongs to $\mathcal{P}_{\mathrm{inv}}(S^*\S[d])$. Actually, $\nu_q$ is the \SDM{} of the sequence of normalized zonal harmonics on $q$ (see Theorem \ref{Thm: SDM of zonal harmonics}).

Assume now that $Q = \{q_1, \dots , q_K, -q_1, \dots, -q_K, p_1, \dots, p_J\}$, $N = 2K + J$. For $m \in [0,2]^N$, $m = (m_{k}^{+}, m_{k}^{-}, m_j)$ such that
\begin{equation} \label{Eq intro: condition on weights ma+ ma- mb}
    \sum_{k = 1}^{K} \frac{1}{2} (m_{k}^{+} + m_{k}^{-}) + \sum_{j = 1}^{J} m_j = 1 \ ,
\end{equation}
define
\begin{equation*} % \label{Eq intro: non-inv SDM with weights ma+ ma- mb}
    \nu_{Q, m} \coloneqq \sum_{k = 1}^{K} \Big( m_{k}^{+} \nu_{q_k, 1/2} + m_{k}^{-} \nu_{-q_k, 1/2} \Big) + \sum_{j = 1}^{J} m_j \nu_{p_j} \ .
\end{equation*}
\begin{Remark}
    $\nu_{Q, m}$ is not invariant under the geodesic flow as soon as $K \geq 1$ and $m_{k_0}^{+} \neq m_{k_0}^{-}$ for some $k_0 \in \{ 1, \dots, K\}$.
\end{Remark}

\begin{Theorem} \label{Thm: 2nd theorem new ef}
    Assume $d = 2, 3$. Let $Q = \{q_1, \dots , q_K, -q_1, \dots, -q_K, p_1, \dots, p_J\} \subseteq \S[d]$, $N = 2K + J$, and $m \in [0,2]^{N}$ satisfying \eqref{Eq intro: condition on weights ma+ ma- mb}. Then there exists a sequence of point-perturbations $(\Lap_{L_n})_{n \in \N}$ such that
    \[
    \nu_{Q, m} \in \mathcal{M}_{\mathrm{sc}}(\Lap_{L_n}) \ .
    \]
\end{Theorem}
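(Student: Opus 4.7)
The plan is to construct the required eigenfunctions as explicit linear combinations of Green's functions and then to realize them as eigenfunctions of $\Lap_L$ via the Lagrangian parametrization of point perturbations from \cite[Section 2]{Verdasco2026}. The ansatz is
\[
u_\lambda = \sum_{q \in Q} c_q \, G_{\lambda^2}(\cdot, q), \qquad \lambda^2 \notin \Spec(\Lap), \ c = (c_q) \in \C[N] \setminus \{0\},
\]
where $G_{\lambda^2}$ denotes the resolvent kernel of $\Lap$ on $\S[d]$. Since $u_\lambda$ solves $(\Lap - \lambda^2)u_\lambda = 0$ on $\S[d] \setminus Q$ with Green's-function singularities at $Q$, the membership $u_\lambda \in \dom(\Lap_L)$ reduces to a linear condition on the pair $(c, c')$, where $c'$ collects the regularized values of $u_\lambda$ at the points of $Q$; any such pair lies on some Lagrangian plane of $\C[N] \times \C[N]$, so for each admissible $(\lambda, c)$ there is $L = L(\lambda, c)$ making $u_\lambda$ a new eigenfunction of $\Lap_L$ at energy $\lambda^2$.

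The next step is to compute $W_{1/\lambda}[u_\lambda / \norm{u_\lambda}[L^2]]$ along a sequence $\lambda_n \to \infty$, for instance with $\lambda_n^2$ placed halfway between consecutive Laplace eigenvalues. Using the spectral decomposition $G_{\lambda^2}(x, y) = \sum_k Z_k(x, y) / (\lambda_k^2 - \lambda^2)$, where $Z_k$ is the reproducing kernel of the $k$-th spherical-harmonic eigenspace, or equivalently the half-wave kernel representation of the resolvent, one first shows that the normalized single-point Green's function has Wigner limit equal to $\nu_q = \nu_{q, 1/2} + \nu_{-q, 1/2}$, the uniform measure on closed geodesics through $q$ (this is consistent with the SDM of zonal harmonics computed in \cref{Thm: SDM of zonal harmonics}, since zonal harmonics are spectral projections of Green's functions). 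For the cross terms $\ip{G_{\lambda^2}(\cdot, q)}{\Op[h]{a} G_{\lambda^2}(\cdot, q')}$, those with $q' \notin \{q, -q\}$ are negligible because the associated wavefronts lie on disjoint geodesic orbits, while the antipodal cross term is nontrivial: on $\S[d]$ one has $Z_k(q, -q) = (-1)^k Z_k(q, q)$, producing a sign-oscillating contribution that breaks the symmetry between $\nu_{q, 1/2}$ and $\nu_{-q, 1/2}$.

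Combining the diagonal and antipodal contributions, the Wigner limit of $u_\lambda/\norm{u_\lambda}[L^2]$ along each antipodal pair $\{q_k, -q_k\}$ takes the schematic form
\[
\frac{1}{\norm{u_\lambda}[L^2]^2}\Bigl[(A_k + B_k(\lambda))\,\nu_{q_k, 1/2} + (A_k - B_k(\lambda))\,\nu_{-q_k, 1/2}\Bigr]
\]
with $A_k = |c_{q_k}|^2 + |c_{-q_k}|^2$ and $B_k(\lambda) = 2\Re(c_{q_k} \overline{c_{-q_k}}\, e^{i\theta_k(\lambda)})$, while each non-antipodal point $p_j$ contributes an independent $|c_{p_j}|^2 \nu_{p_j}/\norm{u_\lambda}[L^2]^2$. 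The phase $\theta_k(\lambda)$ comes from the half-period transit between antipodes and is asymptotically $\pi\lambda$ modulo lower-order corrections, so extracting a subsequence along which each $\theta_k(\lambda_n)$ converges modulo $2\pi$ and then tuning the magnitudes and relative phases of the $c_q$ reduces the problem to an elementary quadratic system; the Cauchy--Schwarz bound $|B_k(\lambda)| \le A_k$ matches the positivity constraint exactly, and the global constraint \eqref{Eq intro: condition on weights ma+ ma- mb} is enforced automatically by the normalization $\norm{u_\lambda}[L^2] = 1$.

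The main technical hurdle is the precise analysis of the antipodal cross term: identifying the dominant spectral blocks, tracking the sign oscillation $(-1)^k$ against the near-singular denominators $(\lambda_k^2 - \lambda^2)^2$, and exhibiting the effective phase $\theta_k(\lambda)$ with controlled error (together with the simultaneous analysis of the cross term appearing in the normalization $\norm{u_\lambda}[L^2]^2$, which must be tracked in the same way). Once this bookkeeping is done, the construction of $L_n = L(\lambda_n, c(\lambda_n))$ is an elementary linear-algebra step and standard Wigner-calculus arguments conclude that $\nu_{Q, m} \in \mathcal{M}_{\mathrm{sc}}(\Lap_{L_n})$.
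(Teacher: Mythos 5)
Your architecture matches the paper's: Green's-function ansatz, realization via a Lagrangian choice, exploitation of the antipodal parity $Z_\ell^{-q}=(-1)^\ell Z_\ell^q$, decomposition into antipodal pairs plus singletons whose limits decouple, and tuning of the coefficients $c_q$ so that the Cauchy--Schwarz range of the interference term $B_k$ covers the prescribed weights. Your heuristic $\theta(\lambda)\approx\pi\lambda$ is also right in spirit; the paper fixes a \emph{single} intensity $\sigma\in(0,1)$ with $h_n^{-2}=(\ell_n+\sigma)(\ell_n+\sigma+d-1)$ and gets the fixed phase $e^{\pm i\pi\sigma}$ in $m_{q,\pm}=|\beta_q+\rho\beta_{-q}e^{\pm i\pi\sigma}|^2$, rather than extracting a subsequence along which a varying phase converges, but this is cosmetic.

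What you underestimate is the ``main technical hurdle.'' The sentence ``standard Wigner-calculus arguments conclude'' is where the proposal breaks down, because the Wigner distribution of a normalized Green's function is not accessible by standard semiclassical arguments. With $h^{-2}$ placed between consecutive eigenvalues, the spectral coefficients of $G_h^{Q,\beta}$ concentrate on a finite window of levels $\ell_n+k$ with $\ell^2$-weights $\sim(\beta_q+(-1)^k\rho\beta_{-q})/(k-\sigma)$; this is not an oscillatory integral and the near-singular denominators $(\lambda_{\ell}^2-h^{-2})^{-1}$ prevent any stationary-phase treatment of the cross term. The paper makes this rigorous by (i) proving the tail and main-term norm asymptotics and producing an explicit $L^2$-quasimode $\frac{1}{C_{\sigma,\beta,\rho}}\sum_{|k|\le\Upsilon}\frac{\beta_q+(-1)^k\rho\beta_{-q}}{k-\sigma}z_{\ell_n+k}^q$ with error $O(\Upsilon h_n)+O(\Upsilon^{-1/2})$ (\cref{Thm: Gh tails L2 norm estimates}, \cref{Thm: Gh L2 norm estimates scenario 1}, \cref{Thm: gh quasimode scenario 1}); (ii) rewriting that finite combination as $\Op[h_n]{\Gamma^{q,\beta,\Upsilon,\sigma,\rho}}z_{\ell_n}^q$ using ladder operators built from the Gegenbauer recursion (\cref{Thm: approx of uqUpsilonxn by PsDO on zln}); and (iii) sending $\Upsilon\to\infty$ by recognizing $\Gamma^{q,\beta,\Upsilon,\sigma,\rho}(\phi_t(q,\xi))$ as a partial Fourier series of a step function on $\T$ and invoking Carleson's theorem to compute the limiting density $|\gamma^{\sigma,\beta,\rho}|^2$, which is exactly the two piecewise-constant weights $m_{q,\pm}$. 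None of these steps is a bookkeeping exercise, and without them you cannot justify the schematic form $(A_k+B_k)\nu_{q_k,1/2}+(A_k-B_k)\nu_{-q_k,1/2}$, the normalization $\|u_\lambda\|_{L^2}^2\sim C h^{3-d}$, or the orthogonality between contributions of distinct pairs (which the paper obtains from \cref{Lemma: orthogonality for sequence from SDM} only \emph{after} the single-pair SDM has been computed). If you intend to carry out your direct spectral computation of $\ip{G_{\lambda^2}(\cdot,q)}{\Op[h]{a}G_{\lambda^2}(\cdot,q')}$, you must supply an argument of comparable strength to replace (i)--(iii); as written, the proposal identifies the correct route but does not prove the result.
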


\SSDMs for point-perturbations on tori have been extensively studied in \cite{RudnickUeberschaer2012, KurlbergUeberschaer2014, KurlbergRosenzweig2017, KurlbergUeberschaer2017, KurlbergLesterRosenzweig2023, Yesha2013, Yesha2015, Yesha2018}. In those references, it is shown that, for a single scatterer on the squared torii $\T[2]$ and $\T[3]$, there exist density-one subsequences of new eigenfunctions that equidistribute on phase space and, at the same time, there exist zero-density subsequences of new eigenfunctions that equidistribute on the position variable but scar on the momentum variable, a phenomenon known as \emph{superscarring}. The case of two-dimensional irrational tori for a single point-perturbation was addressed in those articles too: it was proved that there exist density-one subsequences for which superscarring takes place. The case of two scatterers on the squared torus $\T[2]$ was also studied: under some diophantine condition of the scatterers, there exists a density-one subsequence of new eigenfunctions that equidistributes in the configuration variable.

As far as we know, this is the first instance of a perturbed Schrödinger operator whose sequence of eigenfunctions produce \SDMs{} which are non-invariant under the Hamiltonian flow induced by the unperturbed Schrödinger operator. We must note that this kind of phenomenon is very particular to the sphere, as we extensively make use of the fact that eigenfuntions are explicit. At this point, it is not clear whether or not this phenomenon may be reproduced on Zoll manifolds following a strategy similar to the one presented here.

\subsection*{Outline of the paper}
\label{subSec: Introduction. Outline}

In Section \ref{Sec: Point-perturbations of Laplacian on the spheres} we briefly present the theory of point-perturbations of $\Lap$. In Section \ref{Sec: high-energy zonal harmonics} we study zonal harmonics in the high-energy regime, characterizing its set of \SDMs{}. In Section \ref{Sec: Quasimode Green function} we introduce some auxiliary parameters to compute fine estimates of the norms of new eigenfunctions that are fundamental to later finding a sequence of quasimodes for every sequence of new eigenfunctions. In Section \ref{Sec: SDM Green functions} we use these quasimodes to characterize the set of \SDMs{} of sequences of new eigenfunctions. In Section \ref{Sec: proofs of main theorem} we prove the aforementioned theorems, namely Theorem \ref{Thm: 2nd theorem new ef},  Theorem \ref{Thm: 2nd theorem old ef}, and Theorem \ref{Thm: main theorem}, in this order.

\subsection*{Acknowledgments}

The author appreciates Jared Wunsch and Yuzhou Joey Zou's hospitality during his stay at Northwestern University, and warmly thanks them for the insightful conversations on this and related problems that led to clever ideas. The author wants to thank Víctor Arnaiz for the fruitful conversations held at the Institut de Mathématiques de Bordeaux that opened up this problem. Lastly, the author is indebted to Fabricio Macià as this problem was suggested and supervised by him.
This research has been supported by grants PID2021-124195NB-C31 and PID2024-158664NB-C21 from Agencia Estatal de Investigación (Spain), and by grant VPREDUPM22 from Programa Propio UPM.

%%%%%%%%%%%%%%%%%%%%%%%%%%%%%%%%%%%%%%%%%%%%%%%%%%%%%%%%%%%%%%%%%%%%%%%%%%%
\section{Point-pertubations of Laplacian on the spheres}
\label{Sec: Point-perturbations of Laplacian on the spheres}

The theory of self-adjoint extensions of a symmetric operator is a well-established theory, and several equivalent descriptions of the family of seld-adjoint extensions are known; for example, we refer the reader to \cite[Chapter X]{ReedSimonII} for a description via unitary operators. Nevertheless, we opt for a description via Lagrangian subspaces (see \cite[Section 3]{HillairetKokotov2012}). In particular, we follow the notation and definitions described in \cite[Section 2]{Verdasco2026} and adapt them to our particular case $M = \S[d]$, $d = 2, 3$.

Fix a finite set of points $Q \subseteq \S[d]$, $N \coloneqq \card Q$, define $A$ as the closure of the symmetric operator $\Lap|_{\CinfK(\S[d] \setminus Q)}$ in $L^2(\S[d])$. A point-perturbation of $\Lap$ in $\S[d]$ is a self-adjoint extension of $A$. Each self-adjoint extension corresponds to a choice of boundary conditions on the set $Q$; in particular, the choice of continuous boundary conditions on $Q$ corresponds to $\Lap$, the trivial self-adjoint extension.

The family of all self-adjoint extensions of $A$ is isomorphic to the Lagrangian-Grassmanian of $\C[N] \times \C[N]$ with its standard symplectic form
\[
\Omega( (A^+, A^-) , (B^+, B^-)) = \ip{A^{+}}{B^{-}}[\C[N]] - \ip{A^{-}}{B^{+}}[\C[N]] \ .
\]
Moreover, there exists a symplectomorphic coordinate map from the set of all possible (self-adjoint) boundary conditions on $Q$ to $(\C[N] \times \C[N], \Omega)$ \cite[Lemma 2.2]{Verdasco2026}.

For a Lagrangian subspace $L$ of $(\C[N] \times \C[N], \Omega)$, define $\Lap_L$ as the corresponding self-adjoint perturbation of $\Lap$. Let $Z_{\ell}^{q}$ denote the zonal harmonic of energy $\lambda_{\ell}^2 \in \Spec(\Lap)$, that is, the unique eigenfunction in $\ker(\Lap {}- \lambda_{\ell}^2)$ such that
\[
\ip{Z_{\ell}^{q}}{u} = u(q) \ , \qquad \forall \ u \in \ker(\Lap {}- \lambda_{\ell}^2) \ .
\]
In addition, for $h > 0$ and $\beta \in \C[N]$ such that
\begin{equation} \label{Eq: condition beta-h}
    \sum_{q \in Q} \beta_{q} \delta_{q} = 0 \qquad \text{on} \qquad \ker(h^2 \Lap {}- 1)
\end{equation}
define $G_{h}^{Q, \beta}$ as follows. If $h^{-2} \notin \Spec(\Lap)$,
\begin{equation} \label{Eq: def GhQbeta good h}
    G_{h}^{Q, \beta} \coloneqq \sum_{ \ell \in \N } \frac{1}{\lambda_{\ell}^2 - h^{-2}} \sum_{q \in Q} \beta_{q} Z_{\ell}^{q} \ .
\end{equation}
but if $h^{-2} = \lambda_{\ell_h}^2$ for some $\ell_h \in \N$,
\begin{equation} \label{Eq: def GhQbeta bad h}
    G_{h}^{Q, \beta} \coloneqq \sum_{ \substack{\ell \in \N \\ \ell \neq \ell_h} } \frac{1}{\lambda_{\ell}^2 - \lambda_{\ell_h}^2} \sum_{q \in Q} \beta_{q} Z_{\ell}^{q} \ .
\end{equation}
Observe that for such $h > 0$ and $\beta \in \C[N]$, $G_{h}^{Q, \beta}$ is a linear combination of Green's function for $\Lap$ at energy $h^{-2}$:
\begin{equation} \label{Eq: GF property of new EF}
    \ip{G_{h}^{Q, \beta}}{(\Lap {}- h^{-2}) u}[L^2(\S[d])] = \sum_{q \in Q} \beta_q u(q) \ , \qquad \forall \ u \in H^2(\S[d]) \ .
\end{equation}
Note that $G_{h}^{Q, \beta}$ is always real-valued because $Z_{\ell}^{q}$ is real-valued for any $\ell \in \N$ (see \cite[Lemma 2.3]{Verdasco2026}).

\begin{Remark}
    Throughout the paper, if some $h > 0$ is under consideration, we will always assume that $\beta \in \C[N]$ is such \eqref{Eq: condition beta-h} holds for that specific $h$. For instance, if we have sequences $(h_n)_{n \in \N} \subseteq (0,\infty)$ and $(\beta_{n})_{n \in \N}$, we will assume that $h_n$ and $\beta_{n}$ together satisfy \eqref{Eq: condition beta-h}.
\end{Remark}

From \cite[Theorem 2.15]{Verdasco2026}, we know that $\Lap_L$ has compact resolvent, and $u_h \in L^2(\S[d])$ satisfies $h^2\Lap_L u_h = u_h$ for some $h > 0$ if and only if $u_h \in \dom(\Lap_L)$ and $u_h$ is of the form
\begin{equation} \label{Eq: eigenfunctions of LapL}
    u_h = w_h + G_{h}^{Q, \beta} \ ,
\end{equation}
for some $w_h \in \ker(h^2\Lap {}- 1)$ and $\beta \in \C[N]$ satisfying \eqref{Eq: condition beta-h}. When $h^{-2} \in \Spec(\Lap)$ and $u_h = w_h$, then $u_h$ is called an \emph{old eigenfunction}; these are true eigenfunctions of $\Lap$. We will study them in Section \ref{Sec: SDM old eigenfunctions}, where we prove Theorem \ref{Thm: 2nd theorem old ef}. When $u_h = G_{h}^{Q, \beta}$ for some $\beta \in \C[N]$ such that \eqref{Eq: condition beta-h} is satisfied, $u_h$ is called a \emph{new eigenfunction}. We will study new eigenfunctions through Section \ref{Sec: Quasimode Green function} and Section \ref{Sec: SDM Green functions}, proving Theorem \ref{Thm: 2nd theorem new ef}. In particular, if $Q = \{q\}$, then $u_h \in D(\Lap_L)$ in \eqref{Eq: eigenfunctions of LapL} is either an old eigenfunction vanishing on $q$ if $h^{-2} \in \Spec(\Lap)$, or $u_h = \beta G_{h}^{q}$ for some $\beta \in \C$ if $h^{-2} \notin \Spec(\Lap)$.

On the other hand, for every $h > 0$ and $\beta \in \C[N]$ satisfying \eqref{Eq: condition beta-h} there exists some point perturbation $\Lap_L$ such that $G_{h}^{Q, \beta} \in D(\Lap_L)$ (see \cite[Lemma 2.16]{Verdasco2026}).

%%%%%%%%%%%%%%%%%%%%%%%%%%%%%%%%%%%%%%%%%%%%%%%%%%%%%%%%%%%%%%%%%%%%%%%%%%%
\section{High-energy zonal harmonics}
\label{Sec: high-energy zonal harmonics}

A key feature new eigenfunctions of $\Lap_L$ on the spheres $\S[d]$ have is that they are (infinite) linear combinations of zonal harmonics $Z_{\ell}^{q}$ on the different points of $Q$. The rich structure of the spectrum of $\Lap$ on $\S[d]$ allows us to use the sequence of zonal harmonics $(Z_{\ell}^{q})_{\ell \in \N}$ as skeleton for any sequence of new eigenfunctions. Moreover, thanks to the very nice symmetric properties of the sequence $(Z_{\ell}^{q})_{\ell \in \N}$, one may prove this sequence has a unique \SDM{}; this later allows us to compute \SDMs{} for sequences of new eigenfunctions.

Denote for $\ell \in \N$ and $q \in \S[d]$,
\begin{equation}
    z_{\ell}^{q} \coloneqq \frac{1}{ \norm{Z_{\ell}^{q}}[L^2(\S[d])] } Z_{\ell}^{q} \ .
\end{equation}

\begin{Theorem} \label{Thm: SDM of zonal harmonics}
    Let $d \in \N$. Let $(\ell_n)_{n \in \N} \subseteq \N$ be any increasing sequence of natural numbers, and let $(h_n)_{n \in \N} \subseteq (0, \infty)$ be such that $\lim_{\ell \to \infty} h_n \lambda_{\ell_n} = 1$, then
    \begin{equation} \label{Eq: SDM of zonal harmonics}
        \lim_{n \to \infty} \ip*{z_{\ell_n}^{q}}{\Op[h_n]{a} z_{\ell_n}^{q}} = \int_{T^*\S[d]} a(x, \xi) \nu_{q}(\D{x}, \D{\xi}) \qquad \forall \ a \in \CinfK(T^*\S[d]) \ ,
    \end{equation}
    where $\nu_{q}$ is uniform probability measure along the geodesic flow-out from $S_{q}^{*} \S[d]$:
    \begin{equation} \label{Eq: def of nuq}
         \int_{T^*\S[d]} a(x, \xi) \nu_{q}(\D{x}, \D{\xi}) \coloneqq \int_{S_{q}^*\S[d]} \int_{0}^{2\pi} a(\phi_t(q, \xi)) \frac{\D{t}}{2\pi} \frac{\D{\xi}}{\vol(\S[d-1])} \ , \qquad \forall \, a \in \CinfK(T^*\S[d]) \ .
    \end{equation}
\end{Theorem}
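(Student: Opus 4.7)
The plan is to identify the limit measure via three constraints: (i) invariance under the geodesic flow (from the eigenfunction equation), (ii) invariance under the isotropy subgroup $T_q := \mathrm{Stab}_{SO(d+1)}(q) \cong SO(d)$ (from the reproducing property characterising $Z_\ell^q$), and (iii) microlocal concentration on the set $\Lambda_q := \bigcup_{t \in \R} \phi_t(S_q^*\S[d])$ of geodesics through $q$ (from the Lagrangian structure $Z_\ell^q = \pi_\ell \delta_q$, where $\pi_\ell$ denotes the orthogonal projector onto $\ker(\Lap - \lambda_\ell^2)$).

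First I would verify, directly from \eqref{Eq: def of nuq}, that $\nu_q$ is a probability measure on $T^*\S[d]$ supported in $S^*\S[d]$ and invariant under the $2\pi$-periodic geodesic flow. Let $\mu$ be any weak-$*$ accumulation point of $(W_{h_n}[z_{\ell_n}^{q}])_{n \in \N}$; since $h_n \lambda_{\ell_n} \to 1$, the standard theory for Wigner measures of normalized eigenfunctions on CROSSes (see \cite{Macia2008}) yields $\mu \in \mathcal{P}_{\mathrm{inv}}(S^*\S[d])$. Because each $z_{\ell_n}^{q}$ is fixed by every $R \in T_q$ and the Weyl quantisation is $SO(d+1)$-equivariant up to lower-order remainders, $\mu$ is also $T_q$-invariant. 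Moreover, the joint action of $T_q$ and the geodesic flow on $\Lambda_q$ is transitive --- an oriented geodesic through $q$ is determined by its tangent at $q$, a point of $S_q^*\S[d] \cong \S[d-1]$ on which $T_q$ acts transitively --- so the unique $(T_q, \phi_t)$-invariant probability measure on $\Lambda_q$ is $\nu_q$. Proving $\mu = \nu_q$ thus reduces to establishing the microlocal inclusion $\supp \mu \subseteq \Lambda_q$.

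For this concentration I would exploit the Zoll structure of $\S[d]$: since $\Spec(\sqrt{\Lap + (d-1)^2/4}) = \{\ell + (d-1)/2\}_{\ell \in \N}$ is an arithmetic progression, the propagator $U(t) := e^{it\sqrt{\Lap + (d-1)^2/4}}$ is $2\pi$-periodic and the spectral projector admits the representation
\begin{equation*}
    \pi_\ell = \frac{1}{2\pi}\int_0^{2\pi} e^{-it(\ell + (d-1)/2)} U(t) \D{t} \ .
\end{equation*}
Since $Z_\ell^q = \pi_\ell \delta_q$ and $U(t)$ is a Fourier integral operator with canonical relation given by the graph of $\phi_t$, the distribution $U(t)\delta_q$ is Lagrangian along $\phi_t(T_q^*\S[d] \setminus 0)$. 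A semiclassical Egorov estimate applied to $\Op[h_n]{a} U(t) \delta_q$ with $h_n = 1/\lambda_{\ell_n}$, followed by integration in $t$ against the oscillatory phase, then produces the semiclassical wavefront bound $\supp \mu \subseteq \Lambda_q$, which combined with the previous paragraph yields $\mu = \nu_q$.

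The main obstacle is quantifying this wavefront bound at the $L^2$ level: since $\delta_q \notin L^2(\S[d])$, the projector argument must be paired carefully with the renormalisation factor $\sqrt{\vol(\S[d])/d_\ell}$, where $d_\ell := \dim \ker(\Lap - \lambda_\ell^2)$, in order to produce a genuine semiclassical defect statement rather than a merely distributional one. An alternative route that bypasses wavefront analysis altogether is to apply the CROSS averaging lemma of \cite{Macia2008} to replace any $a \in \CinfK(T^*\S[d])$ by its joint $(T_q, \phi_t)$-average $\widetilde{a}$ --- which depends only on the smooth function $\cos^2 \rho = (q \cdot x)^2 + (q \cdot \xi)^2$ on $S^*\S[d]$, where $\rho$ is the closest-approach distance of the geodesic to $q$ --- and then to verify directly that $\lim_n \ip{z_{\ell_n}^{q}}{\Op[h_n]{\widetilde{a}} z_{\ell_n}^{q}} = \widetilde{a}(q, \xi_0)$ for any $\xi_0 \in S_q^*\S[d]$, by Weierstrass approximation in $\cos^2 \rho$ together with the explicit Gegenbauer asymptotics for $|Z_\ell^q|^2$.
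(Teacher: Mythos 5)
Your three constraints are the right ones and your reduction to uniqueness of the $(T_q,\phi_t)$-invariant probability on $\Lambda_q$ (via transitivity, i.e.\ Haar measure) matches the paper's endgame. The divergence is in how you obtain the support constraint $\supp\mu\subseteq\Lambda_q$, and here the paper has a much more economical device that you miss. Rather than pass through the $2\pi$-periodic propagator, the FIO representation of $\pi_\ell\delta_q$, and a wavefront argument (where, as you yourself flag, the fact that $\delta_q\notin L^2$ and the $\sqrt{\vol/d_\ell}$ normalisation make a rigorous semiclassical statement delicate), the paper exploits the \emph{exact} identity $\hat L Z_\ell^q=0$ for every Killing field $\hat L$ with $\hat L(q)=0$ (\cref{Prop: properties zonal harm}). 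Since $\tfrac{h}{i}\hat L=\Op[h]{L}+\BigO(h)$ with $L(x,\xi)=\xi(\hat L(x))$, this single pseudodifferential observation delivers simultaneously $\int aL\,\D\mu=0$ — hence $\supp\mu\subseteq L^{-1}(0)$ for every such $L$ — and $\int\{a,L\}\,\D\mu=0$ — hence $\psi_t^L$-invariance. Your equivariance argument only yields the latter (pushforward invariance under $T_q$), not the vanishing of the symbol on $\supp\mu$; that missing support constraint is precisely what forces you onto the harder FIO track. The paper then identifies the joint zero set $\bigcap_{L} L^{-1}(0)\cap S^*\S[d]$ with the flow-out $\mathcal{F}_q$ by stereographic projection, a purely geometric step. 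Your route would work if carried out (the Zoll propagator is indeed a $2\pi$-periodic FIO and $U(t)\delta_q$ is Lagrangian along $\phi_t(T_q^*\S[d]\setminus 0)$), and your fallback via Gegenbauer asymptotics and the averaging argument of \cite{Macia2008} would also close the gap, but both are substantially heavier than noticing that the isotropy invariance of $z_\ell^q$ is already encoded in a first-order PDE whose principal symbol cuts out $\mathcal{F}_q$.
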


\begin{proof}
    Up to extraction of a subsequence, we may assume there exists a unique measure $\mu$ such that
    \[
    \lim_{n \to \infty} \ip*{z_{\ell_n}^{q}}{\Op[h_n]{a} z_{\ell_n}^{q}} = \int_{T^*\S[d]} a \, \mu \ , \qquad \forall \ a \in \CinfK(T^*\S[d]) \ .
    \]
    We are going to show that $\mu = \nu_{q}$.

    \textbf{I.} Since the sequence $(z_{\ell_n}^{q})_{n \in \N}$ satisfies the following semiclassical pde,
    \[
    ((h_n)^2 \Lap {}- 1) z_{\ell_n}^{q} = \littleo_{L^2}(h_n) \ ,
    \]
    we know, that for $H(x, \xi) \coloneqq \abs{\xi}_x^2$,
    \[
    \int_{T^*\S[d]} a (H - 1) \, \mu = 0 \ , \qquad \int_{T^*\S[d]} \sympf{ a }{ H } \, \mu = 0 \ .
    \]
    From these identities, we read that
    \begin{equation} \label{Eq: prop of zonalharm SDM from Lap}
        \supp \mu \subseteq S^*\S[d] \ , \qquad (\phi_t)_{*} \mu = \mu
    \end{equation}
    where $\phi_t$ denotes the geodesic flow on $T^*\S[d]$.

    Similarly, since the sequence $(z_{\ell_n}^{q})_{n \in \N}$ satisfies (see Proposition \ref{Prop: properties zonal harm})
    \[
    \frac{h_n}{i} \hat{L} z_{\ell_n} = 0
    \]
    for any Killing vector field $\hat{L}$ such that $\hat{L}(q) = 0 \in T_{q}\S[d]$, we infer, that for $L(x, \xi) \coloneqq \xi(\hat{L}(x))$,
    \[
    \int_{T^*\S[d]} a L \, \mu = 0 \ , \qquad \int_{T^*\S[d]} \sympf{ a }{ L } \, \mu = 0 \ ,
    \]
    since $L$ is the symbol of the semiclassical pseudodifferential operator $\frac{h_n}{i} \hat{L}$. We define $\mathcal{L}$ as the subset of smooth functions on $T^*\S[d]$ that comprise all functions $L(x, \xi) \coloneqq \xi(\hat{L}(x))$ for any Killing vector field $\hat{L}$ on $\S[d]$ such that $\hat{L}(q) = 0$. From these identities, we read that
    \begin{equation} \label{Eq: prop of zonalharm SDM from Kfield}
        \supp \mu \subseteq L^{-1}(0) \ , \qquad (\psi_t^{L})_{*} \mu = \mu \ , \qquad \forall \, L \in \mathcal{L} \ ,
    \end{equation}
    where $\psi_t^{L}$ denotes the geodesic flow on $T^*\S[d]$.

    \textbf{II.} Now we show that the only probability measure $\mu$ that satisfy both \eqref{Eq: prop of zonalharm SDM from Lap} and \eqref{Eq: prop of zonalharm SDM from Kfield} is $\nu_q$, that we defined in \eqref{Eq: def of nuq}.

    We first note that the commutation of the operators $\Lap$ and $\hat{L}$ imply that their symbols, $H$ and $L$, Poisson-commute. Since
    \[
    0 = [h^2 \Lap , \tfrac{h}{i} \hat{L}] = \frac{h}{i} \Op[h]{\sympf{H}{L} } + h\Psi_h^1 \ ,
    \]
    and $\frac{h}{i} \Op[h]{\sympf{H}{L} } \in h \Psi_{h}^{2}$, then $\sympf{H}{L} = 0$. This means that the quantities $L \in \mathcal{L}$ are preserved along the geodesic flow.

    \begin{Claim}
        The measure $\mu$ is supported inside the unit-speed geodesic flow-out from $q$, $\supp \mu \subseteq \mathcal{F}_{q}$,
        \begin{equation} \label{Eq: flow-out from q set def}
            \mathcal{F}_{q} = \{ \phi_t(q, \xi) \colon \ \xi \in S_{q}^{*}\S[d] \, , \ t \in \R \} \ .
        \end{equation}
    \end{Claim}

    \begin{proof}[Proof of claim]
        Through stereographic projection from the antipodal point of $q$, geodesics through $q \in \S[d]$ are mapped to straight lines through $0 \in \R[d]$, and Killing fields $\hat{L}$ are transformed into vector fields on $\R[d]$ that are tangent to every sphere $r\S[d-1]$, $r \geq 0$. Moreover, in these coordinates, for every $y \in \R[d] \setminus \{0\}$, $\{ \hat{L}(y) \colon L \in \mathcal{L} = T_{y}\S[d-1]$. This implies that the set of coordinate points $(y, \eta) \in T^*\R[d]$ such that $L(y, \eta) = 0$ for all $L \in \mathcal{L}$ are those such that $T_{y}\S[d - 1] \subseteq \ker(\eta)$. These points on $T^*\R[d]$, when lifted to the cotangent bundle $T^*\S[d]$, correspond to the geodesic flow-out from $q$. Since we know that $\supp \mu \subseteq S^*\S[d]$, we conclude that $\supp \mu \subseteq \mathcal{F}_{q}$.
    \end{proof}
    
    Now we show that $\mu$ must be "uniform" on $\mathcal{F}_{q}$. The set $\mathcal{F}_{q}$ is a $d$-dimensional manifold; it is diffeomorphic to $\T \times \S[d-1]$ indeed. A diffeomorphism is given by
    \[
    F \colon \T \times S_{q}^*\S[d] \to \mathcal{F}_{q} \ , \qquad F(t, \xi) = \phi_t(q, \xi) \ ,
    \]
    where we recall that $\phi_t$ is the geodesic flow on $T^*\S[d]$. The Lie group $\T \times SO(d)$ acts smoothly on $\mathcal{F}_{q}$:
    \[
    (t, g) \cdot \phi_{t'}(q, \xi) = \phi_{t' + t}(q, g\xi) \ .
    \]
    Moreover, for every $L \in \mathcal{L}$, $g^{L} \coloneqq \psi_{1}^{L}|_{S_{q}^* \S[d]}$ can be thought as an element of $SO(d)$. Since $g^{L}$ are rotations on $S_{q}^*\S[d]$---composition of two reflections on $S_{q}^*\S[d]$---, each $g \in SO(d)$ can be expressed as a finite composition of elements of the form $g^{L}$, $g = g^{L_N} \dots g^{L_1}$, because every isometry of $T_{q}^{*}\S[d] \equiv \R[d]$ can be decomposed as a composition of at most $d$ orthogonal reflections (hence $N \leq \frac{d}{2}$). Since flows $\phi_t$ and $\psi_s^L$ commute, the action from above could be rewritten as
    \[
    (t,g) \cdot (x, \xi) = \phi_t \circ \psi_{1}^{L_{N}} \circ \dots \circ \psi_{1}^{L_{1}} (x, \xi) \ .
    \]
    Since $(\phi_t)_{*} \mu = \mu$ and $(\psi_s^{L})_{*} \mu = \mu$, then $\mu$ is invariant under the action of the whole Lie group:
    \begin{align*}
        \int_{\mathcal{F}_{q}} a \big( (t, g) \cdot (x, \xi)\big) \ \mu(\D{x}, \D{\xi}) & = \int_{\mathcal{F}_{q}} a \big( \phi_t \circ \psi_{1}^{L_{N}} \circ \dots \circ \psi_{1}^{L_{1}} (x, \xi) \big) \ \mu(\D{x}, \D{\xi}) = \\
        & = \int_{\mathcal{F}_{q}} a(x, \xi) \ \mu(\D{x}, \D{\xi}) \ .
    \end{align*}
    Since the probability Radon measure $\mu$ is invariant under the (continuous) action of $\T \times SO(d)$, it induces a probability Borel regular measure $\Haarmeasure$ on the group $\T \times SO(d)$ as follows
    \[
    \Haarmeasure(E) \coloneqq \mu(\{ (t, g) \cdot z_0 \colon \, (t, g) \in E \}) \ , \qquad \forall \, E \subseteq \T \times SO(d) \ \text{closed}
    \]
    for any $z_0 \in \mathcal{F}_{q}$. $\Haarmeasure$ is invariant under the group operation, thus it must be the probability Haar measure on $\T \times SO(d)$: $\Haarmeasure = \Haarmeasure_{\T} \otimes \Haarmeasure_{SO(d)}$, the tensor product of the respective probability Haar measures. Therefore, fixing $z_0 = (q, \xi_0)$, for a closed subset $K \subseteq \T \times SO(d)$ of the form $K = F([a, b] \times C)$, 
    \[
    \mu(K) = \Haarmeasure_{\T}([a,b]) \, \Haarmeasure_{SO(d)}(\{ g \in SO(d) \colon \, g \cdot \xi \in C \}) = \frac{b-a}{2\pi} \frac{\sigma^{d-1}(C)}{\vol(\S[d-1])} \ .
    \]
    Here we use the characterization of the uniform spherical measure on $\S[d-1]$, $\sigma^{d-1}$, via the Haar measure on $SO(d)$. This last statement is equivalent to
    \[
    \int_{\mathcal{F}_{q}} a(x, \xi) \, \mu(\D{x}, \D{\xi}) = \int_{S_{q}^*\S[d]} \int_{0}^{2\pi} a(\phi_t(q, \xi)) \ \frac{\D{t}}{2\pi} \frac{\D{\xi}}{\vol(\S[d-1])} \qquad \forall \, a \in \Cont(\mathcal{F}_{q}) \ .
    \]
    This concludes the result.
\end{proof}

The following properties of the measures $\nu_{q}$ are easy to check.
\begin{Prop} \label{Prop: nuq properties}
    Let $d \in \N$. Let $q, p \in \S[d]$, and denote by $-q$ the antipodal point to $q$. The following holds:
    \begin{enumerate}
        \item $\nu_{-q} = \nu_{q}$.
        \item If $p \neq q, -q$, then $\nu_{p}$ and $\nu_{q}$ are mutually singular, $\nu_{p} \perp \nu_{q}$.
    \end{enumerate}
\end{Prop}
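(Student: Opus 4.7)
The plan is to handle the two claims separately, both by exploiting the explicit structure of $\nu_q$ as a $d$-dimensional Haar-like measure supported on the geodesic flow-out $\mathcal{F}_q$ described in the proof of \cref{Thm: SDM of zonal harmonics}.

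For (1), I would argue directly from the defining formula \eqref{Eq: def of nuq}. All geodesics on $\S[d]$ are great circles of period $2\pi$, so $\phi_\pi(q, \xi) = (-q, P_\xi)$, where $P\colon S_q^*\S[d]\to S_{-q}^*\S[d]$ denotes parallel transport along the half-geodesic and is an isometry between the two cospheres. Performing the change of variable $t = s + \pi$ in the inner integral defining $\nu_q$ and using $2\pi$-periodicity of the flow yields
\[
\int_{0}^{2\pi} a(\phi_t(q, \xi)) \frac{\D{t}}{2\pi} = \int_{0}^{2\pi} a(\phi_s(-q, P_\xi)) \frac{\D{s}}{2\pi},
\]
and then the change of variables $\xi \mapsto P_\xi$ (which preserves the uniform measure $\D\xi / \vol(\S[d-1])$ since $P$ is an isometry) converts the outer integral to one over $S_{-q}^*\S[d]$. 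Thus $\nu_q = \nu_{-q}$.

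For (2), the strategy is to exhibit disjoint sets of full measure for $\nu_q$ and $\nu_p$. From the proof of \cref{Thm: SDM of zonal harmonics}, $\nu_q$ is the pushforward, via $F(t,\xi) = \phi_t(q,\xi)$, of the Haar-type product $\frac{\D t}{2\pi} \otimes \frac{\D\xi}{\vol(\S[d-1])}$ on $\T \times S_q^*\S[d]$. In particular $\supp \nu_q = \mathcal{F}_q$. Because $p \neq \pm q$, there is a \emph{unique} great circle $\gamma$ through both $p$ and $q$, so a point of $\mathcal{F}_q$ lies in $\mathcal{F}_p$ iff its underlying geodesic is $\gamma$. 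Pulling back through $F$, this means $F^{-1}(\mathcal{F}_p \cap \mathcal{F}_q) \subseteq \T \times \{\pm\xi_0\}$ for the two directions $\pm\xi_0 \in S_q^*\S[d]$ tangent to $\gamma$. Since $\dim S_q^*\S[d] = d-1 \geq 1$, a finite set of points in $S_q^*\S[d]$ has zero uniform measure, and hence $\nu_q(\mathcal{F}_p\cap \mathcal{F}_q) = 0$; by symmetry the same holds for $\nu_p$.

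Setting $A \coloneqq \mathcal{F}_q\setminus \mathcal{F}_p$ and $B \coloneqq \mathcal{F}_p\setminus \mathcal{F}_q$, one has $A\cap B = \emptyset$, while $\nu_q(A) = \nu_q(\mathcal{F}_q) - \nu_q(\mathcal{F}_p\cap \mathcal{F}_q) = 1$ and similarly $\nu_p(B) = 1$, proving mutual singularity. No step here looks genuinely difficult; the only point requiring minor care is the $F$-parametrization, which is 2-to-1 on a null set (the antipodal identification $\phi_\pi$ from part (1)) but this does not affect the measure-theoretic statement.
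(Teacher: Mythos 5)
The paper offers no proof here---it declares the properties ``easy to check''---so there is nothing to compare against, and your argument stands on its own. It is correct. For (1) the time-shift $t\mapsto t+\pi$ together with $2\pi$-periodicity and the fact that parallel transport $P\colon S_q^*\S[d]\to S_{-q}^*\S[d]$ is a measure-preserving isometry is exactly the right computation. For (2), identifying $\nu_q$ as the pushforward of $\frac{\D{t}}{2\pi}\otimes\frac{\D{\xi}}{\vol(\S[d-1])}$ under $F$, and noting that $\mathcal{F}_p\cap\mathcal{F}_q$ pulls back to $\T\times\{\pm\xi_0\}$ (a null set in $\T\times S_q^*\S[d]$ when $\dim S_q^*\S[d]=d-1\geq 1$), is clean and complete. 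Two small remarks. First, your argument (and the statement itself) genuinely requires $d\geq 2$: for $d=1$ one has $S_q^*\S[1]=\{\pm 1\}$, the two-point set does not have measure zero, and indeed $\nu_p=\nu_q$ for all $p,q$, so (2) fails; the paper's hypothesis ``$d\in\N$'' is an oversight, but you should have flagged the restriction rather than burying it in ``$\geq 1$''. Second, your closing caveat that $F$ is ``2-to-1 on a null set'' is unnecessary (and slightly misleading): as the paper observes in the proof of \cref{Thm: SDM of zonal harmonics}, $F\colon\T\times S_q^*\S[d]\to\mathcal{F}_q$ is a diffeomorphism, since a unit-speed geodesic emanating from $q$ returns to $q$ only at integer multiples of $2\pi$ and with the same initial covector.
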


The fact that the sequences $(z_{\ell}^{q})_{\ell \in \N}$ and $(z_{\ell}^{p})_{\ell \in \N}$ have mutually singular \SDM{} implies the following algebraic flavored result.

\begin{Prop} \label{Prop: linindep of zonal harm}
    Let $d \in \N$. Let $Q \subseteq \S[d]$ be a fixed finite set of points, $N \coloneqq \card Q$. The following are equivalent:
    \begin{enumerate}
        \item $Q$ contains a pair of antipodal points, $q$ and $-q$.
        \item There exists an increasing sequence $(\ell_n)_{n \in \N}$ such that $\{ z_{\ell_n}^{q}\}_{q \in Q}$ are linearly dependent for all $n \in \N$.
    \end{enumerate}
\end{Prop}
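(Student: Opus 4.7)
The plan is to prove $(1) \Rightarrow (2)$ directly and $(2) \Rightarrow (1)$ by contrapositive through a Gram-matrix argument.

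For $(1) \Rightarrow (2)$, assume $\{q, -q\} \subseteq Q$. Every element of $\ker(\Lap - \lambda_\ell^2)$ on $\S[d]$ is the restriction to $\S[d]$ of a harmonic polynomial of degree $\ell$ on $\R[d+1]$ and therefore has parity $(-1)^\ell$ under the antipodal map $x \mapsto -x$. Applied to the reproducing kernel this gives
\begin{equation*}
    Z_\ell^{-q}(x) = Z_\ell^q(-x) = (-1)^\ell Z_\ell^q(x),
\end{equation*}
so after $L^2$-normalization $z_\ell^{-q} = (-1)^\ell z_\ell^q$ for every $\ell \in \N$. Consequently, $\{z_\ell^p\}_{p \in Q}$ is linearly dependent for every $\ell$, and any strictly increasing sequence $(\ell_n)_{n \in \N}$ witnesses $(2)$.

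For $(2) \Rightarrow (1)$, I argue by contrapositive: assume $Q$ has no antipodal pair and show that $\{z_\ell^p\}_{p \in Q}$ is linearly independent for all $\ell$ sufficiently large, which precludes any infinite subsequence of the kind required by $(2)$. The natural object is the Gram matrix $G_\ell \coloneqq (\ip{z_\ell^p}{z_\ell^{p'}})_{p, p' \in Q}$, whose diagonal is identically $1$. Combining the reproducing-kernel identities $\ip{Z_\ell^p}{Z_\ell^{p'}} = Z_\ell^p(p')$ and $\|Z_\ell^p\|^2 = Z_\ell^p(p)$ with the fact that $Z_\ell^p$ depends only on $x \cdot p$ in $\R[d+1]$, the off-diagonal entries can be written as
\begin{equation*}
    (G_\ell)_{p, p'} = \frac{Z_\ell^p(p')}{\sqrt{Z_\ell^p(p)\, Z_\ell^{p'}(p')}} = \frac{\mathcal{P}_\ell^{(d)}(p \cdot p')}{\mathcal{P}_\ell^{(d)}(1)},
\end{equation*}
where $\mathcal{P}_\ell^{(d)}$ is the Gegenbauer polynomial of degree $\ell$ and index $(d-1)/2$ attached to $\S[d]$. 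The antipodal-free hypothesis on $Q$ forces $p \cdot p' \in (-1, 1)$ whenever $p \ne p'$ in $Q$, and classical asymptotics of Gegenbauer polynomials give $\mathcal{P}_\ell^{(d)}(t)/\mathcal{P}_\ell^{(d)}(1) \to 0$ as $\ell \to \infty$ for each fixed $t \in (-1, 1)$. Hence $G_\ell \to I$ in operator norm, $G_\ell$ is invertible for every sufficiently large $\ell$, and $\{z_\ell^p\}_{p \in Q}$ is linearly independent for such $\ell$.

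The main obstacle, such as it is, lies in the off-diagonal decay $\mathcal{P}_\ell^{(d)}(t)/\mathcal{P}_\ell^{(d)}(1) \to 0$ since this is the only input beyond the structural material already developed in the paper; for the dimensions $d = 2, 3$ of the main theorem it reduces to the elementary asymptotics of Legendre and normalized Chebyshev polynomials. Should one prefer to avoid explicit special function asymptotics, the same decay can be extracted from \cref{Thm: SDM of zonal harmonics} together with the mutual singularity provided by \cref{Prop: nuq properties}: insert a cutoff $\chi \in \CinfK(T^*\S[d])$ equal to $1$ near $S^*\S[d]$ and write $\ip{z_\ell^p}{z_\ell^{p'}} = \ip{z_\ell^p}{\Op[h_\ell]{\chi} z_\ell^{p'}} + o(1)$ with $h_\ell = \lambda_\ell^{-1}$; then control the remaining cross term by a Cauchy--Schwarz argument using the concentration of $z_\ell^p$ and $z_\ell^{p'}$ on the mutually singular supports of $\nu_p$ and $\nu_{p'}$. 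Either route closes the Gram-matrix step and completes the contrapositive.
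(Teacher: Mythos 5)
Your proof is correct and takes a mildly different route to the same conclusion. The $(1)\Rightarrow(2)$ direction is identical to the paper's (both invoke $z_\ell^{-q} = (-1)^\ell z_\ell^q$). For $(2)\Rightarrow(1)$ the paper also argues by contrapositive, but instead of your Gram-matrix framing it assumes a normalized vanishing linear combination $\sum_q \beta_{n,q} z_{\ell_n}^q = 0$ with $|\beta_n|=1$, expands $0 = \|\sum_q \beta_{n,q} z_{\ell_n}^q\|^2 = 1 + \sum_{p\neq q}\bar\beta_{n,p}\beta_{n,q}\ip{z_{\ell_n}^p}{z_{\ell_n}^q}$, and lets $n\to\infty$ to reach $0=1$. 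Logically this is equivalent to your $G_\ell\to I$ argument; both hinge entirely on the off-diagonal decay $\ip{z_\ell^p}{z_\ell^{p'}}\to 0$ for $p\neq p'$. Where you genuinely diverge from the paper is in how that decay is obtained. The paper cites \cref{Lemma: orthogonality for sequence from SDM}, fed by \cref{Thm: SDM of zonal harmonics} and \cref{Prop: nuq properties}: the zonal harmonics on distinct, non-antipodal points have mutually singular semiclassical defect measures, hence asymptotically orthogonal. Your primary route instead reads the decay directly off the Gegenbauer identity $\ip{z_\ell^p}{z_\ell^{p'}} = C_\ell^{((d-1)/2)}(p\cdot p')/C_\ell^{((d-1)/2)}(1)$ together with the pointwise asymptotics (which the paper does prove, in \cref{Prop: pointwise asymptotics of Gegen poly}, and uses for the same purpose in \cref{Lemma: properties of Zl matrix}). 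This is more elementary and self-contained — it bypasses the whole semiclassical measure apparatus — and it yields the quantitative rate $O(\ell^{-(d-1)/2})$, whereas the paper's route gives only $o(1)$. Your secondary route (the cutoff-plus-Cauchy–Schwarz sketch) is essentially a re-derivation of \cref{Lemma: orthogonality for sequence from SDM}, i.e. the paper's approach. One cautionary remark applying equally to both arguments: the off-diagonal decay, whether via Gegenbauer asymptotics (where the exponent $-\alpha = -(d-1)/2$ vanishes at $d=1$) or via mutual singularity of $\nu_p,\nu_q$ (which fails on $\S^1$ since every $\nu_q$ is the Liouville measure), requires $d\geq 2$; the statement as written with $d\in\N$ should be read with that restriction, which is harmless for the paper's purposes.
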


\begin{proof}
    If $Q$ contains a pair of antipodal points, say $q$ and $-q$, then $\{ z_{\ell}^{q}\}_{q \in Q}$ are linearly dependent because $z_{\ell}^{-q} = (-1)^{\ell} z_{\ell}^{q}$ for all $\ell \in \N$ (see Proposition \ref{Prop: properties zonal harm}(6)).

    Assume now that $Q$ does not comprise a pair of antipodal points. Fix an arbitrary increasing sequence $(\ell_n)_{n \in \N} \subseteq \N$, and let $(\beta_{n})_{n \in \N} \subseteq \C[N]$ such that
    \[
    z_{\ell_n}^{Q, \beta_n} \coloneqq \sum_{q \in Q} \beta_{n, q} z_{\ell_n}^{q} = 0 \qquad \forall \, n \in \N \ .
    \]
    Assume that $\beta_{n} \neq 0$ for all $n \in \N$. Due to the homogeneity of the above identity, we may further assume that $\abs{\beta_n}^2 = 1$ for all $n \in \N$. From this we read that
    \begin{equation} \label{Eq: Prop li zonal harm aux}
        0 = \norm*{z_{\ell_n}^{Q, \beta_n}}[L^2(\S[d])]^2 = \sum_{q \in Q} \bigg( \abs{\beta_{n, q}}^2 + \sum_{ \substack{p \in Q \\ p \neq q} } \conj{\beta_{n, p}} \beta_{n, q} \ip{z_{\ell_n}^{p} }{ z_{\ell_n}^{q} }[L^2(\S[d])] \bigg) \ .
    \end{equation}
    Thanks to Theorem \ref{Thm: SDM of zonal harmonics} and Proposition \ref{Prop: nuq properties}, we may apply Lemma \ref{Lemma: orthogonality for sequence from SDM} to the sequences $(z_{\ell_n}^{q})_{n \in \N}$ and $(z_{\ell_n}^{p})_{n \in \N}$ and thus obtain
    \[
    \lim_{n \to \infty} \ip{z_{\ell_n}^{p} }{ z_{\ell_n}^{q} }[L^2(\S[d])] = 0 \qquad \forall \, p \neq q \ .
    \]
    Taking limits $n \to \infty$ in \eqref{Eq: Prop li zonal harm aux}, we get $0 = 1$, a contradiction!
\end{proof}

\begin{Lemma} \label{Lemma: orthogonality for sequence from SDM}
    Let $(u_n)_n$ and $(v_n)_n$ be normalized sequences in $L^2(M)$. Assume that for a fixed sequence of semiclassical parameters $(h_n)_n$, $(u_n)_n$ and $(v_n)_n$ have unique semiclassical defect measure $\mu$ and $\nu$, respectively. Assume the following.
    \begin{enumerate}
        \item $\mu \perp \nu$.
        \item $\mu$ is a probability measure with compact support on $T^*M$.
    \end{enumerate}
    Then
    \[
    \lim_{n \to \infty} \ip{v_n}{u_n} = 0 \ .
    \]
\end{Lemma}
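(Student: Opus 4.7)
The plan is to exploit mutual singularity by building a single microlocal cutoff $\chi \in \CinfK(T^*M)$ that is essentially $1$ on a set carrying almost all the mass of $\mu$ while integrating to almost zero against $\nu$, then split $\ip{v_n}{u_n}$ via $\Op[h_n]{\chi}$ and pass to the limit using the two \SDMs{}.

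First, for a given $\eps > 0$, I would construct $\chi$. By mutual singularity there exists a Borel set $A \subseteq T^*M$ with $\mu(A^c) = 0$ and $\nu(A) = 0$. By inner regularity of the Radon measure $\mu$ and the compactness of $\supp \mu$, there is a compact $K \subseteq A \cap \supp \mu$ with $\mu(K) > 1 - \eps$. Since $\nu(K) = 0$, by outer regularity of $\nu$ there is an open set $V \supseteq K$ with $\nu(V) < \eps$ and $\overline{V}$ compact. A smooth Urysohn-type cutoff produces $\chi \in \CinfK(T^*M)$ with $0 \le \chi \le 1$, $\chi \equiv 1$ on $K$, and $\supp \chi \subseteq V$. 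Then
\[
    \int \chi \, \D{\mu} \ge \mu(K) > 1 - \eps, \qquad \int \chi^2 \, \D{\nu} \le \nu(V) < \eps.
\]

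Next, write $\ip{v_n}{u_n} = \ip{v_n}{\Op[h_n]{\chi} u_n} + \ip{v_n}{(\mathrm{Id} - \Op[h_n]{\chi}) u_n}$ and bound both pieces by Cauchy--Schwarz. For the first, using the semiclassical symbol calculus, $\Op[h_n]{\chi} \Op[h_n]{\chi}^*$ has principal symbol $\chi^2$, so
\[
    \norm{\Op[h_n]{\chi}^* v_n}[L^2]^2 = \ip{\Op[h_n]{\chi} \Op[h_n]{\chi}^* v_n}{v_n} \longrightarrow \int \chi^2 \, \D{\nu} < \eps
\]
by the definition of $\nu$ as the \SDM{} of $(v_n)_n$. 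For the second term, $\Op[h_n]{\chi}^*\Op[h_n]{\chi}$ also has principal symbol $\chi^2$, hence
\[
    \norm{(\mathrm{Id} - \Op[h_n]{\chi}) u_n}[L^2]^2 = 1 - 2\Re \ip{u_n}{\Op[h_n]{\chi} u_n} + \norm{\Op[h_n]{\chi} u_n}[L^2]^2 \longrightarrow 1 - 2\int \chi \, \D{\mu} + \int \chi^2 \, \D{\mu}.
\]
Since $0 \le \chi \le 1$, one has $\chi^2 \le \chi$, so the limit is bounded by $1 - \int \chi \, \D{\mu} < \eps$. Combining the two bounds yields $\limsup_{n \to \infty} \abs{\ip{v_n}{u_n}} \le 2\sqrt{\eps}$, and letting $\eps \to 0$ finishes the proof.

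The main subtlety is the construction of $\chi$: mutual singularity does not force $\supp \mu$ and $\supp \nu$ to be disjoint, so one cannot simply separate supports by a cutoff. One must go through the concentration sets $A$ and $A^c$ and use regularity of both measures, which is where compactness of $\supp \mu$ (or at least tightness of $\mu$) together with $\mu$ being a \emph{probability} measure becomes essential --- it guarantees that $\int \chi \, \D{\mu}$ can be made arbitrarily close to $1$ with a single compactly supported test function, so that $(\mathrm{Id} - \Op[h_n]{\chi})u_n$ is genuinely small in $L^2$. The rest is standard semiclassical calculus.
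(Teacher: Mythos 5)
The paper states this lemma without proof, so there is no internal argument to compare against; but your proof is correct and is, in fact, the natural way to establish it. The construction of the cutoff $\chi$ is the crux and you handle it properly: mutual singularity gives the concentration set $A$, inner regularity of the probability measure $\mu$ (using that $\supp\mu$ is compact, or merely that $\mu$ is tight) gives a compact $K \subseteq A$ with $\mu(K) > 1 - \eps$, and outer regularity of the finite Radon measure $\nu$ together with $\nu(K) = 0$ gives the relatively compact open neighborhood $V$ with $\nu(V) < \eps$. The semiclassical-calculus steps are also sound: with the Weyl quantization both $\Op[h_n]{\chi}\Op[h_n]{\chi}^*$ and $\Op[h_n]{\chi}^*\Op[h_n]{\chi}$ have principal symbol $\chi^2$ and are uniformly bounded, so the Wigner distributions pass to the limit against $\nu$ and $\mu$ respectively, and the hypothesis that $\mu$ is a \emph{probability} measure is exactly what makes $1 - 2\int\chi\,\D{\mu} + \int\chi^2\,\D{\mu} \le 1 - \int\chi\,\D{\mu} < \eps$ work. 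The final Cauchy--Schwarz split and the $\eps \to 0$ limit are clean. One minor cosmetic remark: since the paper uses Weyl quantization and $\chi$ is real, $\Op[h_n]{\chi}$ is self-adjoint up to $\BigO(h_n)$, so the explicit adjoints are not strictly needed --- but keeping them, as you do, is safer if the chart-based quantization fails to be exactly self-adjoint, and costs nothing.
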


%%%%%%%%%%%%%%%%%%%%%%%%%%%%%%%%%%%%%%%%%%%%%%%%%%%%%%%%%%%%%%%%%%%%%%%%%%%
\section{Quasimodes for new eigenfunctions}
\label{Sec: Quasimode Green function}

New eigenfunctions $u_h$ of a point-perturbation of $\Lap$ on $Q$ with positive eigenvalue are linear combinations of Green's functions of $\Lap$ on the points $q \in Q$ at energy $h^{-2}$: for a complex vector $\beta \in \C[N]$, $N \coloneqq \card Q$, such that $\sum_{q \in Q} \beta_{q} \delta_{q} = 0$ on $\ker(h^2 \Lap {}- 1)$, we denote
\[
G_{h}^{Q, \beta} \coloneqq \sum_{q \in Q} \beta_{q} G_{h}^{q} = \sum_{\ell \in \N} \frac{1}{\lambda_{\ell}^2 - h^{-2}} \sum_{q \in Q} \beta_q Z_{\ell}^{q} \ .
\]
If $Q$ is a singleton, $Q = \{q\}$, we simply write $g_{h}^{q} \coloneqq (\norm{G_{h}^{q}}[L^2(\S[d])])^{-1} G_{h}^{q}$.

It turns out that the set of \SDMs{} of the normalized sequence $(g_{h_n}^{Q, \beta_n})_{n \in \N}$,
\begin{equation} \label{Eq: def ghnQbeta}
    g_{h_n}^{Q, \beta_n} \coloneqq \frac{1}{ \norm*{G_{h_n}^{Q, \beta_n}}[L^2(\S[d])] } G_{h_n}^{Q, \beta_{n}} \ ,
\end{equation}
can be inferred from the case $\card Q = 2$; we defer this reasoning to Section \ref{Sec: SDM Green functions} and focus on finding quasimodes for $(g_{h_n}^{Q, \beta_n})_{n \in \N}$ in the case $Q = \{ q, -q\}$.

With the aim of computing these quasimodes, we introduce some parameters that will help us distinguish different high-energy regimes. For a decreasing sequence $(h_n)_{n \in \N} \subseteq (0, 1)$, let $\ell_n \in \N$ and $\sigma_n \in [0,1)$\footnote{One should think that $\sigma_n \in \R / \Z$, because if $\sigma_n$ were to be $1$, one would choose $\ell_n + 1$ and $\sigma_n = 0$ instead. Moreover, sequences $(h_n)_{n \in \N}$ such that $\sigma_n \to \sigma \in \{0, 1\}$ will exhibit the same behavior depending on the asymptotics of $\beta_{n,q} + (-1)^{\sigma} \rho \beta_{n,-q}$.} such that
\begin{equation} \label{Eq: def elln and sigman}
    h_n^{-2} = (\ell_n + \sigma_n) (\ell_n + \sigma_n + d-1) \ .
\end{equation}

The following asymptotics are fundamental.
\begin{Prop} \label{Prop: hn elln approximation}
    Under the notation introduced above, the following estimates hold for all $h_n \in (0,1)$,
    \begin{equation*}
        \abs*{(h_n)^{-1} - [\ell_n + \sigma_n + \tfrac{d - 1}{2}] } \leq h_n \ , \qquad \abs*{h_n - \frac{1}{\ell_n + \sigma_n + \frac{d-1}{2}} } \leq h_n^3 \ .
    \end{equation*}
\end{Prop}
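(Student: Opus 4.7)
The plan is a short algebraic computation that extracts both estimates from completing the square in \eqref{Eq: def elln and sigman} and applying the conjugate-root identity. Setting $\alpha_n := \ell_n + \sigma_n + \tfrac{d-1}{2}$, the defining relation rewrites, after completing the square, as
\[
    \alpha_n^2 \;=\; h_n^{-2} + C_d, \qquad C_d := \tfrac{(d-1)^2}{4},
\]
so that $\alpha_n = \sqrt{h_n^{-2} + C_d}$. In particular $\alpha_n \geq h_n^{-1}$, which will be used repeatedly.

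For the first estimate, I would apply the standard identity $\sqrt{A+B} - \sqrt{A} = B / (\sqrt{A+B} + \sqrt{A})$ to obtain
\[
    \alpha_n - h_n^{-1} \;=\; \frac{C_d}{\alpha_n + h_n^{-1}} \;\leq\; \frac{C_d}{2\,h_n^{-1}} \;=\; \tfrac{C_d}{2}\, h_n,
\]
where the inequality uses $\alpha_n \geq h_n^{-1}$ in the denominator. Since $C_d/2 = (d-1)^2/8$ equals $1/8$ for $d = 2$ and $1/2$ for $d = 3$, both are bounded by $1$, and this yields the desired bound $|h_n^{-1} - \alpha_n| \leq h_n$.

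For the second estimate, I would pass to reciprocals using
\[
    |h_n - \alpha_n^{-1}| \;=\; \frac{|\alpha_n - h_n^{-1}|}{\alpha_n\, h_n^{-1}} \;\leq\; \frac{|\alpha_n - h_n^{-1}|}{h_n^{-2}},
\]
again invoking $\alpha_n \geq h_n^{-1}$ so that the denominator is at least $h_n^{-2}$. Substituting the first estimate gives $|h_n - \alpha_n^{-1}| \leq \tfrac{C_d}{2}\, h_n^3 \leq h_n^3$ for $d \in \{2,3\}$, as required.

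There is no real obstacle in this argument: it consists of two one-line manipulations based on completing the square, the conjugate identity for square roots, and the monotonicity $\alpha_n \geq h_n^{-1}$. The only point of care is to check that the dimension-dependent prefactor $(d-1)^2/8$ is at most $1$, which is exactly what restricts the cleanest form of the estimates to the dimensions $d \in \{2,3\}$ of interest.
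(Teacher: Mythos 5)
Your argument is correct and follows the same route as the paper: both rely on the algebraic identity $\alpha_n^2 - h_n^{-2} = (d-1)^2/4$, the conjugate factorization $\alpha_n - h_n^{-1} = (\alpha_n^2 - h_n^{-2})/(\alpha_n + h_n^{-1})$, and the monotonicity $\alpha_n \geq h_n^{-1}$ to pass to reciprocals. The only cosmetic difference is that you keep $\alpha_n$ in the denominator to obtain the slightly sharper prefactor $(d-1)^2/8$, while the paper discards it and uses $(d-1)^2/4$; both are $\leq 1$ for $d \in \{2,3\}$, so the conclusion is unaffected.
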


\begin{proof}
    The first inequality follows from a direct computation:
    \[
    \abs*{(h_n)^{-1} - [\ell_n + \sigma_n + \tfrac{d - 1}{2}] } = \frac{\abs*{(h_n)^{-2} - [\ell_n + \sigma_n + \frac{d - 1}{2}]^2 } }{ (h_n)^{-1} + [\ell_n + \sigma_n + \tfrac{d - 1}{2}] } \leq \frac{ \frac{(d-1)^2}{4} }{ h_n^{-1} } \leq h_n \ .
    \]
    The second inequality is a consequence of the first. We see that
    \[
    \ell_n + \sigma_n + \tfrac{d-1}{2} \geq h_n^{-1} \ ,
    \]
    therefore
    \[
    \abs*{h_n - \frac{1}{\ell_n + \sigma_n + \frac{d-1}{2}} } = \frac{ \abs*{(h_n)^{-1} - [\ell_n + \sigma_n + \tfrac{d - 1}{2}] } }{ h_n^{-1} [\ell_n + \sigma_n + \tfrac{d - 1}{2}] } \leq \frac{h_n}{h_n^{-2}} = h_n^3 \ . \qedhere
    \]
\end{proof}

In this case that $Q = \{q, - q\}$, using that $Z_{\ell}^{-q} = (-1)^{\ell} Z_{\ell}^{q}$ for all $\ell \in \N$ (see Proposition \ref{Prop: properties zonal harm}), we have for any sequence $(\beta_{n})_{n \in \N} \subseteq \C[2]$,
\begin{equation} \label{Eq: sum of antipodal Green fun}
    G_{h_n}^{Q, \beta_n} = \beta_{n,q} G_{h_n}^{q} + \beta_{n, -q} G_{h_n}^{-q} = \sum_{\ell \in \N} \frac{\beta_{n,q} + (-1)^{\ell} \beta_{n,-q}}{ \lambda_{\ell}^2 - h_n^{-2} } Z_{\ell_n}^{q} \ .
\end{equation}
Since $G_{h}^{Q, z \beta} = z G_{h}^{Q, \beta}$ for all $z \in \C$, we may assume, without lost of generality, that the sequence $(\beta_{n})_{n \in \N}$ is contained in the complex sphere $\{ \beta \in \C[2] \colon \abs{\beta} = 1\}$.

Up to extraction of a subsequence, we may further assume that the sequences $(\sigma_n)_{n \in \N}$ and $(\beta_n)_{n \in \N}$ converge to some $\sigma \in [0,1]$ and $\beta \in \C[2]$ respectively, and that $\ell_n$ is even for all $n \in \N$, or that $\ell_n$ is odd for all $n \in \N$; in this case we set $\rho \coloneqq (-1)^{\ell_n}$. Four different scenarios are possible:
\begin{enumerate}
    \item $h_n^{-2} \notin \Spec(\Lap)$ for all $n \in \N$, $\sigma_n \to \sigma \in (0,1)$.
    \item $h_n^{-2} \notin \Spec(\Lap)$ for all $n \in \N$, $\sigma_n \to \sigma \in \{0, 1\}$ and $\lim_{n \to \infty} \frac{\abs{ \beta_{n,q} + (-1)^{\sigma} \rho \beta_{n, -q} } }{ \abs{\sigma - \sigma_n} } = \infty$.
    \item $h_n^{-2} \notin \Spec(\Lap)$ for all $n \in \N$, and $\sigma_n \to \sigma \in \{0,1\}$, but $\lim_{n \to \infty} \frac{\beta_{n,q} + (-1)^{\sigma} \rho \beta_{n,-q}}{ \sigma - \sigma_n } \in \C$.
    \item $h_n^{-2} = \lambda_{\ell_n}$ and $\beta_{n,-q} + \rho \beta_{n,q} = 0$ for all $n \in \N$.
\end{enumerate}
Scenarios 1 and 3 are sources of non-invariant phenomena. Under the hypothesis of scenario 2 one may show that (see Theorem \ref{Thm: gh quasimode scenario 2})
\begin{equation*}
    \norm*{g_{h_n}^{Q, \beta_n} - z_{\ell_n + \sigma}^{q}}[L^2(M)] \leq C \bigg[ h_n + \frac{ \abs{\sigma - \sigma_n} }{\abs{\beta_{n,q} + (-1)^{\sigma} \rho \beta_{n,-q}} } \bigg] \qquad \text{for large enough $n$,}
\end{equation*}
and thanks to Theorem \ref{Thm: SDM of zonal harmonics} one concludes
\[
\lim_{n \to \infty} \ip*{ g_{h_n}^{Q, \beta_n}}{\Op[h_n]{a} g_{h_n}^{Q, \beta_n}}[L^2(\S[d])] = \int_{T^*\S[d]} a \, \nu_q \qquad \forall \, a \in \CinfK(T^*\S[d]) \ .
\]
Meanwhile, a sequence that satisfy the hypothesis of scenario 4 can be approximated by a sequences that satisfy the hypothesis of scenario 3 with the added condition $\lim_{n \to \infty} \frac{ \beta_{n,q} + \beta_{n, -q} }{ \sigma - \sigma_n } = 0$, which cancels the non-invariant ingredient of scenario 3. Therefore, sequences of linear combinations of Green's functions satisfying either scenario 2 or scenario 4 do not give rise to non-invariant phenomena; consequently, we give detailed proofs of scenarios 1 and 3, but we just sketch proofs of scenarios 2 and 4.

We introduce a last bit of notation. For any set $A \subseteq \N$, let $\Pi_{A}$ be the spectral eigenprojector onto the subspace $\oplus_{\ell \in A} \ker(\Lap {}- \lambda_{\ell}^2)$, and for $\ell \in \N$, $r \geq 0$, let $I(\ell, r) \coloneqq [\ell - r, \ell + r] \cap \N$. For instance, for $h_n^{-2} \notin \Spec(\Lap)$ and $\Upsilon \in \N$,
\begin{equation} \label{Eq: example cutoff Upsilon GF}
    \Pi_{I(\ell_n, \Upsilon)} G_{h_n}^{Q, \beta} = \sum_{\abs{k} \leq \Upsilon} \frac{\beta_{n,q} + (-1)^{\ell_n  + k} \beta_{n, -q} }{ \lambda_{\ell_n + k}^2 - h_n^{-2} } Z_{\ell_n + k}^{q} \ .
\end{equation}

\begin{Theorem} \label{Thm: Gh tails L2 norm estimates}
    Let $(h_n)_{n \in \N} \subseteq (0,1)$ and $(\beta_{n})_{n \in \N} \subseteq \C[2]$, and for every $n \in \N$, let $\ell_n \in \N$ and $\sigma_n \in [0,1)$ as defined in \eqref{Eq: def elln and sigman}. Assume that $h_n \to 0^+$ and that $(\beta_{n})_{n \in \N}$ is bounded. There exists $C > 1$ such that for all integer $\Upsilon \geq 1$ and all $h_n^{-1} \geq \Upsilon$
    \begin{equation} \label{Eq: Gh tails L2 norm asymptotics}
        \norm*{G_{h_n}^{Q, \beta_n} - \Pi_{I(\ell_n, \Upsilon)} G_{h_n}^{Q, \beta_n}}[L^2(\S[d])]^{2} \leq C \frac{(h_n)^{3-d}}{\Upsilon} \ .
    \end{equation}
\end{Theorem}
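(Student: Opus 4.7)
The plan is to expand $G_{h_n}^{Q,\beta_n}$ in the zonal-harmonic basis provided by \eqref{Eq: sum of antipodal Green fun} and estimate the tail of the resulting Parseval series directly. Since the zonal harmonics $Z_\ell^q$ belong to distinct eigenspaces of $\Lap$, they are pairwise $L^2$-orthogonal across $\ell$, and I obtain
\[
\norm{G_{h_n}^{Q,\beta_n} - \Pi_{I(\ell_n,\Upsilon)} G_{h_n}^{Q,\beta_n}}[L^2(\S[d])]^2 = \sum_{|\ell - \ell_n| > \Upsilon} \frac{|\beta_{n,q} + (-1)^\ell \beta_{n,-q}|^2}{(\lambda_\ell^2 - h_n^{-2})^2} \norm{Z_\ell^q}[L^2(\S[d])]^2 \ .
\]
Since $(\beta_n)_{n}$ is bounded, the numerator is uniformly $\BigO(1)$, and the reproducing-kernel identity $\norm{Z_\ell^q}[L^2(\S[d])]^2 = Z_\ell^q(q)$ together with the classical dimension count for spherical harmonics of degree $\ell$ on $\S[d]$ gives $\norm{Z_\ell^q}[L^2(\S[d])]^2 \lesssim \ell^{d-1}$. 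The problem therefore reduces to bounding $\sum_{|\ell - \ell_n| > \Upsilon} \ell^{d-1} / (\lambda_\ell^2 - h_n^{-2})^2$ by the right-hand side of \eqref{Eq: Gh tails L2 norm asymptotics}.

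Next, I would exploit the explicit factorization of the spectral gap that comes from $\lambda_\ell^2 = \ell(\ell + d - 1)$ and $h_n^{-2} = (\ell_n + \sigma_n)(\ell_n + \sigma_n + d - 1)$ in \eqref{Eq: def elln and sigman}, namely
\[
\lambda_\ell^2 - h_n^{-2} = (\ell - \ell_n - \sigma_n)(\ell + \ell_n + \sigma_n + d - 1) \ .
\]
Setting $k = \ell - \ell_n$, the hypothesis $|k| > \Upsilon \geq 1$ forces $|k| \geq 2$ and hence $|k - \sigma_n| \geq |k|/2$; this is the key spectral-gap estimate that prevents the denominator from degenerating on the tail. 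I would then split the sum at $|k| \sim \ell_n$. In the \emph{near regime} $\Upsilon < |k| \leq \ell_n$ both factors of the denominator are comparable to $\ell_n$, so the contribution is at most $C\ell_n^{d-3} \sum_{|k| > \Upsilon} k^{-2} \lesssim \ell_n^{d-3}/\Upsilon$, which by \cref{Prop: hn elln approximation} is of order $h_n^{3-d}/\Upsilon$. In the \emph{far regime} $k > \ell_n$ (the only direction possible, since $\ell \geq 0$ forces $k \geq -\ell_n$), one has $\ell + \ell_n \asymp k$, and the contribution collapses to $\sum_{k > \ell_n} k^{d-5} \lesssim \ell_n^{d-4}$; combined with $\Upsilon \leq h_n^{-1} \asymp \ell_n$, this is also absorbed into $h_n^{3-d}/\Upsilon$.

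I do not foresee a serious obstacle: once the orthogonality is invoked and the gap is factored, the argument is elementary harmonic-sum bookkeeping. The only delicate points are the conversion from $\ell_n$-polynomial estimates to the sharp $h_n^{3-d}$ scaling of \eqref{Eq: Gh tails L2 norm asymptotics} via \cref{Prop: hn elln approximation}, and a brief remark that in the resonant case $h_n^{-2} = \lambda_{\ell_n}^2$ the compatibility condition \eqref{Eq: condition beta-h} forces $\beta_{n,q} + (-1)^{\ell_n} \beta_{n,-q} = 0$, so the $k = 0$ term in \eqref{Eq: sum of antipodal Green fun} is harmlessly zero and plays no role in the tail estimate outside $I(\ell_n, \Upsilon)$.
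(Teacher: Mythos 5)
Your proposal is correct and essentially the same argument as the paper's: Parseval tail expansion, boundedness of $(\beta_n)$, the spectral-gap factorization \eqref{Eq: first order approx of lambda(ln+k)}, and a split at $\abs{k}\sim\ell_n$, with the paper routing the per-term bound through \cref{Lemma: bound for quotient(ln+k) |k|>Upsilon} (whose $\delta$-dichotomy on $\sigma_n$ is only needed to also cover $\abs{k}=1$ in later scenarios) while you read it off directly from $\abs{k-\sigma_n}\geq\abs{k}/2$ on $\abs{k}\geq2$, which is cleaner here since the tail never touches $k=0,1$. One small wording slip worth fixing: in the near regime you write that both factors of the denominator are comparable to $\ell_n$, but what your own estimate $C\ell_n^{d-3}\sum_{\abs{k}>\Upsilon}k^{-2}$ actually uses, and needs, is that $\abs{k-\sigma_n}\asymp\abs{k}$ while only the second factor $2\ell_n+k+\sigma_n+d-1$ is $\asymp\ell_n$.
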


\begin{proof}
    Theorem \ref{Thm: Gh tails L2 norm estimates} may be inferred from \cite[Theorem 3.3]{Verdasco2026}, but we present a proof adapted to this setting $M = \S[d]$ for the sake of completeness.
    
    Throughout the proof, $C$ will denote a large positive constant whose value may change from line to line, but at all times independent of $h_n$ and $\Upsilon$.

    Recall that $\norm{Z_{\ell}^{q}}[L^2(\S[d])]^2 = \frac{m_{\ell}}{\vol(\S[d])}$ for $m_{\ell} \coloneqq \dim(\ker(\Lap {}- \lambda_{\ell}^2))$, hence one has (see \eqref{Eq: example cutoff Upsilon GF})
    \begin{equation} \label{Eq: aux norm tails Ghn}
        \norm*{G_{h_n}^{Q, \beta_n} - \Pi_{I(\ell_n, \Upsilon)} G_{h_n}^{Q, \beta_n}}[L^2(\S[d])]^{2} = \bigg[ \sum_{\abs{k} = \Upsilon + 1}^{\ell_n} + \sum_{k = \ell_n + 1}^{\infty} \bigg] \frac{\abs*{\beta_{n, q} + (-1)^{\ell_n + k} \beta_{n, -q}}^2}{\vol(\S[d])} \frac{ m_{\ell_n + k} }{ (\lambda_{\ell_n + k}^2 - h_n^{-2})^2} \ .
    \end{equation}
    First, observe that there exists $C > 1$ such that
    \[
    \frac{\abs*{\beta_{n, q} + (-1)^{\ell_n + k} \beta_{n, -q}}^2}{\vol(\S[d])} \leq C \qquad \forall \, n \in \N
    \]
    because $(\beta_{n})_{n \in \N}$ is bounded. The following lemma, whose proof is is presented below, helps us deal with the terms $\frac{ m_{\ell_n + k} }{ (\lambda_{\ell_n + k}^2 - h_n^{-2})^2}$.

    \begin{Lemma} \label{Lemma: bound for quotient(ln+k) |k|>Upsilon}
        There exists $C > 1$ such that for all $h_n \in (0,1)$, the following holds:
        \begin{enumerate}
            \item If there exists $\delta > 0$ such that $\sigma_n \in [0, 1 - \delta)$ for all $n \in \N$, then, for all $1 \leq \abs{k} \leq \ell_n$,
            \begin{equation*}
                \frac{ m_{\ell_n + k} }{ (\lambda_{\ell_n + k}^2 - h_n^{-2})^2 } \leq C (h_n)^{3-d} \frac{1}{\abs{k}^2} \ .
            \end{equation*}
            \item If there exists $\delta > 0$ such that $\sigma_n \in (\delta, 1)$ for all $n \in \N$, then, for all $1 \leq \abs{k - 1} \leq \ell_n$,
            \begin{equation*}
                \frac{ m_{\ell_n + k} }{ (\lambda_{\ell_n + k}^2 - h_n^{-2})^2 } \leq C (h_n)^{3-d} \frac{1}{\abs{k - 1}^2} \ .
            \end{equation*}
            \item If $k \geq \ell_n + 1$, then
            \begin{equation*}
                \frac{ m_{\ell_n + k} }{ (\lambda_{\ell_n + k}^2 - h_n^{-2})^2 } \leq C \frac{1}{ k^{5-d} } \ .
            \end{equation*}
        \end{enumerate}
    \end{Lemma}

    Assume that there exists $\delta > 0$ such that $\sigma_n \in [0, 1 -\delta)$ for all $n \in \N$. We may apply Lemma \ref{Lemma: bound for quotient(ln+k) |k|>Upsilon} on \eqref{Eq: aux norm tails Ghn} and get for $h_n^{-1} \geq \Upsilon \geq 1$, 
    \begin{align*}
        \norm*{G_{h_n}^{q} - \Pi_{I(\ell_n, \Upsilon)} G_{h_n}^{q}}[L^2(\S[d])]^{2} & \leq C \bigg[ (h_n)^{3-d} \sum_{\abs{k} = \Upsilon + 1}^{\ell_n} \frac{1}{k^2} + \sum_{k = \ell_n + 1}^{\infty} \frac{1}{k^{5-d}} \bigg] \\
        & \leq C \Big[ 2 (h_n)^{3-d} \frac{1}{\Upsilon} + (h_n)^{4-d} \Big] \leq \frac{C}{\Upsilon} (h_n)^{3-d} \ ,
    \end{align*}
    where we used that for all $R \in \N$ and $s > 1$,
    \begin{equation} \label{Eq: upper bound of squared harmonic tail}
        \sum_{k = R + 1}^{\infty} \frac{1}{k^s} \leq \int_{R}^{\infty} \frac{1}{t^s} \D{t} = \frac{s-1}{R} \ .
    \end{equation}
    In a similar fashion, if $\sigma_n \in (\delta, 1)$ for some $\delta > 0$, one gets the same estimate for $h_n^{-1} \geq \Upsilon \geq 1$.
\end{proof}

\begin{proof}[Proof of \texorpdfstring{Lemma \ref{Lemma: bound for quotient(ln+k) |k|>Upsilon}}{Lemma bound quotient}]
    First, since
    \begin{equation} \label{Eq: multiplicities ml}
    m_{\ell} = \begin{cases} 2\ell + 1 & \quad \text{if $d = 2$} \\ (\ell+1)^2 & \quad \text{if $d = 3$} \end{cases} \ ,
    \end{equation}
    we have that for $\abs{k} \leq \ell_n$,
    \[
    m_{\ell_n + k} \leq m_{2\ell_n} \leq C h_n^{1-d} \ .
    \]

    On the other hand, using that
    \[
    x(x + r) - y(y + r) = (x - y) (x + y + r) \ ,
    \]
    with
    \begin{align*}
        \lambda_{\ell_n + k}^2 & = (\ell_n + k) (\ell_n + k + d - 1) \\
        h_n^{-2} & = (\ell_n + \sigma_n) (\ell_n + \sigma_n + d - 1) \ ,
    \end{align*}
    we get
    \begin{equation} \label{Eq: first order approx of lambda(ln+k)}
        \lambda_{\ell_n + k}^2 - h_n^{-2} = (k - \sigma_n) [ 2(\ell_n + \tfrac{d - 1}{2}) + k + \sigma_n ] \ ,
    \end{equation}
    Therefore, if there exists $\delta > 0$ such that $\sigma_n \in [0, 1 - \delta)$ for all $n \in \N$, then
    \[
    \abs*{ \lambda_{\ell_n + k}^2 - h_n^{-2} } \geq \frac{1}{\delta} \abs{k} h_n^{-1} \qquad \forall \, 1 \leq \abs{k} \leq \ell_n \ .
    \]
    If there exists $\delta > 0$ such that $\sigma_n \in (\delta, 1)$ for all $n \in \N$, then
    \[
    \abs*{ \lambda_{\ell_n + k}^2 - h_n^{-2} } \geq \frac{1}{\delta} \abs{k - 1} h_n^{-1} \qquad \forall \, 1 \leq \abs{k - 1} \leq \ell_n \ .
    \]
    Combining the inequalities for $m_{\ell_n + k}$ and $\abs{\lambda_{\ell_n + k}^2 - h_n^{-2}}$ we get
    \[
    \frac{ m_{\ell_n + k} }{ (\lambda_{\ell_n + k}^2 - h_n^{-2})^2 } \leq C \frac{h_n^{1-d}}{\abs{k}^2 h_n^{-2} } \leq C (h_n)^{3-d} \frac{1}{\abs{k}^2} \qquad \forall \, 1 \leq \abs{k} \leq \ell_n \ , \qquad \text{if $\sigma_n \in [0, 1 - \delta)$,}
    \]
    and 
    \[
    \frac{ m_{\ell_n + k} }{ (\lambda_{\ell_n + k}^2 - h_n^{-2})^2} \leq C (h_n)^{3-d} \frac{1}{\abs{k - 1}^2} \qquad \forall \, 1 \leq \abs{k - 1} \leq \ell_n \ , \qquad \text{if $\sigma_n \in (\delta, 1)$.}
    \]

    On the other, if $k \geq \ell_n + 1$, from \eqref{Eq: first order approx of lambda(ln+k)} we have
    \[
    \abs*{ \lambda_{\ell_n + k}^2 - h_n^{-2} } \geq \frac{1}{C} k^2 \ ,
    \]
    and from \eqref{Eq: multiplicities ml},
    \[
    m_{\ell_n + k} \leq m_{2k} \leq C k^{1-d} \ ,
    \]
    therefore
    \[
    \frac{ m_{\ell_n + k} }{ (\lambda_{\ell_n + k}^2 - h_n^{-2})^2} \leq C \frac{1}{k^{5-d} } \qquad \forall \, k \geq \ell_n + 1 \ . \qedhere
    \]
\end{proof}

\subsection{Scenario 1}

\begin{Theorem} \label{Thm: Gh L2 norm estimates scenario 1}
    Let $(h_n)_{n \in \N} \subseteq (0,1)$ and $(\beta_{n})_{n \in \N} \subseteq \C[2]$, and for every $n \in \N$, let $\ell_n \in \N$ and $\sigma_n \in [0,1)$ as defined in \eqref{Eq: def elln and sigman}. Assume the following:
    \begin{enumerate}
        \item $h_n \to 0^+$ and $h_n^{-2} \notin \Spec(\Lap)$ for all $n \in \N$, thus $\sigma_n \in (0,1)$ for all $n \in \N$.
        \item $\beta_n \to \beta \in \C[2]$.
        \item $\ell_n$ is even for all $n \in \N$, or $\ell_n$ is odd for all $n \in \N$. Set $\rho \coloneqq (-1)^{\ell_n}$.
        \item $\sigma_n \to \sigma \in (0,1)$ as $n \to \infty$.
    \end{enumerate}
    Set
    \begin{equation} \label{Eq: Csigmabeta definition scenario 1}
        C_{\sigma, \beta, \rho} \coloneqq \bigg(\sum_{k \in \Z} \frac{ \abs{\beta_{q} + (-1)^{k} \rho \beta_{-q} }^2 }{ (k - \sigma)^2 } \bigg)^{\frac{1}{2}} \ .
    \end{equation}
    There exists $C > 1$ such that for every $\Upsilon \in \N$, there exists $\nu \in \N$ such that for all $n \geq \nu$,
    \begin{equation} \label{Eq: norm mainterm scenario 1}
        \abs*{ \norm*{ \Pi_{I(\ell_n, \Upsilon)} G_{h_n}^{Q, \beta_n} }[L^2(M)]^2 - \frac{(C_{\sigma, \beta, \rho})^2}{2(d-1) \vol(\S[d])}(h_n)^{3-d} } \leq C (h_n)^{3-d} \Big[ \Upsilon h_n + \abs{\sigma_n - \sigma} + \abs{\beta_n - \beta} + \Upsilon^{-1} \Big] \ .
    \end{equation}
\end{Theorem}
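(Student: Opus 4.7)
The plan is to use Pythagoras to expand the squared $L^2$ norm as an explicit finite sum of scalars, substitute the asymptotic formulas for $\lambda_{\ell_n+k}^2 - h_n^{-2}$, $m_{\ell_n+k}$, and $h_n$ already in hand, and then track four sources of error that match the four terms inside the square bracket of \eqref{Eq: norm mainterm scenario 1}: a Taylor expansion error of order $(1+\abs{k})h_n$ in the denominator, the tail $\abs{k}>\Upsilon$, and the drifts $\abs{\sigma_n-\sigma}$ and $\abs{\beta_n-\beta}$.

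The starting point is the Pythagorean identity
\begin{equation*}
\norm*{\Pi_{I(\ell_n,\Upsilon)} G_{h_n}^{Q,\beta_n}}[L^2(\S[d])]^{2} = \frac{1}{\vol(\S[d])} \sum_{\abs{k}\leq\Upsilon} \frac{\abs{\beta_{n,q} + (-1)^{\ell_n+k}\beta_{n,-q}}^{2}}{(\lambda_{\ell_n+k}^2 - h_n^{-2})^{2}}\, m_{\ell_n+k},
\end{equation*}
which follows from \eqref{Eq: example cutoff Upsilon GF}, the $L^2$-orthogonality of eigenspaces of $\Lap$ at distinct eigenvalues, and $\norm{Z_\ell^q}[L^2(\S[d])]^{2} = m_\ell/\vol(\S[d])$. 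Inserting the factorization \eqref{Eq: first order approx of lambda(ln+k)} of the denominator, the explicit multiplicities \eqref{Eq: multiplicities ml}, the expansion $h_n^{-1} = \ell_n+\sigma_n+(d-1)/2 + \BigO(h_n)$ from \cref{Prop: hn elln approximation}, and the coincidence $(d-1)! = d-1$ valid for $d\in\{2,3\}$, each summand rewrites as
\begin{equation*}
\frac{h_n^{3-d}}{2(d-1)\vol(\S[d])} \cdot \frac{\abs{\beta_{n,q} + (-1)^{\ell_n+k}\beta_{n,-q}}^{2}}{(k-\sigma_n)^{2}}\, \bigl(1 + \BigO((1+\abs{k})h_n)\bigr),
\end{equation*}
uniformly in $\abs{k}\leq\Upsilon\leq h_n^{-1}$.

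Since $\sigma\in(0,1)$ stays bounded away from $\{0,1\}$, for $n$ large the denominators $(k-\sigma_n)^2$ and $(k-\sigma)^2$ are comparable uniformly in $k\in\Z$, and the parity assumption gives $(-1)^{\ell_n+k}=\rho(-1)^k$. Substituting $(\sigma,\beta)$ for $(\sigma_n,\beta_n)$ in each numerator costs a per-term relative error of $\BigO(\abs{\sigma_n-\sigma}+\abs{\beta_n-\beta})$; summing against the convergent series $\sum_{k\in\Z}(k-\sigma)^{-2}<\infty$ and then extending the truncated sum to the full $(C_{\sigma,\beta,\rho})^{2}$ via the tail bound $\sum_{\abs{k}>\Upsilon}(k-\sigma)^{-2}=\BigO(\Upsilon^{-1})$ from \eqref{Eq: upper bound of squared harmonic tail} produces \eqref{Eq: norm mainterm scenario 1}. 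The delicate point is keeping the multiplicative correction in the expansion of $[2(\ell_n+(d-1)/2)+k+\sigma_n]^{2}$ proportional to $\abs{k}$ (through the factor $\ell_n^{-1}\sim h_n$) rather than merely $\BigO(1)$: this ensures that the per-term Taylor error sums against $\abs{k}\leq\Upsilon$ to $\BigO(h_n\log\Upsilon)\leq\BigO(\Upsilon h_n)$, matching the first term inside the square bracket of \eqref{Eq: norm mainterm scenario 1}.
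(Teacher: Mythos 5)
Your proof is correct and follows essentially the same route as the paper's: both begin with the Pythagorean expansion of the truncated norm, both use the factorization $\lambda_{\ell_n+k}^2 - h_n^{-2} = (k-\sigma_n)[2(\ell_n+\tfrac{d-1}{2})+k+\sigma_n]$ together with \cref{Prop: hn elln approximation} to rewrite each summand as $\frac{h_n^{3-d}}{2(d-1)\vol(\S[d])}\frac{\abs{\beta_{n,q}+(-1)^k\rho\beta_{n,-q}}^2}{(k-\sigma)^2}$ up to a controlled relative error, and both then split the total discrepancy into contributions from the Taylor error, the drifts $\abs{\sigma_n-\sigma}$ and $\abs{\beta_n-\beta}$, and the tail $\abs{k}>\Upsilon$. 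The one genuine difference is that you track the per-term relative error as $\BigO((1+\abs{k})h_n)$ rather than the cruder $\BigO(\Upsilon h_n)$ used in the paper's \cref{Lemma: asymp exp of quotient(ln+k) |k|<Upsilon}; summing your tighter bound against $(k-\sigma)^{-2}$ gives $\BigO(h_n\log\Upsilon)$, which is a real (if unnecessary for the stated estimate) improvement over the paper's $\BigO(\Upsilon h_n)$. The observation $(d-1)!=d-1$ for $d\in\{2,3\}$ is correct but tangential, since the paper just plugs in the explicit multiplicities \eqref{Eq: multiplicities ml}. One small point worth making explicit: to invert the $(1+\BigO((1+\abs{k})h_n))$ factor you need $\Upsilon h_n$ bounded below $1$, which is exactly the role of the index $\nu$ in the statement; your phrase "uniformly in $\abs{k}\le\Upsilon\le h_n^{-1}$" gestures at this but should be spelled out as the choice of $\nu$ depending on $\Upsilon$.
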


\begin{proof}
    Throughout the proof, $C$ will denote a large positive constant whose value may change from line to line and that may depend on $\sigma$ and $\sup_{n \in \N} \abs{\beta_{n}}$, but at all times is independent of $h_n$, $\sigma_n$, and $\Upsilon$. In the same way, $\nu$ will denote a large index whose value may change from line to line and may depend on $\Upsilon$ and how fast does $h_n \to 0$.

    In order to find the main term of the asymptotics of
    \begin{equation} \label{Eq: scenario 1 aux 3}
    \norm*{\Pi_{I(\ell_n, \Upsilon)} G_{h_n}^{Q, \beta_n} }[L^2(\S[d])]^{2} = \sum_{\abs{k} \leq \Upsilon} \abs{\beta_{n,q} + (-1)^{\ell_n + k} \beta_{n, -q}}^2 \frac{ \norm*{ Z_{\ell_n + k}^{q} }[L^2(\S[d])]^2 }{(\lambda_{\ell_n + k}^2 - h_n^{-2})^2} \ ,
    \end{equation}
    it will be useful to know good asymptotics for the terms involved in the previous sum. Recall that $\norm{Z_{\ell}^{q}}[L^2(\S[d])]^2 = \frac{m_{\ell}}{ \vol(\S[d]) }$.

    \begin{Lemma} \label{Lemma: asymp exp of lambda(ln+k)}
        For every $\Upsilon \in \N$, every $\abs{k} \leq \Upsilon$, and every $h_n \in (0,1)$,
        \begin{equation} \label{Eq: asymp exp of lambda(ln+k)}
            \abs*{ \lambda_{\ell_n + k}^2 - h_n^{-2} - 2(k - \sigma_n) h_n^{-1}} \leq 6 \abs{k - \sigma_n} \Upsilon \ .
        \end{equation}
        If one further assumes that $\sigma_n \to \sigma \in [0,1]$, then for all $\abs{k} \leq \Upsilon \leq h_n^{-1}$,
        \begin{equation} \label{Eq: asymp exp of lambda(ln+k) conv sigman}
            \abs*{ \lambda_{\ell_n + k}^2 - h_n^{-2} - 2(k - \sigma) h_n^{-1}} \leq 8 \Big[ \abs{k - \sigma} \Upsilon + \abs{\sigma_n - \sigma} h_n^{-1} \Big] \ .
        \end{equation}
    \end{Lemma}
    \begin{proof}
        From \eqref{Eq: first order approx of lambda(ln+k)},
        \begin{equation*}
            \lambda_{\ell_n + k}^2 - h_n^{-2} = (k - \sigma_n) [ 2(\ell_n + \sigma_n + \tfrac{d-1}{2}) + (k - \sigma_n)] \ ,
        \end{equation*}
        hence, we may subtract $2(k - \sigma_n) h_n^{-1}$ on both sides to obtain
        \begin{equation} \label{Eq: second order approx of lambda(ln+k)}
            \lambda_{\ell_n + k}^2 - h_n^{-2} - 2(k - \sigma_n) h_n^{-1} = (k - \sigma_n) \Big[ 2 (\ell_n + \sigma_n + \tfrac{d-1}{2} - h_n^{-1}) + (k - \sigma_n) \Big] \ .
        \end{equation}
        Applying triangular inequality and then Proposition \ref{Prop: hn elln approximation}, we get
        \[
        \abs*{ \lambda_{\ell_n + k}^2 - h_n^{-2} - 2(k - \sigma_n) h_n^{-1}} \leq 2 \abs{k - \sigma_n} \Big[ h_n + (\Upsilon + 1) \Big] \leq 6 \abs{k - \sigma_n} \Upsilon \ .
        \]

        If we assume that $\sigma_n \to \sigma \in [0,1]$, adding and subtracting $2\sigma h_n^{-1}$ on the left hand side of \eqref{Eq: second order approx of lambda(ln+k)} and then applying triangular inequality, we get
        \[
        \abs*{ \lambda_{\ell_n + k}^2 - h_n^{-2} - 2(k - \sigma) h_n^{-1} } \leq 6 \abs{k - \sigma} \Upsilon + 6 \Upsilon \abs{\sigma - \sigma_n} + 2 h_n^{-1} \abs{\sigma - \sigma_n} \ . \qedhere
        \]
    \end{proof}

    \begin{Lemma} \label{Lemma: asymp exp of m(ln+k)}
        There exists $C > 1$ such that for all $\Upsilon \in \N$, all $\abs{k} \leq \Upsilon$, and all $h_n^{-1} \geq \Upsilon$,
        \begin{equation*}
            \abs*{ m_{\ell_n + k} - \tfrac{2}{d - 1} (h_n)^{1 - d}} \leq C \Upsilon h_n^{2-d}  \ .
        \end{equation*}
    \end{Lemma}
    \begin{proof}
        If $d = 2$, we quickly see that
        \[
        m_{\ell_n + k} = 2(\ell_n + k) + 1 = 2(\ell_n + \sigma_n + \tfrac{1}{2}) + 2(k - \sigma_n) \ .
        \]
        Proposition \ref{Prop: hn elln approximation} imply the result if $d = 2$.

        If $d = 3$, in a similar fashion we have
        \begin{align*}
            m_{\ell_n + k} & = (\ell_n + k + 1)^2 = \big( (\ell_n + \sigma_n + 1) + (k - \sigma_n) \big)^2 \\
            & = (\ell_n + \sigma_n + 1)^2 + 2(k - \sigma_n) (\ell_n + \sigma_n + 1) + (k - \sigma_n)^2 \ .
        \end{align*}
        Once again, Proposition \ref{Prop: hn elln approximation} imply the result if $d = 3$ and $h_n^{-1} \geq \Upsilon$.
    \end{proof}

    \begin{Lemma} \label{Lemma: asymp exp of quotient(ln+k) |k|<Upsilon}
        There exists $C > 1$ such that for all $\Upsilon \in \N$, there exists $\nu \in \N$ such that for all $\abs{k} \leq \Upsilon$ and all $n \geq \nu$,
        \begin{equation} \label{Eq: asymp exp of quotient(ln+k)}
            \abs*{ \frac{ \norm*{ Z_{\ell_n + k}^{q} }[L^2(\S[d])]^2 }{(\lambda_{\ell_n + k}^2 - h_n^{-2})^2} - \frac{(h_n)^{3-d}}{2(d-1) \vol(\S[d])} \frac{1}{(k - \sigma_n)^2} } \leq C \frac{(h_n)^{3-d} }{\abs{k - \sigma_n}^2 } \Big[ \Upsilon h_n \Big] \ .
        \end{equation}
        If one further assumes that $\sigma_n \to \sigma \in [0,1]$, there exists $C > 1$ such that for all $\Upsilon \in \N$, there exists $\nu \in \N$ such that for all $\abs{k} \leq \Upsilon$, $k \neq \sigma$, and $n \geq \nu$,
        \begin{equation} \label{Eq: asymp exp of quotient(ln+k) conv sigman}
            \abs*{ \frac{ \norm*{ Z_{\ell_n + k}^{q} }[L^2(\S[d])]^2 }{(\lambda_{\ell_n + k}^2 - h_n^{-2})^2} - \frac{(h_n)^{3-d}}{2(d-1) \vol(\S[d])} \frac{1}{(k - \sigma)^2} } \leq C \frac{(h_n)^{3-d} }{\abs{k - \sigma}^2 } \bigg[ \Upsilon h_n + \abs{\sigma_n - \sigma} \Big] \ .
        \end{equation}
    \end{Lemma}
    \begin{proof}
        Adding and subtracting
        \[
        \frac{h_n^2}{4(k - \sigma_n)^2} \norm*{Z_{\ell_n + k}^{q} }[L^2(\S[d])]^2 \ ,
        \]
        then triangular inequality implies
        \begin{equation} \label{Eq: first approximation to quotient(ln+k)}
            \begin{aligned}
                \abs*{ \frac{ \norm*{ Z_{\ell_n + k}^{q} }[L^2(\S[d])]^2 }{(\lambda_{\ell_n + k}^2 - h_n^{-2})^2} - \frac{(h_n)^{3-d}}{2(d-1) \vol(\S[d])} \frac{1}{(k - \sigma_n)^2} } & \leq \norm*{Z_{\ell_n + k}^{q} }[L^2(\S[d])]^2 \abs*{ \frac{1}{(\lambda_{\ell_n + k}^2 - h_n^{-2})^2} - \frac{h_n^2}{4(k - \sigma_n)^2} } \\
                & \qquad + \frac{h_n^2}{4 \vol(\S[d]) \abs{k - \sigma_n}^2} \abs*{ m_{\ell_n + k} - \tfrac{2}{d-1} (h_n)^{1-d} } \ .
            \end{aligned}
        \end{equation}
        Thanks to Lemma \ref{Lemma: asymp exp of lambda(ln+k)}, we know that for all $\abs{k} \leq \Upsilon \leq h_n^{-1}$,
        \begin{equation*}
            \abs*{ \lambda_{\ell_n + k}^2 - h_n^{-2} - 2(k - \sigma_n) h_n^{-1} } \leq 6 \abs{k - \sigma_n} \Upsilon \ ,
        \end{equation*}
        and in particular, taking $\nu \in \N$ such that $6 \Upsilon \leq h_{\nu}^{-1}$, since $h_n \to 0^+$, one has that for all $n \geq \nu$,
        \[
        \abs*{ \lambda_{\ell_n + k}^2 - h_n^{-2} } \geq \abs{k - \sigma_n} h_n^{-1} \ , \qquad \text{and} \qquad \abs*{ \lambda_{\ell_n + k}^2 - h_n^{-2} } \leq 3 \abs{k - \sigma_n} h_n^{-1} \ .
        \]
        Using that
        \begin{equation} \label{Eq: aux 1/x2 - 1/y2}
            \abs[\bigg]{ \frac{1}{x^2} - \frac{1}{y^2} } \leq \frac{ \abs{x} + \abs{y}}{\abs{x}^2 \, \abs{y}^2 } \abs{x - y}
        \end{equation}
        with $x = \lambda_{\ell_n + k}^2 - h_n^{-2}$ and $y = 2h^{-1} (k - \sigma_n)$, we get that for all $\abs{k} \leq \Upsilon$ and all $n \geq \nu$,
        \begin{equation*}
            \abs*{ \frac{1}{(\lambda_{\ell_n + k}^2 - h_n^{-2})^2} - \frac{h_n^2}{4(k - \sigma_n)^2} } \leq \frac{5 \abs{k - \sigma_n} h_n^{-1} }{ 4 \abs{k - \sigma_n}^4 h_n^{-4} } \ 6 \abs{k - \sigma_n} \Upsilon \leq \tfrac{15}{2} \frac{\Upsilon h_n^3}{\abs{k - \sigma_n}^2 } \ .
        \end{equation*}
        On the other hand, Lemma \ref{Lemma: asymp exp of m(ln+k)} says that for all $\abs{k} \leq \Upsilon$ and $h_n$ small enough,
        \[
        \abs*{ m_{\ell_n + k} - \tfrac{2}{d-1} (h_n)^{1-d} } \leq C \Upsilon (h_n)^{2-d} \ .
        \]
        Coming back to \eqref{Eq: first approximation to quotient(ln+k)}, we get for all $\abs{k} \leq \Upsilon$ and $n \geq \nu$
        \begin{align*}
            \abs*{ \frac{ \norm{ Z_{\ell_n + k}^{q} }[L^2(\S[d])]^2 }{(\lambda_{\ell_n + k}^2 - h_n^{-2})^2} - \frac{(h_n)^{3-d}}{2(d-1) \vol(\S[d])} \frac{1}{(k - \sigma_n)^2} } & \leq C(h_n)^{1-d} \frac{\Upsilon h_n^3}{\abs{k - \sigma_n}^2 } + C \frac{h_n^2}{\abs{k - \sigma_n}^2} \Upsilon (h_n)^{2-d} \\
            & = \frac{C (h_n)^{3-d}}{\abs{k - \sigma_n}^2} \Big[ h_n \Upsilon \Big] \ .
        \end{align*}
        
        If one further assumes that $\sigma_n \to \sigma \in [0,1]$, the same strategy leads to the following estimate for all $\abs{k} \leq \Upsilon$, $k \neq \sigma$, and all $n \geq \nu$,
        \[
        \abs*{ \frac{1}{(\lambda_{\ell_n + k}^2 - h_n^{-2})^2} - \frac{h_n^2}{4(k - \sigma_n)^2} } \leq C \bigg[ \frac{\Upsilon h_n^3}{ \abs{k - \sigma}^2} + \frac{h_n^2}{ \abs{k - \sigma}^3 } \abs{\sigma_n - \sigma} \bigg] \ ,
        \]
        where $\nu \in \N$ is such that
        \[
        8\bigg[ \Upsilon h_{n} + \frac{\abs{\sigma_n - \sigma}}{\abs{k - \sigma}} \bigg] \leq 1 \qquad \text{for $k \in \{0, 1\} \setminus \{ \sigma \}$, and all $n \geq \nu$.}
        \]
        We may then apply this estimate to \eqref{Eq: first approximation to quotient(ln+k)} obtaining
        \begin{align*}
            & \abs*{ \frac{ \norm{ Z_{\ell_n + k}^{q} }[L^2(\S[d])]^2 }{(\lambda_{\ell_n + k}^2 - h_n^{-2})^2} - \frac{(h_n)^{3-d}}{2(d-1) \vol(\S[d])} \frac{1}{(k - \sigma)^2} } \leq \\
            & \hspace{100pt} \leq C(h_n)^{1-d} \bigg[ \frac{\Upsilon h_n^3}{\abs{k - \sigma}^2 } + \frac{h_n^2}{\abs{k - \sigma}^3} \abs{\sigma_n - \sigma} \bigg] + C \frac{h_n^2}{\abs{k - \sigma}^2} \Upsilon (h_n)^{2-d} \\
            & \hspace{100pt} \leq \frac{C (h_n)^{3-d}}{\abs{k - \sigma}^2} \Big[ h_n \Upsilon + \abs{\sigma_n - \sigma} \Big] \ ,
        \end{align*}
        for all $\abs{k} \leq \Upsilon$, $k \neq \sigma$, and all $n \geq \nu$, for some $C$ that depends on $\sigma \in (0,1)$.
    \end{proof}

    A combination of \eqref{Eq: scenario 1 aux 3},
    \[
    \text{adding and subtracting} \quad \frac{(h_n)^{3-d}}{2(d-1) \vol(\S[d])} \sum_{\abs{k} \leq \Upsilon} \frac{ \abs{\beta_{n,q} + (-1)^{k} \rho \beta_{n, -q}}^2 }{ (k - \sigma)^2 } \ ,
    \]
    and triangular inequality, leads to
    \begin{align*}
        & \abs*{ \norm*{\Pi_{I(\ell_n, \Upsilon)} G_{h_n}^{Q, \beta_n} }[L^2(\S[d])]^{2} - \frac{ (C_{\sigma, \beta, \rho})^2 }{ 2(d-1) \vol(\S[d]) } (h_n)^{3-d} } \leq \\
        & \hspace{70pt} \leq \sum_{\abs{k} \leq \Upsilon} \abs{\beta_{n, q} + (-1)^{k} \rho \beta_{n, -q}}^2 \abs*{ \frac{ \norm{ Z_{\ell_n + k}^{q} }[L^2(\S[d])]^2 }{(\lambda_{\ell_n + k}^2 - h_n^{-2})^2} - \frac{(h_n)^{3-d}}{2(d-1) \vol(\S[d])} \frac{1}{(k - \sigma)^2} } \\
        & \hspace{100pt} + \frac{(h_n)^{3-d}}{2(d-1) \vol(\S[d])} \sum_{\abs{k} \leq \Upsilon} \abs[\Big]{ \abs{\beta_{n,q} + (-1)^{k} \rho \beta_{n, -q}}^2 - \abs{\beta_{q} + (-1)^{k} \rho \beta_{-q}}^2 } \frac{1}{(k - \sigma)^2} \\
        & \hspace{100pt} + \frac{(h_n)^{3-d}}{2(d-1) \vol(\S[d])}  \sum_{ \abs{k} \geq \Upsilon + 1} \frac{ \abs{\beta_{q} + (-1)^{k} \rho \beta_{-q}}^2 }{ (k - \sigma)^2 } \eqqcolon \Sigma_1 + \Sigma_2 + \Sigma_3\ .
    \end{align*}

    % Sigma1
    Since we are assuming that $\sigma_n \to \sigma \in (0,1)$, Lemma \ref{Lemma: asymp exp of quotient(ln+k) |k|<Upsilon} provides some $C > 1$ (depending on $\sigma \in (0,1)$) such that for all $\Upsilon \in \N$ there exists $\nu \in \N$ such that for all $\abs{k} \leq \Upsilon$ and all $n \geq \nu$,
    \[
    \abs*{ \frac{ \norm{ Z_{\ell_n + k}^{q} }[L^2(\S[d])]^2 }{(\lambda_{\ell_n + k}^2 - h_n^{-2})^2} - \frac{(h_n)^{3-d}}{2(d-1) \vol(\S[d])} \frac{1}{(k - \sigma)^2} } \leq C \frac{(h_n)^{3-d} }{\abs{k - \sigma}^2 } \Big[ \Upsilon h_n + \abs{\sigma_n - \sigma} \Big] \ .
    \]
    If one further assumes that $C \geq \sup_{n \in \N} \abs{\beta_{n,q} + (-1)^{k} \rho \beta_{n,-q}}^2$, then for all $\Upsilon \in \N$, and all $n \geq \nu$,
    \begin{equation} \label{Eq: estimate Sigma1 scenario 1}
        \Sigma_1 \leq C \sum_{\abs{k} \leq \Upsilon} \frac{(h_n)^{3-d} }{\abs{k - \sigma}^2 } \Big[ \Upsilon h_n + \abs{\sigma_n - \sigma} \Big] \leq C (h_n)^{3-d} \Big[ \Upsilon h_n + \abs{\sigma_n - \sigma} \Big] \ .
    \end{equation}

    % Sigma2
    \begin{Lemma} \label{Lemma: zn wn converging sequences estimate}
        Let $(z_n)_{n \in \N} \subseteq \C$, and $(w_n)_{n \in \N} \subseteq \C$ converging sequences to $z_{\infty}$ and $w_{\infty}$ respectively. There exists $C > 1$ such that
        \[
        \abs[\Big]{ \abs{z_n + w_n}^2 - \abs{z_{\infty} + w_{\infty} }^2 } \leq C \Big[ \abs{z_n - z_{\infty}} + \abs{w_n - w_{\infty}} \Big]
        \]
    \end{Lemma}
    Thanks to Lemma \ref{Lemma: zn wn converging sequences estimate} for the sequences $z_{n} = \beta_{n,q}$ and $w_{n} = \rho \beta_{n,-q}$, there exists $C > 1$ such that for all $\Upsilon \in \N$ and all $h_n \in (0,1)$,
    \begin{equation} \label{Eq: estimate Sigma2 scenario 1}
        \Sigma_2 \leq C (h_n)^{3-d} \abs{\beta_{n} - \beta_{q}} \ .
    \end{equation}
    
    % Sigma3
    Lastly, using that for all $s > 0$ and all $\Upsilon \in \N$,
    \begin{equation} \label{Eq: upper bound tail of converging series}
        \sum_{k = \Upsilon + 1}^{\infty} \frac{1}{k^{1 + s}} \leq \int_{\Upsilon}^{\infty} x^{-1 - s} \D{x} = \frac{\Upsilon^{s}}{s} \ ,
    \end{equation}
    we get that for all $\Upsilon \in \N$ and all $h_n \in (0,1)$,
    \begin{equation} \label{Eq: estimate Sigma3 scenario 1}
        \Sigma_3 \leq C (h_n)^{3-d} \Upsilon^{-1} \ .
    \end{equation}

    Combining the estimates for $\Sigma_1$ \eqref{Eq: estimate Sigma1 scenario 1}, $\Sigma_2$ \eqref{Eq: estimate Sigma2 scenario 1} and $\Sigma_3$ \eqref{Eq: estimate Sigma3 scenario 1}, we have proved that \eqref{Eq: norm mainterm scenario 1} holds for all $\Upsilon \in \N$ and $n \geq \nu$, for a certain $C > 1$ that depends on $\sigma \in (0,1)$ and an upper bound for the sequence $(\abs{\beta_{n}})_{n \in \N}$.
\end{proof}

Recall that we defined for $\ell \in \N$
\[
z_{\ell}^{q} \coloneqq \frac{1}{\norm{Z_{\ell}^{q}}[L^2(\S[d])]} Z_{\ell}^{q} \ ,
\]
and for $h > 0$ and $\beta \in \C[2]$ such that $\sum_{q \in Q} \beta_{q} \delta_{q} = 0$ on $\ker(h^2 \Lap {}- 1)$,
\[
g_{h_n}^{Q, \beta_n} \coloneqq \frac{1}{ \norm*{G_{h_n}^{Q, \beta_n}}[L^2(\S[d])] } G_{h_n}^{Q, \beta_{n}} \ .
\]

\begin{Theorem} \label{Thm: gh quasimode scenario 1}
    Let $(h_n)_{n \in \N} \subseteq (0,1)$ and $(\beta_{n})_{n \in \N} \subseteq \C[2]$, and for every $n \in \N$, let $\ell_n \in \N$ and $\sigma_n \in [0,1)$ as defined in \eqref{Eq: def elln and sigman}. Assume the following:
    \begin{enumerate}
        \item $h_n \to 0^+$ and $h_n^{-2} \notin \Spec(\Lap)$ for all $n \in \N$, thus $\sigma_n \in (0,1)$ for all $n \in \N$.
        \item $\beta_n \to \beta \in \C[2]$.
        \item $\ell_n$ is even for all $n \in \N$, or $\ell_n$ is odd for all $n \in \N$. Set $\rho \coloneqq (-1)^{\ell_n}$.
        \item $\sigma_n \to \sigma \in (0,1)$ as $n \to \infty$.
    \end{enumerate}
    Set
    \begin{equation*}
        C_{\sigma, \beta, \rho} \coloneqq \bigg(\sum_{k \in \Z} \frac{ \abs{\beta_{q} + (-1)^{k} \rho \beta_{-q} }^2 }{ (k - \sigma)^2 } \bigg)^{\frac{1}{2}} \ .
    \end{equation*}
    There exists $C > 1$ such that for every $\Upsilon \in \N$, there exists $\nu \in \N$ such that for all $n \geq \nu$,
    \begin{equation} \label{Eq: ghQbeta quasimode scenario 1}
        \norm[\bigg]{ g_{h_n}^{Q, \beta_n} - \frac{1}{C_{\sigma, \beta, \rho}} \sum_{\abs{k} \leq \Upsilon} \frac{ \beta_{q} + (-1)^{k} \rho \beta_{-q} }{k - \sigma} z_{\ell_n + k}^{q} }[L^2(\S[d])] \leq C \Big[ \Upsilon h_n + \abs{\sigma_n - \sigma} + \abs{\beta_n - \beta} + \Upsilon^{-\frac{1}{2}} \Big] \ .
    \end{equation}
\end{Theorem}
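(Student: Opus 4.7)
The plan is to split the left-hand side of \eqref{Eq: ghQbeta quasimode scenario 1} into an out-of-band tail and an in-band coefficient discrepancy. Setting $P_n := \Pi_{I(\ell_n,\Upsilon)}$, I write the difference as $T_n + D_n$, with $T_n := \norm{G_{h_n}^{Q,\beta_n}}[L^2(\S[d])]^{-1}(I - P_n) G_{h_n}^{Q,\beta_n}$ and $D_n$ living in the span of $\{z_{\ell_n+k}^q : \abs{k} \leq \Upsilon\}$. Since $T_n \perp D_n$, the triangle inequality lets me bound the two pieces separately; and since the zonal harmonics $\{z_{\ell_n+k}^q\}_{\abs{k}\leq\Upsilon}$ are pairwise orthonormal, $\norm{D_n}[L^2(\S[d])]^2$ is a plain sum of squared coefficient differences.

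For the tail, \cref{Thm: Gh tails L2 norm estimates} gives $\norm{(I-P_n)G_{h_n}^{Q,\beta_n}}[L^2(\S[d])]^2 \leq C \Upsilon^{-1}(h_n)^{3-d}$. In scenario~1 the constant $C_{\sigma,\beta,\rho}$ is strictly positive because $\sigma\in(0,1)$ keeps all denominators in \eqref{Eq: Csigmabeta definition scenario 1} away from zero and $\abs{\beta}=1$, so \cref{Thm: Gh L2 norm estimates scenario 1} combined with the Pythagorean identity $\norm{G_{h_n}^{Q,\beta_n}}[L^2(\S[d])]^2 = \norm{P_n G_{h_n}^{Q,\beta_n}}[L^2(\S[d])]^2 + \norm{(I-P_n) G_{h_n}^{Q,\beta_n}}[L^2(\S[d])]^2$ yields $\norm{G_{h_n}^{Q,\beta_n}}[L^2(\S[d])]^2 \geq c (h_n)^{3-d}$ for $n$ sufficiently large (by fixing an auxiliary large $\Upsilon_0$ in that theorem). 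Dividing produces $\norm{T_n}[L^2(\S[d])] \leq C \Upsilon^{-1/2}$, which accounts for the $\Upsilon^{-1/2}$ term in the right-hand side of \eqref{Eq: ghQbeta quasimode scenario 1}.

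For the in-band piece, let $L_n := \sqrt{(C_{\sigma,\beta,\rho})^2 (h_n)^{3-d}/(2(d-1)\vol(\S[d]))}$ be the leading-order asymptotic of $\norm{G_{h_n}^{Q,\beta_n}}[L^2(\S[d])]$. Writing $c_{n,k}$ for the $z_{\ell_n+k}^q$-coefficient of $g_{h_n}^{Q,\beta_n}$ and $d_{n,k} := (\beta_q + (-1)^k \rho \beta_{-q})/(C_{\sigma,\beta,\rho}(k-\sigma))$ for the target coefficient, I decompose
\[
c_{n,k}-d_{n,k} = \frac{\tilde N_{n,k} - L_n d_{n,k}}{\norm{G_{h_n}^{Q,\beta_n}}[L^2(\S[d])]} - \frac{\bigl(\norm{G_{h_n}^{Q,\beta_n}}[L^2(\S[d])] - L_n\bigr)\, d_{n,k}}{\norm{G_{h_n}^{Q,\beta_n}}[L^2(\S[d])]},
\]
where $\tilde N_{n,k}$ is the $z_{\ell_n+k}^q$-coefficient of $P_n G_{h_n}^{Q,\beta_n}$. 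The first summand is a multiplicative perturbation of four factors: combining \cref{Lemma: asymp exp of lambda(ln+k)} for the denominator $\lambda_{\ell_n+k}^2 - h_n^{-2}$, \cref{Lemma: asymp exp of m(ln+k)} for $\norm{Z_{\ell_n+k}^q}[L^2(\S[d])]^2 = m_{\ell_n+k}/\vol(\S[d])$, and \cref{Lemma: zn wn converging sequences estimate} for the $\beta_n \to \beta$ error, the explicit $(h_n)^{\pm(3-d)/2}$ scales cancel exactly and leave a remainder of size $C\abs{k-\sigma}^{-1}\bigl[\Upsilon h_n + \abs{\sigma_n-\sigma}+\abs{\beta_n-\beta}\bigr]$. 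The second summand uses the square-root of \cref{Thm: Gh L2 norm estimates scenario 1} (valid thanks to the positive lower bound on $\norm{G_{h_n}^{Q,\beta_n}}[L^2(\S[d])]$ from the previous paragraph) and gives the same type of bound plus an extra $\Upsilon^{-1}$. Summing the squared coefficient errors against the convergent series $\sum_{k\in\Z}(k-\sigma)^{-2}<\infty$ and taking square roots yields $\norm{D_n}[L^2(\S[d])] \leq C\bigl[\Upsilon h_n + \abs{\sigma_n-\sigma} + \abs{\beta_n-\beta} + \Upsilon^{-1}\bigr]$, and the $\Upsilon^{-1}$ term is absorbed into the $\Upsilon^{-1/2}$ tail contribution.

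The main obstacle I anticipate is the multiplicative bookkeeping in the first summand of the coefficient decomposition above: four perturbative factors with explicit $(h_n)^{\pm(3-d)/2}$ scales must be combined so that the leading orders telescope exactly while the per-$k$ remainder stays proportional to $\abs{k-\sigma}^{-1}$, which is what ultimately lets the sum over $\abs{k}\leq\Upsilon$ be controlled uniformly in $\Upsilon$. A subsidiary technical point is that several of the prior lemmas deliver squared-error statements, so passing to the linear-in-error form needed here requires the positive lower bound on $\norm{G_{h_n}^{Q,\beta_n}}[L^2(\S[d])]$ already established in the tail step.
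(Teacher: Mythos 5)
Your decomposition into an out-of-band tail and an in-band coefficient discrepancy, the tail bound via \cref{Thm: Gh tails L2 norm estimates} and \cref{Thm: Gh L2 norm estimates scenario 1}, and the per-coefficient splitting against the leading asymptotic $L_n$ of $\norm{G_{h_n}^{Q,\beta_n}}[L^2(\S[d])]$ reproduce the paper's argument almost verbatim (the paper packages the per-coefficient quotient asymptotic as \cref{Lemma: asymp exp of quotient(ln+k) |k|<Upsilon} feeding into \cref{Lemma: approx of quotient quasimode scenario 1}, whereas you re-derive it inline from \cref{Lemma: asymp exp of lambda(ln+k)} and \cref{Lemma: asymp exp of m(ln+k)}). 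One small slip: the linear estimate $\abs{(\beta_{n,q}-\beta_q)+(-1)^k\rho(\beta_{n,-q}-\beta_{-q})}\lesssim\abs{\beta_n-\beta}$ that you need in the first summand is just the triangle inequality, not \cref{Lemma: zn wn converging sequences estimate}, which controls the \emph{squared} modulus discrepancy and is used by the paper only in the $\Sigma_2$ bound inside \cref{Thm: Gh L2 norm estimates scenario 1}.
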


\begin{proof}
    Adding and subtracting $\Pi_{I(\ell_n, \Upsilon)} g_{h_n}^{Q, \beta_n}$ and then applying triangular inequality, we have
    \begin{multline*}
        \norm*{ g_{h_n}^{Q, \beta_n} - \frac{1}{C_{\sigma, \beta, \rho}} \sum_{\abs{k} \leq \Upsilon} \frac{ \beta_{q} + (-1)^{k} \rho \beta_{-q} }{k - \sigma} z_{\ell_n + k}^{q} }[L^2(\S[d])] \leq \\
        \leq \frac{ \norm*{ G_{h_n}^{Q, \beta_n} - \Pi_{I(\ell_n, \Upsilon)} G_{h_n}^{Q, \beta_n} }[L^2(\S[d])] }{ \norm{G_{h_n}^{Q, \beta_n}}[L^2(\S[d])] } 
        + \norm[\bigg]{ \Pi_{I(\ell_n, \Upsilon)} g_{h_n}^{Q, \beta_n} - \frac{1}{C_{\sigma, \beta, \rho}} \sum_{\abs{k} \leq \Upsilon} \frac{ \beta_{q} + (-1)^{k} \rho \beta_{-q} }{k - \sigma} z_{\ell_n + k}^{q} }[L^2(\S[d])] \ .
    \end{multline*}

    For the first term, thanks to Theorem \ref{Thm: Gh tails L2 norm estimates} and Theorem \ref{Thm: Gh L2 norm estimates scenario 1}, there exists $C > 1$ such that for all $\Upsilon \in \N$, there exists $\nu \in \N$ such that for all $n \geq \nu$,
    \begin{equation} \label{Eq: quasimode thm scnr1 aux1}
        \frac{ \norm*{ G_{h_n}^{Q, \beta_n} - \Pi_{I(\ell_n, \Upsilon)} G_{h_n}^{Q, \beta_n} }[L^2(\S[d])] }{ \norm{G_{h_n}^{Q, \beta_n}}[L^2(\S[d])] } \leq C \Upsilon^{-\frac{1}{2}} \ .
    \end{equation}

    For the second term, adding and subtracting
    \[
    \frac{1}{C_{\sigma, \beta, \rho}} \sum_{\abs{k} \leq \Upsilon} \frac{ \beta_{n, q} + (-1)^{k} \rho \beta_{n, -q} }{ k - \sigma } z_{\ell_n + k}^{q} \ ,
    \]
    and using triangular, we have
    \begin{equation} \label{Eq: quasimode thm scnr1 aux2}
        \begin{aligned}
            & \norm*{ \Pi_{I(\ell_n, \Upsilon)} g_{h_n}^{Q, \beta_n} - \frac{1}{C_{\sigma, \beta, \rho}} \sum_{\abs{k} \leq \Upsilon} \frac{ \beta_{q} + (-1)^{k} \rho \beta_{-q} }{ k - \sigma } z_{\ell_n + k}^{q} }[L^2(\S[d])] \leq \\
            & \hspace{50pt} \leq \left( \sum_{\abs{k} \leq \Upsilon} \abs{ \beta_{n, q} + (-1)^{k} \rho \beta_{n, -q} }^2 \ \abs[\Bigg]{ \frac{ \norm*{Z_{\ell_n + k}^{q}}[L^2(\S[d])] }{ \norm*{G_{h_n}^{Q, \beta_n} }[L^2(\S[d])] (\lambda_{\ell_n + k}^2 - h_n^{-2}) } - \frac{1}{C_{\sigma, \beta, \rho}} \frac{1}{k - \sigma} }^2 \right)^{\frac{1}{2}} \\
            & \hspace{70pt} + \frac{1}{C_{\sigma, \beta, \rho}} \left( \sum_{\abs{k} \leq \Upsilon} \frac{ \abs{ (\beta_{n, q} - \beta_{q}) + (-1)^k \rho (\beta_{n, -q} - \beta_{-q}) }^2 }{(k - \sigma)^2} \right)^{\frac{1}{2}} \ .
        \end{aligned}
    \end{equation}

    \begin{Lemma} \label{Lemma: approx of quotient quasimode scenario 1}
        There exists $C > 1$ such that for all $\Upsilon \in \N$ there exists $\nu \in \N$ such that for all $n \geq \nu$ and all $\abs{k} \leq \Upsilon$,
        \begin{equation*}
            \abs[\Bigg]{ \frac{ \norm*{Z_{\ell_n + k}^{q}}[L^2(\S[d])] }{ \norm*{G_{h_n}^{Q, \beta_n} }[L^2(\S[d])] (\lambda_{\ell_n + k}^2 - h_n^{-2}) } - \frac{1}{C_{\sigma, \beta, \rho}} \frac{1}{k - \sigma} } \leq \frac{C}{\abs{k - \sigma}} \Big[ \Upsilon h_n + \abs{\sigma_n - \sigma} + \abs{\beta_n - \beta} + \Upsilon^{-1} \Big] \ .
        \end{equation*}
    \end{Lemma}
    \begin{proof}[Proof of \texorpdfstring{Lemma \ref{Lemma: approx of quotient quasimode scenario 1}}{Proof of Lemma approx quotient scn1}]
        We add and subtract
        \[
        \frac{1}{\norm*{G_{h_n}^{Q, \beta_n} }[L^2(\S[d])]} \bigg( \frac{(h_n)^{3-d}}{2(d-1) \vol(\S[d])} \bigg)^{\frac{1}{2}} \frac{1}{k - \sigma} \ ,
        \]
        and then apply triangular inequality. 

        On the one hand, thanks to Lemma \ref{Lemma: asymp exp of quotient(ln+k) |k|<Upsilon} and Theorem \ref{Thm: Gh L2 norm estimates scenario 1}, there exists $C > 1$ such that for all $\Upsilon \in \N$ there exists $\nu \in \N$ such that for all $n \geq \nu$ and all $\abs{k} \leq \Upsilon$,
        \[
        \frac{1}{\norm*{G_{h_n}^{Q, \beta_n} }[L^2(\S[d])]} \abs[\Bigg]{ \frac{ \norm*{Z_{\ell_n + k}^{q}}[L^2(\S[d])] }{ \lambda_{\ell_n + k}^2 - h_n^{-2} } - \bigg( \frac{(h_n)^{3-d}}{2(d-1) \vol(\S[d])} \bigg)^{\frac{1}{2}} \frac{1}{k - \sigma}  } \leq \frac{C}{ \abs{k - \sigma} } \Big[ \Upsilon h_n + \abs{\sigma_n - \sigma} \Big] \ .
        \]
        
        On the other hand, Theorem \ref{Thm: Gh L2 norm estimates scenario 1} provides some $C > 1$ such that for all $\Upsilon \in \N$ there exists $\nu \in \N$ such that for all $n \geq \nu$ and all $\abs{k} \leq \Upsilon$,
        \begin{align*}
            & \bigg( \frac{(h_n)^{3-d}}{2(d-1) \vol(\S[d])} \bigg)^{\frac{1}{2}} \frac{1}{\abs{k - \sigma}} \abs[\bigg]{ \frac{1}{\norm*{G_{h_n}^{Q, \beta_n} }[L^2(\S[d])]} - \bigg( \frac{(h_n)^{3-d}}{2(d-1) \vol(\S[d])} \bigg)^{-\frac{1}{2}} \frac{1}{C_{\sigma, \beta, \rho}} } \leq \\
            & \hspace{200pt} \leq \frac{C}{\abs{k - \sigma}} \Big[ \Upsilon h_n + \abs{\sigma_n - \sigma} + \abs{\beta_n - \beta} + \Upsilon^{-1} \Big] \ . \qedhere
        \end{align*}
    \end{proof}

    Meanwhile, using that there exists $C > 1$ such that for all $n \in \N$
    \[
    \abs{ (\beta_{n, q} - \beta_{q}) + (-1)^k \rho (\beta_{n, -q} - \beta_{-q}) } \leq C \abs{\beta_n - \beta} \ ,
    \]
    we get for all $\Upsilon$ and all $n \in \N$,
    \[
    \sum_{\abs{k} \leq \Upsilon} \frac{ \abs{ (\beta_{n, q} - \beta_{q}) + (-1)^k \rho (\beta_{n, -q} - \beta_{-q}) }^2 }{(k - \sigma)^2} \leq C \abs{\beta_n - \beta} \ .
    \]
    Last estimate and Lemma \ref{Lemma: approx of quotient quasimode scenario 1} allows us conclude that there exists $C > 1$ such that for all $\Upsilon \in \N$ there exists $\nu \in \N$ such that for all $n \geq \nu$
    \begin{equation} \label{Eq: quasimode thm scnr1 aux3}
        \norm*{ \Pi_{I(\ell_n, \Upsilon)} g_{h_n}^{Q, \beta_n} - \frac{1}{C_{\sigma, \beta, \rho}} \sum_{\abs{k} \leq \Upsilon} \frac{ \beta_{q} + (-1)^{k} \rho \beta_{-q} }{ k - \sigma } z_{\ell_n + k}^{q} }[L^2(\S[d])] \leq C \Big[ \Upsilon h_n + \abs{\sigma_n - \sigma} + \abs{\beta_n - \beta} + \Upsilon^{-1} \Big] \ .
    \end{equation}
    Theorem \ref{Thm: gh quasimode scenario 1} follows after a combination of \eqref{Eq: quasimode thm scnr1 aux1} and \eqref{Eq: quasimode thm scnr1 aux3}.
\end{proof}

\subsection{Scenario 2}

\begin{Theorem} \label{Thm: Gh L2 norm estimates scenario 2}
    Let $(h_n)_{n \in \N} \subseteq (0,1)$, and $(\beta_{n})_{n \in \N} \subseteq \C[2]$, and for every $n \in \N$, let $\ell_n \in \N$ and $\sigma_n \in [0,1)$ as defined in \eqref{Eq: def elln and sigman}. Assume the following:
    \begin{enumerate}
        \item $h_n \to 0^+$ and $h_n^{-2} \notin \Spec(\Lap)$ for all $n \in \N$, thus $\sigma_n \in (0,1)$ for all $n \in \N$.
        \item $\beta_n \to \beta \in \C[2]$.
        \item $\ell_n$ is even for all $n \in \N$, or $\ell_n$ is odd for all $n \in \N$. Set $\rho \coloneqq (-1)^{\ell_n}$.
        \item $\sigma_n \to \sigma \in \{0,1\}$ as $n \to \infty$.
        \item The following condition holds,
        \begin{equation} \label{Eq: infty condition scenario 2}
            \lim_{n \to \infty} \frac{\abs*{\beta_{n,q} + (-1)^{\sigma} \rho \beta_{n,-q}}}{ \abs{\sigma - \sigma_n} } = \infty \ .
        \end{equation}
    \end{enumerate}
    There exists $C > 1$ and $\nu \in \N$ such that for every $n \geq \nu$,
    \begin{gather} \label{Eq: norm tails scenario 2}
        \norm*{ G_{h_n}^{Q, \beta_n} - \Pi_{I(\ell_n + \sigma, 0)} G_{h_n}^{Q, \beta_n} }[L^2(\S[d])]^2 \leq C (h_n)^{3-d} \ , \\
        \label{Eq: norm mainterm scenario 2}
        \abs*{ \norm*{ \Pi_{I(\ell_n + \sigma, 0)} G_{h_n}^{Q, \beta_n} }[L^2(M)]^2 - \frac{(h_n)^{3-d} }{2(d-1) \vol(\S[d])} \frac{ \abs*{\beta_{n,q} + (-1)^{\sigma} \rho \beta_{n,-q}}^2 }{ \abs{\sigma - \sigma_n}^2 } } \leq C(h_n)^{4-d} \frac{ \abs*{\beta_{n,q} + (-1)^{\sigma} \rho \beta_{n,-q}}^2 }{ \abs{\sigma - \sigma_n}^2 }
    \end{gather} 
\end{Theorem}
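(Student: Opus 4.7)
The plan is to isolate the near-resonant contribution from the single eigenspace at index $\ell_n + \sigma$, which is what drives the behavior of $G_{h_n}^{Q,\beta_n}$ in this scenario: the denominator $\lambda_{\ell_n+\sigma}^2 - h_n^{-2}$ vanishes at rate $\abs{\sigma - \sigma_n} h_n^{-1}$, and condition \eqref{Eq: infty condition scenario 2} ensures this is not compensated by the coefficient $\beta_{n,q} + (-1)^{\sigma}\rho \beta_{n,-q}$. First I would unfold $G_{h_n}^{Q,\beta_n}$ using \eqref{Eq: sum of antipodal Green fun} and exploit the orthogonality of zonal harmonics at distinct energies to rewrite both squared norms as explicit numerical series in $\ell \in \N$: the tail norm is a sum over $\ell \neq \ell_n + \sigma$, while $\Pi_{I(\ell_n + \sigma, 0)} G_{h_n}^{Q,\beta_n}$ reduces to a single term. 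This parallels the scheme used in \cref{Thm: Gh tails L2 norm estimates}, and everything needed is already set up.

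For \eqref{Eq: norm tails scenario 2} I would run essentially the proof of \cref{Thm: Gh tails L2 norm estimates} with the cut-off parameter effectively set to $1$. Since $\sigma_n \to \sigma \in \{0,1\}$, for $n$ large enough either $\sigma_n \in [0, 1/2)$ (if $\sigma = 0$) or $\sigma_n \in (1/2, 1)$ (if $\sigma = 1$), so one of parts (1)--(2) of \cref{Lemma: bound for quotient(ln+k) |k|>Upsilon} applies and gives
\[
\frac{m_{\ell_n + k}}{(\lambda_{\ell_n + k}^2 - h_n^{-2})^2} \leq \frac{C(h_n)^{3-d}}{\abs{k - \sigma}^{2}} \qquad \text{for all } 1 \leq \abs{k - \sigma} \leq \ell_n \ ,
\]
while part (3) handles $k \geq \ell_n + 1$. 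Since $(\beta_n)_{n \in \N}$ is bounded and the series $\sum_{k \neq \sigma} 1/\abs{k - \sigma}^{2}$ converges (precisely because we have removed the only singular index $k = \sigma$), summing over $k$ and using an integral bound analogous to \eqref{Eq: upper bound of squared harmonic tail} for the high-$k$ part yields the claimed $C(h_n)^{3-d}$ bound.

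For \eqref{Eq: norm mainterm scenario 2}, I would plug $k = \sigma$ into the asymptotic expansions already proved. \cref{Lemma: asymp exp of lambda(ln+k)} applied with $\Upsilon = 1$ gives
\[
\lambda_{\ell_n + \sigma}^2 - h_n^{-2} = 2(\sigma - \sigma_n) h_n^{-1}\bigl(1 + \BigO(h_n)\bigr) \ ,
\]
and \cref{Lemma: asymp exp of m(ln+k)} yields $m_{\ell_n + \sigma} = \tfrac{2}{d-1}(h_n)^{1-d}\bigl(1 + \BigO(h_n)\bigr)$. Combining these with $\norm{Z_{\ell_n + \sigma}^{q}}[L^2(\S[d])]^2 = m_{\ell_n + \sigma}/\vol(\S[d])$,
\[
\frac{\norm{Z_{\ell_n + \sigma}^{q}}[L^2(\S[d])]^2}{(\lambda_{\ell_n + \sigma}^2 - h_n^{-2})^2} = \frac{(h_n)^{3-d}}{2(d-1)\vol(\S[d])}\frac{1}{(\sigma - \sigma_n)^2}\bigl(1 + \BigO(h_n)\bigr) \ ,
\]
and multiplying by $\abs{\beta_{n,q} + (-1)^{\sigma}\rho \beta_{n,-q}}^2$ produces exactly the stated main term, with the $\BigO(h_n)$ relative error turning into the absolute bound $C(h_n)^{4-d}\abs{\beta_{n,q} + (-1)^{\sigma}\rho \beta_{n,-q}}^2/(\sigma - \sigma_n)^2$.

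No genuine obstacle is expected: once the asymptotic lemmas are available, the statement is a book-keeping exercise. The point worth emphasizing is that hypothesis \eqref{Eq: infty condition scenario 2} plays no role in proving either displayed estimate individually, but it is exactly what guarantees that the main term in \eqref{Eq: norm mainterm scenario 2} eventually dominates the tail bound \eqref{Eq: norm tails scenario 2}. This is why the theorem is stated with unnormalized $L^2$ norms: when the two bounds are later combined, \eqref{Eq: infty condition scenario 2} forces $\norm{G_{h_n}^{Q,\beta_n}}[L^2(\S[d])]^2$ to be equivalent to the main-term expression, so that the normalized Green's function $g_{h_n}^{Q,\beta_n}$ coincides at leading order with the normalized zonal harmonic $z_{\ell_n + \sigma}^{q}$, whose unique semiclassical defect measure is $\nu_q$ by \cref{Thm: SDM of zonal harmonics}.
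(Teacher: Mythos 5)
Your proposal is correct and follows essentially the same route as the paper: the tail estimate \eqref{Eq: norm tails scenario 2} is obtained by the argument of \cref{Thm: Gh tails L2 norm estimates} with the projection window shrunk to the single index $\ell_n+\sigma$, using \cref{Lemma: bound for quotient(ln+k) |k|>Upsilon}; and the main-term estimate \eqref{Eq: norm mainterm scenario 2} is exactly \cref{Lemma: asymp exp of quotient(ln+k) |k|<Upsilon} with $\Upsilon=1$, $k=\sigma$, which you simply unpack into its constituent parts (\cref{Lemma: asymp exp of lambda(ln+k)} and \cref{Lemma: asymp exp of m(ln+k)}) before multiplying by $\abs{\beta_{n,q}+(-1)^{\sigma}\rho\beta_{n,-q}}^2$. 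Your closing observation that hypothesis \eqref{Eq: infty condition scenario 2} is not used in proving either displayed inequality, but only serves downstream to make the main term dominate the tail when the two are combined in \cref{Thm: gh quasimode scenario 2}, is accurate.
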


\begin{proof}
    \eqref{Eq: norm tails scenario 2} can be proven as \eqref{Eq: Gh tails L2 norm asymptotics} in Theorem \ref{Thm: Gh tails L2 norm estimates}, using that
    \begin{align*}
        \norm*{G_{h_n}^{Q, \beta_n} - \Pi_{I(\ell_n + \sigma, 0)} G_{h_n}^{Q, \beta_n}}[L^2(\S[d])]^2 & \\
        & \hspace{-40pt} = \bigg[ \sum_{1 \leq \abs{k - \sigma} \leq \ell_n + \sigma} + \sum_{k \geq \ell_n + 1 + 2\sigma} \bigg] \frac{\abs*{\beta_{n, q} + (-1)^{k} \rho \beta_{n, -q}}^2}{\vol(\S[d])} \frac{ m_{\ell_n + k} }{ (\lambda_{\ell_n + k}^2 - h_n^{-2})^2} \\
        & \hspace{-40pt} \leq \sum_{1 \leq \abs{k - \sigma} \leq \ell_n + \sigma} C \frac{(h_n)^{3-d}}{\abs{k - \sigma}^2} + \sum_{k \geq \ell_n + 1 + 2\sigma} C \frac{1}{k^{5-d}} \\
        & \hspace{-40pt} \leq C (h_n)^{3-d} \ ,
    \end{align*}
    thanks to Lemma \ref{Lemma: bound for quotient(ln+k) |k|>Upsilon}.

    On the other hand, observe that
    \[
    \norm*{ \Pi_{I(\ell_n + \sigma, 0)} G_{h_n}^{Q, \beta_n} }[L^2(M)]^2 = \abs*{\beta_{n,q} + (-1)^{\sigma} \rho \beta_{n, -q}} \frac{\norm*{Z_{\ell_n + \sigma}^{q}}[L^2(\S[d])]^2 }{ (\lambda_{\ell_n + \sigma}^2 - h_n^{-2})^2 } \ .
    \]
    From Lemma \ref{Lemma: asymp exp of quotient(ln+k) |k|<Upsilon} with $\Upsilon = 1$ and $k = \sigma$, we read that
    \[
    \abs*{ \norm*{ \Pi_{I(\ell_n + \sigma, 0)} G_{h_n}^{Q, \beta_n} }[L^2(M)]^2 - \frac{(h_n)^{3-d}}{2(d-1) \vol(\S[d])} \frac{ \abs*{\beta_{n,q} + (-1)^{\sigma} \rho \beta_{n,-q}}^2 }{ \abs{\sigma - \sigma_n}^2 } } \leq C (h_n)^{4-d} \frac{\abs{\beta_{n,q} + (-1)^{\sigma} \rho \beta_{n,-q}}^2 }{\abs{\sigma - \sigma_n}^2} \ ,
    \]
    as we wanted to prove.
\end{proof}

\begin{Theorem} \label{Thm: gh quasimode scenario 2}
    Let $(h_n)_{n \in \N} \subseteq (0,1)$ and $(\beta_{n})_{n \in \N} \subseteq \C[2]$, and for every $n \in \N$, let $\ell_n \in \N$ and $\sigma_n \in [0,1)$ as defined in \eqref{Eq: def elln and sigman}. Assume the following:
    \begin{enumerate}
        \item $h_n \to 0^+$ and $h_n^{-2} \notin \Spec(\Lap)$ for all $n \in \N$, thus $\sigma_n \in (0,1)$ for all $n \in \N$.
        \item $\beta_n \to \beta \in \C[2]$.
        \item $\ell_n$ is even for all $n \in \N$, or $\ell_n$ is odd for all $n \in \N$. Set $\rho \coloneqq (-1)^{\ell_n}$.
        \item $\sigma_n \to \sigma \in \{0,1\}$ as $n \to \infty$.
        \item The following condition holds,
        \begin{equation*}
            \lim_{n \to \infty} \frac{\abs*{\beta_{n,q} + (-1)^{\sigma} \rho \beta_{n,-q}}}{ \abs{\sigma - \sigma_n} } = \infty \ .
        \end{equation*}
    \end{enumerate}
    There exists $C > 1$ and $\nu \in \N$ such that for every $n \geq \nu$,
    \begin{equation} \label{Eq: ghQbeta quasimode scenario 2}
        \norm*{ g_{h_n}^{Q, \beta_n} - (-1)^{1 - \sigma} z_{\ell_n + \sigma}^{q} }[L^2(\S[d])] \leq C \bigg[ h_n + \frac{ \abs{\sigma - \sigma_n} }{\abs{\beta_{n,q} + (-1)^{\sigma} \rho \beta_{n,-q}} } \bigg] \ .
    \end{equation}
\end{Theorem}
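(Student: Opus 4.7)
The plan is to orthogonally decompose $G_{h_n}^{Q,\beta_n} = \Pi_n G_{h_n}^{Q,\beta_n} + R_n$, where $\Pi_n \coloneqq \Pi_{I(\ell_n+\sigma, 0)}$ is the spectral projector onto the single eigenspace $\ker(\Lap - \lambda_{\ell_n+\sigma}^2)$, and to show that under hypothesis (5) this single-mode projection captures essentially all the $L^2$-mass of $G_{h_n}^{Q,\beta_n}$. Formula \eqref{Eq: sum of antipodal Green fun} immediately gives
\[
\Pi_n G_{h_n}^{Q,\beta_n} = \frac{\beta_{n,q} + (-1)^{\sigma} \rho \beta_{n,-q}}{\lambda_{\ell_n+\sigma}^2 - h_n^{-2}}\, Z_{\ell_n+\sigma}^q,
\]
so that $\Pi_n G_{h_n}^{Q,\beta_n}$ is already parallel to $z_{\ell_n+\sigma}^q$, up to a unimodular phase coming from this scalar coefficient (which is understood to be absorbed into the choice of normalization of $G_{h_n}^{Q,\beta_n}$).

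Writing $\eta_n \coloneqq |\sigma-\sigma_n|/|\beta_{n,q} + (-1)^{\sigma} \rho \beta_{n,-q}|$, which tends to $0$ by hypothesis (5), the two bounds of \cref{Thm: Gh L2 norm estimates scenario 2} give $\|R_n\|_{L^2}^2 \leq C h_n^{3-d}$ and $\|\Pi_n G_{h_n}^{Q,\beta_n}\|_{L^2}^2 \geq C^{-1} h_n^{3-d} \eta_n^{-2}$ for large $n$, hence $\|R_n\|_{L^2} \leq C \eta_n \|\Pi_n G_{h_n}^{Q,\beta_n}\|_{L^2}$. Combined with the Pythagorean identity $\|G_{h_n}^{Q,\beta_n}\|_{L^2}^2 = \|\Pi_n G_{h_n}^{Q,\beta_n}\|_{L^2}^2 + \|R_n\|_{L^2}^2$, this yields $\|\Pi_n G_{h_n}^{Q,\beta_n}\|_{L^2}/\|G_{h_n}^{Q,\beta_n}\|_{L^2} = 1 + \mathcal{O}(\eta_n^2)$. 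The additional relative $\mathcal{O}(h_n)$ error present in the second estimate of \cref{Thm: Gh L2 norm estimates scenario 2} transfers, after taking a square root, into an $\mathcal{O}(h_n)$ contribution to that same ratio.

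The endgame is the decomposition
\[
g_{h_n}^{Q,\beta_n} - z_{\ell_n+\sigma}^q = \left(\frac{\|\Pi_n G_{h_n}^{Q,\beta_n}\|_{L^2}}{\|G_{h_n}^{Q,\beta_n}\|_{L^2}} - 1\right) z_{\ell_n+\sigma}^q + \frac{R_n}{\|G_{h_n}^{Q,\beta_n}\|_{L^2}},
\]
where the first summand contributes $\mathcal{O}(h_n + \eta_n^2)$ in $L^2$-norm by the ratio estimate, while the second contributes $\mathcal{O}(\eta_n)$ since $\|R_n\|_{L^2}/\|G_{h_n}^{Q,\beta_n}\|_{L^2} \leq (\|R_n\|_{L^2}/\|\Pi_n G_{h_n}^{Q,\beta_n}\|_{L^2})(1+\mathcal{O}(\eta_n^2)) \leq C\eta_n$. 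Absorbing $\eta_n^2 \leq \eta_n$ for large $n$ produces the announced bound \eqref{Eq: ghQbeta quasimode scenario 2}.

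\textbf{Main obstacle.} All the heavy analytic work is already packaged in \cref{Thm: Gh L2 norm estimates scenario 2}, so what remains is largely careful bookkeeping. The one genuinely delicate point is to keep the two independent error sources separate—the $\mathcal{O}(h_n)$ coming from the asymptotic expansion of the resonant projection and the $\mathcal{O}(\eta_n)$ coming from the off-resonance tail—since in the general regime neither dominates the other and both must appear in the final estimate.
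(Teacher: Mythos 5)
Your proof is correct and follows the same underlying route as the paper's: both arguments add and subtract the single-mode projection $\Pi_n G_{h_n}^{Q,\beta_n}$ and use the two estimates of \cref{Thm: Gh L2 norm estimates scenario 2} to control the normalized remainder. The point of genuine difference is how the projected term is treated. The paper writes out the scalar coefficient of $\Pi_n g_{h_n}^{Q,\beta_n}$ explicitly and invokes \cref{Lemma: asymp exp of quotient(ln+k) |k|<Upsilon} once more (with $\Upsilon=1$, $k=\sigma$), which is the source of the $\BigO(h_n)$ in the stated bound. You instead note the exact identity $\norm{\Pi_n g_{h_n}^{Q,\beta_n} - z_{\ell_n+\sigma}^q} = \abs*{\norm{\Pi_n G_{h_n}^{Q,\beta_n}}/\norm{G_{h_n}^{Q,\beta_n}} - 1}$ (granting the phase normalization discussed below) and control that ratio by the Pythagorean identity alone, so your method actually yields the sharper conclusion $\norm{g_{h_n}^{Q,\beta_n} - z_{\ell_n+\sigma}^q} = \BigO(\eta_n)$; the theorem's weaker bound then follows trivially since $\eta_n \leq h_n + \eta_n$.

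One sentence in your write-up is wrong and should be struck: the claim that the relative $\BigO(h_n)$ error in the second estimate of \cref{Thm: Gh L2 norm estimates scenario 2} ``transfers \ldots\ into an $\BigO(h_n)$ contribution to that same ratio.'' The ratio $\norm{\Pi_n G}/\norm{G} = \bigl(1 + \norm{R_n}^2/\norm{\Pi_n G}^2\bigr)^{-1/2}$ depends \emph{only} on $\norm{R_n}/\norm{\Pi_n G}$, for which you have already established the bound $\BigO(\eta_n)$; it is therefore $1 + \BigO(\eta_n^2)$ with no $h_n$-dependence at all. The $h_n$-error in \cref{Thm: Gh L2 norm estimates scenario 2} enters only when comparing $\norm{\Pi_n G}$ (or $\norm{G}$) against an explicit power of $h_n$, which your Pythagorean route bypasses entirely. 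The misstatement is harmless here because it merely inflates an upper bound, but it obscures the fact that your argument is tighter than the paper's. Finally, you are right to flag that $\Pi_n G_{h_n}^{Q,\beta_n}$ is only a \emph{positive real} multiple of $z_{\ell_n+\sigma}^q$ after a suitable choice of the unimodular phase of $\beta_n$; the theorem as literally stated, and the paper's own proof, rely on the same tacit normalization of the coefficient $c_n = (\beta_{n,q} + (-1)^\sigma\rho\,\beta_{n,-q})/(\lambda_{\ell_n+\sigma}^2 - h_n^{-2})$.
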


\begin{proof}
    Adding and subtracting $\Pi_{I(\ell_n + \sigma, 0)} g_{h_n}^{Q, \beta_n}$ and then applying triangular inequality we obtain
    \[
    \norm*{ g_{h_n}^{Q, \beta_n} - z_{\ell_n + \sigma}^{q} }[L^2(\S[d])] \leq \frac{\norm*{ G_{h_n}^{Q, \beta_n} - \Pi_{I(\ell_n + \sigma, 0)} G_{h_n}^{Q, \beta_n} }[L^2(\S[d])] }{ \norm*{G_{h_n}^{Q, \beta_n}}[L^2(\S[d])]^2 } + \norm*{\Pi_{I(\ell_n + \sigma, 0)} g_{h_n}^{Q, \beta_n} - (-1)^{1 - \sigma} z_{\ell_n + \sigma}^{q} }[L^2(\S[d])] \ .
    \]
    On the one hand, thanks to Theorem \ref{Thm: Gh L2 norm estimates scenario 2}, there exists $C > 1$ and $\nu \in \N$ such that
    \[
    \frac{\norm*{ G_{h_n}^{Q, \beta_n} - \Pi_{I(\ell_n + \sigma, 0)} G_{h_n}^{Q, \beta_n} }[L^2(\S[d])] }{ \norm*{G_{h_n}^{Q, \beta_n}}[L^2(\S[d])]^2 } \leq C \frac{ \abs{\sigma - \sigma_n} }{\abs*{\beta_{n,q} + (-1)^{\sigma} \rho \beta_{n,-q}}} \ .
    \]
    On the other hand, using that $(-1)^{1 - \sigma} = \frac{\abs{\sigma - \sigma_n}}{\sigma - \sigma_n}$ for all $n \in \N$,
    \[
    \norm*{\Pi_{I(\ell_n + \sigma, 0)} g_{h_n}^{Q, \beta_n} - (-1)^{1 - \sigma} z_{\ell_n + \sigma}^{q} }[L^2(\S[d])] = \abs*{ \frac{ \beta_{n, q} + (-1)^{\sigma} \rho \beta_{n, -q} }{\norm*{G_{h_n}^{Q, \beta_n}}[L^2(\S[d])]} \frac{\norm*{Z_{\ell_n + \sigma}^{q}}[L^2(\S[d])] }{ \lambda_{\ell_n + \sigma}^{2} - h_n^{-2} } - \frac{\abs{\sigma - \sigma_n}}{\sigma - \sigma_n} } \ .
    \]
    Adding and subtracting
    \[
    \frac{ \beta_{n, q} + (-1)^{\sigma} \rho \beta_{n, -q} }{\norm*{G_{h_n}^{Q, \beta_n}}[L^2(\S[d])]} \bigg( \frac{(h_n)^{3-d}}{2(d-1) \vol(\S[d])} \bigg)^{\frac{1}{2}} \frac{1}{\sigma - \sigma_n}
    \]
    and then using triangular inequality in combination with Lemma \ref{Lemma: asymp exp of quotient(ln+k) |k|<Upsilon}, with $\Upsilon = 1$ and $k = \sigma$, and Theorem \ref{Thm: Gh L2 norm estimates scenario 2} one gets that there exists $C > 1$ and $\nu \in \N$ such that for all $n \geq \nu$,
    \[
    \abs*{ \frac{ \beta_{n, q} + (-1)^{\sigma} \rho \beta_{n, -q} }{\norm*{G_{h_n}^{Q, \beta_n}}[L^2(\S[d])]} \frac{\norm*{Z_{\ell_n + \sigma}^{q}}[L^2(\S[d])] }{ \lambda_{\ell_n + \sigma}^{2} - h_n^{-2} } - \frac{\abs{\sigma - \sigma_n}}{\sigma - \sigma_n} } \leq C h_n \ .
    \]
\end{proof}

\subsection{Scenario 3}

\begin{Theorem} \label{Thm: Gh L2 norm estimates scenario 3}
    Let $(h_n)_{n \in \N} \subseteq (0,1)$ and $(\beta_{n})_{n \in \N} \subseteq \C[2]$, and for every $n \in \N$, let $\ell_n \in \N$ and $\sigma_n \in [0,1)$ as defined in \eqref{Eq: def elln and sigman}. Assume the following:
    \begin{enumerate}
        \item $h_n \to 0^+$ and $h_n^{-2} \notin \Spec(\Lap)$ for all $n \in \N$, thus $\sigma_n \in (0,1)$ for all $n \in \N$.
        \item $\beta_n \to \beta \in \C[2]$.
        \item $\ell_n$ is even for all $n \in \N$, or $\ell_n$ is odd for all $n \in \N$. Set $\rho \coloneqq (-1)^{\ell_n}$.
        \item $\sigma_n \to \sigma \in \{0,1\}$ as $n \to \infty$.
        \item $\beta_{q} + (-1)^{\sigma} \rho \beta_{-q} = 0$, so that
        \begin{equation} \label{Eq: def csigmabeta limit}
            c_{\sigma, \beta, \rho} \coloneqq \lim_{n \to \infty} \frac{\beta_{n,q} + (-1)^{\sigma} \rho \beta_{n,-q}}{ \sigma - \sigma_n } \in \C \ .
        \end{equation}
    \end{enumerate}
    Set
    \begin{equation} \label{Eq: Csigmabeta definition limitsigma}
        C_{\sigma, \beta, \rho} \coloneqq \bigg(\abs{c_{\sigma, \beta, \rho}}^2 + \sum_{\substack{ k \in \Z \\ k \neq \sigma} } \frac{ \abs{\beta_{q} + (-1)^{k} \rho \beta_{-q} }^2 }{ (k - \sigma)^2 } \bigg)^{\frac{1}{2}} \ .
    \end{equation}
    There exists $C > 1$ such that for every $\Upsilon \in \N$, there exists $\nu \in \N$ such that for all $n \geq \nu$,
    \begin{multline} \label{Eq: norm mainterm scenario 3}
        \abs*{ \norm*{ \Pi_{I(\ell_n + \sigma, 2\Upsilon - 1)} G_{h_n}^{Q, \beta_n} }[L^2(M)]^2 - \frac{(C_{\sigma, \beta, \rho})^2}{2(d-1) \vol(\S[d])}(h_n)^{3-d} } \leq \\
        \leq C (h_n)^{3-d} \bigg[ \Upsilon h_n + \abs{\sigma_n - \sigma} + \abs{\beta_n - \beta} + \abs*{ c_{\sigma, \beta, \rho} - \frac{\beta_{n,q} + (-1)^{\sigma} \rho \beta_{n,-q}}{ \sigma - \sigma_n } } + \Upsilon^{-1} \bigg] \ .
    \end{multline}
\end{Theorem}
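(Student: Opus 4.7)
The plan is to adapt the proof of \cref{Thm: Gh L2 norm estimates scenario 1} to the boundary case $\sigma \in \{0,1\}$, where the ``central'' index $k = \sigma$ now lies in $\Z$ and the denominator $\lambda_{\ell_n+\sigma}^2 - h_n^{-2}$ degenerates. The cancellation hypothesis $\beta_q + (-1)^\sigma \rho \beta_{-q} = 0$ ensures that the numerator $\abs{\beta_{n,q} + (-1)^\sigma \rho \beta_{n,-q}}$ vanishes at the same rate $\abs{\sigma - \sigma_n}$, so the $k = \sigma$ mode still contributes at order $(h_n)^{3-d}$ and yields a finite $\abs{c_{\sigma, \beta, \rho}}^2$ after normalization. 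The strategy is to isolate this singular mode and treat the remaining modes via the scenario 1 method.

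Concretely, I would split
\begin{equation*}
    \norm*{\Pi_{I(\ell_n + \sigma, 2\Upsilon - 1)} G_{h_n}^{Q, \beta_n}}[L^2(\S[d])]^{2} = T_\sigma(n) + R_\Upsilon(n),
\end{equation*}
where
\begin{equation*}
    T_\sigma(n) \coloneqq \abs*{\beta_{n,q} + (-1)^{\sigma} \rho \beta_{n, -q}}^2 \frac{\norm*{Z_{\ell_n + \sigma}^{q}}[L^2(\S[d])]^{2}}{\bigl(\lambda_{\ell_n + \sigma}^{2} - h_n^{-2}\bigr)^{2}},
\end{equation*}
and $R_\Upsilon(n)$ collects the contributions from integers $k$ with $0 < \abs{k - \sigma} \leq 2\Upsilon - 1$. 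The target asymptotics are $T_\sigma(n) \simeq \frac{(h_n)^{3-d}}{2(d-1)\vol(\S[d])} \abs{c_{\sigma, \beta, \rho}}^2$ and $R_\Upsilon(n) \simeq \frac{(h_n)^{3-d}}{2(d-1)\vol(\S[d])} \sum_{k \ne \sigma} \abs{\beta_q + (-1)^k \rho \beta_{-q}}^2 / (k - \sigma)^2$, whose sum is the announced main term by \eqref{Eq: Csigmabeta definition limitsigma}.

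For $T_\sigma(n)$, I would invoke the identity \eqref{Eq: first order approx of lambda(ln+k)} at $k = \sigma$, namely $\lambda_{\ell_n + \sigma}^2 - h_n^{-2} = (\sigma - \sigma_n)\bigl[2(\ell_n + \sigma_n + \tfrac{d-1}{2}) + (\sigma - \sigma_n)\bigr]$; combined with \cref{Prop: hn elln approximation}, this gives $\lambda_{\ell_n + \sigma}^2 - h_n^{-2} = 2(\sigma - \sigma_n) h_n^{-1}\bigl[1 + \BigO(h_n)\bigr]$, and \cref{Lemma: asymp exp of m(ln+k)} gives $m_{\ell_n + \sigma} = \tfrac{2}{d-1}(h_n)^{1-d}\bigl[1 + \BigO(h_n)\bigr]$. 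Substituting,
\begin{equation*}
    T_\sigma(n) = \frac{(h_n)^{3-d}}{2(d-1)\vol(\S[d])} \abs*{\frac{\beta_{n,q} + (-1)^\sigma \rho \beta_{n,-q}}{\sigma - \sigma_n}}^2 \bigl[1 + \BigO(h_n)\bigr].
\end{equation*}
Since the ratio above converges to $c_{\sigma, \beta, \rho}$ it is bounded, so the relative $\BigO(h_n)$ translates into an absolute error of size $(h_n)^{3-d} h_n$. Replacing the squared ratio by $\abs{c_{\sigma, \beta, \rho}}^2$ then costs an additional $(h_n)^{3-d}\bigl|c_{\sigma, \beta, \rho} - (\beta_{n,q} + (-1)^\sigma \rho \beta_{n,-q})/(\sigma - \sigma_n)\bigr|$, via \cref{Lemma: zn wn converging sequences estimate} applied to the ratio and its limit.

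For $R_\Upsilon(n)$, since $k \ne \sigma$ I apply the second estimate \eqref{Eq: asymp exp of quotient(ln+k) conv sigman} of \cref{Lemma: asymp exp of quotient(ln+k) |k|<Upsilon} term by term, introduce and subtract $\abs{\beta_q + (-1)^k \rho \beta_{-q}}^2 / (k - \sigma)^2$, apply \cref{Lemma: zn wn converging sequences estimate} to swap $\beta_n$ for $\beta$ at cost $\abs{\beta_n - \beta}$, and finally use \eqref{Eq: upper bound tail of converging series} to extend the sum to all $k \in \Z \setminus \{\sigma\}$ at cost $\BigO((h_n)^{3-d} \Upsilon^{-1})$; this is essentially the calculation already performed in the proof of \cref{Thm: Gh L2 norm estimates scenario 1}, adapted to the shifted centre $\ell_n + \sigma$. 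Assembling the errors from $T_\sigma(n)$ and $R_\Upsilon(n)$ and absorbing $h_n$ into $\Upsilon h_n$ produces \eqref{Eq: norm mainterm scenario 3}. The main delicacy will be in the $T_\sigma(n)$ step: both the numerator and a factor in the denominator tend to $0$, so the relative error must be expressed against the bounded limiting ratio $c_{\sigma, \beta, \rho}$ rather than against the degenerating quantities themselves; once this is done, scenario 1 provides the template for everything else.
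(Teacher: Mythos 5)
Your proposal is correct and takes essentially the same approach as the paper: both isolate the singular $k=\sigma$ mode (where the cancellation $\beta_q + (-1)^\sigma\rho\beta_{-q}=0$ is used to turn a $0/0$ into the finite limit $c_{\sigma,\beta,\rho}$, at cost $\abs{c_{\sigma,\beta,\rho}-(\beta_{n,q}+(-1)^\sigma\rho\beta_{n,-q})/(\sigma-\sigma_n)}$), and then treat the remaining indices and the tail exactly as in scenario~1. The only cosmetic difference is that the paper re-uses \cref{Lemma: asymp exp of quotient(ln+k) |k|<Upsilon} directly at $k=\sigma$ (where you re-derive the quotient asymptotics from the $\lambda_{\ell_n+k}^2$ and $m_{\ell_n+k}$ expansions), and splits the remainder into two sums ($\Sigma_\Upsilon$ and $\Sigma_\infty$) rather than folding the tail into one estimate.
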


\begin{proof}
    Throughout the proof, $C$ will denote a large positive constant whose value may change from line to line and that may depend on $\sup_{n \in \N} \abs{\beta_{n}}$, but at all times is independent of $h_n$, $\sigma_n$, and $\Upsilon$. In the same way, $\nu$ will denote a large index whose value may change from line to line and may depend on $\Upsilon$ and how fast does $h_n \to 0$.

    We proceed as in the proof of Theorem \ref{Thm: Gh L2 norm estimates scenario 1}. We have for all $n \in \N$,
    \begin{equation*}
        \norm*{ \Pi_{I(\ell_n + \sigma, 2\Upsilon - 1)} G_{h_n}^{Q, \beta_n} }[L^2(M)]^2 = \sum_{\abs{k - \sigma} \leq 2\Upsilon - 1} \abs{\beta_{n,q} + (-1)^{k} \rho \beta_{n, -q}}^2 \frac{ \norm{Z_{\ell_n + k}^{q}}[L^2(\S[d])]^2 }{ \abs{\lambda_{\ell_n + k}^2 + h_n^{-2}}^2 } \ .
    \end{equation*}
    We start applying triangular inequality, splitting the sums into the following categories $k = \sigma$, $1 \leq \abs{k - \sigma} \leq 2\Upsilon - 1$, and $\abs{k - \sigma} \geq 2\Upsilon$:
    \begin{equation} \label{Eq: thm Gh norms scnr3 aux1}
        \abs*{ \norm*{\Pi_{I(\ell_n + \sigma, 2\Upsilon - 1)} G_{h_n}^{Q, \beta_n} }[L^2(\S[d])]^{2} - \frac{ (C_{\sigma, \beta, \rho})^2 }{ 2(d-1) \vol(\S[d]) } (h_n)^{3-d} } \leq \Sigma_{\sigma} + \Sigma_{\Upsilon} + \Sigma_{\infty} \ ,
    \end{equation}
    where
    \begin{align*}
        \Sigma_{\sigma} & \coloneqq \abs[\bigg]{ \abs{\beta_{n, q} + (-1)^{\sigma} \rho \beta_{n, -q}}^2 \frac{ \norm*{Z_{\ell_n}^{q}}[L^2(\S[d])]^2 }{ \abs{ \lambda_{\ell_n + \sigma}^{2} - h_n^{-2} }^2 } - \frac{ (h_n)^{3-d} }{2(d-1) \vol(\S[d]) } \abs{C_{\sigma, \beta, \rho} }^2 } \ ,\\
        \Sigma_{\Upsilon} & \coloneqq \sum_{ \abs{k - \sigma} \leq 2\Upsilon - 1 } \abs[\bigg]{ \abs{\beta_{n,q} + (-1)^{k} \rho \beta_{n, -q}}^2 \frac{ \norm{Z_{\ell_n + k}^{q}}[L^2(\S[d])]^2 }{ \abs{\lambda_{\ell_n + k}^2 + h_n^{-2}}^2 } - \abs{\beta_{q} + (-1)^k \beta_{-q}}^2 \frac{ (h_n)^{3-d} }{ 2(d-1) \vol(\S[d]) \abs{k - \sigma}^2 } } \ , \\
        \Sigma_{\infty} & \coloneqq \frac{(h_n)^{3-d}}{2(d-1) \vol(\S[d])}  \sum_{ \abs{k - \sigma} \geq 2\Upsilon} \frac{ \abs{\beta_{q} + (-1)^{k} \rho \beta_{-q}}^2 }{ (k - \sigma)^2 } \ .
    \end{align*}

    We study $\Sigma_{\sigma}$ first. Adding and subtracting
    \[
    \frac{ (h_n)^{3-d} }{2(d-1) \vol(\S[d]) } \frac{ \abs{\beta_{n, q} + (-1)^{\sigma} \rho \beta_{n, -q}}^2 }{ \abs{\sigma_n - \sigma}^2 }
    \]
    we get
    \begin{align*}
        \Sigma_{\sigma} & \leq \abs*{\beta_{n, q} + (-1)^{\sigma} \rho \beta_{n, -q}}^2 \abs*{ \frac{ \norm*{Z_{\ell_n}^{q}}[L^2(\S[d])]^2 }{ \abs{ \lambda_{\ell_n + \sigma}^{2} - h_n^{-2} }^2 } - \frac{ (h_n)^{3-d} }{2(d-1) \vol(\S[d]) } \frac{1}{\abs{\sigma_n - \sigma}^2} } \\
        & \hspace{40pt} + \frac{ (h_n)^{3-d} }{2(d-1) \vol(\S[d]) } \abs[\bigg]{ \frac{ \abs{\beta_{n, q} + (-1)^{\sigma} \rho \beta_{n, -q}}^2 }{ \abs{\sigma_n - \sigma}^2 } - \abs{c_{\sigma, \beta, \rho}}^2 }
    \end{align*}
    Thanks to Lemma \ref{Lemma: asymp exp of quotient(ln+k) |k|<Upsilon} with $\Upsilon = 1$ and $k = \sigma$, and since \eqref{Eq: def csigmabeta limit} hold, there exists some $C > 1$ and $\nu \in \N$ such that for all $n \geq \nu$,
    \begin{equation} \label{Eq: thm Gh norms scnr3 aux2}
        \begin{aligned}
            \Sigma_{\sigma} & \leq \abs{\beta_{n, q} + (-1)^{\sigma} \rho \beta_{n, -q}}^2 C \frac{ (h_n)^{4-d} }{\abs{\sigma - \sigma_n}^2} + C (h_n)^{3-d} \abs[\bigg]{ \frac{\beta_{n,q} + (-1)^{\sigma} \rho \beta_{n,-q}}{ \sigma - \sigma_n } - c_{\sigma, \beta, \rho} } \\
            & \leq C (h_n)^{3-d} \bigg[ h_n + \abs[\bigg]{ \frac{\beta_{n,q} + (-1)^{\sigma} \rho \beta_{n,-q}}{ \sigma - \sigma_n } - c_{\sigma, \beta, \rho} } \bigg] \ .
        \end{aligned}
    \end{equation}

    Then, for $\Sigma_{\Upsilon}$, we add and subtract for every $1 \leq \abs{k - \sigma} \leq 2\Upsilon - 1$,
    \[
    \abs{\beta_{n,q} + (-1)^{k} \rho \beta_{n, -q}}^2 \frac{ (h_n)^{3-d} }{ 2(d-1) \vol(\S[d]) \abs{k - \sigma}^2 } \ ,
    \]
    and then apply triangular inequality, obtaining
    \begin{align*}
        \Sigma_{\Upsilon} & \leq \sum_{\abs{k - \sigma} = 1}^{2\Upsilon - 1} \abs{\beta_{n,q} + (-1)^{k} \rho \beta_{n, -q}}^2 \abs*{ \frac{ \norm{Z_{\ell_n + k}^{q}}[L^2(\S[d])]^2 }{ \abs{\lambda_{\ell_n + k}^2 + h_n^{-2}}^2 } - \frac{ (h_n)^{3-d} }{ 2(d-1) \vol(\S[d]) \abs{k - \sigma}^2 } } \\
        & \hspace{40pt} + \frac{ (h_n)^{3-d} }{ 2(d-1) \vol(\S[d]) } \sum_{\abs{k - \sigma} = 1}^{2\Upsilon - 1} \abs[\Big]{ \abs{\beta_{n,q} + (-1)^{k} \rho \beta_{n, -q}}^2 - \abs{\beta_{q} + (-1)^{k} \rho \beta_{-q}}^2 } \frac{1}{ \abs{k - \sigma}^2 }
    \end{align*}
    Thanks to Lemma \ref{Lemma: asymp exp of quotient(ln+k) |k|<Upsilon} for all $1 \leq \abs{k - \sigma} \leq \Upsilon$, and Lemma \ref{Lemma: zn wn converging sequences estimate} with $z_{n} = \beta_{n,q}$ and $w_{n} = \rho \beta_{n,-q}$, there exists $C > 1$ such that for all $\Upsilon \in \N$ there exists $\nu \in \N$ such that for all $n \geq \nu$,
    \begin{equation} \label{Eq: thm Gh norms scnr3 aux3}
        \begin{aligned}
            \Sigma_{\Upsilon} & \leq C (h_n)^{3-d} \Big[ \Upsilon h_n + \abs{\sigma - \sigma_n} \Big] \sum_{\abs{k - \sigma} = 1}^{2\Upsilon - 1} \frac{1}{ \abs{k - \sigma}^2 } + C (h_n)^{3-d} \abs{\beta_n - \beta} \sum_{\abs{k - \sigma} = 1}^{2\Upsilon - 1} \frac{1}{ \abs{k - \sigma}^2 } \\
            & \leq C (h_n)^{3-d} \Big[ \Upsilon h_n + \abs{\sigma - \sigma_n} + \abs{\beta_n - \beta} \Big] \ .
        \end{aligned}
    \end{equation}

    Lastly, thanks to \eqref{Eq: upper bound tail of converging series}, there exists $C > 1$ such that for all $\Upsilon \in \N$ and all $n \in \N$,
    \begin{equation} \label{Eq: thm Gh norms scnr3 aux4}
        \Sigma_{\infty} \leq C (h_n)^{3-d} \Upsilon^{-1} \ .
    \end{equation}
    Substituting \eqref{Eq: thm Gh norms scnr3 aux2}, \eqref{Eq: thm Gh norms scnr3 aux3}, and \eqref{Eq: thm Gh norms scnr3 aux4} into \eqref{Eq: thm Gh norms scnr3 aux1}, the result is concluded.
\end{proof}

\begin{Theorem} \label{Thm: gh quasimode scenario 3}
    Let $(h_n)_{n \in \N} \subseteq (0,1)$ and $(\beta_{n})_{n \in \N} \subseteq \C[2]$, and for every $n \in \N$, let $\ell_n \in \N$ and $\sigma_n \in [0,1)$ as defined in \eqref{Eq: def elln and sigman}. Assume the following:
    \begin{enumerate}
        \item $h_n \to 0^+$ and $h_n^{-2} \notin \Spec(\Lap)$ for all $n \in \N$, thus $\sigma_n \in (0,1)$ for all $n \in \N$.
        \item $\beta_n \to \beta \in \C[2]$.
        \item $\ell_n$ is even for all $n \in \N$, or $\ell_n$ is odd for all $n \in \N$. Set $\rho \coloneqq (-1)^{\ell_n}$.
        \item $\sigma_n \to \sigma \in \{0,1\}$ as $n \to \infty$.
        \item $\beta_{q} + (-1)^{\sigma} \rho \beta_{-q} = 0$, so that
        \begin{equation*}
            c_{\sigma, \beta, \rho} \coloneqq \lim_{n \to \infty} \frac{\beta_{n,q} + (-1)^{\sigma} \rho \beta_{n,-q}}{ \sigma - \sigma_n } \in \C \ .
        \end{equation*}
    \end{enumerate}
    Set
    \begin{equation*}
        C_{\sigma, \beta, \rho} \coloneqq \bigg(\abs{c_{\sigma, \beta, \rho}}^2 + \sum_{\substack{ k \in \Z \\ k \neq \sigma} } \frac{ \abs{\beta_{q} + (-1)^{k} \rho \beta_{-q} }^2 }{ (k - \sigma)^2 } \bigg)^{\frac{1}{2}} \ .
    \end{equation*}
    There exists $C > 1$ such that for every $\Upsilon \in \N$, there exists $\nu \in \N$ such that for all $n \geq \nu$,
    \begin{multline} \label{Eq: ghQbeta quasimode scenario 3}
        \norm*{ g_{h_n}^{Q, \beta_n} - \frac{1}{C_{\sigma, \beta, \rho}} \bigg[ c_{\sigma, \beta, \rho} z_{\ell_n + \sigma}^{q} + \sum_{ \abs{k - \sigma} = 1}^{2\Upsilon - 1} \frac{ \beta_{q} + (-1)^{k} \rho \beta_{-q} }{ k - \sigma } z_{\ell_n + k}^{q} \bigg] }[L^2(\S[d])] \\ 
        \leq C \bigg[ \Upsilon h_n + \abs{\sigma_n - \sigma} + \abs{\beta_n - \beta} + \abs[\bigg]{ c_{\sigma, \beta, \rho} - \frac{\beta_{n,q} + (-1)^{\sigma} \rho \beta_{n,-q}}{ \sigma - \sigma_n } } + \Upsilon^{-\frac{1}{2}} \bigg] \ .
    \end{multline}
\end{Theorem}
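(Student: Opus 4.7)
The plan is to mirror the proof of \cref{Thm: gh quasimode scenario 1}, with the spectral window $I(\ell_n, \Upsilon)$ replaced by $I(\ell_n + \sigma, 2\Upsilon - 1)$: in scenario 3 the eigenvalue of $\Spec(\Lap)$ closest to $h_n^{-2}$ is $\lambda_{\ell_n + \sigma}^2$, which is why the target linear combination is centred on $k = \sigma$. I would start by inserting $\pm\, \Pi_{I(\ell_n + \sigma, 2\Upsilon - 1)} g_{h_n}^{Q, \beta_n}$ inside the $L^2$-norm and applying the triangle inequality, splitting the estimate into a tail piece and a main piece.

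The tail piece is controlled by \cref{Thm: Gh tails L2 norm estimates}, which gives
\[
\norm*{G_{h_n}^{Q,\beta_n} - \Pi_{I(\ell_n + \sigma, 2\Upsilon - 1)} G_{h_n}^{Q,\beta_n}}[L^2(\S[d])]^2 \leq C \Upsilon^{-1} (h_n)^{3-d},
\]
together with the matching asymptotic $\norm*{G_{h_n}^{Q,\beta_n}}[L^2(\S[d])]^2 \sim \frac{(C_{\sigma,\beta,\rho})^2}{2(d-1)\vol(\S[d])} (h_n)^{3-d}$ from \cref{Thm: Gh L2 norm estimates scenario 3}; dividing, the tail contributes at most $C \Upsilon^{-1/2}$.

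For the main piece, both summands are linear combinations of the orthonormal family $\{z_{\ell_n + k}^q\}_{\abs{k - \sigma} \leq 2\Upsilon - 1}$, so the squared $L^2$-norm reduces to an $\ell^2$-sum of coefficient differences. I would separate the singular index $k = \sigma$ from the regular range $1 \leq \abs{k - \sigma} \leq 2\Upsilon - 1$. The regular indices are treated by an almost verbatim repetition of \cref{Lemma: approx of quotient quasimode scenario 1}: the sharp estimate \eqref{Eq: asymp exp of quotient(ln+k) conv sigman} of \cref{Lemma: asymp exp of quotient(ln+k) |k|<Upsilon} (which applies precisely because $k \neq \sigma$), combined with the norm asymptotic from \cref{Thm: Gh L2 norm estimates scenario 3}, shows that each coefficient of $z_{\ell_n + k}^q$ differs from $\frac{1}{C_{\sigma, \beta, \rho}} \cdot \frac{\beta_q + (-1)^k \rho \beta_{-q}}{k - \sigma}$ by at most a $C\abs{k - \sigma}^{-1}$-multiple of the bracketed error on the right-hand side of \eqref{Eq: ghQbeta quasimode scenario 3}; squaring and summing against the convergent series $\sum \abs{k - \sigma}^{-2}$ preserves the bound.

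The main obstacle is the new singular term at $k = \sigma$, which has no analogue in scenario 1. Its coefficient in $\Pi_{I(\ell_n + \sigma, 2\Upsilon - 1)} g_{h_n}^{Q, \beta_n}$ is
\[
\frac{\bigl(\beta_{n,q} + (-1)^\sigma \rho \beta_{n,-q}\bigr) \, \norm*{Z_{\ell_n + \sigma}^q}[L^2(\S[d])]}{\bigl(\lambda_{\ell_n + \sigma}^2 - h_n^{-2}\bigr)\, \norm*{G_{h_n}^{Q,\beta_n}}[L^2(\S[d])]},
\]
a $0/0$ form in the limit. The plan is to factor $\sigma - \sigma_n$ out of both numerator and denominator: \cref{Lemma: asymp exp of lambda(ln+k)} with $k = \sigma$ gives $\lambda_{\ell_n + \sigma}^2 - h_n^{-2} = 2(\sigma_n - \sigma) h_n^{-1} + \BigO(\abs{\sigma_n - \sigma})$; hypothesis (5) provides $\frac{\beta_{n,q} + (-1)^\sigma \rho \beta_{n,-q}}{\sigma - \sigma_n} \to c_{\sigma, \beta, \rho}$ with quantitative error equal to the term $\abs[\big]{c_{\sigma, \beta, \rho} - \frac{\beta_{n,q} + (-1)^\sigma \rho \beta_{n,-q}}{\sigma - \sigma_n}}$ already appearing in \eqref{Eq: ghQbeta quasimode scenario 3}; and \cref{Thm: Gh L2 norm estimates scenario 3} furnishes the asymptotic of $\norm*{G_{h_n}^{Q,\beta_n}}[L^2(\S[d])]$ recalled above. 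Multiplying these three approximations identifies the limit of the singular coefficient as $c_{\sigma, \beta, \rho}/C_{\sigma, \beta, \rho}$, with all error contributions absorbed into the right-hand side of \eqref{Eq: ghQbeta quasimode scenario 3}. Combining the tail estimate with the $\ell^2$ coefficient-difference bound yields the desired inequality.
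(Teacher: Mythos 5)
Your proposal follows essentially the same route as the paper: insert $\pm\,\Pi_{I(\ell_n + \sigma, 2\Upsilon - 1)} g_{h_n}^{Q,\beta_n}$ and split into a tail piece (controlled via \cref{Thm: Gh tails L2 norm estimates} and \cref{Thm: Gh L2 norm estimates scenario 3}) and a main piece, then separate the singular index $k=\sigma$ from the regular window $1 \leq \abs{k-\sigma} \leq 2\Upsilon-1$, exactly as the paper does in its decomposition $\Sigma_\sigma + \Sigma_\Upsilon$; the "factor out $\sigma - \sigma_n$" step you describe for the $k=\sigma$ coefficient is precisely what the paper's \cref{Lemma: approx of quotient quasimode ksigma scenario 3} implements by adding and subtracting the intermediate quantity $\frac{1}{C_{\sigma,\beta,\rho}}\frac{\beta_{n,q}+(-1)^\sigma\rho\beta_{n,-q}}{\sigma-\sigma_n} z_{\ell_n+\sigma}^q$. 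One small slip: with $k=\sigma$, \cref{Lemma: asymp exp of lambda(ln+k)} gives $\lambda_{\ell_n+\sigma}^2 - h_n^{-2} = 2(\sigma - \sigma_n)h_n^{-1} + \BigO(\abs{\sigma-\sigma_n}\,\Upsilon)$, not $2(\sigma_n - \sigma)h_n^{-1}$; the sign and the $\Upsilon$-factor in the error are immaterial to the argument but should be recorded correctly.
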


\begin{proof}
    We mimic the proof of Theorem \ref{Thm: gh quasimode scenario 1}. Adding and subtracting $\Pi_{I(\ell_n + \sigma, 2\Upsilon - 1)} g_{h_n}^{Q, \beta_n}$ and using triangular inequality,
    \begin{align*}
        & \norm*{ g_{h_n}^{Q, \beta_n} - \frac{1}{C_{\sigma, \beta, \rho}} \bigg[ c_{\sigma, \beta, \rho} z_{\ell_n + \sigma}^{q} + \sum_{ \abs{k - \sigma} = 1}^{2\Upsilon - 1} \frac{ \beta_{q} + (-1)^{k} \rho \beta_{-q} }{ k - \sigma } z_{\ell_n + k}^{q} \bigg] }[L^2(\S[d])] \leq \\
        & \hspace{50pt} \leq \frac{ \norm*{ G_{h_n}^{Q, \beta_n} - \Pi_{I(\ell_n + \sigma, 2\Upsilon - 1)} G_{h_n}^{Q, \beta_n} }[L^2(\S[d])] }{ \norm*{G_{h_n}^{Q, \beta_n}}[L^2(\S[d])] } + \\
        & \hspace{60pt} + \norm*{ \Pi_{I(\ell_n + \sigma, 2\Upsilon - 1)} g_{h_n}^{Q, \beta_n} - \frac{1}{C_{\sigma, \beta, \rho}} \bigg[ c_{\sigma, \beta, \rho} z_{\ell_n + \sigma}^{q} + \sum_{ \abs{k - \sigma} = 1}^{2\Upsilon - 1} \frac{ \beta_{q} + (-1)^{k} \rho \beta_{-q} }{ k - \sigma } z_{\ell_n + k}^{q} \bigg] }[L^2(\S[d])] \ .
    \end{align*}
    Thanks to Theorem \ref{Thm: Gh tails L2 norm estimates} and Theorem \ref{Thm: Gh L2 norm estimates scenario 3}, there exists $C > 1$ such that for all $\Upsilon \in \N$ there exists $\nu \in \N$ such that for all $n \geq \nu$,
    \begin{equation} \label{Eq: quasimode thm scnr3 aux1}
        \frac{ \norm*{ G_{h_n}^{Q, \beta_n} - \Pi_{I(\ell_n + \sigma, 2\Upsilon - 1)} G_{h_n}^{Q, \beta_n} }[L^2(\S[d])] }{ \norm*{G_{h_n}^{Q, \beta_n}}[L^2(\S[d])] } \leq C \Upsilon^{-\frac{1}{2}} \ .
    \end{equation}

    For the second term, we first split via the triangular inequality into the indices $k = \sigma$ and $\abs{k} \leq \Upsilon$, $k \neq \sigma$:
    \[
    \norm*{ \Pi_{I(\ell_n + \sigma, 2\Upsilon - 1)} g_{h_n}^{Q, \beta_n} - \frac{1}{C_{\sigma, \beta, \rho}} \bigg[ c_{\sigma, \beta, \rho} z_{\ell_n + \sigma}^{q} + \sum_{\abs{k - \sigma} = 1}^{2\Upsilon - 1} \frac{ \beta_q + (-1)^k \rho \beta_{-q} }{ k - \sigma } z_{\ell_n + k}^{q} \bigg] }[L^2(\S[d])] \leq \Sigma_{\sigma} + \Sigma_{\Upsilon} \ ,
    \]
    where
    \begin{align*}
        \Sigma_{\sigma} & \coloneqq \norm*{ \frac{1}{\norm*{G_{h_n}^{Q, \beta_n}}[L^2(\S[d])]} \frac{ \beta_{n,q} + (-1)^{\sigma} \rho \beta_{n,-q}}{ \lambda_{\ell_n + \sigma}^2 - h_n^{-2} } Z_{\ell_n + \sigma}^{q} - \frac{1}{C_{\sigma, \beta, \rho}} c_{\sigma, \beta, \rho} z_{\ell_n + \sigma}^{q} }[L^2(\S[d])] \ , \\
        \Sigma_{\Upsilon} & \coloneqq \norm*{ \frac{1}{\norm*{G_{h_n}^{Q, \beta_n}}[L^2(\S[d])]} \sum_{ \abs{k - \sigma} = 1}^{2 \Upsilon - 1} \frac{\beta_{n,q} + (-1)^{k} \rho \beta_{n,-q}}{ \lambda_{\ell_n + k}^2 - h_n^{-2} } Z_{\ell_n + k}^{q} - \frac{1}{C_{\sigma, \beta, \rho}} \sum_{\abs{k - \sigma} = 1}^{2\Upsilon - 1} \frac{ \beta_q + (-1)^k \rho \beta_{-q} }{ k - \sigma } z_{\ell_n + k}^{q} }[L^2(\S[d])] \ .
    \end{align*}

    Adding and subtracting
    \[
    \frac{1}{C_{\sigma, \beta, \rho}} \frac{\beta_{n,q} + (-1)^{\sigma} \rho \beta_{n,-q}}{ \sigma - \sigma_n } z_{\ell_n + \sigma}^{q} \ ,
    \]
    into $\Sigma_{\sigma}$, and then using triangular inequality, we get
    \begin{align*}
        \Sigma_{\sigma} & \leq \abs*{\beta_{n,q} + (-1)^{\sigma} \rho \beta_{n,-q}} \abs*{ \frac{\norm*{Z_{\ell_n + \sigma}^{q}}[L^2(\S[d])] }{ \norm*{G_{h_n}^{Q, \beta_n}}[L^2(\S[d])] (\lambda_{\ell_n + \sigma}^2 - h_n^{-2}) } - \frac{1}{C_{\sigma, \beta, \rho}} \frac{1}{\sigma - \sigma_n} } \\
        & \qquad + \frac{1}{C_{\sigma, \beta, \rho}} \abs[\bigg]{ \frac{\beta_{n,q} + (-1)^{\sigma} \rho \beta_{n,-q}}{ \sigma - \sigma_n } - C_{\sigma, \beta, \rho} } \ .
    \end{align*}

    \begin{Lemma} \label{Lemma: approx of quotient quasimode ksigma scenario 3}
        There exists $C > 1$ and $\nu \in \N$ such that for all $n \geq \nu$,
        \begin{multline*}
            \abs*{\beta_{n,q} + (-1)^{\sigma} \rho \beta_{n,-q}} \abs*{ \frac{ \norm*{Z_{\ell_n + \sigma}^{q}}[L^2(\S[d])] }{ \norm*{G_{h_n}^{Q, \beta_n} }[L^2(\S[d])] (\lambda_{\ell_n + \sigma}^2 - h_n^{-2}) } - \frac{1}{C_{\sigma, \beta, \rho}} \frac{1}{k - \sigma} } \leq \\
            \leq C \bigg[ h_n + \abs{\sigma_n - \sigma} + \abs{\beta_n - \beta} + \abs[\bigg]{ c_{\sigma, \beta, \rho} - \frac{\beta_{n,q} + (-1)^{\sigma} \rho \beta_{n,-q}}{ \sigma - \sigma_n } } + \Upsilon^{-1} \bigg] \ .
        \end{multline*}
    \end{Lemma}
    \begin{proof}[Proof of \texorpdfstring{Lemma \ref{Lemma: approx of quotient quasimode ksigma scenario 3}}{Proof of Lemma approx quotient scn3}]
        We add and subtract
        \[
        \frac{1}{\norm*{G_{h_n}^{Q, \beta_n} }[L^2(\S[d])]} \bigg( \frac{(h_n)^{3-d}}{2(d-1) \vol(\S[d])} \bigg)^{\frac{1}{2}} \frac{\beta_{n, q} + (-1)^{\sigma} \rho \beta_{n, -q} }{\sigma - \sigma_n} \ ,
        \]
        and then apply triangular inequality. 

        On the one hand, thanks to Lemma \ref{Lemma: asymp exp of quotient(ln+k) |k|<Upsilon}, with $\Upsilon = 1$ and $k = \sigma$, and Theorem \ref{Thm: Gh L2 norm estimates scenario 3}, and since \eqref{Eq: def csigmabeta limit} holds, there exists $C > 1$ and $\nu \in \N$ such that for all $n \geq \nu$,
        \begin{multline*}
        \frac{ \abs*{\beta_{n,q} + (-1)^{\sigma} \rho \beta_{n,-q}} }{\norm*{G_{h_n}^{Q, \beta_n} }[L^2(\S[d])]} \abs[\Bigg]{ \frac{ \norm*{Z_{\ell_n + \sigma}^{q}}[L^2(\S[d])] }{ \lambda_{\ell_n + \sigma}^2 - h_n^{-2} } - \bigg( \frac{(h_n)^{3-d}}{2(d-1) \vol(\S[d])} \bigg)^{\frac{1}{2}} \frac{1}{\sigma - \sigma_n}  } \leq \\
        \leq C \frac{\abs*{\beta_{n,q} + (-1)^{\sigma} \rho \beta_{n,-q}} }{ \abs{\sigma - \sigma_n} } h_n \leq C h_n \ .
        \end{multline*}
        
        On the other hand, Theorem \ref{Thm: Gh L2 norm estimates scenario 3} provides some $C > 1$ such that for all $\Upsilon \in \N$ there exists $\nu \in \N$ such that for all $n \geq \nu$,
        \begin{multline*}
            \bigg( \frac{(h_n)^{3-d}}{2(d-1) \vol(\S[d])} \bigg)^{\frac{1}{2}} \frac{ \abs*{\beta_{n,q} + (-1)^{\sigma} \rho \beta_{n,-q}} }{\abs{\sigma - \sigma_n }} \abs*{ \frac{1}{\norm*{G_{h_n}^{Q, \beta_n} }[L^2(\S[d])]} - \bigg( \frac{(h_n)^{3-d}}{2(d-1) \vol(\S[d])} \bigg)^{-\frac{1}{2}} \frac{1}{C_{\sigma, \beta, \rho}} } \leq \\
            \leq C \Big[ \Upsilon h_n + \abs{\sigma_n - \sigma} + \abs{\beta_n - \beta} + \abs[\bigg]{ c_{\sigma, \beta, \rho} - \frac{\beta_{n,q} + (-1)^{\sigma} \rho \beta_{n,-q}}{ \sigma - \sigma_n } } + \Upsilon^{-1} \Big] \ ,
        \end{multline*}
        where we used that \eqref{Eq: def csigmabeta limit} holds.
    \end{proof}
    Therefore, thanks to Lemma \ref{Lemma: approx of quotient quasimode ksigma scenario 3} and using that \eqref{Eq: def csigmabeta limit} holds, we infer that
    \begin{equation} \label{Eq: quasimode thm scnr3 aux2}
        \Sigma_{\sigma} \leq C \bigg[ h_n + \abs{\sigma_n - \sigma} + \abs{\beta_n - \beta} + \abs[\bigg]{ c_{\sigma, \beta, \rho} - \frac{\beta_{n,q} + (-1)^{\sigma} \rho \beta_{n,-q}}{ \sigma - \sigma_n } } + \Upsilon^{-1} \bigg] \ .
    \end{equation}

    On the other hand, for $\Sigma_{\Upsilon}$, we add and subtract
    \[
    \frac{1}{C_{\sigma, \beta, \rho}} \sum_{ \abs{k - \sigma} = 1}^{2\Upsilon -1} \frac{\beta_{n, q} + (-1)^{k} \rho \beta_{n,-q}}{ k - \sigma } z_{\ell_n + k}^{q} \ ,
    \]
    and then apply triangular inequality and Parseval's identity, obtaining
    \begin{align*}
        \Sigma_{\Upsilon} & \leq \left( \sum_{ \abs{k - \sigma} = 1}^{2\Upsilon - 1} \abs*{\beta_{n,q} + (-1)^{\sigma} \rho \beta_{n,-q}}^2 \abs*{ \frac{ \norm*{Z_{\ell_n + k}^{q}}[L^2(\S[d])] }{ \norm*{G_{h_n}^{Q, \beta_n} }[L^2(\S[d])] (\lambda_{\ell_n + k}^2 - h_n^{-2}) } - \frac{1}{C_{\sigma, \beta, \rho}} \frac{1}{k - \sigma} }^2  \right)^{\frac{1}{2}} \\
        & \qquad + \frac{1}{C_{\sigma, \beta, \rho}} \left( \sum_{\abs{k - \sigma} = 1}^{2\Upsilon - 1} \frac{ \abs{ (\beta_{n, q} - \beta_{q}) + (-1)^k \rho (\beta_{n, -q} - \beta_{-q}) }^2 }{(k - \sigma)^2} \right)^{\frac{1}{2}}
    \end{align*}

    \begin{Lemma} \label{Lemma: approx of quotient quasimode scenario 3}
        There exists $C > 1$ such that for all $\Upsilon \in \N$ there exists $\nu \in \N$ such that for all $n \geq \nu$ and all $1 \leq \abs{k - \sigma} \leq 2\Upsilon - 1$,
        \begin{multline*}
            \abs*{ \frac{ \norm*{Z_{\ell_n + k}^{q}}[L^2(\S[d])] }{ \norm*{G_{h_n}^{Q, \beta_n} }[L^2(\S[d])] (\lambda_{\ell_n + k}^2 - h_n^{-2}) } - \frac{1}{C_{\sigma, \beta, \rho}} \frac{1}{k - \sigma} } \leq \\
            \leq \frac{C}{\abs{k - \sigma}} \bigg[ \Upsilon h_n + \abs{\sigma_n - \sigma} + \abs{\beta_n - \beta} + \abs[\bigg]{ c_{\sigma, \beta, \rho} - \frac{\beta_{n,q} + (-1)^{\sigma} \rho \beta_{n,-q}}{ \sigma - \sigma_n } } + \Upsilon^{-1} \bigg] \ .
        \end{multline*}
    \end{Lemma}
    \begin{proof}[Proof of \texorpdfstring{Lemma \ref{Lemma: approx of quotient quasimode scenario 3}}{Proof of Lemma approx quotient scn3}]
        Let $k \in \Z$ such that $1 \leq \abs{k - \sigma} \leq 2\Upsilon - 1$. We add and subtract
        \[
        \frac{1}{\norm*{G_{h_n}^{Q, \beta_n} }[L^2(\S[d])]} \bigg( \frac{(h_n)^{3-d}}{2(d-1) \vol(\S[d])} \bigg)^{\frac{1}{2}} \frac{1}{k - \sigma} \ ,
        \]
        and then apply triangular inequality. 

        On the one hand, thanks to Lemma \ref{Lemma: asymp exp of quotient(ln+k) |k|<Upsilon} and Theorem \ref{Thm: Gh L2 norm estimates scenario 3}, there exists $C > 1$ such that for all $\Upsilon \in \N$ there exists $\nu \in \N$ such that for all $n \geq \nu$ and all $1 \leq \abs{k - \sigma} \leq 2\Upsilon - 1$,
        \[
        \frac{1}{\norm*{G_{h_n}^{Q, \beta_n} }[L^2(\S[d])]} \abs[\Bigg]{ \frac{ \norm*{Z_{\ell_n + k}^{q}}[L^2(\S[d])] }{ \lambda_{\ell_n + k}^2 - h_n^{-2} } - \bigg( \frac{(h_n)^{3-d}}{2(d-1) \vol(\S[d])} \bigg)^{\frac{1}{2}} \frac{1}{k - \sigma}  } \leq \frac{C}{ \abs{k - \sigma} } \Big[ \Upsilon h_n + \abs{\sigma_n - \sigma} \Big] \ .
        \]
        
        On the other hand, Theorem \ref{Thm: Gh L2 norm estimates scenario 3} provides some $C > 1$ such that for all $\Upsilon \in \N$ there exists $\nu \in \N$ such that for all $n \geq \nu$,
        \begin{align*}
            & \bigg( \frac{(h_n)^{3-d}}{2(d-1) \vol(\S[d])} \bigg)^{\frac{1}{2}} \frac{1}{\abs{k - \sigma}} \abs*{ \frac{1}{\norm*{G_{h_n}^{Q, \beta_n} }[L^2(\S[d])]} - \bigg( \frac{(h_n)^{3-d}}{2(d-1) \vol(\S[d])} \bigg)^{-\frac{1}{2}} \frac{1}{C_{\sigma, \beta, \rho}} } \leq \\
            & \hspace{50pt} \leq \frac{C}{\abs{k - \sigma}} \Big[ \Upsilon h_n + \abs{\sigma_n - \sigma} + \abs{\beta_n - \beta} + \abs[\bigg]{ c_{\sigma, \beta, \rho} - \frac{\beta_{n,q} + (-1)^{\sigma} \rho \beta_{n,-q}}{ \sigma - \sigma_n } } + \Upsilon^{-1} \Big] \ . \qedhere
        \end{align*}
    \end{proof}
    Applying Lemma \ref{Lemma: approx of quotient quasimode scenario 3} and using that $\beta_n \to \beta$, there exists $C > 1$ such that for all $\Upsilon \in \N$ there exists $\nu \in \N$ such that for all $n \geq \nu$,
    \begin{equation} \label{Eq: quasimode thm scnr3 aux3}
        \begin{aligned}
            \Sigma_{\Upsilon} & \leq C \bigg[ \Upsilon h_n + \abs{\sigma_n - \sigma} + \abs{\beta_n - \beta} + \abs[\bigg]{ c_{\sigma, \beta, \rho} - \frac{\beta_{n,q} + (-1)^{\sigma} \rho \beta_{n,-q}}{ \sigma - \sigma_n } } + \Upsilon^{-1} \bigg] \bigg( \sum_{ \substack{ \abs{k} \leq \Upsilon \\ k \neq \sigma} } \frac{1}{ \abs{k - \sigma}^2} \bigg)^{\frac{1}{2}} \\
            & \leq C \bigg[ \Upsilon h_n + \abs{\sigma_n - \sigma} + \abs{\beta_n - \beta} + \abs[\bigg]{ c_{\sigma, \beta, \rho} - \frac{\beta_{n,q} + (-1)^{\sigma} \rho \beta_{n,-q}}{ \sigma - \sigma_n } } + \Upsilon^{-1} \bigg]
        \end{aligned}
    \end{equation}
    
    Combining \eqref{Eq: quasimode thm scnr3 aux1}, \eqref{Eq: quasimode thm scnr3 aux2}, and \eqref{Eq: quasimode thm scnr3 aux3}, we conclude the result.
\end{proof}

\subsection{Scenario 4}

\begin{Theorem} \label{Thm: Gh L2 norm estimates scenario 4}
    Let $(h_n)_{n \in \N} \subseteq (0,1)$ and $(\beta_{n})_{n \in \N} \subseteq \C[2]$, and for every $n \in \N$, let $\ell_n \in \N$ and $\sigma_n \in [0,1)$ as defined in \eqref{Eq: def elln and sigman}. Assume the following:
    \begin{enumerate}
        \item $h_n \to 0^+$ and $h_n^{-2} = \lambda_{\ell_n}^2$ for all $n \in \N$, thus $\sigma_n = 0$ for all $n \in \N$.
        \item $\ell_n$ is even for all $n \in \N$, or $\ell_n$ is odd for all $n \in \N$. Set $\rho \coloneqq (-1)^{\ell_n}$.
        \item $\beta_{n,-q} + \rho \beta_{n,q} = 0$ for all $n \in \N$, thus $\beta_{-q} + \rho \beta_{q} = 0$.
    \end{enumerate}
    Set
    \begin{equation} \label{Eq: Dbeta definition}
        D_{\beta} \coloneqq \bigg( \sum_{\substack{ k \in \Z \\ k \neq 0} } \frac{ \abs{\beta_{q} + (-1)^{k} \rho \beta_{-q} }^2 }{ k^2 } \bigg)^{\frac{1}{2}} \ .
    \end{equation}
    There exists $C > 1$ such that for every $\Upsilon \in \N$, there exists $\nu \in \N$ such that for all $n \geq \nu$,
    \begin{equation} \label{Eq: norm mainterm scenario 4}
        \abs*{ \norm*{ \Pi_{I(\ell_n, 2\Upsilon - 1)} G_{h_n}^{Q, \beta_n} }[L^2(M)]^2 - \frac{(D_{\beta})^2}{2(d-1) \vol(\S[d])}(h_n)^{3-d} } \leq C (h_n)^{3-d} \bigg[ \Upsilon h_n + \Upsilon^{-1} \bigg] \ .
    \end{equation}
\end{Theorem}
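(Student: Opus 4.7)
The plan is to follow the proof of \cref{Thm: Gh L2 norm estimates scenario 3} almost verbatim, exploiting one essential simplification: the hypothesis $\beta_{n,-q}+\rho\beta_{n,q}=0$ forces the coefficient at $k=0$ in the Green's function expansion to vanish identically (not just in the limit). Concretely, combining the parity identity $Z_{\ell}^{-q}=(-1)^{\ell}Z_{\ell}^{q}$ with \eqref{Eq: def GhQbeta bad h}, the expansion
\[
\Pi_{I(\ell_n,2\Upsilon-1)}G_{h_n}^{Q,\beta_n}=\sum_{1\le|k|\le 2\Upsilon-1}\frac{\beta_{n,q}+(-1)^{k}\rho\,\beta_{n,-q}}{\lambda_{\ell_n+k}^{2}-\lambda_{\ell_n}^{2}}\,Z_{\ell_n+k}^{q}
\]
has no $k=0$ term, and by orthogonality of zonal harmonics of distinct degrees the squared $L^2$-norm is the obvious Parseval sum.

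As in the scenario~3 proof, I would add and subtract $(h_n)^{3-d}/(2(d-1)\vol(\S[d]))\sum_{1\le|k|\le 2\Upsilon-1}\abs{\beta_{q}+(-1)^{k}\rho\beta_{-q}}^{2}/k^{2}$, then apply the triangle inequality. This bounds the target difference by two pieces $\Sigma_{\Upsilon}+\Sigma_{\infty}$, analogous to the homonymous pieces appearing there. Crucially, the ``$\Sigma_{\sigma}$'' contribution present in scenario~3 does not appear here, because neither side of the comparison contains any $k=0$ mass, so the whole delicate $c_{\sigma,\beta,\rho}$-bookkeeping is bypassed.

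To finish, $\Sigma_{\Upsilon}$ is controlled by \cref{Lemma: asymp exp of quotient(ln+k) |k|<Upsilon} applied with $\sigma_n=\sigma=0$ and $k\ne 0$, which yields a $k$-wise error $O((h_n)^{3-d}\,\Upsilon h_n/k^{2})$; summing over $1\le|k|\le 2\Upsilon-1$ gives the $\Upsilon h_n$ contribution. The $n$-dependence of the coefficients $\abs{\beta_{n,q}+(-1)^{k}\rho\beta_{n,-q}}^{2}$ is handled by \cref{Lemma: zn wn converging sequences estimate}, producing an error $O(\abs{\beta_n-\beta})$ per term; choosing $\nu=\nu(\Upsilon)$ large enough that $\abs{\beta_n-\beta}\le\Upsilon^{-1}$ absorbs this into the $\Upsilon^{-1}$ slot. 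Finally, $\Sigma_{\infty}$ is dealt with directly by \eqref{Eq: upper bound tail of converging series}, giving $O((h_n)^{3-d}\Upsilon^{-1})$. Combining these yields the claimed bound. The main ``obstacle'' here is purely administrative, since the identical vanishing at $k=0$ removes the principal analytic difficulty of scenario~3 and no genuinely new estimate is required.
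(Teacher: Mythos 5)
Your proposal is correct and follows the paper's proof essentially verbatim: the $k=0$ term vanishes identically by \eqref{Eq: def GhQbeta bad h}, so the decomposition collapses to $\Sigma_{\Upsilon}+\Sigma_{\infty}$ with no $\Sigma_{\sigma}$-piece, $\Sigma_{\Upsilon}$ is handled by \cref{Lemma: asymp exp of quotient(ln+k) |k|<Upsilon} with $\sigma_n=\sigma=0$, and $\Sigma_{\infty}$ by \eqref{Eq: upper bound tail of converging series}. The one point where you do extra work is the invocation of \cref{Lemma: zn wn converging sequences estimate} to replace $\abs{\beta_{n,q}+(-1)^{k}\rho\beta_{n,-q}}^{2}$ by the limit $\abs{\beta_{q}+(-1)^{k}\rho\beta_{-q}}^{2}$ and absorb the $\abs{\beta_n-\beta}$ error into $\Upsilon^{-1}$: this is correct, but it is not needed in the paper's argument, because the constraint $\beta_{n,-q}+\rho\beta_{n,q}=0$ (together with the normalization $\abs{\beta_n}=1$ used in \cref{Thm: characterization of SDM of ghQbeta}) makes $\abs{\beta_{n,q}+(-1)^{k}\rho\beta_{n,-q}}^{2}=(1-(-1)^{k})^{2}\abs{\beta_{n,q}}^{2}$ equal to $0$ for $k$ even and $2$ for $k$ odd, identically in $n$, hence equal to the limit coefficient $\abs{\beta_q+(-1)^{k}\rho\beta_{-q}}^{2}$ with no error at all. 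Your version is slightly more robust if one does not insist on the normalization, and in any case it does no harm.
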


\begin{proof}
    Since $h_n^{-2} = \lambda_{\ell_n}^2$, $\ell_n$ is even, and $\beta_{n,-q} + \rho \beta_{n,q} = 0$ for all $n \in \N$, we note that
    \[
    \norm*{ \Pi_{I(\ell_n, 2\Upsilon - 1)} G_{h_n}^{Q, \beta_n} }[L^2(M)]^2 = \sum_{\abs{k} = 1}^{2\Upsilon - 1} \frac{ \norm*{Z_{\ell_n + k}^{q}}[L^2(\S[d])]^2 }{ \abs{\lambda_{\ell_n + k}^2 - \lambda_{\ell_n}^2 }^2 } \ .
    \]
    Adding and subtracting
    \[
    \frac{(h_n)^{3-d}}{2(d-1) \vol(\S[d])} \sum_{\abs{k} = 1}^{2\Upsilon - 1} \frac{ \abs{\beta_{q} + (-1)^{k} \rho \beta_{-q} }^2}{k^2} \ ,
    \]
    triangular inequality gives us
    \begin{align*}
        & \abs*{ \norm*{ \Pi_{I(\ell_n, 2\Upsilon - 1)} G_{h_n}^{Q, \beta_n} }[L^2(M)]^2 - \frac{(D_{\beta})^2}{2(d-1) \vol(\S[d])}(h_n)^{3-d} } \\
        & \hspace{100pt} \leq \sum_{\abs{k} = 1}^{2\Upsilon - 1} \abs*{\beta_{q} + (-1)^{k} \rho \beta_{-q}}^2 \abs*{ \frac{ \norm*{Z_{\ell_n + k}^{q}}[L^2(\S[d])]^2 }{ \abs{\lambda_{\ell_n + k}^2 - \lambda_{\ell_n}^2 }^2 } - \frac{(h_n)^{3-d}}{2(d-1) \vol(\S[d])} \frac{1}{k} } \\
        & \hspace{120pt} + \frac{(h_n)^{3-d}}{2(d-1) \vol(\S[d])} \bigg[ \sum_{k = -\ell_n}^{-2\Upsilon} + \sum_{k = 2\Upsilon}^{\infty} \bigg] \frac{\abs{\beta_{q} + (-1)^{k} \rho \beta_{q}}^2 }{\abs{k}^2} \ .
    \end{align*}
    Now, thanks to Lemma \ref{Lemma: asymp exp of quotient(ln+k) |k|<Upsilon}, there exists $C > 1$ such that for every $\Upsilon \in \N$ there exists $\nu \in \N$ such that for every $n \geq \nu$,
    \begin{align*}
        \sum_{\abs{k} = 1}^{2\Upsilon - 1} \abs{\beta_{q} + (-1)^{k} \rho \beta_{-q} }^2 \abs*{ \frac{ \norm*{Z_{\ell_n + k}^{q}}[L^2(\S[d])]^2 }{ \abs{\lambda_{\ell_n + k}^2 - \lambda_{\ell_n}^2 }^2 } - \frac{(h_n)^{3-d}}{2(d-1) \vol(\S[d])} \frac{1}{k^2} } & \leq C \Big[ \Upsilon h_n \Big] \sum_{\abs{k} = 1}^{2\Upsilon - 1} \frac{(h_n)^{3-d}}{ \abs{k}^2 } \\
        & \leq C (h_n)^{3-d} \Big[\Upsilon h_n \Big] \ .
    \end{align*}
    Using $\sum_{k = 2\Upsilon} \frac{1}{\abs{k}^2} \leq \Upsilon^{-1}$, we infer
    \[
    \frac{(h_n)^{3-d}}{2(d-1) \vol(\S[d])} \bigg[ \sum_{k = -\ell_n}^{-2\Upsilon} + \sum_{k = 2\Upsilon}^{\infty} \bigg] \frac{\abs{\beta_{q} + (-1)^{k} \rho \beta_{q}}^2 }{\abs{k}^2} \leq C (h_n)^{3-d} \Upsilon^{-1} \ .
    \]
    Thus, the result follows.
\end{proof}

\begin{Theorem} \label{Thm: gh quasimode scenario 4}
    Let $(h_n)_{n \in \N} \subseteq (0,1)$ and $(\beta_{n})_{n \in \N} \subseteq \C[2]$, and for every $n \in \N$, let $\ell_n \in \N$ and $\sigma_n \in [0,1)$ as defined in \eqref{Eq: def elln and sigman}. Assume the following:
    \begin{enumerate}
        \item $h_n \to 0^+$ and $h_n^{-2} = \lambda_{\ell_n}^2$ for all $n \in \N$, thus $\sigma_n = 0$ for all $n \in \N$.
        \item $\ell_n$ is even for all $n \in \N$, or $\ell_n$ is odd for all $n \in \N$. Set $\rho \coloneqq (-1)^{\ell_n}$.
        \item $\beta_{n,-q} + \rho \beta_{n,q} = 0$ for all $n \in \N$, thus $\beta_{-q} + \rho \beta_{q} = 0$.
    \end{enumerate}
    Set
    \begin{equation*}
        D_{\beta} \coloneqq \bigg( \sum_{\substack{ k \in \Z \\ k \neq 0} } \frac{ \abs{\beta_{q} + (-1)^{k} \rho \beta_{-q} }^2 }{ k^2 } \bigg)^{\frac{1}{2}} \ .
    \end{equation*}
    There exists $C > 1$ such that for every $\Upsilon \in \N$, there exists $\nu \in \N$ such that for all $n \geq \nu$,
    \begin{equation} \label{Eq: ghQbeta quasimode scenario 4}
        \norm*{ g_{h_n}^{Q, \beta_{n}} - \frac{1}{D_{\beta}} \sum_{\abs{k} = 1}^{2\Upsilon - 1} \frac{\beta_{q} + (-1)^{k} \rho \beta_{-q}}{ k } z_{\ell_n + k}^{q} }[L^2(M)] \leq C \Big[ \Upsilon h_n^{-1} + \Upsilon^{-\frac{1}{2}} \Big] \ .
    \end{equation}
\end{Theorem}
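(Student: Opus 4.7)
The plan is to follow essentially the same template used in the proofs of \cref{Thm: gh quasimode scenario 1} and \cref{Thm: gh quasimode scenario 3}; scenario 4 is in fact the ``degenerate'' limit of scenario 3 in which the constant $c_{\sigma, \beta, \rho}$ coincides with zero (because $\beta_{n,q} + \rho \beta_{n,-q} = 0$ \emph{exactly}, not just asymptotically), so the singular harmonic $Z_{\ell_n}^{q}$ is automatically absent from the sum defining $G_{h_n}^{Q,\beta_n}$. Concretely, the first move is to insert the spectral projection $\Pi_{I(\ell_n, 2\Upsilon -1)} g_{h_n}^{Q, \beta_n}$ and apply the triangle inequality to split the left-hand side of \eqref{Eq: ghQbeta quasimode scenario 4} into a tail term and a projected-difference term.

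For the tail term, \cref{Thm: Gh tails L2 norm estimates} gives
\[
\norm*{G_{h_n}^{Q, \beta_n} - \Pi_{I(\ell_n, 2\Upsilon -1)} G_{h_n}^{Q, \beta_n}}[L^2(\S[d])]^{2} \leq C (h_n)^{3-d} \Upsilon^{-1},
\]
while \cref{Thm: Gh L2 norm estimates scenario 4} shows that $\norm{G_{h_n}^{Q, \beta_n}}[L^2(\S[d])]^2$ is comparable to $D_\beta^{2} (h_n)^{3-d}$ for $n$ large enough; dividing, the quotient contributes $C \Upsilon^{-1/2}$, which matches the second term in \eqref{Eq: ghQbeta quasimode scenario 4}.

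For the projected-difference term, since assumption (3) forces the $k = 0$ coefficient to vanish identically, we may write
\[
\Pi_{I(\ell_n, 2\Upsilon -1)} g_{h_n}^{Q, \beta_n} = \frac{1}{\norm*{G_{h_n}^{Q, \beta_n}}[L^2(\S[d])]} \sum_{\abs{k} = 1}^{2\Upsilon - 1} \frac{\beta_{q} + (-1)^{k} \rho \beta_{-q}}{\lambda_{\ell_n + k}^2 - \lambda_{\ell_n}^2} Z_{\ell_n + k}^{q},
\]
(here I have also used $\beta_{n} = \beta$ in the formula; assumption (3) holds at every $n$, so the coefficients are stationary). Comparing this coefficient by coefficient with the target sum reduces the problem to establishing, as in \cref{Lemma: approx of quotient quasimode scenario 3}, an estimate of the form
\[
\abs*{ \frac{\norm{Z_{\ell_n + k}^{q}}[L^2(\S[d])]}{\norm{G_{h_n}^{Q,\beta_n}}[L^2(\S[d])] (\lambda_{\ell_n+k}^2 - \lambda_{\ell_n}^2)} - \frac{1}{D_\beta} \frac{1}{k} } \leq \frac{C}{\abs{k}} \Big[ \Upsilon h_n + \Upsilon^{-1} \Big],
\]
valid for all $1 \leq \abs{k} \leq 2\Upsilon - 1$ and $n$ large. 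This is proved by inserting the intermediate quantity $\frac{1}{\norm{G_{h_n}^{Q, \beta_n}}[L^2(\S[d])]} \bigl( \frac{(h_n)^{3-d}}{2(d-1) \vol(\S[d])} \bigr)^{1/2} \frac{1}{k}$, bounding the first piece via \cref{Lemma: asymp exp of quotient(ln+k) |k|<Upsilon} (specialized to $\sigma = \sigma_n = 0$), and bounding the second piece via \cref{Thm: Gh L2 norm estimates scenario 4}.

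Plugging this pointwise bound into the Parseval-style sum and using $\sum_{1 \leq \abs{k} \leq 2\Upsilon-1} \abs{k}^{-2} \leq \frac{\pi^{2}}{3}$ yields a projected-difference error of order $\Upsilon h_n + \Upsilon^{-1}$, which combines with the tail estimate to give \eqref{Eq: ghQbeta quasimode scenario 4}. There is no real obstacle beyond bookkeeping here: the disappearance of the resonant term $k = 0$ removes precisely the delicate ingredient of scenario 3 (the convergence of $(\beta_{n,q} + \rho \beta_{n,-q})/(\sigma - \sigma_n)$), and every other step is structurally identical.
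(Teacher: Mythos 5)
Your proof follows the paper's own argument essentially verbatim: you split off the tail via $\Pi_{I(\ell_n, 2\Upsilon - 1)}$, control it using \cref{Thm: Gh tails L2 norm estimates} together with \cref{Thm: Gh L2 norm estimates scenario 4}, and bound the projected difference coefficient-by-coefficient through the analogue of \cref{Lemma: approx of quotient quasimode scenario 3}, which the paper isolates as \cref{Lemma: approx of quotient quasimode scenario 4}, before summing via Parseval. Note that the $C[\Upsilon h_n + \Upsilon^{-1/2}]$ you obtain is the sharper (and evidently intended) bound; the $\Upsilon h_n^{-1}$ in the stated inequality \eqref{Eq: ghQbeta quasimode scenario 4} appears to be a typo, since it diverges as $n \to \infty$ and is inconsistent with the $\Upsilon h_n$ terms in \cref{Thm: gh quasimode scenario 1} and \cref{Thm: gh quasimode scenario 3} --- your bound of course implies the weaker one since $h_n < 1$.
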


\begin{proof}
    Triangular inequality implies that
    \begin{multline*}
        \norm*{ g_{h_n}^{Q, \beta_{n}} - \frac{1}{D_{\beta}} \sum_{\abs{k} = 1}^{2\Upsilon - 1} \frac{\beta_{q} + (-1)^{k} \rho \beta_{-q}}{ k } z_{\ell_n + k}^{q} }[L^2(M)] \leq \\
        \leq \frac{ \norm*{ G_{h_n}^{Q, \beta_{n}} - \Pi_{I(\ell_n, 2\Upsilon - 1)} G_{h_n}^{Q, \beta_{n}} }[L^2(\S[d])] }{ \norm*{ G_{h_n}^{Q, \beta_n}}[L^2(\S[d])] } + \norm*{ \Pi_{I(\ell_n, 2\Upsilon - 1)} G_{h_n}^{Q, \beta_{n}} - \frac{1}{D_{\beta}} \sum_{\abs{k} = 1}^{2\Upsilon - 1} \frac{\beta_{q} + (-1)^{k} \rho \beta_{-q}}{ k } z_{\ell_n + k}^{q} }[L^2(\S[d])] \ .
    \end{multline*}
    Thanks to Theorem \ref{Thm: Gh tails L2 norm estimates} and Theorem \ref{Thm: Gh L2 norm estimates scenario 4}, there exists $C > 1$ such that for all $\Upsilon \in \N$ there exists $\nu \in \N$ such that for all $n \geq \nu$,
    \begin{equation} \label{Eq: quasimode thm scnr4 aux1}
        \frac{ \norm*{ G_{h_n}^{Q, \beta_{n}} - \Pi_{I(\ell_n, 2\Upsilon - 1)} G_{h_n}^{Q, \beta_{n}} }[L^2(\S[d])] }{ \norm*{ G_{h_n}^{Q, \beta_n}}[L^2(\S[d])] } \leq C \Upsilon^{-\frac{1}{2}} \ .
    \end{equation}
    On the other hand
    \begin{multline*}
    \norm*{ \Pi_{I(\ell_n, 2\Upsilon - 1)} G_{h_n}^{Q, \beta_{n}} - \frac{1}{D_{\beta}} \sum_{\abs{k} = 1}^{2\Upsilon - 1} \frac{\beta_{q} + (-1)^{k} \rho \beta_{-q}}{ k } z_{\ell_n + k}^{q} }[L^2(\S[d])] = \\
    = \left( \sum_{\abs{k} = 1}^{2\Upsilon - 1} \abs*{\beta_{q} + (-1)^{k} \rho \beta_{-q}}^2 \abs*{ \frac{1}{\norm*{G_{h_n}^{Q, \beta_n}}[L^2(\S[d])]} \frac{\norm*{Z_{\ell_n + k}^{q}}[L^2(\S[d])]}{ \lambda_{\ell_n + k}^2 - \lambda_{2\ell_n}^2 } - \frac{1}{D_{\beta}} \frac{1}{k} }^2 \right)^{\frac{1}{2}}
    \end{multline*}
    One then uses the following lemma, that is consequence of Lemma \ref{Lemma: asymp exp of quotient(ln+k) |k|<Upsilon} and Theorem \ref{Thm: Gh L2 norm estimates scenario 4}.

    \begin{Lemma} \label{Lemma: approx of quotient quasimode scenario 4}
        There exists $C > 1$ such that for all $\Upsilon \in \N$ there exists $\nu \in \N$ such that for all $n \geq \nu$ and all $0 \leq \abs{k} \leq 2\Upsilon - 1$,
        \begin{equation*}
            \abs*{ \frac{1}{\norm*{G_{h_n}^{Q, \beta_n}}[L^2(\S[d])]} \frac{\norm*{Z_{\ell_n + k}^{q}}[L^2(\S[d])]}{ \lambda_{\ell_n + k}^2 - \lambda_{\ell_n}^2 } - \frac{1}{D_{\beta}} \frac{1}{k} } \leq \frac{C}{\abs{k}} \Big[ \Upsilon h_n + \Upsilon^{-1} \Big] \ .
        \end{equation*}
    \end{Lemma}

    Thanks to Lemma \ref{Lemma: approx of quotient quasimode scenario 4}, there exists $C > 1$ such that for every $\Upsilon \in \N$ there exists $\nu \in \N$ such that for every $n \geq \nu$,
    \begin{equation} \label{Eq: quasimode thm scnr4 aux2}
        \norm*{ \Pi_{I(\ell_n, 2\Upsilon - 1)} G_{h_n}^{Q, \beta_{n}} - \frac{1}{D_{\beta}} \sum_{\abs{k} = 1}^{2\Upsilon - 1} \frac{\beta_{q} + (-1)^{k} \rho \beta_{-q}}{k} z_{\ell_n + k}^{q} }[L^2(\S[d])] \leq C \Big[ \Upsilon h_n + \Upsilon^{-1} \Big] \ .
    \end{equation}
    \eqref{Eq: quasimode thm scnr4 aux1} and \eqref{Eq: quasimode thm scnr4 aux2} together imply the result.
\end{proof}

%%%%%%%%%%%%%%%%%%%%%%%%%%%%%%%%%%%%%%%%%%%%%%%%%%%%%%%%%%%%%%%%%%%%%%%%%%%
\section{Semiclassical defect measures for Green's functions}
\label{Sec: SDM Green functions}

In Section \ref{Sec: Quasimode Green function} we introduce a large parameter $\Upsilon \in \N$ that allowed us to find arbitrarily good quasimodes of linear combination of Green's functions, $G_{h_n}^{Q, \beta_n}$, on antipodal points, $q$ and $-q$, in the high-energy regime, $h_n \searrow 0$. These quasimodes are linear combinations of normalized zonal harmonics on $q$ on a large window centered on $\ell_n$ and of width $\Upsilon$, and the coefficients of these linear combination depend on the accumulation points of the vector of weights $\beta_{n} \in \C[2]$ and the \emph{intensity} $\sigma_n \in [0,1]$ (see Theorem \ref{Thm: gh quasimode scenario 1} and Theorem \ref{Thm: gh quasimode scenario 3}).

We start this section proving that for such quasimodes with $\Upsilon \in \N$ fixed, we may find a semiclassical pseudodifferential operator that capture such linear combination of zonal harmonics on $q$ in the high-energy regime. We choose a Weyl quantization defined through charts $\Opp_{h}$.

We introduce necessary notation for this section. For a fixed point $q \in \S[d]$, let $r \coloneqq d(x, q)$ for $x \in \S[d]$, and define the smooth symbols on $T^*\S[d]$
\begin{equation} \label{Eq: definition symbols kappaq varsigmaq}
    \kappa^{q}(x, \xi) \coloneqq \cos r \ , \qquad \varsigma^{q}(x, \xi) \coloneqq \xi (\sin r \cdot \partial_{r}) \ .
\end{equation}
In addition, for a square-summable sequence $X \in \ell_2(\Z)$ and any finite subset $S \subseteq \Z$, set
\[
S_{-} \coloneqq \{k \in S \colon k \leq -1 \} \ , \qquad S_{+} \coloneqq \{k \in S \colon k \geq 1\} \ ,
\]
and let $\Gamma^{q, X, S}$ be the following smooth symbol on $T^*\S[2]$
\begin{equation} \label{Eq:definition-symbol-GammaqXS}
    \Gamma^{q, X, S} \coloneqq \sum_{k \in S_{-}} X_{k} (\kappa^{q} - i \varsigma^{q})^{-k} + \charf_{S}(0) X_{0} + \sum_{k \in S_{+}} X_{k} (\kappa^{q} + i \varsigma^{q})^{k} \ .
\end{equation}

\begin{Theorem} \label{Thm: approx of uqUpsilonxn by PsDO on zln}
    Let $q \in \S[d]$, and let $(h_n)_{n \in \N} \subseteq (0,1)$, $(\ell_n)_{n \in \N} \subseteq \N$ be sequences such that
    \begin{equation*}
        \lim_{n \to \infty} h_n = 0 \ , \qquad \exists \delta > 0 \ \text{s.t.} \quad \delta \leq h_n \lambda_{\ell_n} \leq \delta^{-1} \qquad \forall \, n \in \N \ .
    \end{equation*}
    Let $(x_n)_{n \in \N} \subseteq \ell_2(\Z)$ be a bounded sequence and let $X \in \ell_2(\Z)$ fixed. For any finite subset $S \subseteq \Z$, let $\charf_{S}$ be the characteristic function of $S$ on $\Z$. There exist $C(S) > 1$ and $N = N(S) \in \N$ such that for all $n \geq N$,
    \begin{equation} \label{Eq:symbol-lincomb-zonalharmonics}
        \norm*{ \sum_{k \in S} x_{n,k} z_{\ell_n + k}^{q} - \Op[h_n]{ \Gamma^{q, X, S} } z_{\rho_n}^{q} }[L^2(\S[d])] \leq \norm*{ \charf_{S} (x_n - X)}[\ell_2(\Z)] + \norm*{\charf_{S}X}[\ell_1]  C(S) h_n \ .
    \end{equation}
\end{Theorem}

\begin{proof}
    We start defining
    \begin{align*}
        u_{n}^{q, x_n, S} & \coloneqq \sum_{k \in S} x_{n,k} z_{\ell_n + k}^{q} \ , \\
        u_{n}^{q, X, S} & \coloneqq \sum_{k \in S} X_{k} z_{\ell_n + k}^{q} \ .
    \end{align*}
    Since the following holds
    \begin{equation*}
        \norm[\Big]{ u_{n}^{q, x_n, S} -  u_{n}^{q, X, S} }[L^2(\S[d])] \leq \bigg( \sum_{x \in S} \abs*{ x_{n,k} - X_k }^2 \bigg)^{\frac{1}{2}} = \norm*{ \charf_{S} (x_n - X)}[\ell_2(\Z)] \ ,
    \end{equation*}
    we have
    \begin{equation} \label{Eq: uqUpsilonxn - uqUpsilonX norm}
        \norm*{ u_{n}^{q, x_n, S} - \Op[h_n]{\Gamma^{q, X, S}} z_{\ell_n}^{q} }[L^2(\S[d])] \leq \norm*{ \charf_{S} (x_n - X)}[\ell_2(\Z)]  + \norm*{u_{n}^{q, X, S} - \Op[h_n]{\Gamma^{q, X, S}} z_{\ell_n}^{q} }[L^2(\S[d])] \ .
    \end{equation}
    Proving an estimate for the second term requires of several lemma; it is eventually achieved on \eqref{Eq: Proposition symbol of lincomb end}, in page \pageref{Eq: Proposition symbol of lincomb end}.

    From Equation \ref{Eq: identity zonal harm - Gegen poly} we read that for every $\ell \in \N$,
    \[
    z_{\ell}^{q}(x) = \frac{1}{\norm{Z_{\ell}^{q}}} Z_{\ell}^{q} =  \sqrt{ \frac{m_{\ell}}{\vol(\S[d])} } \big[ C_{\ell}^{(\frac{d-1}{2})} (1) \big]^{-1} C_{\ell}^{(\frac{d-1}{2})} (\cos r)
    \]
    where $C_{\ell}^{(\alpha)}$ is the $\ell$-th Gegenbauer polynomial and $r = d(x, q)$ is the geodesic distance from $q$ to $x$. Combining this identity with Proposition \ref{Prop: ladder operators for Gegenbauer poly} we get
    
    \begin{Prop} \label{Lemma: ladder op for zonal harm}
        The operators
        \begin{equation} \label{Eq: crea and anni op for zonal harm}
            K_{\ell}^{q, \pm} \coloneqq \sqrt{ \frac{m_{\ell \pm 1}}{m_{\ell}} } \Big[ \cos r \pm \frac{\sin r}{(\ell + \frac{d-1}{2}) \pm \frac{d-1}{2} } \partial_r \Big] \ ,
        \end{equation}
        are creation and annihilation operators respectively for the normalized zonal harmonics on $q \in \S[d]$, that is,
        \begin{equation} \label{Eq: cre and anni identities for zonal harm}
            z_{\ell+1}^{q} = K_{\ell}^{q, +} z_{\ell}^{q} \ , \qquad z_{\ell-1}^{q} = K_{\ell}^{q, -}z_{\ell}^{q} \ .
        \end{equation}
    \end{Prop}
    
    With these ladder operators at hand, we may define
    \begin{equation*}
        \begin{aligned}
        \mathcal{K}_{n}^{q, X, S, -} & \coloneqq \sum_{k \in S_{-}} X_{k} \Big[ K_{\ell_n - (k - 1)}^{q,-} \dots K_{\ell_n}^{q,-} \Big] \ , \\
        \mathcal{K}_{n}^{q, X, S, +} & \coloneqq \sum_{k \in S_{+}} X_{k} \Big[ K_{\ell_n + (k - 1)}^{q,+} \dots K_{\ell_n}^{q,+} \Big] \ ,
        \end{aligned}
    \end{equation*}
    and thus rewrite $u_{n}^{q, X, S}$ as follows
    \begin{equation} \label{Eq: uqUpsilonX into ladder operators}
        u_{n}^{q, X, S} = \Big[ \mathcal{K}_{n}^{q, X, S, -} \, z_{\ell_n}^{q} + X_{0} z_{\ell_n}^{q} + \mathcal{K}_{n}^{q, X, S, +} \, z_{\ell_n}^{q} \Big] \ .
    \end{equation}
    
    We see that for a fixed $\ell \in \N$ the operator $K_{\ell}^{q,\pm}$ is a pseudodifferential operator; however, in the semiclassical regime $h_n \searrow 0$, its symbol depends on the sequences $(h_n)_{n \in \N}$ and $(\ell_n)_{n \in \N}$. More specifically, we will show that the symbol of $K_{\ell_n}^{q, \pm}$ is $\sigma(K_{\ell_n}^{q, \pm}) = (\alpha \pm i \beta)$ provided $\ell_n h_n \sim 1$; however, the size of the error term will depend on the distance $\abs{h_n - \ell_n^{-1}}$. In order to estimate the error term, we introduce a scale of semiclassical Sobolev spaces for $s \in \R$,
    \[
    H_h^s(\S[d]) = \{ u \in \Dist(\S[d]) \colon \, (h^2 \Lap {}+ 1)^{s/2} u \in L^2(\S[d]) \} \ , \qquad \norm{u}[H_h^s(\S[d])] = \norm{(h^2 \Lap {}+ 1)^{s/2} u}[L^2(\S[d])] \ ,
    \]
    and three technical lemmas.

    \begin{Lemma} \label{Lemma: ladder operators are hPsDO}
        For any $\ell \in \N$, any $h > 0$, and any choice of $\pm$, there exists a pseudodifferential operator $R_{l, h}^{q, \pm}$ such that
        \begin{equation} \label{Eq: ladder operators are hPsDO}
            K_{\ell}^{q, \pm} = \Op[h]{\kappa^{q} \pm i \varsigma^{q}} + R_{l, h}^{q, \pm} \ ,
        \end{equation}
        and for every $s \in \R$, there exists a constant $C_{1, s} > 0$ such that
        \[
        \norm*{R_{l, h}^{q, \pm}}[H_h^{s} \to H_{h}^{s-1}] \leq C_{1, s} \bigg[ \tfrac{1}{\ell + \frac{d-1}{2} } + h + \abs[\bigg]{ \frac{1}{ (\ell + \frac{d - 1}{2}) \pm \frac{d-1}{2} } - h } \bigg] \ ,
        \]
    \end{Lemma}

    \begin{proof}
        From the definition of Weyl quantization, we have the following identities
        \begin{align*}
            \Op[h]{\kappa^{q}} & = \cos r \ ,\\
            \Op[h]{\varsigma^{q}} & = \frac{h}{i} (\sin r\, \partial_r) + R_h \ ,
        \end{align*}
        for a certain pseudodifferential operator $R_h$ such that $\norm{R_h}[H_h^s \to H_h^{s}] \leq C_{s} h$ for all $s \in \R$ and all $h > 0$. Hence,
        \begin{align*}
            K_{\ell}^{q, \pm} - \Op[h]{\kappa^{q} \pm i\varsigma^{q}} & = \Big[ \sqrt{ \frac{m_{\ell \pm 1}}{m_{\ell}} } - 1 \Big] \cos r \pm \Big[ \sqrt{ \frac{m_{\ell \pm 1}}{m_{\ell}} } \frac{1}{ (\ell + \frac{d - 1}{2}) \pm \frac{d-1}{2} } - h \Big] \sin r \, \partial_r + R_h \\
            & = \BigO_{H_h^{s} \to H_{h}^{s}} \Big( \tfrac{1}{\ell + \frac{d-1}{2} } + h \Big) + \BigO_{H_h^{s} \to H_{h}^{s - 1}} \bigg( \abs[\bigg]{ \frac{1}{ (\ell + \frac{d - 1}{2}) \pm \frac{d-1}{2} } - h } + \frac{h}{\ell + \frac{d-1}{2} }\bigg)
        \end{align*}
        where we used that
        \begin{equation} \label{Eq: estimate on quotient of mellpm1}
            \abs*{ \frac{m_{\ell \pm 1}}{m_{\ell}} - 1} \leq C \frac{1}{\ell + \tfrac{d-1}{2} } \ .
        \end{equation}
    \end{proof}
    
    \begin{Lemma} \label{Lemma: ladder operators are continuous on Hhs}
        For any $s \in \R$, there exists positive constants $C_{2, s}, C_{3, s} > 0$ such that
        \begin{align*}
            \norm{\Op[h]{\kappa^{q} \pm i \varsigma^{q}}}[H_{h}^{s} \to H_{h}^{s-1}] & \leq C_{2, s} \\
            \norm*{\Op[h]{\kappa^{q} \pm i \varsigma^{q}}^k - \Op[h]{(\kappa^{q} \pm i \varsigma^{q})^k}}[H_{h}^{s-1} \to L^2] & \leq C_{3, s} h
        \end{align*}
    \end{Lemma}

    \begin{proof}
        For the first inequality, thanks to the symbolic calculus and Calderon-Vaillancourt theorem, for any $u \in H^{s}(\S[d])$
        \begin{align*}
            \norm*{ \Op[h]{\kappa^{q} \pm i\varsigma^{q}} u }[H_{h}^{s-1}(\S[d])] & = \norm*{ (h^2 \Lap {}+ 1)^{\frac{s-1}{2}} \Op[h]{\kappa^{q} \pm i \varsigma^{q}} u }[L^2(\S[d])] \\
            & = \norm*{ \Op[h]{\kappa^{q} \pm i \varsigma^{q}} (h^2 \Lap {}+ 1)^{\frac{s-1}{2}}  u }[L^2(\S[d])] + \BigO_{L^2(\S[d])}(h) \\
            & \leq \norm*{  \Op[h]{(\abs{\xi}^2 + 1)^{-\frac{1}{2}} (\kappa^{q} \pm i \varsigma^{q}) } }[L^2 \to L^2] \norm*{ (h^2 \Lap {}+ 1)^{\frac{s}{2}} u }[L^2(\S[d])] + \BigO_{L^2(\S[d])}(h) \\
            & \leq C \norm{u}[H_h^{s}(\S[d])] + \BigO(h) \ .
        \end{align*}
        The second inequality is, once again, a consequence of symbolic calculus and Calderon-Vaillancourt theorem.
    \end{proof}

    \begin{Lemma} \label{Lemma: elliptic estimate for zonal harmonics}
        For every $s \in \R$, there exists $C_s^4 > 0$ such that
        \[
        \norm*{z_{\ell_n}^{q}}[H_{h_n}^s(\S[d])] \leq C_s^{4} \qquad \forall \ n \in \N \ .
        \]
    \end{Lemma}

    \begin{proof}
        This is a direct computation:
        \begin{align*}
            \norm*{z_{\ell_n}^{q}}[H_{h_n}^{s}] & = \norm*{[(h_n)^2 \Lap {}+ 1]^{s/2} z_{\ell_n}^{q} }[L^2(\S[2])] \\
            & = \abs*{(h_n)^2 \lambda_{\ell_n}^2 + 1}^{s/2} \leq C^{s/2}
        \end{align*}
        where we used that $h_n \lambda_{\ell_n} \leq \delta^{-1}$ for every $n \in \N$.
    \end{proof}

    We are finally in position to prove Theorem \ref{Thm: approx of uqUpsilonxn by PsDO on zln}. Let $S_{\max} \coloneqq \max \{\abs{k} \colon k \in S\}$, and let $C_{S} > 0$ such that
    \[
    C_{S} \geq \max_{j = 1, \dots, S_{\max} } \{C_{1, j}, C_{2, j}, C_{3, j}, C_{4, j}\} \ ,
    \]
    and, thanks to $\delta \leq h_n \lambda_{\ell_n}$ for all $n \in \N$, such that
    \begin{equation} \label{Eq: Prop 5.2 aux 10}
        C_{S} \geq h_n^{-1}\bigg[ \tfrac{1}{\ell_n + k + \frac{d-1}{2} } + h_n + \abs[\bigg]{ \frac{1}{ (\ell_n + k + \frac{d - 1}{2}) \pm \frac{d-1}{2} } - h_n } \bigg] \qquad \forall \, \abs{k} \leq S_{\max} \ , \quad \forall \, n \in \N \ .
    \end{equation}
    We study the operator with positive indices $\mathcal{K}_{n}^{q, X, S, +}$. Thanks to \eqref{Eq: ladder operators are hPsDO}, we may write the $k$-creation operator as a linear combination of semiclassical pseudodifferential operators,
    \begin{equation} \label{Eq: Prop 5.2 aux 11}
        K_{\ell_n + (k - 1)}^{q,+} \dots K_{\ell_n}^{q,+} = \sum_{\mathrm{len}(w) = k} \prod_{w} w \Big( \Op[h_n]{\kappa^{q} + i \varsigma^{q}}, R_{\bullet,h_n}^{q,+} \Big) \ .
    \end{equation}
    Thanks to Lemma \ref{Lemma: ladder operators are hPsDO} and \eqref{Eq: Prop 5.2 aux 10},
    \[
    \norm*{R_{\ell_n + k, h_n}^{q, +}}[H_{h_n}^{j} \to H_{h_n}^{j-1}] \leq (C_{S})^2 h_n \qquad \forall \, 1 \leq k, j \leq S_{\max} \ , \quad n \in \N \ ;
    \]
    and thanks to Lemma \ref{Lemma: ladder operators are continuous on Hhs},
    \[
    \norm*{\Op[h_n]{\kappa^{q} + i \varsigma^{q}}}[H_{h_n}^{j} \to H_{h_n}^{j-1}] \leq C_{S} \qquad \forall \, 1 \leq j \leq S_{\max} \ , \quad n \in \N \ ;
    \]
    we get
    \[
    \norm*{ K_{\ell_n + (k - 1)}^{q,+} \dots K_{\ell_n}^{q,+} - \big( \Op[h_n]{\kappa^{q} + i \varsigma^{q}} \big)^{k} }[H_{h}^{k-1} \to L^2] \leq \sum_{j = 1}^{k} \binom{k}{j} (C_{S})^{k - j} \big( (C_{S})^2 h_n \big)^j \qquad \forall \, n \in \N \ .
    \]
    This yields that there exists $\nu = \nu(S) \in \N$ such that for all $n \geq \nu$ and all $k \in S_{+}$, the following holds
    \[
    \norm*{ K_{\ell_n + (k - 1)}^{q,+} \dots K_{\ell_n}^{q,+} - \big( \Op[h_n]{\kappa^{q} + i \varsigma^{q}} \big)^{k} }[H_{h}^{k-1} \to L^2] \leq (C_{S})^{2k} [ (1 + h_n)^{k} - 1] \leq (C_{S})^{2k} h_n \ .
    \]

    Thanks to Lemma \ref{Lemma: ladder operators are continuous on Hhs},
    \[
    \norm*{ \big( \Op[h_n]{\kappa^{q} + i \varsigma^{q} } \big)^{k} - \Op[h_n]{(\kappa^{q} + i \varsigma^{q})^k} }[H_{h_n}^{k-1} \to L^2] \leq C_{S} h_n \qquad \forall \, k \in S_{+} \ , \quad \forall \, n \in \N \ ,
    \]
    hence, triangular inequality tells that there exists $\nu = \nu(S) \in \N$ such that for all $n \geq \nu$ and all $k \in S_{+}$,
    \[
    \norm*{ K_{\ell_n + (k - 1)}^{q,+} \dots K_{\ell_n}^{+} - \Op[h_n]{(\kappa^{q} + i \varsigma^{q})^k} }[H_{h_n}^{k-1} \to L^2] \leq (C_{S})^{2k} h_n + C_{S} h_n \ .
    \]

    We may now apply Lemma \ref{Lemma: elliptic estimate for zonal harmonics} to control semiclassical Sobolev norms of the sequence $(z_{\ell_n}^{q})_{n \in \N}$,
    \[
    \norm*{z_{\ell_n}^{q}}[H_{h_n}^{k-1}] \leq C_{S} \qquad \forall \, k \in S_{+} \ , \quad \forall \, n \in \N \ ,
    \]
    thus obtaining
    \[
    \norm*{ \Big[K_{\ell_n + (k - 1)}^{q,+} \dots K_{\ell_n}^{q,+} - \Op[h_n]{(\kappa^{q} + i \varsigma^{q})^k} \Big] z_{\ell_n}^{q} }[L^2(\S[d])] \leq \Big[ (C_{S})^{2k+1} + (C_{S})^2 \Big] h_n \ ,
    \]
    for all $k \in S_{+}$, and all $n \geq \nu$. Thanks to triangular inequality, we have shown there exists $\nu = \nu(S) \in \N$ such that for all $n \geq \nu$,
    \[
    \norm*{ \Big[ \mathcal{K}_{n}^{q, X, S, +} - \Opp_{h_n}\Big( {\textstyle \sum_{k \in S_{+}} X_{k} (\kappa^{q} + i \varsigma^{q})^k} \Big) \Big]z_{\ell_n}^{q} }[L^2(\S[d])] \leq \sum_{k \in S_{+}} \abs*{X_k} \Big[ (C_{S})^{2k+1} + (C_{S})^2 \Big] h_n \ .
    \]
    
    In a similar fashion, one may prove as well there exists $\nu = \nu(S) \in \N$ such that for all $n \geq \nu$,
    \[
    \norm*{ \Big[ \mathcal{K}_{n}^{q, X, S, -} - \Opp_{h_n}\Big( {\textstyle \sum_{k \in S_{-}} X_{k} (\kappa^{q} - i \varsigma^{q})^{-k} } \Big) \Big] z_{\ell_n}^{q} }[L^2(\S[d])] \leq \sum_{-k \in S_{-}} \abs*{X_{-k}} \Big[ (C_{S})^{2k+1} + (C_{S})^2 \Big] h_n \ ,
    \]
    Recalling \eqref{Eq: uqUpsilonX into ladder operators}, we have shown there exists $C(S) > 0$ and $\nu = \nu(S) \in \N$ such that for all $n \geq \nu$,
    \begin{equation} \label{Eq: Proposition symbol of lincomb end}
        \norm[\bigg]{ u_{n}^{q, X, S} - \Op[h_n]{\Gamma^{q, X, S}} z_{\ell_n}^{q} }[L^2(\S[d])] \leq \norm*{\charf_{S} X}[\ell_1] C(S) h_n \qquad \forall \, n \geq \nu \ ,
    \end{equation}
    and the symbol
    \begin{equation*}
        \Gamma^{q, X, S} \coloneqq \sum_{k \in S_{-}} X_{k}(\kappa^{q} - i \varsigma^{q})^{-k} + X_0 + \sum_{k \in S_{+}} X_{k} (\kappa^{q} + i \varsigma^{q})^{k} \ .
    \end{equation*}
    Combining this with \eqref{Eq: uqUpsilonxn - uqUpsilonX norm}, Theorem \ref{Thm: approx of uqUpsilonxn by PsDO on zln} is proven.
\end{proof}

Theorem \ref{Thm: approx of uqUpsilonxn by PsDO on zln} is the key ingredient to compute semiclassical defect measures of linear combinations of Green's functions. Recall the following notation introduced back in the introduction. For $q \in \S[d]$, define the measure $\nu_{q, 1/2}$ on $T^*\S[d]$ as
\begin{equation} \label{Eq: half measure on flow-out defintion}
    \int_{T^*\S[d]} a(x, \xi) \nu_{q, 1/2}(\D{x}, \D{\xi}) \coloneqq \int_{S_{q}^*\S[d]} \int_{0}^{\pi} a(\phi_t(q, \xi)) \frac{\D{t}}{2\pi} \frac{\D{\xi}}{\vol(\S[d-1])} \ , \qquad \forall \ a \in \Co(T^*\S[d]) \ .
\end{equation}

\begin{Prop} \label{Prop: nuqhalf properties}
    Let $q \in \S[d]$ and let $H(x, \xi) \coloneqq \abs{\xi}_{x}^2$ be Hamiltonian function that generates the geodesic flow on $T^*\S[d]$. The following holds:
    \begin{enumerate}
        \item $\supp \nu_{q, 1/2} \subseteq S^*\S[d]$.
        \item $\nu_{q, 1/2}(S^*\S[d]) = \frac{1}{2}$.
        \item $\nu_{q} = \nu_{q, 1/2} + \nu_{-q, 1/2}$ (see \eqref{Eq: def of nuq}).
        \item For every $a \in \CinfK(T^*\S[d])$,
        \begin{equation} \label{Eq: nuqhalf geodesic derivative}
            \int_{T^*\S[d]} \{ a, \, H\} \nu_{q, 1/2} = \frac{1}{2\pi} \int_{S_{-q}^*\S[d]} a(-q, \xi) \frac{\D{\xi}}{\vol(\S[d-1])} - \frac{1}{2\pi} \int_{S_{q}^*\S[d]} a(q, \xi) \frac{\D{\xi}}{\vol(\S[d-1])} \ .
        \end{equation}
    \end{enumerate}
\end{Prop}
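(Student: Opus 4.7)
The four properties are routine consequences of the definition of $\nu_{q,1/2}$, and the plan is essentially to unpack each of them directly.

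For properties 1 and 2, the plan is immediate: since the formula defining $\nu_{q,1/2}$ evaluates $a$ only at points of the form $\phi_t(q,\xi)$ with $\xi \in S_q^*\S[d]$, and since the geodesic flow preserves $|\xi|_x$, those points all lie in $S^*\S[d]$, which gives (1). For (2), taking $a \equiv 1$ on $S^*\S[d]$ reduces the integral to $\frac{\pi}{2\pi}\cdot \frac{\vol(S^{d-1})}{\vol(S^{d-1})}=\frac{1}{2}$.

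For property 3, the key observation is that on $(\S[d],\mathrm{can})$ the geodesic flow at time $\pi$ carries $S_q^*\S[d]$ to $S_{-q}^*\S[d]$: if $\gamma_\xi(t)=\cos(t)\,q+\sin(t)\,\xi$ (viewed in $\R[d+1]$), then $\phi_\pi(q,\xi)=(-q,-\xi)$. Moreover, since $\phi_\pi$ is a symplectomorphism that maps the fiber $S_q^*\S[d]$ to $S_{-q}^*\S[d]$, and the uniform probability measure on each spherical fiber is the unique $O(d)$-invariant probability measure, the pushforward $(\phi_\pi)_*$ of the normalized surface measure on $S_q^*\S[d]$ is the normalized surface measure on $S_{-q}^*\S[d]$. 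The plan is then to split the integral in the definition of $\nu_q$ as $\int_0^{2\pi}=\int_0^\pi+\int_\pi^{2\pi}$, and in the second piece substitute $s=t-\pi$ and use $\phi_{s+\pi}(q,\xi)=\phi_s(\phi_\pi(q,\xi))$ together with the change of variables $\eta=-\xi\in S_{-q}^*\S[d]$ (via $\phi_\pi$) to rewrite it as $\int_0^\pi a(\phi_s(-q,\eta))\,ds$, thereby identifying the two halves with $\nu_{q,1/2}(a)$ and $\nu_{-q,1/2}(a)$ respectively.

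For property 4, the plan is to use that $\{a,H\}=X_H a$ is exactly the derivative of $a$ along the geodesic flow. Substituting into the definition,
\begin{equation*}
    \int_{T^*\S[d]} \{a,H\}\,\nu_{q,1/2} = \int_{S_q^*\S[d]} \int_0^\pi \frac{d}{dt}\bigl(a\circ\phi_t\bigr)(q,\xi) \,\frac{\D t}{2\pi}\,\frac{\D\xi}{\vol(\S[d-1])}.
\end{equation*}
The inner integral is a telescoping integral equal to $a(\phi_\pi(q,\xi))-a(q,\xi)$. Using $\phi_\pi(q,\xi)=(-q,-\xi)$ and the pushforward identification described above for property 3, the first boundary term becomes $\frac{1}{2\pi}\int_{S_{-q}^*\S[d]} a(-q,\eta)\,\frac{\D\eta}{\vol(\S[d-1])}$, while the second is $\frac{1}{2\pi}\int_{S_q^*\S[d]}a(q,\xi)\,\frac{\D\xi}{\vol(\S[d-1])}$, which is precisely \eqref{Eq: nuqhalf geodesic derivative}.

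The only point that requires any attention (and is the closest thing to an obstacle) is the claim that $\phi_\pi$ pushes the normalized surface measure on $S_q^*\S[d]$ to that on $S_{-q}^*\S[d]$; this is used both in (3) and (4). The cleanest justification is to note that $\phi_\pi$ restricted to $S_q^*\S[d]$ intertwines the $SO(d)$-action by rotations fixing $q$ with the $SO(d)$-action fixing $-q$ (both stabilizers coincide in $SO(d+1)$ through the axis $\R q$), and any $SO(d)$-equivariant diffeomorphism between these spheres must push the unique invariant probability measure to the unique invariant probability measure.
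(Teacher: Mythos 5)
Your proof is correct and follows essentially the same route as the paper: properties (1)--(3) are handled directly from the definition, and property (4) is obtained by writing $\{a,H\}$ as the derivative along the geodesic flow and integrating in $t$ to get the boundary terms $(a\circ\phi_\pi)(q,\xi)-a(q,\xi)$. The paper leaves the final pushforward identification from $S_q^*\S[d]$ to $S_{-q}^*\S[d]$ implicit, whereas you spell it out via $SO(d)$-equivariance of the antipodal map, which is a reasonable amount of extra detail but not a different argument.
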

\begin{proof}
    Properties 1 through 3 are trivially satisfied. We prove property 4. Let $a \in \CinfK(T^*\S[d])$, and recall that if $\phi_t$ denotes the geodesic flow on $T^*\S[d]$ at time $t$, 
    \[
    \{ a, \, H\} (x, \xi) = \partial_{s} [a \circ \phi_s(x, x)] \qquad \forall \, (x, \xi) \in T^*\S[d] \ .
    \]
    Plugging this into the definition of $\nu_{q, 1/2}$,
    \begin{align*}
        \int_{T^*\S[d]} \{ a, \, H\} \nu_{q, 1/2} & = \int_{S_{q}^*\S[d]} \int_{0}^{\pi} \partial_{s} [a \circ \phi_{t + s}(q, \xi)] \frac{\D{t}}{2\pi} \frac{\D{\xi}}{\vol(\S[d-1])} \\
        & = \int_{S_{q}^*\S[d]} \frac{1}{2\pi} \Big[ (a \circ \phi_{\pi})(q, \xi) - (a \circ \phi_{0}) (q, \xi) \Big] \frac{\D{\xi}}{\vol(\S[d-1])} \ ,
    \end{align*}
    and \eqref{Eq: nuqhalf geodesic derivative} follows.
\end{proof}

\subsection{Antipodal points case}
We start by proving the simpler case that $Q$ just contains a pair of antipodal points, $Q = \{q, -q\}$; the case $Q = \{q\}$ is included in this by taking $\beta_{n,-q} = 0$ for all $n \in \N$ (see Corollary \ref{Cor: SDM of seq of single GF} below).

\begin{Theorem} \label{Thm: characterization of SDM of ghQbeta}
    Let $Q = \{q, -q\} \subseteq \S[d]$. Let $(h_n)_{n \in \N} \subseteq (0,1)$ such that $h_n \to 0^+$, and $(\beta_{n})_{n \in \N} \subseteq \C[2]$. Assume there exists a unique Radon probability measure $\mu$ on $T^*\S[d]$ such that
    \[
    \lim_{n \to \infty} \ip*{ g_{h_n}^{Q, \beta_n} }{\Op[h_n]{a} g_{h_n}^{Q, \beta_n} }[L^2(\S[d])] = \int_{T^*\S[d]} a \, \mu \qquad \forall \, a \in \CinfK(T^*\S[d]) \ .
    \]
    Then $\mu = m_{q} \nu_{q, 1/2} + m_{-q} \nu_{-q, 1/2}$ for some $(m_{q}, m_{-q}) \in [0,2]^2$ such that $\frac{1}{2}(m_{q} + m_{-q}) = 1$ (see \eqref{Eq: mu is lincomb of halfnuq scenario1}, \eqref{Eq: mu is lincomb of halfnuq scenario2}, \eqref{Eq: mu is lincomb of halfnuq scenario3}, and \eqref{Eq: mu is lincomb of halfnuq scenario4}).
\end{Theorem}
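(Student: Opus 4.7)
The plan is to deduce the structure of $\mu$ by passing to subsequences and reducing to one of the four scenarios of \cref{Sec: Quasimode Green function}. By standard diagonal extraction, up to subsequence I may assume $\sigma_n \to \sigma \in [0,1]$, $\beta_n \to \beta \in \C[2]$, and that the parity $\rho = (-1)^{\ell_n}$ is eventually constant; the asymptotics of $\beta_{n,q} + (-1)^{\sigma}\rho\beta_{n,-q}$ relative to $|\sigma - \sigma_n|$ then place us in exactly one of Scenarios~1--4. In each case, the corresponding quasimode theorem (\cref{Thm: gh quasimode scenario 1,Thm: gh quasimode scenario 2,Thm: gh quasimode scenario 3,Thm: gh quasimode scenario 4}) produces a finite linear combination $\frac{1}{C_{\sigma,\beta,\rho}}\sum_{|k|\leq \Upsilon} X_k\, z_{\ell_n+k}^{q}$ that approximates $g_{h_n}^{Q,\beta_n}$ in $L^2$ with error $O(\Upsilon^{-1/2}) + o_n(1)$, where the coefficients $X_k$ are given explicitly in terms of $\beta$, $\rho$, $\sigma$, and (in Scenario~3) $c_{\sigma,\beta,\rho}$.

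The next step is to realize the quasimode as a pseudodifferential operator applied to a zonal harmonic. \Cref{Thm: approx of uqUpsilonxn by PsDO on zln} shows that, up to an $O_\Upsilon(h_n)$ error, the quasimode equals $\Op[h_n]{\Gamma^{q,\beta,\Upsilon,X}}\, z_{\ell_n}^{q}$. By \cref{Thm: SDM of zonal harmonics}, the sequence $(z_{\ell_n}^{q})_n$ has unique \SDM{} $\nu_q$. A routine symbolic-calculus argument based on the identity $\ip{A u_n}{\Op[h_n]{a} A u_n} = \ip{u_n}{A^* \Op[h_n]{a} A u_n}$ for $A = \Op[h_n]{\Gamma^{q,\beta,\Upsilon,X}}$ then yields that the \SDM{} of the quasimode equals $|\Gamma^{q,\beta,\Upsilon,X}|^2 \nu_q$, and successively letting $n \to \infty$ and $\Upsilon \to \infty$ identifies $\mu$.

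The main obstacle is to show that, after taking $\Upsilon \to \infty$, the density $|\Gamma|^2$ on $\supp \nu_q$ becomes a step function whose two values are the sought weights $m_q, m_{-q}$. On the flow-out $\mathcal{F}_q$, parametrize $\phi_t(q,\xi)$ by $t \in [0,2\pi)$: one checks that $\cos r = \cos t$ on all of $\mathcal{F}_q$ and that $\sin r \cdot \xi(\partial_r) = \sin t$ (the sign change of $\xi(\partial_r)$ past the antipode cancels the sign of $\sin(2\pi - t)$), so
\begin{equation*}
    \kappa^{q} \pm i\, \varsigma^{q} = e^{\pm i t} \quad \text{on } \mathcal{F}_q, \qquad \Gamma^{q,\beta,\Upsilon,X}\big(\phi_t(q,\xi)\big) = \frac{1}{\norm{X}[\ell_2(\Z)]} \sum_{|k|\leq \Upsilon} X_k\, e^{ikt}.
\end{equation*}
In Scenario~1, $X_k = (\beta_q + (-1)^k \rho\, \beta_{-q})/(k - \sigma)$. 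Splitting into two pieces and using the closed-form Fourier identity
\begin{equation*}
    \sum_{k \in \Z} \frac{e^{ikt}}{k - \sigma} = -\frac{2\pi i\, e^{i\sigma t}}{e^{2\pi i \sigma} - 1}, \qquad t \in (0,2\pi),
\end{equation*}
together with its translate $t \mapsto t + \pi$ modulo $2\pi$ for the alternating term, shows that $|\sum_{k} X_k e^{ikt}|$ is constant on $(0,\pi)$ and on $(\pi,2\pi)$, producing two values
\begin{equation} \label{Eq: mu is lincomb of halfnuq scenario1}
    m_{\pm q} = \frac{\pi^2}{\sin^2(\pi\sigma)\, C_{\sigma,\beta,\rho}^2}\, \bigl|\beta_q + \rho\,\beta_{-q}\, e^{\pm i\pi\sigma}\bigr|^2.
\end{equation}
Since $\nu_q = \nu_{q,1/2} + \nu_{-q,1/2}$ decomposes as uniform measures on these two half flow-outs, the limit measure is exactly $m_q\,\nu_{q,1/2} + m_{-q}\,\nu_{-q,1/2}$, and the probability normalization forces $\tfrac{1}{2}(m_q + m_{-q}) = 1$. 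Scenario~2 is immediate, as the quasimode reduces to a single $z_{\ell_n + \sigma}^{q}$ with \SDM{} $\nu_q$, giving $m_q = m_{-q} = 1$; Scenarios~3 and~4 use the same Fourier computations but require separating the resonant index $k = \sigma$ (carrying the coefficient $c_{\sigma,\beta,\rho}$, respectively vanishing) from the Laurent-type sum over $k \neq \sigma$.
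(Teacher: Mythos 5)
Your proposal follows essentially the same route as the paper's proof: normalize $\beta_n$, extract subsequences so that $\sigma_n\to\sigma$, $\beta_n\to\beta$, and the parity $\rho=(-1)^{\ell_n}$ is fixed; apply the appropriate quasimode theorem (scenarios 1--4) and \cref{Thm: approx of uqUpsilonxn by PsDO on zln} to reduce to the Wigner distribution of $z_{\ell_n}^q$; pass to the limit and use Carleson's theorem and the closed-form Fourier series of $\sum_k e^{ikt}/(k-\sigma)$ to identify the density $|\Gamma|^2$ as a two-valued step function. Your explicit formula $m_{\pm q}=\pi^2\,|\beta_q+\rho\beta_{-q}e^{\pm i\pi\sigma}|^2/(\sin^2(\pi\sigma)\,C_{\sigma,\beta,\rho}^2)$ agrees with the paper's \eqref{Eq: mqpmsigmabetarho definition} after substituting the value of $C_{\sigma,\beta,\rho}^2$ from \eqref{Eq: Csigmabetarho value} and the identity $|1-e^{i2\pi\sigma}|^2=4\sin^2(\pi\sigma)$.
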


\begin{proof}
    Let $(h_n)_{n \in \N} \subseteq (0,1)$, $(\beta_{n})_{n \in \N} \subseteq \C[2]$, and $\mu$ be as stated in the theorem.

    Since $G_{h}^{Q, z \beta} = z G_{h}^{Q, \beta}$ for any $z \in \C$, we may assume that the sequence $(\beta_{n})_{n \in \N}$ is contained in the complex sphere $\{ \beta \in \C[2] \colon \abs{\beta} = 1\}$. Let $(\ell_n)_{n \in \N} \subseteq \N$ and $(\sigma_n)_{n \in \N} \subseteq [0,1)$ be defined by
    \[
    h_n^{-2} = (\ell_n + \sigma_n) (\ell_n + \sigma_n + d - 1) \ .
    \]
    Up to extraction of a subsequence, we may assume that
    \begin{enumerate}
        \item $h_n^{-2} \notin \Spec(\Lap)$ for all $n \in \N$, or $h_n^{-2} \in \Spec(\Lap)$ for all $n \in \N$.
        \item $\beta_n \to \beta \in \C[2]$.
        \item $\sigma_n \to \sigma \in [0,1]$.
        \item $\ell_n$ is even for all $n \in \N$, or odd for all $n \in \N$. We set $\rho \coloneqq +1$ if $\ell_n$ is always even, $\rho \coloneqq -1$ if $\ell_n$ is always odd.
    \end{enumerate}
    Recall the definition of the symbols $\kappa^{q}$ and $\varsigma^{q}$ back in \eqref{Eq: definition symbols kappaq varsigmaq} for $r \coloneqq d(x, q)$.
    \begin{equation} \label{Eq: def kappaq varsigmaq thm cardQ=2}
        \kappa^{q}(x, \xi) \coloneqq \cos r \ , \qquad \varsigma^{q}(x, \xi) \coloneqq \xi(\sin r \cdot \partial_{r}) \ .
    \end{equation}
    We give a detailed proof under the hypothesis of scenario 1, and sketch the proof under the hypothesis of scenarios 2,3 and 4 due to their large similarities.

    \textbf{Scenario 1.} Assume that $h_n^{-2} \notin \Spec(\Lap)$ for all $n \in \N$, and that $\sigma \in (0,1)$. We prove that
    \begin{equation} \label{Eq: mu is lincomb of halfnuq scenario1}
        \mu = m_{q, +}^{\sigma, \beta, \rho} \nu_{q, 1/2} + m_{q, -}^{\sigma, \beta, \rho} \nu_{-q, 1/2} \ ,
    \end{equation}
    for the weights
    \begin{equation} \label{Eq: mqpmsigmabetarho definition}
        m_{q, \pm}^{\sigma, \beta, \rho} \coloneqq \frac{ \abs*{ \beta_{q} + \rho \beta_{-q} e^{\pm i\pi \sigma} }^2 }{\frac{1}{2}\abs*{ \beta_{q} + \rho \beta_{-q} e^{i\pi \sigma} }^2  + \frac{1}{2} \abs*{ \beta_{q} + \rho \beta_{-q} e^{-i\pi \sigma} }^2} \ .
    \end{equation}

    Set
    \begin{equation*}
        C_{\sigma, \beta, \rho} \coloneqq \bigg(\sum_{k \in \Z} \frac{ \abs{\beta_{q} + (-1)^{k} \rho \beta_{-q} }^2 }{ (k - \sigma)^2 } \bigg)^{\frac{1}{2}} \ .
    \end{equation*}
    Let $\Upsilon \in \N$ be large. Thanks to Theorem \ref{Thm: gh quasimode scenario 1}, there exist $C > 1$ and $\nu \in \N$ (depending on $\Upsilon$) such that for all $n \geq \nu$
    \[
    \norm[\bigg]{ g_{h_n}^{Q, \beta_n} - \frac{1}{C_{\sigma, \beta, \rho}} \sum_{\abs{k} \leq \Upsilon} \frac{ \beta_{q} + (-1)^{k} \rho \beta_{-q} }{k - \sigma} z_{\ell_n + k}^{q} }[L^2(\S[d])] \leq C \Big[ \Upsilon h_n + \abs{\sigma_n - \sigma} + \abs{\beta_n - \beta} + \Upsilon^{-\frac{1}{2}} \Big] \ .
    \]
    Let $X = (X_k)_{k \in \Z} \in \ell_2(\Z)$ be defined by
    \begin{equation} \label{Eq: seq xnk def scenario1}
        X_{k} \coloneqq \frac{1}{C_{\sigma, \beta, \rho}} \frac{ \beta_{q} + (-1)^{k} \rho \beta_{-q} }{k - \sigma} \qquad  k \in \Z \ ,
    \end{equation}
    and let the smooth symbol
    \begin{equation} \label{Eq: symbol Gamma scenario1}
        \Gamma^{q, \beta, \Upsilon, \sigma, \rho} \coloneqq \frac{1}{C_{\sigma, \beta, \rho}} \Bigg[ \sum_{k = 0}^{\Upsilon} \frac{ \beta_{q} + (-1)^{k} \rho \beta_{-q} }{-k - \sigma} (\kappa^{q} - i \varsigma^{q})^{k} + \sum_{k = 1}^{\Upsilon} \frac{ \beta_{q} + (-1)^{k} \rho \beta_{-q} }{k - \sigma} (\kappa^{q} + i \varsigma^{q})^{k} \Bigg] \ .
    \end{equation}
    Thanks to Theorem \ref{Thm: approx of uqUpsilonxn by PsDO on zln}, there exist $C(X, \Upsilon) > 1$ such that 
    \[
    \norm*{ \frac{1}{C_{\sigma, \beta, \rho}} \sum_{\abs{k} \leq \Upsilon} \frac{ \beta_{q} + (-1)^{k} \rho \beta_{-q} }{k - \sigma} z_{\ell_n + k}^{q} - \Op[h_n]{ \Gamma^{q, \beta, \Upsilon, \sigma, \rho} } z_{\ell_n}^{q} }[L^2(\S[d])] \leq C(X, \Upsilon) h_n \ .
    \]
    Combining these estimates, we get that there exists $C > 1$, $C(X, \Upsilon) > 1$, and $\nu \in \N$ such that for all $n \geq \nu$
    \[
    \norm[\bigg]{ g_{h_n}^{Q, \beta_n} - \Op[h_n]{ \Gamma^{q, \beta, \Upsilon, \sigma, \rho} } z_{\ell_n}^{q} }[L^2(\S[d])] \leq C \Big[ C(X, \Upsilon) h_n + \abs{\sigma_n - \sigma} + \abs{\beta_n - \beta} + \Upsilon^{-\frac{1}{2}} \Big] \ .
    \]
    Therefore, by symbolic calculus, for all $n \geq \nu$,
    \begin{multline*}
        \abs*{ \ip{ g_{h_n}^{Q, \beta_n} }{\Op[h_n]{a} g_{h_n}^{Q, \beta_n} }[L^2(\S[d])] - \ip{z_{\ell_n}^{q}}{ \Op[h_n]{ a \, \abs{ \Gamma^{q, \beta, \Upsilon, \sigma, \rho}}^2 } z_{\ell_n}^{q} }[L^2(\S[d])] } \leq \\
        \leq C \Big[ C(X, \Upsilon) h_n + \abs{\sigma_n - \sigma} + \abs{\beta_n - \beta} + \Upsilon^{-\frac{1}{2}} \Big] \ .
    \end{multline*}
    Thanks to Theorem \ref{Thm: SDM of zonal harmonics}, taking limits $n \to \infty$ we infer
    \begin{equation} \label{Eq: approximation mu by symbolnuq}
        \abs[\bigg]{ \int_{T^*\S[d]} a \, \mu - \int_{T^*\S[d]} a \cdot \abs{ \Gamma^{q, \beta, \Upsilon, \sigma, \rho}}^2 \nu_{q} } \leq C \Upsilon^{-\frac{1}{2}} \ .
    \end{equation}
    Taking $\Upsilon \to \infty$ would allow us to characterize the measure $\mu$, but for that, we need to study the \weakstar limit of $\abs{ \Gamma^{q, \beta, \Upsilon, \sigma, \rho}}^2 \nu_{q}$ as $\Upsilon \to \infty$ first.

    If $\phi_t$ denotes the geodesic flow on $T^*\S[d]$ at time $t$, it is not difficult to check (see \eqref{Eq: def kappaq varsigmaq thm cardQ=2}) that
    \begin{equation} \label{Eq: kappaq varsigmaq evaluated on flow-out from q}
        \kappa^{q}(\phi_t(q, \xi)) = \cos t \ , \qquad \varsigma^{q}(\phi_t(q, \xi)) = \sin t \qquad \forall \, \xi \in S_{q}^*\S[d] \ ,
    \end{equation}
    thus,
    \begin{align*}
        \Gamma^{q, \beta, \Upsilon, \sigma, \rho}(\phi_{t}(q, \xi)) & = \frac{1}{C_{\sigma, \beta, \rho}} \sum_{\abs{k} \leq \Upsilon} \frac{\beta_{q} + (-1)^{k} \rho \beta_{-q} }{k - \sigma} e^{ikt} \\
        & = \frac{\beta_{q}}{C_{\sigma, \beta, \rho}} \sum_{\abs{k} \leq \Upsilon} \frac{1}{k - \sigma} e^{ikt} + \frac{ \rho \beta_{-q}}{C_{\sigma, \beta, \rho}} \sum_{\abs{k} \leq \Upsilon} \frac{(-1)^{k}}{k - \sigma} e^{ikt} \qquad \forall \, \xi \in S_{q}^{*}\S[d] \ .
    \end{align*}
    If $\T \coloneqq \R / (2\pi \Z)$, let $\gamma_{\sigma}, [S_{-\pi} \gamma^{\sigma}] \in L^2(\T)$ be defined by
    \[
    \gamma^{\sigma}(t) \coloneqq \frac{2\pi i}{1 - e^{i2\pi\sigma} } e^{i \sigma t} \ , \quad t \in [0,2\pi) \ , \qquad [S_{-\pi}\gamma^{\sigma}] (t) \coloneqq \begin{dcases} \gamma^{\sigma}(t + \pi) & \qquad t \in [0, \pi) \ , \\ \gamma^{\sigma}(t - \pi) & \qquad t \in [\pi, 2\pi) \ . \end{dcases}
    \]
    We observe that $\Gamma^{q, \beta, \Upsilon, \sigma, \rho}(\phi_{t}(q, \xi))$ is the partial Fourier series of
    \[
    \gamma^{\sigma, \beta, \rho} \coloneqq \frac{1}{C_{\sigma, \beta, \rho}} \Big[ \beta_{q} \gamma^{\sigma} + \rho \beta_{-q} [S_{-\pi}\gamma^{\sigma}] \Big] \ .
    \]
    Carleson theorem states that the partial Fourier series of some $f \in L^2(\T)$ converges a.e. $t \in \T$ to $f$, hence, Carleson theorem, in combination with dominated convergence theorem, implies
    \begin{equation} \label{Eq: limitUpsilon symbolnuq scenario1}
        \lim_{\Upsilon \to \infty} \int_{T^*\S[d]} a \cdot \abs{ \Gamma^{q, \beta, \Upsilon, \sigma, \rho}}^2 \nu_{q} = \int_{S_{q}^*\S[d]} \int_{0}^{2\pi} a(\phi_t(q, \xi)) \abs*{ \gamma^{\sigma, \beta, \rho}(t)}^2 \frac{\D{t}}{2\pi} \frac{\D{\xi}}{\vol(\S[d-1])} \qquad \forall \, a \in \CinfK(T^*\S[d]) \ .
    \end{equation}

    \begin{Claim}
        For every $\sigma \in (0,1)$, $\beta \in \C[2]$ and $\rho \in \{-1, +1\}$, the following identity holds
        \begin{equation} \label{Eq: Csigmabetarho value}
            (C_{\sigma, \beta, \rho})^2 = \frac{(2\pi)^2}{\abs{1 - e^{i2\pi \sigma}}^2 } \Big[ \frac{1}{2}\abs*{ \beta_{q} + \rho \beta_{-q} e^{i\pi \sigma} }^2  + \frac{1}{2} \abs*{ \beta_{q} + \rho \beta_{-q} e^{-i\pi \sigma} }^2  \Big] \ .
        \end{equation}
        Moreover, for $m_{q, \pm}^{\sigma, \beta, \rho}$ defined in \eqref{Eq: mqpmsigmabetarho definition} one has
        \begin{equation} \label{Eq: Claim aux33}
            \abs*{ \gamma^{\sigma, \beta, \rho}(t)}^2 = \begin{dcases}
                m_{q, +}^{\sigma, \beta, \rho}& \quad t \in [0,\pi) \ , \\
                m_{q, -}^{\sigma, \beta, \rho} & \quad t \in [\pi, 2\pi) \ .
            \end{dcases}
        \end{equation}
    \end{Claim}
    \begin{proof}[Proof of claim]
        We see that for $t \in [0,\pi)$,
        \begin{equation} \label{Eq: Claim aux34}
            \gamma^{\sigma, \beta, \rho}(t) = \frac{1}{C_{\sigma, \beta, \rho}} \frac{2\pi i}{1 - e^{i 2\pi \sigma}} [ \beta_{q} + \rho \beta_{-q} e^{i\pi \sigma} ] e^{i \sigma t} \ ,
        \end{equation}
        and for $t \in [\pi, 2\pi)$,
        \begin{equation} \label{Eq: Claim aux35}
        \gamma^{\sigma, \beta, \rho}(t) = \frac{1}{C_{\sigma, \beta, \rho}} \frac{2\pi i}{1 - e^{i 2\pi \sigma}} [ \beta_{q} + \rho \beta_{-q} e^{-i\pi \sigma} ] e^{i \sigma t} \ ,
        \end{equation}
        thus
        \[
        \norm*{\gamma^{\sigma, \beta, \rho}}[L^2(\T)]^2 = \frac{1}{(C_{\sigma, \beta, \rho})^2} \frac{(2\pi)^2}{\abs{1 - e^{i2\pi \sigma}}^2 } \Big[ \abs*{ \beta_{q} + \rho \beta_{-q} e^{i\pi \sigma} }^2 \pi + \abs*{ \beta_{q} + \rho \beta_{-q} e^{-i\pi \sigma} }^2 \pi \Big] \ .
        \]
        Meanwhile, Parseval's identity gives
        \[
        \norm*{\gamma^{\sigma, \beta, \rho}}[L^2(\T)]^2 = 2\pi \ .
        \]
        \eqref{Eq: Csigmabetarho value} follows from these identities.

        Now, using \eqref{Eq: Csigmabetarho value} in combination with \eqref{Eq: Claim aux34} and \eqref{Eq: Claim aux35}, and the definition of $m_{q, \pm}^{\sigma, \beta, \rho}$, we obtain \eqref{Eq: Claim aux33}.
    \end{proof}

    Recall the measures $\nu_{q, 1/2}$ from \eqref{Eq: half measure on flow-out defintion}. Applying \eqref{Eq: Claim aux33} into \eqref{Eq: limitUpsilon symbolnuq scenario1}, one gets
    \[
    \lim_{\Upsilon \to \infty} \int_{T^*\S[d]} a \cdot \abs{ \Gamma^{q, \beta, \Upsilon, \sigma, \rho}}^2 \nu_{q} = \int_{T^*\S[d]} a \Big( m_{q, +}^{\sigma, \beta, \rho} \nu_{q, 1/2} + m_{q, -}^{\sigma, \beta, \rho} \nu_{-q, 1/2} \Big) \ .
    \]
    Therefore, taking limits $\Upsilon \to \infty$ in \eqref{Eq: approximation mu by symbolnuq} we obtain
    \[
    \int_{T^*\S[d]} a \, \mu = \int_{T^*\S[d]} a \, \Big( m_{q, +}^{\sigma, \beta, \rho} \nu_{q, 1/2} + m_{q, -}^{\sigma, \beta, \rho} \nu_{-q, 1/2} \Big) \qquad \forall \, a \in \CinfK(T^*\S[d]) \ .
    \]

    \textbf{Scenario 2.} Assume that $h_n^{-2} \notin \Spec(\Lap)$ for all $n \in \N$ and that
    \[
    \sup_{n \in \N} \frac{\abs*{\beta_{n,q} + (-1)^{\sigma} \rho \beta_{n,-q}}}{ \abs{\sigma - \sigma_n} } = \infty \ .
    \]
    We prove that
    \begin{equation} \label{Eq: mu is lincomb of halfnuq scenario2}
        \mu = \nu_q = \nu_{q, 1/2} + \nu_{-q, 1/2} \ .
    \end{equation}
    
    Up to extraction of a subsequence we may assume that
    \[
    \lim_{n \to \infty} \frac{\abs*{\beta_{n,q} + (-1)^{\sigma} \rho \beta_{n,-q}}}{ \abs{\sigma - \sigma_n} } = \infty \ ,
    \]
    thus we are under the hypothesis of Theorem \ref{Thm: gh quasimode scenario 2}. Thanks to Theorem \ref{Thm: gh quasimode scenario 2} there exist $C > 1$ and $\nu \in \N$ such that for all $n \geq \nu$,
    \[
    \norm*{g_{h_n}^{Q, \beta_n} - (-1)^{1 - \sigma} z_{\ell_n + \sigma}}[L^2(\S[d])]^2 \leq C \bigg[ h_n + \frac{ \abs{\sigma - \sigma_n} }{\abs*{\beta_{n,q} + (-1)^{\sigma} \rho \beta_{n,-q}}} \bigg] \ .
    \]
    This implies that
    \begin{equation*}
        \abs*{ \ip{g_{h_n}^{Q, \beta_n} }{ \Op[h_n]{a} g_{h_n}^{Q, \beta_n} }[L^2(\S[d])] - \ip{z_{\ell_n + \sigma}^{q}}{ \Op[h_n]{a} z_{\ell_n + \sigma} }[L^2(\S[d])] } \leq C \bigg[ h_n + \frac{ \abs{\sigma - \sigma_n} }{\abs*{\beta_{n,q} + (-1)^{\sigma} \rho \beta_{n,-q}}} \bigg] \ .
    \end{equation*}
    Thanks to Theorem \ref{Thm: SDM of zonal harmonics}, we may take limits $n \to \infty$ above obtaining
    \[
    \int_{T^*M} a \, \mu = \int_{T^*M} a \, \nu_{q} \qquad \forall \, a \in \CinfK(T^*\S[d]) \ .
    \]

    \textbf{Scenario 3.} Assume that $h_n^{-2} \notin \Spec(\Lap)$ for all $n \in \N$ and that
    \[
    \sup_{n \in \N} \frac{\abs*{\beta_{n,q} + (-1)^{\sigma} \rho \beta_{n,-q}}}{ \abs{\sigma - \sigma_n} } < \infty \ .
    \]
    We prove that
    \begin{equation} \label{Eq: mu is lincomb of halfnuq scenario3}
        \mu = m_{q, +}^{\sigma, \beta, \rho} \nu_{q, 1/2} + m_{q, -}^{\sigma, \beta, \rho} \nu_{-q, 1/2}
    \end{equation}
    for the weights
    \begin{equation} \label{Eq: mqpmsigmabetarho definition scenarion3}
            m_{q, \pm}^{\sigma, \beta, \rho} \coloneqq \frac{ \abs*{ c_{\sigma, \beta, \rho} \pm i \pi \beta_q}^2}{ \abs{c_{\sigma, \beta, \rho}}^2 + \pi^2 \abs{\beta_{q}}^2 } \ .
    \end{equation}
    
    Up to extraction of a subsequence we may assume that
    \[
    c_{\sigma, \beta, \rho} \coloneqq \lim_{n \to \infty} \frac{\abs*{\beta_{n,q} + (-1)^{\sigma} \rho \beta_{n,-q}}}{ \abs{\sigma - \sigma_n} } \in \C \ ,
    \]
    thus we are under the hypothesis of Theorem \ref{Thm: gh quasimode scenario 3}. Set
    \begin{equation*}
        C_{\sigma, \beta, \rho} \coloneqq \bigg(\abs{c_{\sigma, \beta, \rho}}^2 + \sum_{\substack{ k \in \Z \\ k \neq \sigma} } \frac{ \abs{\beta_{q} + (-1)^{k} \rho \beta_{-q} }^2 }{ (k - \sigma)^2 } \bigg)^{\frac{1}{2}} \ .
    \end{equation*}
    Let $\Upsilon \in \N$ large. A combination of Theorem \ref{Thm: gh quasimode scenario 3} and Theorem \ref{Thm: approx of uqUpsilonxn by PsDO on zln} for the symbol (note that summation index $k = 0$ corresponds with the eigenvalue index $\ell_n + \sigma$)
    \begin{equation} \label{Eq: symbol Gamma scenario3}
        \Gamma^{q, \beta, \Upsilon, \sigma, \rho} \coloneqq \frac{1}{C_{\sigma, \beta, \rho}} \Bigg[ \sum_{k = 1}^{2\Upsilon - 1} \frac{ \beta_{q} + (-1)^{\sigma - k} \rho \beta_{-q} }{-k} (\kappa^{q} - i \varsigma^{q})^{k} + c_{\sigma, \beta, \rho} + \sum_{k = 1}^{2\Upsilon - 1} \frac{ \beta_{q} + (-1)^{\sigma + k} \rho \beta_{-q} }{k} (\kappa^{q} + i \varsigma^{q})^{k} \Bigg] \ .
    \end{equation}
    give the following: there exists $C > 1$, $C(X, \Upsilon) > 1$, and $\nu \in \N$ such that for all $n \geq \nu$
    \begin{multline*}
        \norm[\bigg]{ g_{h_n}^{Q, \beta_n} - \Op[h_n]{ \Gamma^{q, \beta, \Upsilon, \sigma, \rho} } z_{\ell_n}^{q} }[L^2(\S[d])] \leq \\
        \leq C \bigg[ C(X, \Upsilon) h_n + \abs{\sigma_n - \sigma} + \abs{\beta_n - \beta} + \abs[\bigg]{ c_{\sigma, \beta, \rho} - \frac{\beta_{n,q} + (-1)^{\sigma} \rho \beta_{n,-q}}{ \sigma - \sigma_n } } + \Upsilon^{-\frac{1}{2}} \bigg] \ .
    \end{multline*}

    Now, symbolic calculus implies
    \begin{multline*}
        \abs*{ \ip{ g_{h_n}^{Q, \beta_n} }{\Op[h_n]{a} g_{h_n}^{Q, \beta_n} }[L^2(\S[d])] - \ip{z_{\ell_n}^{q}}{ \Op[h_n]{ a \, \abs{ \Gamma^{q, \beta, \Upsilon, \sigma, \rho}}^2 } z_{\ell_n}^{q} }[L^2(\S[d])] } \leq \\
        C \Big[ C(X, \Upsilon) h_n + \abs{\sigma_n - \sigma} + \abs{\beta_n - \beta} + \abs[\bigg]{ c_{\sigma, \beta, \rho} - \frac{\beta_{n,q} + (-1)^{\sigma} \rho \beta_{n,-q}}{ \sigma - \sigma_n } } + \Upsilon^{-\frac{1}{2}} \Big] \ ,
    \end{multline*}
    and thanks to Theorem \ref{Thm: SDM of zonal harmonics}, we can take limits $n \to \infty$ and obtain
    \begin{equation} \label{Eq: approximation mu by symbolnuq rep}
        \abs[\bigg]{ \int_{T^*\S[d]} a \, \mu - \int_{T^*\S[d]} a \cdot \abs{ \Gamma^{q, \beta, \Upsilon, \sigma, \rho}}^2 \nu_{q} } \leq C \Upsilon^{-\frac{1}{2}} \ .
    \end{equation}
    Once again, we want to compute the \weakstar limit of $\abs{ \Gamma^{q, \beta, \Upsilon, \sigma, \rho}}^2 \nu_{q}$ as $\Upsilon \to \infty$.
    
    Recall that $\beta_{q} + (-1)^{\sigma} \rho \beta_{-q} = 0$, hence, for $k \in \Z$,
    \begin{equation} \label{Eq: simplification betaq + -1sigmarhobeta-q = 0}
        \beta_{q} + (-1)^{\sigma} \rho \beta_{-q} = \begin{dcases}
            0 & \text{if $k \equiv \sigma \mathrm{(mod \, 2)}$} \\
            2\beta_{q} & \text{if $k \equiv 1 - \sigma \mathrm{(mod \, 2)}$}
        \end{dcases}
    \end{equation}
    Therefore, applying identities \eqref{Eq: kappaq varsigmaq evaluated on flow-out from q} and \eqref{Eq: simplification betaq + -1sigmarhobeta-q = 0} later, we get that for all $\xi \in S_{q}^*\S[d]$,
    \begin{align*}
        \Gamma^{q, \beta, \Upsilon, \sigma, \rho}(\phi_{t}(q, \xi)) & = \frac{c_{\sigma, \beta, \rho}}{ C_{\sigma, \beta, \rho} } \charf_{[0,2\pi)}(t) + \frac{1}{C_{\sigma, \beta, \rho}} \sum_{\abs{k} = 1}^{2\Upsilon - 1} \frac{\beta_{q} + (-1)^{\sigma + k} \rho \beta_q }{k} e^{ikt} \\
        & = \frac{c_{\sigma, \beta, \rho}}{ C_{\sigma, \beta, \rho} } \charf_{[0,2\pi)}(t) + \frac{4\beta_{q} i}{C_{\sigma, \beta, \rho}} \sum_{k = 1}^{\Upsilon} \frac{ \sin (2k - 1) t }{2k - 1} \ .
    \end{align*}
    We observe that, for $\xi \in S_{q}^*\S[d]$, $\Gamma^{q, \beta, \Upsilon, \sigma, \rho}(\phi_t(q, \xi))$ is the partial Fourier series of $\gamma^{\sigma, \beta, \rho} \in L^2(\T)$, given by
    \[
    \gamma^{\sigma, \beta, \rho} \coloneqq \frac{1}{C_{\sigma, \beta, \rho}} \Big[ c_{\sigma, \beta, \rho} \charf_{[0,2\pi)} + i \pi \beta_{q} \mathcal{S} \Big] \ , \qquad \mathcal{S}(t) = \begin{dcases} +1 & \text{if $t \in [0,\pi)$} \\ -1 & \text{if $t \in [\pi, 2\pi)$}
    \end{dcases} \ .
    \]
    Meanwhile, taking into account that definition of $C_{\sigma, \beta, \rho}$ and the fact that $\sum_{k = 1}^{\infty} \frac{1}{(2k - 1)^2} = \frac{\pi^2}{8}$, one infers that for $\sigma \in \{0, 1\}$, $\beta \in \C[2]$, and $\rho \in \{-1, +1\}$,
    \begin{equation} \label{Eq: Csigmabetarho value scenario3}
        C_{\sigma, \beta, \rho} = \bigg( \abs{c_{\sigma, \beta, \rho}}^2 + \pi^2 \abs{\beta_{q}}^2 \bigg)^{\frac{1}{2}} \ .
    \end{equation}
    This let us write $\abs{\gamma^{\sigma, \beta, \rho}(t)}^2$ in terms of the previously defined weights $m_{q, \pm}^{\sigma, \beta, q}$ \eqref{Eq: mqpmsigmabetarho definition scenarion3}
    \begin{equation} \label{Eq: Claim aux63}
        \abs*{ \gamma^{\sigma, \beta, \rho}(t)}^2 = \begin{dcases}
            m_{q, +}^{\sigma, \beta, \rho} & \quad t \in [0,\pi) \ ,\\
            m_{q, -}^{\sigma, \beta, \rho} & \quad t \in [\pi, 2\pi) \ .
        \end{dcases}
    \end{equation}
    Using that $\abs{a + b}^2 + \abs{a - b}^2 = 2\abs{a} + 2\abs{b}^2$ for any $a,b \in \C$, one can show that $\norm*{\gamma^{\sigma, \beta, \rho}}[L^2(\T)]^2 = 2\pi$.
    
    Summing up, Carleson theorem and dominated convergence theorem together give
    \[
    \lim_{\Upsilon \to \infty} \int_{T^*\S[d]} a \cdot \abs*{ \Gamma^{q, \beta, \Upsilon, \sigma, \rho}}^2 \nu_{q} = \int_{T^*\S[d]} a \, \Big( m_{q, +}^{\sigma, \beta, \rho} \nu_{q, 1/2} + m_{q, -}^{\sigma, \beta, \rho} \nu_{-q, 1/2} \Big) \qquad \forall \, a \in \CinfK(T^*\S[d]) \ ,
    \]
    therefore, taking limits $\Upsilon \to \infty$ in \eqref{Eq: approximation mu by symbolnuq rep} we infer
    \[
    \int_{T^*\S[d]} a \, \mu = \int_{T^*\S[d]} a \, \Big( m_{q, +}^{\sigma, \beta, \rho} \nu_{q, 1/2} + m_{q, -}^{\sigma, \beta, \rho} \nu_{-q, 1/2} \Big) \qquad \forall \, a \in \CinfK(T^*\S[d]) \ .
    \]

    \textbf{Scenario 4.} Assume that $h_n^{-2} = \lambda_{\ell_n}^2$ (thus $\sigma_n = 0$) and $\beta_{n,q} + \rho \beta_{n,-q} = 0$ for all $n \in \N$ so that $G_{h_n}^{Q, \beta_n}$ is a well-defined $L^2$-function (see \eqref{Eq: condition beta-h}. Set
    \[
    D_{\beta} \coloneqq \bigg( \sum_{\substack{ k \in \Z \\ k \neq 0} } \frac{ \abs{\beta_{q} + (-1)^{k} \rho \beta_{-q} }^2 }{ k^2 } \bigg)^{\frac{1}{2}} \ .
    \]
    The same reasoning as in scenario 3 with the added hypothesis $c_{\sigma, \beta, \rho} = 0$ works in scenario 4. This requires the combination of Theorem \ref{Thm: gh quasimode scenario 4} with Theorem \ref{Thm: approx of uqUpsilonxn by PsDO on zln} for the symbol
    \begin{equation} \label{Eq: symbol Gamma scenario4}
        \Gamma^{q, \beta, \Upsilon, \rho} \coloneqq \frac{1}{D_{\beta}} \Bigg[ \sum_{k = 1}^{2\Upsilon - 1} \frac{ \beta_{q} + (-1)^{\sigma - k} \rho \beta_{-q} }{-k} (\kappa^{q} - i \varsigma^{q})^{k} + \sum_{k = 1}^{2\Upsilon - 1} \frac{ \beta_{q} + (-1)^{\sigma + k} \rho \beta_{-q} }{k} (\kappa^{q} + i \varsigma^{q})^{k} \Bigg] \ .
    \end{equation}
    This leads to the identity
    \begin{equation} \label{Eq: mu is lincomb of halfnuq scenario4}
        \mu = \nu_{q} = \nu_{q, 1/2} + \nu_{-q, 1/2} \ .
    \end{equation}
\end{proof}

\begin{Remark} \label{Rmk: surjectivity of weights from GF}
    Any non-invariant measure of the form $m_{q} \nu_{q, 1/2} + m_{-q} \nu_{-q, 1/2}$ can be produced from a strong perturbation regime, that is $\sigma \in (0,1)$. If one sees the weight $m_{q, +}^{\sigma, \beta, \rho}$ \eqref{Eq: mqpmsigmabetarho definition} as a function of $\beta \in \C[2]$ for $\sigma$ and $\rho$ fixed, then $m$ is surjective thanks to intermediate value Theorem: say $\rho = +1$, then $m$ is continuous, $m(1, e^{-i\pi \sigma}) = 2$, and $m(-1, e^{-i\pi \sigma}) = 0$.
\end{Remark}

If $Q \subseteq \S[d]$ is a singleton, $Q = \{q\}$, then any semiclassical defect measure arising from a sequence of new eigenfunctions is invariant under the geodesic flow, as the following results shows.

\begin{Corollary} \label{Cor: SDM of seq of single GF}
    Let $q \in \S[d]$. Let $(h_n)_{n \in \N} \subseteq (0,1)$ such that $h_n \to 0^+$ and $h_n^{-2} \notin \Spec(\Lap)$ for all $n \in \N$. Assume there exists a unique Radon probability measure $\mu$ on $T^*\S[d]$ such that
    \[
    \lim_{n \to \infty} \ip{ g_{h_n}^{q} }{\Op[h_n]{a} g_{h_n}^{q} }[L^2(\S[d])] = \int_{T^*\S[d]} a \, \mu \qquad \forall \, a \in \CinfK(T^*\S[d]) \ .
    \]
    Then $\mu = \nu_q$.
\end{Corollary}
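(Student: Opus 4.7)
The strategy is to derive \cref{Cor: SDM of seq of single GF} as an immediate consequence of \cref{Thm: characterization of SDM of ghQbeta} by artificially enlarging the scatterer set: set $\widetilde{Q} = \{q, -q\}$ and take $\widetilde{\beta}_n = (1, 0) \in \C[2]$ for every $n \in \N$. Since $h_n^{-2} \notin \Spec(\Lap)$, the compatibility condition \eqref{Eq: condition beta-h} is vacuous, and one reads off directly from \eqref{Eq: def GhQbeta good h} that $G_{h_n}^{\widetilde{Q}, \widetilde{\beta}_n} = G_{h_n}^{q}$, so after normalization $g_{h_n}^{\widetilde{Q}, \widetilde{\beta}_n} = g_{h_n}^{q}$. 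The corollary is therefore reduced to identifying the limit measure that \cref{Thm: characterization of SDM of ghQbeta} produces for this degenerate weight vector.

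Next I would extract an arbitrary subsequence along which $\sigma_n \to \sigma \in [0,1]$ and along which the parity $\rho \coloneqq (-1)^{\ell_n}$ is constant, placing us under the standing assumptions of \cref{Thm: characterization of SDM of ghQbeta}. Scenario 4 is ruled out by the spectral assumption $h_n^{-2} \notin \Spec(\Lap)$, and scenario 3 is ruled out because its key hypothesis $\widetilde{\beta}_q + (-1)^{\sigma} \rho\, \widetilde{\beta}_{-q} = 0$ reduces here to $1 = 0$. If $\sigma \in \{0,1\}$ we are in scenario 2, since the numerator in \eqref{Eq: infty condition scenario 2} is identically $1$ while $|\sigma - \sigma_n| \to 0$; the theorem then delivers $\mu = \nu_q$ outright. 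If $\sigma \in (0,1)$ scenario 1 applies, and substituting $\widetilde{\beta} = (1,0)$ into \eqref{Eq: mqpmsigmabetarho definition} gives
\begin{equation*}
    m_{q, \pm}^{\sigma, \widetilde{\beta}, \rho} = \frac{|1|^{2}}{\tfrac{1}{2}|1|^{2} + \tfrac{1}{2}|1|^{2}} = 1 \ ,
\end{equation*}
so the limit is $\nu_{q,1/2} + \nu_{-q,1/2} = \nu_q$.

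Since every admissible subsequence produces the same measure $\nu_q$, and the corollary's hypothesis asserts uniqueness of the full-sequence semiclassical defect measure $\mu$, a standard subsequence--of--subsequence argument yields $\mu = \nu_q$. There is no genuine obstacle in this argument: the real content lies in \cref{Thm: characterization of SDM of ghQbeta}, while the reduction to a single scatterer is a one-line identity between Green's functions together with the routine observation that the degenerate choice $\widetilde{\beta} = (1, 0)$ cannot fall into the destructive-interference regime of scenario 3, which is what in \cref{Thm: characterization of SDM of ghQbeta} was the only source of non-invariant limit measures for an antipodal pair.
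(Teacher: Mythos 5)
Your proof is correct and follows essentially the same route as the paper's: both reduce to \cref{Thm: characterization of SDM of ghQbeta} by choosing the degenerate weight $\beta_n = (1,0)$, observe that scenarios 3 and 4 cannot occur, and check that scenarios 1 and 2 each yield $\nu_q$ (in scenario 1 because the weights $m_{q,\pm}^{\sigma,\beta,\rho}$ both equal $1$). Your version merely spells out the subsequence extraction and the exclusion of scenarios 3 and 4 in a bit more detail than the paper does, and it even silently corrects a small typo in the paper's final line, which writes $\nu_{q,1/2}+\nu_{q,1/2}$ where $\nu_{q,1/2}+\nu_{-q,1/2}$ is meant.
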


\begin{proof}
    This is a consequence of Theorem \ref{Thm: characterization of SDM of ghQbeta} by simply setting $\beta_n = (1, 0) \in \C[2]$ for all $n \in \N$. Under this conditions, scenario 3 and 4 cannot hold. If the sequence $(h_n)_{n \in \N}$ is such that scenario 2 holds, then $\mu = \nu_q$ immediately thanks to \eqref{Eq: mu is lincomb of halfnuq scenario2}. Meanwhile, if $(h_n)_{n \in \N}$ satisfies the conditions of scenario 1, then \eqref{Eq: mu is lincomb of halfnuq scenario1} and \eqref{Eq: mqpmsigmabetarho definition} imply that for this specific choice of $\beta_{n} \to \beta = (1,0)$,
    \[
    \mu = \nu_{q, 1/2} + \nu_{q, 1/2} = \nu_q \ . \qedhere
    \]
\end{proof}

\subsection{General case}
For a set $X \subseteq \S[d]$, define the set of antipodal points of $X$ as $-X \coloneqq \{ -x \in \S[d] \colon x \in X\}$. Let $A, B \subseteq \S[d]$ be a pair of finite subsets such that $A$, $B$, $(-A)$, and $(-B)$ are pairwise disjoint. Assume that the set $Q$ has the form
\begin{equation} \label{Eq: decomposition Q into A -A B}
    Q = A \cup (-A) \cup B \ ,
\end{equation}
$N \coloneqq \card Q = 2 (\card A) + \card B$. For $m \in [0,2]^N$ such that
\begin{equation} \label{Eq: condition on weights ma+ ma- mb}
    \sum_{q \in A} \tfrac{1}{2} (m_{q} + m_{-q}) + \sum_{p \in B} m_{p} = 1 \ ,
\end{equation}
define
\begin{equation} \label{Eq: non-inv SDM with weights ma+ ma- mb}
    \nu_{Q, m} \coloneqq \sum_{q \in A} \Big( m_{q} \nu_{q, 1/2} + m_{-q} \nu_{-q, 1/2} \Big) + \sum_{p \in B} m_{p} \nu_{p} \ .
\end{equation}

\begin{Theorem} \label{Thm: characterization of SDM of ghQbeta general case}
    Let $Q = A \cup (-A) \cup B \subseteq \S[d]$ be a finite subset as above. Let $(h_n)_{n \in \N} \subseteq (0,1)$ such that $h_n \to 0^+$, and $(\beta_{n})_{n \in \N} \subseteq \C[N]$. Assume there exists a unique Radon probability measure $\mu$ on $T^*\S[d]$ such that
    \[
    \lim_{n \to \infty} \ip*{ g_{h_n}^{Q, \beta_n} }{\Op[h_n]{a} g_{h_n}^{Q, \beta_n} }[L^2(\S[d])] = \int_{T^*\S[d]} a \, \mu \qquad \forall \, a \in \CinfK(T^*\S[d]) \ .
    \]
    Then $\mu = \nu_{Q, m}$ for some $m \in [0,2]^{N}$ satisfying \eqref{Eq: condition on weights ma+ ma- mb}.
\end{Theorem}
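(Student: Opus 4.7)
The plan is to reduce to the antipodal-pair case (\cref{Thm: characterization of SDM of ghQbeta}) and the singleton case (\cref{Cor: SDM of seq of single GF}) through a block decomposition of $G_{h_n}^{Q, \beta_n}$ along $Q = A \cup (-A) \cup B$. Writing $\mathcal{B} := A \cup B$ for the index set of blocks, I would group the Green's functions by antipodal pairs $\{q, -q\}$ for $q \in A$ and singletons $\{p\}$ for $p \in B$, setting
\[
G_n^{(q)} := \beta_{n, q} G_{h_n}^{q} + \beta_{n, -q} G_{h_n}^{-q} \quad (q \in A) \ , \qquad G_n^{(p)} := \beta_{n, p} G_{h_n}^{p} \quad (p \in B) \ ,
\]
so that $G_{h_n}^{Q, \beta_n} = \sum_{b \in \mathcal{B}} G_n^{(b)}$. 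Denoting $N_n^{(b)} := \norm{G_n^{(b)}}[L^2(\S[d])]$, $N_n := \norm{G_{h_n}^{Q, \beta_n}}[L^2(\S[d])]$, and $g_n^{(b)} := G_n^{(b)} / N_n^{(b)}$ whenever $N_n^{(b)} > 0$, after extraction I may assume that $a_n^{(b)} := N_n^{(b)} / N_n \to a^{(b)} \geq 0$ for every $b$ and that each nontrivial $(g_n^{(b)})_n$ has a unique \SDM{} $\mu^{(b)}$. \Cref{Thm: characterization of SDM of ghQbeta} and \cref{Cor: SDM of seq of single GF} then provide the required form of each $\mu^{(b)}$: $\mu^{(b)} = m_{b, +} \nu_{b, 1/2} + m_{b, -} \nu_{-b, 1/2}$ with $\frac{1}{2}(m_{b, +} + m_{b, -}) = 1$ for $b \in A$, and $\mu^{(b)} = \nu_{b}$ for $b \in B$.

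The heart of the argument is the bilinear expansion
\[
\ip{g_{h_n}^{Q, \beta_n}}{\Op[h_n]{a} g_{h_n}^{Q, \beta_n}}[L^2(\S[d])] = \sum_{b, b' \in \mathcal{B}} a_n^{(b)} a_n^{(b')} \ip{g_n^{(b)}}{\Op[h_n]{a} g_n^{(b')}}[L^2(\S[d])]
\]
combined with the vanishing of all off-diagonal terms, which is where I expect the principal obstacle. The geometric input is that when $b \neq b'$, the flow-outs $\mathcal{F}_{b}$ and $\mathcal{F}_{b'}$ (see \eqref{Eq: flow-out from q set def}) are distinct $d$-dimensional submanifolds of $S^*\S[d]$ intersecting only on a lower-dimensional set, so the measures $\mu^{(b)}$ and $\mu^{(b')}$ are mutually singular---which extends \cref{Prop: nuq properties} to the half-measures $\nu_{b, \pm 1/2}$ and to mixed pair/singleton configurations. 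To upgrade \cref{Lemma: orthogonality for sequence from SDM} from plain inner products to pseudodifferential matrix elements, I would set $w_n := \Op[h_n]{a} g_n^{(b')}$: symbolic calculus yields $\norm{w_n}[L^2(\S[d])]^2 \to \int \abs{a}^2 \D{\mu^{(b')}}$, and whenever this limit is positive, the normalized sequence $w_n / \norm{w_n}[L^2(\S[d])]$ has \SDM{} proportional to $\abs{a}^2 \mu^{(b')}$ (still mutually singular with $\mu^{(b)}$); \cref{Lemma: orthogonality for sequence from SDM} then gives $\ip{g_n^{(b)}}{w_n}[L^2(\S[d])] \to 0$, while the degenerate case $\int \abs{a}^2 \D{\mu^{(b')}} = 0$ follows directly from Cauchy--Schwarz.

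Once the off-diagonal vanishing is established, specializing the expansion to $\norm{\fdot}[L^2(\S[d])]^2$ gives $N_n^2 = \sum_{b, b'} N_n^{(b)} N_n^{(b')} \ip{g_n^{(b)}}{g_n^{(b')}}[L^2(\S[d])]$ and hence $\sum_{b \in \mathcal{B}} (a^{(b)})^2 = 1$, and passing to the limit in the bilinear expansion yields
\[
\int_{T^*\S[d]} a \, \D{\mu} = \sum_{b \in \mathcal{B}} (a^{(b)})^2 \int_{T^*\S[d]} a \, \D{\mu^{(b)}} \qquad \forall \, a \in \CinfK(T^*\S[d]) \ .
\]
Setting $m_q := (a^{(q)})^2 m_{q, +}$, $m_{-q} := (a^{(q)})^2 m_{q, -}$ for $q \in A$, and $m_p := (a^{(p)})^2$ for $p \in B$, the relation $\frac{1}{2}(m_{q, +} + m_{q, -}) = 1$ ensures that $m \in [0, 1]^N$ satisfies \eqref{Eq: condition on weights ma+ ma- mb} and that $\mu = \nu_{Q, m}$, concluding the proof. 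The bookkeeping (successive extractions, handling of vanishing blocks $a^{(b)} = 0$, and the mild generalization of \cref{Lemma: orthogonality for sequence from SDM}) is routine once the mutual singularity of the block \SDMs{} is in hand.
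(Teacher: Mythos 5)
Your proposal follows essentially the same route as the paper: decompose $G_{h_n}^{Q,\beta_n}$ into antipodal-pair blocks ($q\in A$) and singleton blocks ($p\in B$), invoke \cref{Thm: characterization of SDM of ghQbeta} and \cref{Cor: SDM of seq of single GF} for each block, use the mutual singularity of the block measures (which the paper gets from \cref{Prop: nuq properties} since $\nu_{b,\pm1/2}\le\nu_b$) together with \cref{Lemma: orthogonality for sequence from SDM} and \cref{Lemma: orthogonality for crossed SDM} to kill the cross terms, and assemble the weights. The only real addition is that you sketch a proof of \cref{Lemma: orthogonality for crossed SDM} (via $w_n=\Op[h_n]{a}g_n^{(b')}$, symbolic calculus, and Cauchy--Schwarz for the degenerate case) rather than simply citing it; this is a sound elaboration but not a change of strategy.
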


\begin{proof}
    Let $(h_n)_{n \in \N} \subseteq (0,1)$, $(\beta_{n})_{n \in \N} \subseteq \C[2]$, and $\mu$ as stated in the theorem. Up to extraction of a subsequence we may assume that $\beta_{n} \to \beta \in \C[N]$.

    For every $q \in A$, let $P_q \coloneqq \{q, -q\}$ and $\alpha_n^{q} \coloneqq (\beta_{n, q}, \beta_{n, -q}) \in \C[2]$. We decompose $g_{h_n}^{Q, \beta_n}$ as follows
    \[
    g_{h_n}^{Q, \beta_n} = \sum_{q \in A} \frac{ \norm*{G_{h_n}^{P_q, \alpha_n^{q}} }[L^2(\S[d])] }{ \norm*{G_{h_n}^{Q, \beta_n}}[L^2(\S[d])]} g_{h_n}^{P_q, \alpha_n^{q}} + \sum_{p \in B} \frac{ \norm*{G_{h_n}^{p} }[L^2(\S[d])] }{ \norm*{G_{h_n}^{Q, \beta_n}}[L^2(\S[d])]} g_{h_n}^{p} \ .
    \]
    Up to extraction of a subsequence, thanks to Theorem \ref{Thm: characterization of SDM of ghQbeta}, for every $q \in A$ there exist weights $\tilde{m}_{q}, \tilde{m}_{-q} \in [0,2]$ such that $\frac{1}{2}(\tilde{m}_{q} + \tilde{m}_{-q}) = 1$ and
    \begin{equation} \label{Eq: aux89}
        \lim_{n \to \infty} \ip*{g_{h_n}^{P_q, \alpha_{n}^{q}} }{\Op[h_n]{a} g_{h_n}^{P_q, \alpha_{n}^{q}} }[L^2(\S[d])] = \int_{T^*\S[d]} a \big(\tilde{m}_{q} \nu_{q, 1/2} + \tilde{m}_{-q} \nu_{-q, 1/2} \big) \qquad \forall \, a \in \CinfK(T^*\S[d]) \ .
    \end{equation}
    Meanwhile, up to extraction of a subsequence, thanks to Corollary \ref{Cor: SDM of seq of single GF}, we know that for all $q \in B$,
    \begin{equation} \label{Eq: aux90}
        \lim_{n \to \infty} \ip*{g_{h_n}^{q} }{\Op[h_n]{a} g_{h_n}^{q} }[L^2(\S[d])] = \int_{T^*\S[d]} a \, \nu_q \qquad \forall \, a \in \CinfK(T^*\S[d]) \ .
    \end{equation}
    Since $A \cup B$ is finite, we may find a subsequence such that \eqref{Eq: aux89} and \eqref{Eq: aux90} hold.

    We now observe that the measures $(m_{q} \nu_{q, 1/2} + m_{-q} \nu_{-q, 1/2})$ for $q \in A$, and $\nu_{q}$ for $q \in B$, are pairwise mutually singular and have compact support. Therefore, Lemma \ref{Lemma: orthogonality for sequence from SDM} imply that
    \begin{equation*}
        1 = \norm{g_{h_n}^{Q, \beta_n}}[L^2(\S[d])]^2 = \sum_{q \in A} \frac{ \norm*{G_{h_n}^{P_q, \alpha_n^{q}} }[L^2(\S[d])]^2 }{ \norm*{G_{h_n}^{Q, \beta_n}}[L^2(\S[d])]^2 }  + \sum_{p \in B} \frac{ \norm*{ G_{h_n}^{p} }[L^2(\S[d])]^2 }{ \norm*{G_{h_n}^{Q, \beta_n}}[L^2(\S[d])]^2} + \littleo_{n}(1) \ .
    \end{equation*}
    Up to extraction of a subsequence, we may assume that
    \begin{align*}
        w_{q} & \coloneqq \lim_{n \to \infty} \frac{ \norm*{G_{h_n}^{P_q, \alpha_n^{q}} }[L^2(\S[d])]^2 }{ \norm*{G_{h_n}^{Q, \beta_n}}[L^2(\S[d])]^2 } \in [0,1] \qquad \forall \, q \in A \ , \\
        w_{q} & \coloneqq \lim_{n \to \infty} \frac{ \norm*{ G_{h_n}^{p} }[L^2(\S[d])]^2 }{ \norm*{G_{h_n}^{Q, \beta_n}}[L^2(\S[d])]^2} \in [0,1] \qquad \forall \, q \in B \ .
    \end{align*}
    Now, Lemma \ref{Lemma: orthogonality for crossed SDM} and the definition of the weights $\{w_{q}\}_{q \in A \cup B}$ allow us to do the following computation
    \begin{align*}
        \ip*{g_{h_n}^{Q, \beta_n}}{ \Op[h_n]{a} g_{h_n}^{Q, \beta_n} }[L^2(\S[d])] & = \sum_{q \in A} w_{q} \ip*{g_{h_n}^{P_q, \alpha_{n}^{q} } }{ \Op[h_n]{a} g_{h_n}^{P_q, \alpha_{n}^{q} } }[L^2(\S[d])] \\
        & \qquad + \sum_{p \in B} w_{p} \ip*{g_{h_n}^{p} }{ \Op[h_n]{a} g_{h_n}^{p} }[L^2(\S[d])] + \littleo_n(1) \ .
    \end{align*}
    Finally, we may invoke identities \eqref{Eq: aux89} and \eqref{Eq: aux90} to obtain the desired result.
\end{proof}

\begin{Lemma} \label{Lemma: orthogonality for crossed SDM}
    Let $(u_n)_n$ and $(v_n)_n$ two normalized sequences in $L^2(M)$. Assume that for a fixed sequence of semiclassical parameters $(h_n)_n$, $(u_n)_n$ and $(v_n)_n$ have unique semiclassical defect measure $\mu$ and $\nu$ respectively. If $\mu \perp \nu$, then
    \[
    \lim_{n \to \infty} \ip{u_n}{\Op[h_n]{a} v_n} = 0 \qquad \qquad \forall \ a \in \CinfK(T^*M) \ .
    \]
\end{Lemma}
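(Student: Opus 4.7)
The plan is to combine a smooth cutoff argument with the $L^{2}$-symbolic calculus for Weyl quantizations, reducing the convergence to zero to the mutual singularity of $\mu$ and $\nu$. Given $\chi \in \CinfK(T^*M)$ with $0 \leq \chi \leq 1$, I would split
\[
\ip{u_n}{\Op[h_n]{a} v_n} = \ip{u_n}{\Op[h_n]{a\chi} v_n} + \ip{u_n}{\Op[h_n]{a(1-\chi)} v_n}
\]
and apply Cauchy-Schwarz in two different ways, so that the $\chi$ piece is estimated against $u_n$ (and hence $\mu$) while the $(1-\chi)$ piece is estimated against $v_n$ (and hence $\nu$):
\[
\abs*{\ip{u_n}{\Op[h_n]{a\chi} v_n}} \leq \norm*{\Op[h_n]{a\chi}^{*} u_n}[L^2(M)] \ , \qquad \abs*{\ip{u_n}{\Op[h_n]{a(1-\chi)} v_n}} \leq \norm*{\Op[h_n]{a(1-\chi)} v_n}[L^2(M)] \ ,
\]
using $\norm{u_n}[L^2(M)] = \norm{v_n}[L^2(M)] = 1$. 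By the symbolic calculus, $\Op[h_n]{a\chi} \Op[h_n]{a\chi}^{*} = \Op[h_n]{\abs{a\chi}^2} + h_n R_n^{(1)}$ and $\Op[h_n]{a(1-\chi)}^{*} \Op[h_n]{a(1-\chi)} = \Op[h_n]{\abs{a(1-\chi)}^2} + h_n R_n^{(2)}$, with $R_n^{(j)}$ uniformly bounded on $L^2(M)$. Squaring the Cauchy-Schwarz bounds and applying the definition of \SDM{} for $(u_n)$ and $(v_n)$, I would pass to the limit to obtain
\[
\limsup_{n\to\infty} \abs*{\ip{u_n}{\Op[h_n]{a} v_n}} \leq \bigg( \int_{T^*M} \abs{a\chi}^2 \, \mu \bigg)^{1/2} + \bigg( \int_{T^*M} \abs{a(1-\chi)}^2 \, \nu \bigg)^{1/2} \ .
\]

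The remaining task is to choose $\chi$ making both integrals arbitrarily small, which is where the hypothesis $\mu \perp \nu$ enters. Pick a Borel set $E \subseteq T^*M$ witnessing the singularity, i.e.\ $\mu(E^{c}) = 0$ and $\nu(E) = 0$. For fixed $\varepsilon > 0$, outer regularity of $\mu$ supplies an open $U \supseteq E^{c}$ with $\mu(U) < \varepsilon$; then $\nu(T^*M \setminus U) \leq \nu(E) = 0$. Since $\nu$ is finite on the compact set $\supp a$, inner regularity yields a compact $K \subseteq U \cap \supp a$ with $\nu((\supp a) \setminus K) < \varepsilon$, and a smooth Urysohn construction produces $\chi \in \CinfK(T^*M)$ with $\supp \chi \subseteq U$, $\chi \equiv 1$ on $K$, and $0 \leq \chi \leq 1$. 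For this $\chi$,
\[
\int \abs{a\chi}^2 \, \mu \leq \norm{a}[L^\infty]^2 \mu(U) < \norm{a}[L^\infty]^2 \varepsilon \ , \qquad \int \abs{a(1-\chi)}^2 \, \nu \leq \norm{a}[L^\infty]^2 \nu((\supp a) \setminus K) < \norm{a}[L^\infty]^2 \varepsilon \ ,
\]
so the $\limsup$ is bounded by $2\norm{a}[L^\infty] \sqrt{\varepsilon}$, and letting $\varepsilon \to 0$ finishes the proof.

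The only delicate step is the construction of the smooth cutoff $\chi$ adapted to the singular decomposition of $\mu$ and $\nu$, which reduces to the regularity of Radon measures on $T^*M$ together with a smooth Urysohn lemma; beyond that, the argument is a transparent application of the $L^2$-symbolic calculus, in complete analogy with the proof of the earlier orthogonality lemma in this section.
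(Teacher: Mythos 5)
The paper states this lemma without proof — it is invoked as a standard orthogonality fact for semiclassical defect measures (as is the companion \cref{Lemma: orthogonality for sequence from SDM}) — so there is no in-text argument to compare against. Your proposal is a correct and complete proof, and it is the canonical argument one would give: the asymmetric Cauchy--Schwarz split that routes the $a\chi$ piece through $\norm{\Op[h_n]{a\chi}^{*}u_n}$ (hence $\mu$) and the $a(1-\chi)$ piece through $\norm{\Op[h_n]{a(1-\chi)}v_n}$ (hence $\nu$), combined with the $L^2$-symbolic calculus identity $T T^{*} = \Op[h]{\abs{\text{symbol}}^2} + O_{L^2\to L^2}(h)$, cleanly localizes the limit onto the two measures. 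The measure-theoretic step is also correct: since $\mu$ and $\nu$ are finite Borel (indeed probability) measures supported on the compact set $S^*M$, they are Radon, so outer/inner regularity and a smooth Urysohn cutoff produce the $\chi$ you need; and restricting the inner regularity argument to $U \cap \supp a$ ensures the compact $K$ exists and the resulting $\chi$ is in $\CinfK(T^*M)$. One remark worth making explicit in a polished writeup: the operator-norm bound $\norm{\Op[h]{a}}_{L^2 \to L^2} = \norm{a}_{L^\infty} + O(h)$ (or Calder\'on--Vaillancourt) is being used implicitly when passing from the cutoff to the $\norm{a}_{L^\infty}^2 \varepsilon$ bound; this is fine, but should be named.
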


\section{Proofs of main theorems}
\label{Sec: proofs of main theorem}

\subsection{Proof of \texorpdfstring{Theorem \ref{Thm: 2nd theorem new ef}}{Theorem intro new EF}}
\label{Sec: Proof of 2nd theorem new EF}

    Assume that $Q = A \cup (-A) \cup B$ as described in \eqref{Eq: decomposition Q into A -A B}. Let $m \in [0,2]^N$ satisfying \eqref{Eq: condition on weights ma+ ma- mb}. Choose some $\sigma \in (0,1)$ and $\rho \in \{-1, +1\}$. Choose an increasing sequence $(\ell_n)_{n \in \N} \subseteq \N$ such that $(-1)^{\ell_n} = \rho$ for all $n \in \N$, and define $(h_n)_{n \in \N} \subseteq (0,1)$ as follows (see \eqref{Eq: def elln and sigman}):
    \[
    h_n^{-2} \coloneqq (\ell_n + \sigma) (\ell_n + \sigma_n + d - 1) \ .
    \]
    For each $q \in A$, let $\beta_q, \beta_{-q} \in \C[2]$ such that (see Remark \ref{Rmk: surjectivity of weights from GF})
    \begin{equation} \label{Eq: choice of betaq in terms of mq} % This is not correc
        m_{\pm q} = \abs{\beta_{q} + \rho \beta_{-q} e^{\pm i \pi \sigma}}^2 .
    \end{equation}
    For each $p \in B$, let $\beta_{p} \in \C$ such that $\abs{\beta_{p}}^2 = m_{p}$. We claim the sequence $(g_{h_n}^{Q, \beta})_{n \in \N}$ has $\nu_{Q,m}$ as \SDM{}.

    We recycle the argument used in the proof of Theorem \ref{Thm: characterization of SDM of ghQbeta general case} with the added condition that $\sigma \in (0,1)$. For every $q \in A$, let $P_q \coloneqq \{q, -q\}$, and $\alpha^{q} \coloneqq (\beta_q, \beta_{-q}) \in \C[2]$. We decompose $g_{h_n}^{Q, \beta_n}$ as follows
    \begin{equation} \label{Eq: decomposition of ghQbeta general into ghQbeta in A B}
        g_{h_n}^{Q, \beta} = \sum_{q \in A} \frac{ \norm*{G_{h_n}^{P_q, \alpha^{q}} }[L^2(\S[d])] }{ \norm*{G_{h_n}^{Q, \beta}}[L^2(\S[d])]} g_{h_n}^{P_q, \alpha^{q}} + \sum_{p \in B} \frac{ \norm*{G_{h_n}^{p} }[L^2(\S[d])] }{ \norm*{G_{h_n}^{Q, \beta}}[L^2(\S[d])]} g_{h_n}^{p} \ .
    \end{equation}
    Since the sequences $(g_{h_n}^{P_q, \alpha^{q}})_{n \in \N}$ for $q \in A$ and $(g_{h_n}^{p})_{n \in \N}$ for $p \in B$ have mutually singular semiclassical defect measures, thanks to Lemma \ref{Lemma: orthogonality for sequence from SDM} we get
    \[
    \norm*{G_{h_n}^{Q, \beta}}[L^2(\S[d])]^2 = \sum_{q \in A} \norm*{G_{h_n}^{P_q, \alpha^{q}} }[L^2(\S[d])]^2 + \sum_{p \in B} \norm*{G_{h_n}^{p} }[L^2(\S[d])]^2  + \littleo_n \bigg( \norm*{G_{h_n}^{Q, \beta}}[L^2(\S[d])]^2 \bigg) \qquad \text{as $n \to \infty$.}
    \]
    Now, combining this estimate with Theorem \ref{Thm: Gh L2 norm estimates scenario 1} and Theorem \ref{Thm: Gh tails L2 norm estimates}, we get that for all $\eps > 0$ there exists $\nu \in \N$ such that for all $n \geq \nu$,
    \begin{equation} \label{Eq: aux 103}
        \abs*{ \norm*{G_{h_n}^{Q, \beta}}[L^2(\S[d])]^2 - \frac{(h_n)^{3-d}}{2(d-1) \vol(\S[d])} \bigg[ \sum_{q \in A} (C_{\sigma, \alpha^{q}, \rho})^2 + \sum_{p \in B} (C_{\sigma, (\beta_{p}, 0), \rho})^2 \bigg] } \leq \eps (h_n)^{3-d} \ ,
    \end{equation}
    where $C_{\sigma, \beta, \rho}$ was defined for $\beta \in \C[2]$ back in \eqref{Eq: Csigmabeta definition scenario 1}:
    \[
    C_{\sigma, \beta, \rho} \coloneqq \bigg(\sum_{k \in \Z} \frac{ \abs{\beta_{q} + (-1)^{k} \rho \beta_{-q} }^2 }{ (k - \sigma)^2 } \bigg)^{\frac{1}{2}} \ .
    \]
    Next, we use that for $\sigma \in (0,1)$, $\beta \in \C[2]$, and $\rho \in \{-1, +1\}$, $(C_{\sigma, \beta, \rho})^2$ takes the value \eqref{Eq: Csigmabetarho value}
    \[
    (C_{\sigma, \beta, \rho})^2 = \frac{(2\pi)^2}{\abs{1 - e^{i2\pi \sigma}}^2 } \Big[ \tfrac{1}{2}\abs*{ \beta_{q} + \rho \beta_{-q} e^{i\pi \sigma} }^2  + \tfrac{1}{2} \abs*{ \beta_{q} + \rho \beta_{-q} e^{-i\pi \sigma} }^2  \Big] \ .
    \]
    Due to our choice of $\beta_{q} \in \C$ for $q \in Q$, we get
    \begin{align*}
        (C_{\sigma, \alpha^{q}, \rho})^2 & = \frac{(2\pi)^2}{\abs{1 - e^{i2\pi \sigma}}^2 } \frac{ m_{q} + m_{-q} }{2} \qquad \forall \, q \in A \ , \\
        (C_{\sigma, (\beta_{p}, 0), \rho})^2 & = \frac{(2\pi)^2}{\abs{1 - e^{i2\pi \sigma}}^2 } m_{p} \qquad \forall \, p \in B \ .
    \end{align*}
    Substituting these identities into \eqref{Eq: aux 103} we find that for all $\eps > 0$ there exists $\nu \in \N$ such that for all $n \in \N$,
    \begin{equation} \label{Eq: aux 104}
        \abs*{ \norm*{G_{h_n}^{Q, \beta}}[L^2(\S[d])]^2 - \frac{(h_n)^{3-d}}{2(d-1) \vol(\S[d])} \frac{(2\pi)^2}{\abs{1 - e^{i2\pi \sigma}}^2 } } \leq \eps (h_n)^{3-d}
    \end{equation}
    where we made use of the fact that $m$ satisfies \eqref{Eq: condition on weights ma+ ma- mb}.

    Coming back to \eqref{Eq: decomposition of ghQbeta general into ghQbeta in A B}, we read that
    \[
    g_{h_n}^{Q, \beta} = \sum_{q \in A} \bigg(\frac{ m_{q} + m_{-q} }{2}\bigg)^{\frac{1}{2}} g_{h_n}^{P_q, \alpha^{q}} + \sum_{p \in B} m_{p}^{1/2} g_{h_n}^{p} + \littleo_{L^2}(1) \qquad \text{as $n \to \infty$.}
    \]
    Consequently, combining this estimate with Lemma \ref{Lemma: orthogonality for crossed SDM} we get
    \begin{equation} \label{Eq: aux 105}
        \begin{aligned}
            \ip*{g_{h_n}^{Q, \beta}}{\Op[h_n]{a} g_{h_n}^{Q, \beta}}[L^2(\S[d])] & = \sum_{q \in A} \frac{ m_{q} + m_{-q} }{2} \ip*{g_{h_n}^{P_q, \alpha^{q}}}{\Op[h_n]{a} g_{h_n}^{P_q, \alpha^{q}} }[L^2(\S[d])] \\
            & \qquad + \sum_{p \in B} m_{p} \ip*{ g_{h_n}^p}{\Op[h_n]{a} g_{h_n}^{p}}[L^2(\S[d])] + \littleo_n(1) \ .
        \end{aligned}
    \end{equation}
    On the one hand, for every $q \in A$, due to the choice we have made of $\beta_q, \beta_{-q} \in \C$, Theorem \ref{Thm: characterization of SDM of ghQbeta}, equation \eqref{Eq: mu is lincomb of halfnuq scenario1}, it gives us
    \[
    \lim_{n \to \infty} \ip*{g_{h_n}^{P_q, \alpha^{q}}}{\Op[h_n]{a} g_{h_n}^{P_q, \alpha^{q}} }[L^2(\S[d])] = \int_{T^*\S[d]} a \, \Big( \frac{m_{q}}{\frac{ m_{q} + m_{-q} }{2}} \nu_{q, 1/2} + \frac{m_{-q}}{\frac{ m_{q} + m_{-q} }{2}} \nu_{-q, 1/2}\Big)
    \]
    On the other hand, for every $p \in B$, Corollary \ref{Cor: SDM of seq of single GF} gives us
    \[
    \lim_{n \to \infty} \ip*{ g_{h_n}^p}{\Op[h_n]{a} g_{h_n}^{p}}[L^2(\S[d])] = \int_{T^*\S[d]} a \, \nu_{q} \ .
    \]
    Taking limits $n \to \infty$ in \eqref{Eq: aux 105} we find that for every $a \in \CinfK(T^*\S[d])$,
    \begin{equation*}
    \lim_{n \to \infty} \ip*{g_{h_n}^{Q, \beta}}{\Op[h_n]{a} g_{h_n}^{Q, \beta}}[L^2(\S[d])] = \sum_{q \in A} \int_{T^*\S[d]} a \, \Big( m_{q} \nu_{q, 1/2} + m_{-q} \nu_{-q, 1/2}\Big) + \sum_{p \in B} \int_{T^*\S[d]} a \, \nu_{p} \ .
    \end{equation*}
    \qed

%%%%%%%%%%%%%%%%%%%%%%%%%%%%%%%%%%%%%%%%%%%%%%%%%%%%%%%%%%%%%%%%%%%%%%%%%%%
\subsection{Proof of \texorpdfstring{Theorem \ref{Thm: 2nd theorem old ef}}{Theorem intro old EF}}
\label{Sec: SDM old eigenfunctions}

Assume that $d \geq 2$. For any given measure $\mu \in \mathcal{P}_{\mathrm{inv}}(S^*\S[d])$, that is, a probability Radon measure $\mu$ on $T^*\S[d]$ supported inside $S^*\S[d]$ and invariant under the geodesic flow, we want to find sequences $(\ell_n)_{n \in \N} \nearrow +\infty$ and $(u_n)_{n \in \N}$ such that
\begin{equation} \label{Eq: conditions old EF vanishing on Q}
    \Lap u_n = \lambda_{\ell_n}^2 u_n \ , \qquad \norm{u_{n}}[L^2(\S[d])] = 1 \ , \qquad u_n(q) = 0 \quad \forall \, q \in Q \ , \qquad \forall \, n \in \N \ ,
\end{equation}
and for $h_n \coloneqq \lambda_{\ell_n}^{-1}$,
\begin{equation} \label{Eq: SDM of sequence un old EF}
    \lim_{n \to \infty} \ip{u_n}{ \Op[h_n]{a} u_n}[L^2(\S[d])] = \int_{T^*\S[d]} a \, \mu \qquad \forall \, a \in \CinfK(T^*\S[d]) \ .
\end{equation}
To this end, we follow the standard procedure used to show that any $\mu \in \mathcal{P}_{\mathrm{inv}}(S^*\S[d])$ is a \SDM{} of $\Lap$ on $\S[d]$. One first proves that for any linear convex combination of $\delta_{\gamma}$,
\begin{equation} \label{Eq: deltagamma definition}
    \int_{T^*\S[d]} a \, \delta_{\gamma} = \int_{0}^{2\pi} a(\gamma(s)) \frac{\D{s}}{2\pi} \ ,
\end{equation}
there exists a sequence $(u_n)_{n \in \N}$ satisfying \eqref{Eq: conditions old EF vanishing on Q} and \eqref{Eq: SDM of sequence un old EF}. Then, Krein-Milman theorem together with a diagonal extraction arguments shows that for any $\mu \in \mathcal{P}_{\mathrm{inv}}(S^*\S[d])$ there exists a sequence $(u_n)_{n \in \N}$ satisfying \eqref{Eq: conditions old EF vanishing on Q} and \eqref{Eq: SDM of sequence un old EF} (see the proof of Theorem 4 in \cite{Macia2008}). Therefore, we just need to focus on proving the following theorem
\begin{Theorem}
    Let $T = \sum_{\gamma \in F} b_{\gamma} \delta_{\gamma}$, for some finite collection of closed geodesics $F$ on $S^*\S[d]$ and coefficients $b_{\gamma} \in [0,1]$ such that $\sum_{\gamma \in F} b_{\gamma} = 1$. There exists $(u_n)_{n \in \N}$ satisfying \eqref{Eq: conditions old EF vanishing on Q} such that \eqref{Eq: SDM of sequence un old EF} holds for $\mu = T$.
\end{Theorem}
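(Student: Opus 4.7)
The plan is to produce $u_n$ in three stages: (a) realise each $\delta_{\gamma}$ individually as the SDM of a sequence of single spherical harmonics (Gaussian beams); (b) superpose these with weights $\sqrt{b_{\gamma}}$ so that the normalised combination has $T$ as its unique SDM; and (c) orthogonally project the resulting sequence inside $\mathcal{E}_{\ell_n} \coloneqq \ker(\Lap - \lambda_{\ell_n}^2)$ onto the subspace of functions vanishing on $Q$ and verify that the correction produced by this projection is asymptotically negligible in $L^2$.

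For step (a), after rotating so that $\gamma$ lies in the $(x_1,x_2)$-plane, take $\psi_{\ell}^{\gamma}$ to be the normalised highest-weight spherical harmonic $c_{\ell}(x_1 + i x_2)^{\ell}|_{\S[d]} \in \mathcal{E}_{\ell}$; along $h_{\ell} = \lambda_{\ell}^{-1}$ this classical Gaussian beam has $\delta_{\gamma}$ as its unique SDM (this is the content of the reductions exploited in \cite{Macia2008,AzagraMacia2010}). For step (b), fix an increasing $(\ell_n)_{n \in \N} \to \infty$ and set $\tilde{u}_n \coloneqq \sum_{\gamma \in F} \sqrt{b_{\gamma}}\, \psi_{\ell_n}^{\gamma} \in \mathcal{E}_{\ell_n}$. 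Distinct closed geodesics on $\S[d]$ are $1$-dimensional curves in the $(2d-1)$-dimensional $S^*\S[d]$ meeting in at most a pair of antipodal points, so $\delta_{\gamma_1} \perp \delta_{\gamma_2}$ for $\gamma_1 \neq \gamma_2$ in $F$. \Cref{Lemma: orthogonality for sequence from SDM} then yields $\ip{\psi_{\ell_n}^{\gamma_1}}{\psi_{\ell_n}^{\gamma_2}}[L^2(\S[d])] \to 0$, and a standard microlocal cutoff argument (localising on disjoint neighbourhoods of the supports of $\delta_{\gamma_1}$ and $\delta_{\gamma_2}$) upgrades this to vanishing of the cross-Wigner terms $\ip{\psi_{\ell_n}^{\gamma_1}}{\Op[h_n]{a} \psi_{\ell_n}^{\gamma_2}}[L^2(\S[d])] \to 0$ for all $a \in \CinfK(T^*\S[d])$. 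Consequently $\norm{\tilde{u}_n}[L^2(\S[d])]^2 \to \sum_{\gamma} b_{\gamma} = 1$ and $u_n \coloneqq \tilde{u}_n / \norm{\tilde{u}_n}[L^2(\S[d])]$ has $T$ as its unique SDM.

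For step (c), let $V_n \coloneqq \{u \in \mathcal{E}_{\ell_n} : u(q) = 0 \text{ for all } q \in Q\}$ and $W_n \coloneqq V_n^{\perp} \cap \mathcal{E}_{\ell_n}$. The reproducing identity $u(q) = \ip{u}{Z_{\ell_n}^{q}}[L^2(\S[d])]$ for $u \in \mathcal{E}_{\ell_n}$ shows that $W_n \subseteq \spn\{Z_{\ell_n}^{q} : q \in Q\}$, hence $\dim W_n \leq N$. Decompose $u_n = v_n + w_n$ with $v_n \in V_n$, $w_n \in W_n$, and pick an $L^2$-orthonormal basis $\{\phi_j^n\}_{j=1}^{k_n}$ of $W_n$, $k_n \leq N$. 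After passing to a subsequence each $\phi_j^n$ has a unique SDM $\mu_j$; the cross-term microlocal analysis used in step (b), applied now to the spanning vectors $\{z_{\ell_n}^{q}\}_{q \in Q}$ (noting that $z_{\ell}^{-q}$ is proportional to $z_{\ell}^{q}$ and that, by \cref{Prop: nuq properties}, $\nu_q$ and $\nu_p$ are mutually singular whenever $p \neq \pm q$), forces $\mu_j$ to be a convex combination of the measures $\{\nu_q\}_{q \in Q}$. Each $\nu_q$ is uniform on the $d$-dimensional geodesic flow-out $\mathcal{F}_q = \{\phi_t(q,\xi) : \xi \in S_q^*\S[d], t \in \R\}$, and since $d \geq 2$ it gives zero mass to every single closed geodesic of $\S[d]$. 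Thus $\mu_j\bigl(\bigcup_{\gamma \in F} \gamma\bigr) = 0$, so taking $A \coloneqq \bigcup_{\gamma \in F}\gamma$ one has $T(A^c) = 0$ and $\mu_j(A) = 0$, i.e.\ $T \perp \mu_j$. \Cref{Lemma: orthogonality for sequence from SDM} yields $\ip{u_n}{\phi_j^n}[L^2(\S[d])] \to 0$ for each $j$, whence
\[
\norm{w_n}[L^2(\S[d])]^2 = \sum_{j=1}^{k_n} \abs*{\ip{u_n}{\phi_j^n}[L^2(\S[d])]}^2 \longrightarrow 0 .
\]
Therefore $\hat{u}_n \coloneqq v_n / \norm{v_n}[L^2(\S[d])]$ is well defined for large $n$, is $L^2$-normalised, lies in $\mathcal{E}_{\ell_n}$, vanishes identically on $Q$, and satisfies $\norm{\hat{u}_n - u_n}[L^2(\S[d])] \to 0$, so it inherits $T$ as its unique SDM.

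The main obstacle is step (c): the $L^2$-negligibility of the projection $w_n$, which reduces to the claim that the SDM of any $L^2$-normalised linear combination of zonal harmonics on $Q$ gives zero mass to every great circle of $\S[d]$. This requires both the full characterisation \cref{Thm: SDM of zonal harmonics} (to identify such SDMs as convex combinations of $\nu_q$'s, using microlocal cutoffs to kill the cross terms) and the dimensional assumption $d \geq 2$ (so that each $d$-dimensional $\mathcal{F}_q$ fails to charge a $1$-dimensional geodesic); in dimension one both ingredients would collapse, and the finite-rank correction $w_n$ could not be controlled.
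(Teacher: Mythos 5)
Your proof is correct, but it takes a genuinely different and somewhat slicker route than the paper's. Both begin identically: Gaussian beams $\psi_\ell^\gamma = c_\ell (x_1+ix_2)^\ell$ (the paper's $Y_\ell^{\gamma_0}$ after rotating), summed with weights $\sqrt{b_\gamma}$ and normalised, with the cross-Wigner terms killed by mutual singularity of $\delta_{\gamma_1}\perp\delta_{\gamma_2}$ and \cref{Lemma: orthogonality for crossed SDM}. The divergence is in how the vanishing condition on $Q$ is enforced. The paper splits into two cases. If $\supp T$ is disjoint from every fibre $S_q^*\S[d]$, $q \in Q$, the beams are \emph{exponentially} small at the points of $Q$; the paper then subtracts an explicit linear combination of zonal harmonics, solving the system $\mathcal{Z}_\ell \alpha_\ell = \vec{Y}_\ell^{\mathcal{T}}$ and bounding $\|\mathcal{Z}_\ell^{-1}\|$ through the diagonal dominance of $\mathcal{Z}_\ell$ (\cref{Lemma: properties of Zl matrix}), so the correction is $\BigO((1-\eps)^\ell)$. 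The general case is then reduced to this one by approximating $T$ by measures $\mathcal{T}_n$ carried on geodesics avoiding $Q$ and performing a diagonal extraction over a countable dense family of symbols. You instead project orthogonally inside the eigenspace $\mathcal{E}_{\ell_n}$ onto $V_n = \{u : u|_Q = 0\}$ and control the complementary part $w_n \in W_n = \spn\{Z_{\ell_n}^q\}$ by a soft argument: any normalised sequence in $W_n$ has SDM a convex combination of the $\nu_q$'s (via \cref{Thm: SDM of zonal harmonics}, \cref{Prop: nuq properties}, and \cref{Lemma: orthogonality for crossed SDM}), each $\nu_q$ is uniform on the $d$-dimensional flow-out $\mathcal{F}_q$ and so gives zero mass to the $1$-dimensional geodesics carrying $T$, hence $T\perp\mu_j$ and \cref{Lemma: orthogonality for sequence from SDM} forces $\|w_n\|\to 0$.

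What each buys: your argument is unified (no two-case split, no diagonal extraction, no need for the quantitative Gershgorin estimate on $\mathcal{Z}_\ell^{-1}$), and it handles a geodesic passing through a scatterer without special pleading; this is arguably closer in spirit to the rest of the paper, which leans heavily on the SDM-orthogonality lemmas anyway. The paper's route is more explicit and, in the non-degenerate case, yields an exponential decay rate for the correction $w_n$ rather than a soft $\littleo(1)$; it also isolates the degenerate case as a limiting procedure, which makes the structural role of the condition $\supp T \cap (\cup_q S_q^*\S[d]) = \varnothing$ transparent. Both approaches need $d\geq 2$: the paper through the off-diagonal decay $\abs{z_\ell^q(p)} \lesssim \ell^{(d-1)/2 - 1}$ that gives diagonal dominance of $\mathcal{Z}_\ell$, you through the fact that the $d$-dimensional $\mathcal{F}_q$ does not charge a $1$-dimensional curve. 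One small point worth making explicit in your write-up: the subsequence extractions in step (c) are legitimate because the whole step is really a proof by contradiction (if $\|w_n\|$ did not tend to $0$, one extracts a subsequence along which it is bounded below, then the argument gives a contradiction); as written it reads as if the extraction might lose track of the original sequence.
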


\begin{proof}
    Assume that $\S[d]$ is embedded in $\R[d+1]$ as the set of unit length vectors:
    \[
    \S[d] = \{ (x_1, \dots, x_{d+1}) \in \R[d+1] \colon \abs{x_1}^2 + \dots + \abs{x_{d+1}}^2 = 1\} \ .
    \]
    Let $P_0 \coloneqq \{x_3 = \dots = x_{d+1} = 0\}$, and let $L_0(x, \xi) \coloneqq \xi_2 x_1 - \xi_1 x_2$ be the normalized angular momentum function with respect to that plane $P$, and let $H(x, \xi) \coloneqq \abs{\xi}_x^2$. Let $\gamma_0$ be the unique closed geodesic on $S^*\S[d]$ such that $\mathrm{im}(\gamma) = H^{-1}(1) \cap L^{-1}(1)$. The following lemma is well-known (see \cite{JakobsonZelditch1999})
    \begin{Lemma}
        The sequence $(Y_{\ell}^{\gamma_0})_{\ell \in \N}$ given by
        \[
        Y_{\ell}^{\gamma_0} \coloneqq \frac{1}{2\pi^{\frac{d+1}{4}} } \bigg( \frac{\Gamma(\ell + \frac{d+1}{2})}{ \Gamma(\ell + 1) }\bigg)^{\frac{1}{2}} (x_1 + i x_2)^{\ell} \ ,
        \]
        enjoys the following properties:
        \begin{enumerate}
            \item $\norm{Y_{\ell}^{\gamma_0}}[L^2(\S[d])] = 1$ for all $\ell \in \N$.
            \item $\Lap Y_{\ell}^{\gamma_0} = \lambda_{\ell}^2 Y_{\ell}^{\gamma_0}$ for all $\ell \in \N$.
            \item If $h_{\ell} = \lambda_{\ell}^{-1}$, then
            \[
            \lim_{\ell \to \infty} \ip*{Y_{\ell}^{\gamma_0}}{ \Op[h_\ell]{a} Y_{\ell}^{\gamma_0}}[L^2(\S[d])] = \int_{T^*\S[d]} a \, \delta_{\gamma_0} \qquad \forall \, a \in \CinfK(T^*\S[d]) \ .
            \]
        \end{enumerate}
    \end{Lemma}
    Since the group of isometries of $\S[d]$ acts transitively on the space of geodesics of $S^*\S[d]$, for any closed geodesic $\gamma$ there exists an isometry $g$ such that $g \gamma_0 = \gamma$. Moreover, the eigenspaces $\ker(\Lap {}- \lambda_{\ell}^2)$ are invariant under the pull-back of any isometry of $\S[d]$, therefore, for any isometry $g$ such that $g \gamma_0 = \gamma$, the sequence $(Y_{\ell}^{\gamma_0} \circ g^{-1})_{\ell \in \N}$ enjoys the same properties as $(Y_{\ell}^{\gamma_0})_{\ell \in \N}$ except for
    \[
    \lim_{\ell \to \infty} \ip*{Y_{\ell}^{g \gamma_0} \circ g^{-1} }{ \Op[h_\ell]{a} Y_{\ell}^{g \gamma_0} \circ g^{-1} }[L^2(\S[d])] = \int_{T^*\S[d]} a \, \delta_{\gamma} \qquad \forall \, a \in \CinfK(T^*\S[d]) \ .
    \]
    We see that the sequence $(Y_{\ell}^{\gamma_0} \circ g_1^{-1})_{\ell \in \N}$ and $(Y_{\ell}^{\gamma_0} \circ g_2^{-1})_{\ell \in \N}$, have the same \SDM{} as long as $g_1 \gamma_0 = g_2 \gamma_0$, hence we will simply denote $Y_{\ell}^{\gamma}$ for any such sequence.
    
    Since for distinct geodesics $\gamma_1 \neq \gamma_2$ the measures $\delta_{\gamma_1}$ and $\delta_{\gamma_2}$ are mutually singular, Lemma \ref{Lemma: orthogonality for sequence from SDM} tells us that $(Y_{\ell}^{\mathcal{T}})_{\ell \in \N}$ given by $Y_{\ell}^{\mathcal{T}} \coloneqq \sum_{\gamma \in F} (b_{\gamma})^{1/2} Y_{\ell}^{\gamma}$ satisfies
    \[
    \lim_{\ell \to \infty} \norm*{Y_{\ell}^{\mathcal{T}}}[L^2(\S[d])] = 1 \ ,
    \]
    and thanks to Lemma \ref{Lemma: orthogonality for crossed SDM},
    \[
    \lim_{\ell \to \infty} \ip*{Y_{\ell}^{\mathcal{T}} }{ \Op[h_\ell]{a} Y_{\ell}^{\mathcal{T}}}[L^2(\S[d])] = \int_{T^*\S[d]} a \, \mathcal{T}  \qquad \forall \, a \in \CinfK(T^*\S[d]) \ .
    \]
    However, the eigenfunctions $Y_{\ell}^{\mathcal{T}}$ may not vanish on $Q$. We fix that making small perturbations to the functions $Y_{\ell}^{\mathcal{T}}$.

    \textbf{I.} Assume first that $\supp T \cap (\cup_{q \in Q} S_{q}^{*}\S[d]) = \varnothing$. This condition implies that for every $q \in Q$, $\gamma \in F$, there exists some $\eps_{\gamma, q} \in (0,1)$ such that
    \[
    \abs{Y_{\ell}^{\gamma}(q)} = \frac{1}{2\pi^{\frac{d+1}{4}} } \bigg( \frac{\Gamma(\ell + \frac{d+1}{2})}{ \Gamma(\ell + 1) }\bigg)^{\frac{1}{2}} (1 - \eps_{\gamma, q})^{\ell} \ .
    \]
    Since $\Gamma(\ell + \frac{d+1}{2}) \sim \ell^{\frac{d-1}{2}} \Gamma(\ell + 1)$ as $\ell \to \infty$, we read that
    \begin{equation} \label{Eq: estimate YlT on q}
        \abs{Y_{\ell}^{\gamma}(q)} \sim \frac{1}{2\pi^{\frac{d+1}{4}} } \ell^{\frac{d-1}{2}} (1 - \eps_{\gamma, q})^{\ell} \qquad \forall \, q \in Q \, , \quad \ell \in \N \ .
    \end{equation}

    For every $\ell \in \N$, let $\alpha_{\ell} \in \C[N]$ be such that
    \begin{equation} \label{Eq: aux 202}
        w_{\ell}^{\mathcal{T}} \coloneqq Y_{\ell}^{\mathcal{T}} - \sum_{q \in Q} \alpha_{\ell, q} z_{\ell}^{q} \qquad \implies \qquad w_{\ell}(p) = 0 \qquad \forall \, p \in Q \ .
    \end{equation}
    We conclude that $\alpha_{\ell, q}$ are the solutions to the following system of linear equations
    \begin{equation} \label{Eq: linear system for coefficients of perturbation}
        \sum_{q \in Q} \alpha_{\ell, q} z_{\ell}^{q}(p) = Y_{\ell}(p) \qquad \forall \, p \in Q \ . 
    \end{equation}
    If we define the matrix $\mathcal{Z}_{\ell} \coloneqq (z_{\ell}^{q}(p))_{p, q}$ and the vector $\vec{Y}_{\ell}^{\mathcal{T}} \coloneqq (Y_{\ell}^{\mathcal{T}}(p))_{p \in Q} \in \C[N]$, we have that $\alpha_{\ell}$ is the solution to the linear equation $\mathcal{Z}_{\ell} \alpha_{\ell} = \vec{Y}_{\ell}^{\mathcal{T}}$. Thanks to Lemma \ref{Lemma: properties of Zl matrix} we know that $\mathcal{Z}_{\ell}$ is invertible for all large enough $\ell$, and $\norm{\mathcal{Z}_{\ell}^{-1}} = \BigO(\ell^{ - \frac{d-1}{2}})$. Consequently, for all large enough $\ell$, \eqref{Eq: linear system for coefficients of perturbation} has a unique solution given by $\alpha_{\ell} = \mathcal{Z}_{\ell}^{-1} \vec{Y}_{\ell}^{\mathcal{T}}$, with the continuity inequality
    \begin{equation} \label{Eq: aux 101}
        \norm{\alpha_\ell} \leq \norm{\mathcal{Z}_{\ell}^{-1}}  \cdot \norm{\vec{Y}_{\ell}^{\mathcal{T}} } \lesssim \ell^{-\frac{d-1}{2}} \norm{\vec{Y}_{\ell}^{\mathcal{T}} } \ .
    \end{equation}
    Thanks to \eqref{Eq: estimate YlT on q}, we know that for some $\eps > 0$, $\norm{\vec{Y}_{\ell}^{\mathcal{T}}} \sim \ell^{\frac{d-1}{2}} (1 - \eps)^{\ell}$ as $\ell \to \infty$, therefore $\norm{\alpha_\ell} = \BigO( (1- \eps)^{\ell})$ as $\ell \to \infty$. From this estimate, we find that
    \[
    \norm*{w_{\ell}^{\mathcal{T}} - Y_{\ell}^{\mathcal{T}} }[L^2(\S[d])] \leq \sum_{q \in Q} \abs{\alpha_{\ell, q}} = \BigO((1 - \eps)^{\ell}) \ .
    \]
    This allows us to conclude that $\lim_{n \to \infty} \norm{w_{\ell}^{\mathcal{T}} }[L^2(\S[d])] = 1$ and
    \[
    \lim_{n \to \infty} \ip*{w_{\ell}^{\mathcal{T}} }{ \Op[h_{\ell}]{a} w_{\ell}^{\mathcal{T}} }[L^2(\S[d])] = \int_{T^*\S[d]} a \, \mathcal{T} \qquad \forall \, a \in \CinfK(S^*\S[d]) \ .
    \]

    \textbf{II.} Assume now that $\supp T \cap (\cup_{q \in Q} S_{q}^{*}\S[d]) \neq \varnothing$. Define the space of closed geodesics as the quotient space $\mathcal{G} = S^*\S[d] / \sim$ for the equivalence relation
    \[
    (x, \xi) \sim (y, \eta) \qquad \text{iff} \qquad \text{$\exists \, T \in [0, 2\pi)$ such that $\phi_T(x, \xi) = (y, \eta)$ \ .}
    \]
    We endow $S^*\S[d]$ with the Sasaki metric. $\mathcal{G}$ inherits a metric because every equivalence class is a closed subset of $S^*\S[d]$ for the Sasaki metric.
    
    If $F = \{ \gamma_1, \dots, \gamma_K\}$ for some $K \in \N$, for each $1\leq k \leq K$ let $(\gamma_{k, n})_{n \in \N}$ be a sequence of closed geodesics that converge to $\gamma_{k}$ in $\mathcal{G}$ and such that $\gamma_{k, n} \cap (\cup_{q \in Q} S_{q}^*\S[d]) = \varnothing$ for all $n \in \N$. Define $\mathcal{T}_{n} \coloneqq \sum_{k = 1}^{K} b_{\gamma_{k}} \delta_{\gamma_{k, n}}$ for all $n \in \N$. For each fixed $n \in \N$, $\supp \mathcal{T}_n \cap (\cup_{q \in Q} S_{q}^{*}\S[d]) = \varnothing$ holds, so part \textbf{I} provides us with a sequence of eigenfunctions $(w_{\ell}^{\mathcal{T}_n})_{\ell \in \N}$ that vanish on $Q$ for all $\ell \in \N$ and
    \[
    \lim_{n \to \infty} \ip*{w_{\ell}^{\mathcal{T}_n} }{ \Op[h_{\ell}]{a} w_{\ell}^{\mathcal{T}_n} }[L^2(\S[d])] = \int_{T^*\S[d]} a \, \mathcal{T}_n \qquad \forall \, a \in \CinfK(S^*\S[d]) \ .
    \]
    Let $D \subseteq \CinfK(S^*\S[d])$ be a countable subset that is dense in $\Co(S^*\S[d])$. Let us enumerate it as $D = \{ a_j \colon j \in \N\}$. We now perform a diagonal-extraction argument. Let us write for some measure $\nu$ on $T^*\S[d]$, $\ip{\nu}{a}[\Dist] \coloneqq \int_{T^*\S[d]} a \, \nu$.
    
    For $j = 1$ and $n = 1$, let $L_{1,1} \in \N$ be such that
    \[
    \abs*{ \ip*{w_{\ell}^{\mathcal{T}_1}}{\Op[h_{\ell}]{a_1} w_{\ell}^{\mathcal{T}_1}}[L^2(\S[d])] - \ip{ \mathcal{T}_1 }{a_1}[\Dist] } \leq 1 \qquad \forall \, \ell \geq L_{1,1} \ .
    \]
    Now for $n \geq 2$, we define $L_{1,n}$ iteratively as some integer $L_{1, n} > L_{1, n-1}$ such that
    \[
    \abs*{ \ip*{w_{\ell}^{\mathcal{T}_n}}{\Op[h_{\ell}]{a_1} w_{\ell}^{\mathcal{T}_n}}[L^2(\S[d])] - \ip*{\mathcal{T}_n}{a_1}[\Dist]} \leq \frac{1}{n} \qquad \forall \, \ell \geq L_{1,n} \ .
    \]
    In this way, for $j = 1$ we have defined a sequence of indices $(L_{1,n})_n \subseteq \N$ with the property that any sequence of eigenfunctions $(w_{\ell_n}^{\mathcal{T}_n})_n$ with $\ell_n \geq L_{1,n}$ satisfies 
    \[
    \abs*{ \ip*{w_{\ell_n}^{\mathcal{T}_n}}{\Op[h_{\ell_n}]{a_1} w_{\ell_n}^{\mathcal{T}_n} }[L^2(\S[d])] - \ip{\mathcal{T}_n}{a_1}[\Dist] } \leq \frac{1}{n} \qquad \forall \ n \in \N \ .
    \]
    We repeat this process for all $a_j \in D$, obtaining sequences of indices $(L_{j,n})_n \in \N$ with the property that for any $J \in \N$ and any sequence of eigenfunctions $(w_{\ell_n}^{\mathcal{T}_n})_n$ with $\ell_n \geq L_{J,n}$ satisfy 
    \[
    \abs*{ \ip*{w_{\ell_n}^{\mathcal{T}_n}}{\Op[h_{\ell_n}]{a_j} w_{\ell_n}^{\mathcal{T}_n} }[L^2(\S[d])] - \ip{\mathcal{T}_n}{a_j}[\Dist] } \leq \frac{1}{n} \qquad \forall \ n \in \N \ , \quad j \leq J \ .
    \]
    For $n \in \N$, let $\hat{L}_n \coloneqq L_{n, n} \geq n$ (recall that $L_{j,n} > L_{j,n-1}$ for all $n \in \N$). $(w_{\hat{L}_n}^{\mathcal{T}_n})_{n \in \N}$ is the sequence we were looking for (up to extraction of a subsequence). Certainly $w_{\hat{L}_n}^{\mathcal{T}_n}$ is an eigenfunction of $\Lap$ that vanishes on $Q$ with energy growing up to $+\infty$ as $n \to \infty$, thus \eqref{Eq: conditions old EF vanishing on Q} is satisfied. Now, let $h_n \coloneqq \lambda_{\hat{L}_n}^{-1}$. For all $a \in D$, adding and subtracting $\ip{\mathcal{T}_n}{a}[\Dist]$,
    \[
    \abs*{ \ip*{w_{\hat{L}_n}^{\mathcal{T}_n}}{\Op[h_n]{a} w_{\hat{L}_n}^{\mathcal{T}_n} }[L^2(\S[d])] - \ip{\mathcal{T}}{a}[\Dist] } \leq \frac{1}{n} + \abs*{\ip{\mathcal{T}_n - \mathcal{T}}{a}[\Dist] } \ .
    \]
    Using that $\mathcal{T}_n \to \mathcal{T}$ in the \weakstar topology because $\gamma_{k,n} \to \gamma_k$ in $\mathcal{G}$ for all $\gamma_k \in F$, we find that
    \[
    \lim_{n \to \infty} \ip*{w_{\hat{L}_n}^{\mathcal{T}_n}}{\Op[h_n]{a} w_{\hat{L}_n}^{\mathcal{T}_n} }[L^2(\S[d])] = \int_{T^*S[d]} a \, \mathcal{T} \qquad \forall \, a \in D \ .
    \]
    Since $D$ is dense in $\Co(T^*\S[d])$, we conclude that $\mathcal{T}$ is the unique semiclassical defect measure of the sequence $(w_{\hat{L}_n}^{\mathcal{T}_n})_{n \in \N}$ (up to extraction of a subsequence).
\end{proof}

\begin{Lemma} \label{Lemma: properties of Zl matrix}
    Assume that $d \geq 2$. The sequence of matrices $(\mathcal{Z}_{\ell})_{\ell \in \N}$, $\mathcal{Z}_{\ell} = (z_{\ell}^{q}(p))_{p, q}$, enjoys the following properties:
    \begin{enumerate}
        \item $\mathcal{Z}_{\ell}$ is real-valued and symmetric for all $\ell \in \N$.
        \item $\mathcal{Z}_{\ell}$ is diagonally dominant for all large enough $\ell$.
        \item $\mathcal{Z}_{\ell}$ is invertible for all large enough $\ell$ and $\norm{\mathcal{Z}_{\ell}} = \BigO(\ell^{\frac{d-1}{2}})$ as $\ell \to \infty$.
    \end{enumerate}
\end{Lemma}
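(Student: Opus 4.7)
The plan is to deduce all three statements from the explicit identity
\[
z_\ell^q(p) = \sqrt{m_\ell/\vol(\S[d])}\,[C_\ell^{(\alpha)}(1)]^{-1}\,C_\ell^{(\alpha)}(\cos d(p,q)),\qquad \alpha = \tfrac{d-1}{2},
\]
combining \eqref{Eq: identity zonal harm - Gegen poly} with the normalization $\norm{Z_\ell^q}[L^2(\S[d])]^2 = m_\ell/\vol(\S[d])$. Part (1) is then immediate: the expression is real-valued because Gegenbauer polynomials have real coefficients, and symmetric under $p \leftrightarrow q$ because the geodesic distance is.

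For (2) and (3), I contrast diagonal growth with off-diagonal decay. The diagonal entries are $z_\ell^q(q) = \sqrt{m_\ell/\vol(\S[d])} \sim c_d\, \ell^{(d-1)/2}$, using $m_\ell = 2\ell + 1$ for $d = 2$ and $m_\ell = (\ell+1)^2$ for $d = 3$. For the off-diagonal entries with $p \neq q$ not antipodal (so $\cos d(p,q) \in (-1,1)$), I will invoke the classical Darboux/Szeg\H{o} asymptotic
\[
C_\ell^{(\alpha)}(\cos\theta) = \frac{2\,\Gamma(\ell+\alpha)}{\Gamma(\alpha)\,\Gamma(\ell+1)}\,\frac{\cos((\ell+\alpha)\theta - \alpha\pi/2)}{(2\sin\theta)^\alpha} + \BigO(\ell^{\alpha-2}),
\]
uniform on compact subsets of $(0,\pi)$. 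Combined with $C_\ell^{(\alpha)}(1) = \binom{\ell+2\alpha-1}{\ell} \sim c\,\ell^{2\alpha-1}$, this gives $|z_\ell^q(p)| = \BigO(1)$ uniformly in $\ell$. Since $N$ is fixed, for $\ell$ large the diagonal of size $c_d\, \ell^{(d-1)/2}$ dominates $\sum_{p \neq q} |z_\ell^q(p)| = \BigO(1)$, yielding diagonal dominance.

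Property (3) then follows from (2) by Gershgorin's circle theorem: every eigenvalue $\mu$ of $\mathcal{Z}_\ell$ satisfies $|\mu - z_\ell^{q_0}(q_0)| \leq \sum_{p \neq q_0} |z_\ell^{q_0}(p)| = \BigO(1)$ for some $q_0 \in Q$, hence $|\mu| \geq \tfrac{1}{2} c_d\, \ell^{(d-1)/2}$ for $\ell$ large. This produces invertibility together with both $\norm{\mathcal{Z}_\ell} \leq \max_p \sum_q |z_\ell^q(p)| = \BigO(\ell^{(d-1)/2})$ and $\norm{\mathcal{Z}_\ell^{-1}} = \BigO(\ell^{-(d-1)/2})$---the latter being what is actually invoked in~\eqref{Eq: aux 101}.

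The delicate point is the case when $Q$ contains an antipodal pair $\{p_0, -p_0\}$: then $\cos d(p_0,-p_0) = -1$ and $C_\ell^{(\alpha)}(-1) = (-1)^\ell C_\ell^{(\alpha)}(1)$, so the off-diagonal entry $z_\ell^{p_0}(-p_0) = (-1)^\ell \sqrt{m_\ell/\vol(\S[d])}$ has the same magnitude as the diagonal and $\mathcal{Z}_\ell$ is in fact singular. This obstruction must be removed \emph{before} invoking the lemma: one first restricts to $\ell$ of a fixed parity so that the relation $z_\ell^{-q} = (-1)^\ell z_\ell^q$ acts consistently, and reduces the vanishing conditions \eqref{Eq: linear system for coefficients of perturbation} to the smaller system indexed by a subset $Q' \subseteq Q$ containing one representative of each antipodal pair. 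The reduced matrix $\mathcal{Z}_\ell'$ then satisfies the conclusions of the lemma by the argument above, and the parity $Y_\ell(-x) = (-1)^\ell Y_\ell(x)$ of spherical harmonics of degree $\ell$ guarantees that vanishing on $Q'$ automatically extends to all of $Q$.
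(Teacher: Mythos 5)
Your proposal follows essentially the same route as the paper's own proof (the explicit Gegenbauer formula for $z_\ell^q$, Darboux/Szeg\H{o} asymptotics for the off-diagonal entries, Gershgorin for invertibility and the bound on $\mathcal{Z}_\ell^{-1}$). But you also flag something the paper overlooks, and you are right to do so: as stated, the Lemma is in fact \emph{false} when $Q$ contains an antipodal pair. If $\{p_0,-p_0\}\subseteq Q$, then by \cref{Prop: parity of Gegenbauer polynomial} the entry $z_\ell^{p_0}(-p_0) = (-1)^\ell\sqrt{m_\ell/\vol(\S[d])}$ has exactly the same magnitude as the diagonal, so the corresponding $2\times 2$ block is $\begin{pmatrix} a & \pm a\\ \pm a & a\end{pmatrix}$ and $\mathcal{Z}_\ell$ is singular for every $\ell$. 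The paper's proof does not notice this because it invokes \cref{Prop: pointwise asymptotics of Gegen poly}, which is uniform only on $\theta\in[\delta,\pi-\delta]$, and never checks the endpoint $\theta=\pi$. Since \cref{Thm: 2nd theorem old ef} (which uses the Lemma via \eqref{Eq: aux 101}) is applied in the main theorem precisely to sets $Q$ that do contain antipodal pairs, the gap matters. Your repair is the correct one: fix a parity of $\ell$, replace $Q$ by a set $Q'$ with one representative of each antipodal pair (so $Q'$ contains no antipodal pair and the diagonal-dominance argument genuinely applies), solve the reduced linear system $\mathcal{Z}_\ell'\alpha_\ell = \vec Y_\ell$, and then use the degree-$\ell$ parity $Y(-x)=(-1)^\ell Y(x)$ of spherical harmonics to conclude that vanishing on $Q'$ forces vanishing on all of $Q$. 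So your proof is not merely equivalent to the paper's; it is more careful, and the caveat you raise should be incorporated either as an extra hypothesis in the Lemma (no antipodal pairs in $Q$) or as the reduction step in the proof of \cref{Thm: 2nd theorem old ef}.
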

\begin{proof}
    The fact that $\mathcal{Z}_{\ell}$ is real-valued and symmetric is a direct consequence of Proposition \ref{Prop: properties zonal harm}.
    
    Proposition \ref{Prop: Gegen poly and zonal harm} gives the following analytic description of normalized zonal harmonics 
    \[
    z_{\ell}^{q} (x) = \frac{1}{\norm{Z_{\ell}^{q}}[L^2(\S[d])]} Z_{\ell}^{q} (x) = \frac{\sqrt{m_{\ell}}}{\sqrt{\vol(\S[d])}} \big[ C_{\ell}^{(\frac{d-1}{2})} (1) \big]^{-1} C_{\ell}^{(\frac{d-1}{2})} (\cos r) \qquad r = d(x, q) \ ,
    \]
    where $\{C_{\ell}^{(\alpha)}\}_{\ell \in \N}$ is the family Gegenbauer polynomials of order $\alpha$.
    
    On the one hand, diagonal entries of $\mathcal{Z}_{\ell}$ take the value
    \[
    z_{\ell}^{q} (q) = \norm*{Z_{\ell}^{q}}[L^2(\S[d])] = \frac{\sqrt{m_{\ell}}}{\sqrt{\vol(\S[d])}} \gtrsim \ell^{\frac{d-1}{2}} \ . 
    \]
    On the other hand, the off-diagonal entries are are asymptotically smaller because $d \geq 2$. Thanks to \eqref{Eq: value of Gegen poly at 1} and Proposition \ref{Prop: pointwise asymptotics of Gegen poly} for $\alpha = \tfrac{d-1}{2}$ we obtain
    \[
    \abs*{ z_{\ell}^{q}(p) } \lesssim \sqrt{m_{\ell}} \frac{\Gamma(\ell+1)}{\Gamma(\ell + d - 1)} \ell^{\frac{d-1}{2} - 1} \qquad \forall \, p \neq q \ .
    \]
    We can use the asymptotics for quotients of Gamma functions \eqref{Eq: asymp quotient Gamma function} to get the following asymptotic upper bound
    \[
    \abs*{ z_{\ell}^{q}(p) } \lesssim \ell^{\frac{d-1}{2}} \ell^{2-d} \ell^{\frac{d-1}{2} - 1} = 1 \qquad \forall \, p \neq q \ .
    \]
    Therefore, for $d \geq 2$, the matrix $\mathcal{Z}_{\ell}$ is diagonally dominant for all large enough $\ell \in \N$.
    
    Lastly, Gershgorin circle theorem (Theorem \ref{Thm: Gershgorin circle theorem} below) implies that, for large enough $\ell \in \N$, $\Spec(\mathcal{Z}_{\ell})$ is contained in a finite union of disks of bounded radius centered on the diagonal elements $z_{\ell}^{q}(q) \gtrsim \ell^{\frac{d-1}{2}}$. Therefore, for all large enough $\ell$, $\mathcal{Z}_{\ell}$ is invertible and $\norm{\mathcal{Z}_{\ell}^{-1}} \lesssim \ell^{-\frac{d-1}{2}}$.
\end{proof}
\begin{Theorem}[Gershgorin circle Theorem] \label{Thm: Gershgorin circle theorem}
    Let $A = (a_{ij})$ be a complex $N \times N$ matrix. For every $p \in \{1, \dots, N\}$, let $R_p = \sum_{j \neq p} \abs{a_{pj}}$ the sum of the absolute values of the non-diagonal entries of the $p$-th row, and $\mathbb{D}_p = D(a_{pp}, R_p)$ a disk in $\C$ of radius $R_p$ centered on $a_{pp}$ (Greshgorin disk). Then $\Spec(A) \subseteq \cup_{p = i}^{N} \mathbb{D}_p$.
\end{Theorem}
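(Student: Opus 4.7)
The plan is to prove Gershgorin's theorem by the classical eigenvector-coordinate argument: given any eigenvalue $\lambda$ of $A$, I will exhibit a specific index $p \in \{1,\dots,N\}$ such that $\lambda \in \mathbb{D}_p$, by exploiting the largest-modulus entry of a corresponding eigenvector.

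First I would fix an eigenvalue $\lambda \in \Spec(A)$ together with a nonzero eigenvector $v = (v_1, \dots, v_N) \in \C[N]$ satisfying $Av = \lambda v$. Since $v \neq 0$, I may choose $p \in \{1, \dots, N\}$ with $\abs{v_p} = \max_{1 \leq j \leq N} \abs{v_j} > 0$. The choice of this index $p$ is the whole idea of the proof: it is the unique input that turns the eigenvalue equation into a diagonally-anchored inequality.

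Next I would read off the $p$-th coordinate of the eigenvalue equation, namely $\sum_{j = 1}^{N} a_{pj} v_j = \lambda v_p$, and isolate the diagonal contribution to obtain
\begin{equation*}
    (a_{pp} - \lambda) v_p = - \sum_{j \neq p} a_{pj} v_j \ .
\end{equation*}
Taking absolute values, applying the triangle inequality, and dividing both sides by $\abs{v_p} > 0$, I get
\begin{equation*}
    \abs{a_{pp} - \lambda} \leq \sum_{j \neq p} \abs{a_{pj}} \frac{\abs{v_j}}{\abs{v_p}} \leq \sum_{j \neq p} \abs{a_{pj}} = R_p \ ,
\end{equation*}
where the second inequality uses precisely the maximality of $\abs{v_p}$. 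This is exactly $\lambda \in \mathbb{D}_p \subseteq \cup_{q = 1}^{N} \mathbb{D}_q$.

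There is no real obstacle here; the only subtlety to keep in mind is that the index $p$ depends on the eigenvector (and hence on $\lambda$), so the proof does not show that every $\mathbb{D}_p$ contains an eigenvalue, only that each eigenvalue lies in \emph{some} Gershgorin disk, which is exactly what is needed in \cref{Lemma: properties of Zl matrix} to conclude $\norm{\mathcal{Z}_{\ell}^{-1}} \lesssim \ell^{-(d-1)/2}$.
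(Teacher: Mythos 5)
Your proof is correct. The paper itself states Gershgorin's circle theorem without proof, treating it as a classical fact used only to conclude the invertibility and resolvent bound for the matrix $\mathcal{Z}_{\ell}$ in \cref{Lemma: properties of Zl matrix}; so there is no paper argument to compare against. Your argument is the standard one: fix $\lambda \in \Spec(A)$ and a nonzero eigenvector $v$, choose $p$ maximizing $\abs{v_p}$, isolate the diagonal term in the $p$-th row of $Av = \lambda v$, and apply the triangle inequality together with $\abs{v_j}/\abs{v_p} \leq 1$ to land in $\mathbb{D}_p$. Your closing remark is also accurate and worth keeping: the index $p$ depends on $\lambda$, so the theorem gives containment of the spectrum in the union of disks, which is exactly the strength needed in \cref{Lemma: properties of Zl matrix} to bound $\norm{\mathcal{Z}_{\ell}^{-1}}$ via the diagonal dominance established there.
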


\subsection{Proof of \texorpdfstring{Theorem \ref{Thm: main theorem}}{main theorem}}
\label{Sec: proof of main theorem}

Assume that $d = 2,3$, and let $Q$ be a finite subset of $\S[d]$.

\textbf{Assume that $Q$ contains no pair of antipodal points.} Let $(\Lap_{L_n})_{n \in \N}$ be a sequence of point-perturbations of $\Lap$ on the set $Q$. Thanks to Theorem \ref{Thm: 2nd theorem old ef} we know that
\[
\mathcal{P}_{\mathrm{inv}}(S^*\S[d]) \subseteq \mathcal{M}_{\mathrm{sc}}(\Lap_{L_n}) \ .
\]
Let us show $\mathcal{M}_{\mathrm{sc}}(\Lap_{L_n}) \subseteq \mathcal{P}_{\mathrm{inv}}(S^*\S[d])$. Let $(h_n)_{n \in \N} \subseteq (0,1)$, $h_n \to 0^+$, and $(u_n)_{n \in \N} \subseteq D(\Lap_{L_n})$ such that
\[
(h_n^2 \Lap_{L_n}) u_n = u_n \ , \qquad \norm*{u_n}[L^2(\S[d])] = 1 \qquad \forall \, n \in \N \ .
\]
In addition, assume there exists a unique $\mu \in \mathcal{M}_{\mathrm{sc}}(\Lap_{L_n})$ such that
\[
\lim_{n \to \infty} \ip*{u_n}{\Op[h_n]{a} u_n}[L^2(\S[d])] = \int_{T^*\S[d]} a \, \mu \qquad \forall \, a \in \CinfK(T^*\S[d]) \ .
\]
We want to show that $\mu \in \mathcal{P}_{\mathrm{inv}}(S^*\S[d])$.

Thanks to equation \ref{Eq: eigenfunctions of LapL}, for every $n \in \N$ there exist
\begin{enumerate}
    \item $w_{h_n} \in \ker(h_n^2 \Lap {}- 1)$,
    \item $\beta_{n} \in \C[N]$, $N \coloneqq \card Q$, such that $\sum_{q \in Q} \beta_{q} \delta_{q} \equiv 0$ on $\ker(h_n^2 \Lap {}- 1)$.
\end{enumerate}
such that
\[
u_{n} = w_{h_n} + G_{h_n}^{Q, \beta_{n}} \in D(\Lap_{L_n}) \ ,
\]
and $G_{h}^{Q, \beta}$ was defined in Section \ref{Sec: Point-perturbations of Laplacian on the spheres}.

We observe that for $\beta \in \C[N]$ and $\ell \in \N$,
\[
\sum_{q \in Q} \beta_{q} \delta_{q} \equiv 0 \quad \text{on $\ker(\Lap {}- \lambda_{\ell}^2)$} \qquad \iff \qquad \sum_{q \in Q} \beta_{q} Z_{\ell}^{q} = 0 \quad \text{in $L^2(\S[d])$} \ .
\]
Therefore, Proposition \ref{Prop: linindep of zonal harm} implies that there exists some $\nu \in \N$ such that for every $n \geq \nu$, either $u_{n} = w_{h_n}$ and $h_n^{-2} \in \Spec(\Lap)$, or $u_{n} = g_{h_n}^{Q, \beta_n}$ and $h_n^{-2} \notin \Spec(\Lap)$. At least one of these possibilities must occur for an infinite collection of $n \in \N$.

If $u_{n} = w_{h_n}$ for an infinite collection of $n$, then along this subsequence we have
\[
\lim_{n \to \infty} \ip*{w_{h_n}}{\Op[h_n]{a} w_{h_n}}[L^2(\S[d])] = \int_{T^*\S[d]} a \, \mu \qquad \forall \, a \in \CinfK(T^*\S[d]) \ .
\]
Since $w_{h_n} \in \ker(h_n^2 \Lap {}- 1)$ for all $n \in \N$, then $\mu \in \mathcal{P}_{\mathrm{inv}}(S^*\S[d])$.

If $u_n = g_{h_n}^{Q, \beta_{n}}$ for an infinite collection of $n$, then along this subsequence we have
\[
\lim_{n \to \infty} \ip*{g_{h_n}^{Q, \beta_{n}}}{\Op[h_n]{a} g_{h_n}^{Q, \beta_{n}}}[L^2(\S[d])] = \int_{T^*\S[d]} a \, \mu \qquad \forall \, a \in \CinfK(T^*\S[d]) \ .
\]
Thanks to Theorem \ref{Thm: characterization of SDM of ghQbeta general case} we know that $\mu = \sum_{q \in Q} m_q \nu_{q}$, for some $m_q \in [0,1]$ for which $\sum_{q} m_{q} = 1$, and some probability measures $\nu_q \in \mathcal{P}_{\mathrm{inv}}(S^*\S[d])$ (see \eqref{Eq: def of nuq}). Consequently, $\mu \in \mathcal{P}_{\mathrm{inv}}(S^*\S[d])$ as well.

\textbf{Assume that $Q$ contains a pair of antipodal points.} Assume that
\[
Q = \{q_1, \dots, q_{K}, -q_{1}, \dots, -q_{K}, p_1, \dots, p_J\}
\]
for some $K \geq 1$. Let $m \in [0,2]^N$, with $m_{k}^{+} = \frac{1}{K}$ and $m_{k}^{-} = 0$ for all $1 \leq k \leq K$, and $m_j = 0$ for all $1 \leq j \leq J$. Thanks to Theorem \ref{Thm: 2nd theorem new ef} there exists a sequence of point-perturbations on $Q$, $(\Lap_{L_n})_{n \in \N}$, such that $\nu_{Q, m} \in \mathcal{M}_{\mathrm{sc}}(\Lap_{L_n})$. Due to the choice we have made of $m$, the measure $\nu_{Q, m} \notin \mathcal{P}_{\mathrm{inv}}(S^*\S[d])$. Meanwhile, Theorem \ref{Thm: 2nd theorem old ef} implies that $\mathcal{P}_{\mathrm{inv}}(S^*\S[d]) \subseteq \mathcal{M}_{\mathrm{sc}}(\Lap_{L_n})$.

%%%%%%%%%%%%%%%%%%%%%%%%%%%%%%%%%%%%%%%%%%%%%%%%%%%%%%%%%%%%%%%%%%%%%%%%%%%
\appendix

\section{Zonal harmonics}
\label{App: zonal harmonics}

We state some well-known facts about eigenspaces of $\Lap$ on $\S[d]$ and zonal harmonics; we refer the reader to \cite[Chapter IV]{SteinWeiss1971} for proofs.

For every $\ell \in \N$, the eigenspace $E_{\ell} \coloneqq \ker(\Lap {}- \lambda_{\ell}^2)$ is the space of all harmonic homogeneous polynomials of degree $\ell$ in $\R[d+1]$ restricted to $\S[d]$. The dimension of $E_{\ell}$ is expressed as a polynomial in $\ell$ of degree $d-1$:
\begin{equation} \label{Eq: dim(El)}
    \dim(E_{\ell}) = \binom{\ell + d}{\ell} - \binom{\ell - 2 + d}{\ell - 2} \ .
\end{equation}
For instance, $\dim(E_{\ell}) = 2l+1$ if $d = 2$, and $\dim(E_{\ell}) = (\ell+1)^2$ if $d = 3$.

For every $q \in \S[d]$ and every $\ell \in \N$, since $E_{\ell}$ is a finite-dimensional subspace of (smooth) functions, there exists a unique function $Z_{\ell}^{q} \in E_{\ell}$ such that
\[
\ip{Z_{\ell}^{q}}{u} = u(q) \qquad \forall \, u \in E_{\ell} \ .
\]
$Z_{\ell}^{q}$ is known as the $\ell$-th zonal harmonic with respect to $q$. We collect the properties of our interest in the following proposition
% REVISAR items of prop

\begin{Prop} \label{Prop: properties zonal harm}
    Assume $d \geq 1$. Let $q \in \S[d]$ and $\ell \in \N$. The $\ell$-th zonal harmonic with respect to $q$, $Z_{\ell}^{q}$, enjoy the following properties:
    \begin{enumerate}
        \item $Z_{\ell}^{q}$ is real-valued and $Z_{\ell}^{q}(p) = Z_{\ell}^{p}(q)$ for all $p \in \S[d]$.
        \item $Z_{\ell}^{q}(q) = \sup \{ u(q) \colon u \in E_{\ell} \, , \ \norm{u}[L^2(\S[d])] \leq \norm{Z_{\ell}^{q}}[L^2(\S[d])]\} = \norm{Z_{\ell}^{q}}^2 = \frac{\dim(E_\ell)}{\vol(\S[d])}$.
        \item If $\varphi$ is an isometry of $\S[d]$ then $\varphi^* (Z_{\ell}^{\varphi(q)}) = Z_{\ell}^{q}$. In particular, if $\varphi(q) = q$, then $\varphi^*(Z_{\ell}^{q}) = Z_{\ell}^{q}$.
        \item $\hat{L} Z_{\ell}^{q} = 0$ for any Killing vector field $\hat{L}$ on $\S[d]$ that vanishes on $q$.
        \item If $-q$ is the antipodal point of $q$ in $\S[d]$, $Z_{\ell}^{-q} = (-1)^{\ell} Z_{\ell}^{q}$.
    \end{enumerate}
\end{Prop}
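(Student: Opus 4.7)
The proof hinges on the reproducing-kernel representation of $Z_{\ell}^{q}$. Let $m_\ell \coloneqq \dim(E_\ell)$ and fix an orthonormal basis $\{u_1, \dots, u_{m_\ell}\}$ of $E_\ell$ consisting of real-valued functions, which exists because $\Lap$ has real coefficients and $E_\ell$ therefore admits a real structure. Expanding any $v \in E_\ell$ in this basis and evaluating at $q$ yields $v(q) = \sum_j \ip{v}{u_j} u_j(q) = \ip{v}{\sum_j u_j(q) u_j}$, so uniqueness forces
\begin{equation*}
    Z_{\ell}^{q} = \sum_{j = 1}^{m_\ell} u_j(q) u_j \ .
\end{equation*}
This formula immediately yields item (1), since the right-hand side is a real function of $p$ and $Z_{\ell}^{q}(p) = \sum_j u_j(q) u_j(p)$ is symmetric in $(q, p)$.

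For item (2), Cauchy-Schwarz applied to the reproducing identity $u(q) = \ip{u}{Z_{\ell}^{q}}$ shows $\abs{u(q)} \leq \norm{u}[L^2(\S[d])] \, \norm*{Z_{\ell}^{q}}[L^2(\S[d])]$; equality is attained at $u = Z_{\ell}^{q}$, giving the supremum formula and the identity $Z_{\ell}^{q}(q) = \norm*{Z_{\ell}^{q}}[L^2(\S[d])]^2$. To identify the common value with $m_\ell/\vol(\S[d])$, I would invoke (proved next) the $SO(d+1)$-invariance of the function $q \mapsto Z_{\ell}^{q}(q)$, which forces it to be constant on $\S[d]$, and then integrate $Z_{\ell}^{q}(q) = \sum_j \abs{u_j(q)}^2$ over $\S[d]$ to recover $\sum_j \norm{u_j}[L^2(\S[d])]^2 = m_\ell$.

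Item (3) is a direct uniqueness argument: for any isometry $\varphi$, $\varphi^*$ acts unitarily on $E_\ell$, and for every $u \in E_\ell$,
\begin{equation*}
    \ip{\varphi^* Z_{\ell}^{\varphi(q)}}{u}[L^2(\S[d])] = \ip{Z_{\ell}^{\varphi(q)}}{(\varphi^{-1})^* u}[L^2(\S[d])] = ((\varphi^{-1})^* u)(\varphi(q)) = u(q) \ ,
\end{equation*}
so $\varphi^* Z_{\ell}^{\varphi(q)}$ has the defining reproducing property and therefore equals $Z_{\ell}^{q}$. Item (4) follows by applying (3) to the one-parameter group of isometries $\varphi_t = \exp(t\hat{L})$---which fixes $q$ since $\hat{L}(q) = 0$---to get $\varphi_t^* Z_{\ell}^{q} = Z_{\ell}^{q}$, then differentiating at $t = 0$. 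For item (5), I would apply (3) to the antipodal isometry $\iota(p) = -p$ to obtain $Z_{\ell}^{-q}(-p) = Z_{\ell}^{q}(p)$; since every element of $E_\ell$ is the restriction of a degree-$\ell$ homogeneous polynomial on $\R[d+1]$, we also have $Z_{\ell}^{-q}(-p) = (-1)^\ell Z_{\ell}^{-q}(p)$, and combining the two identities yields $Z_{\ell}^{-q} = (-1)^\ell Z_{\ell}^{q}$.

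There is no substantial obstacle in the proof: once the reproducing-kernel formula and the uniqueness characterization of $Z_{\ell}^{q}$ are in place, each item follows from a short symmetry or Cauchy-Schwarz computation. The only mildly delicate point is the logical ordering---item (3) must be proved before the $SO(d+1)$-invariance step of (2) and before (4), (5)---but it is easy to arrange, since (3) needs nothing beyond the defining property.
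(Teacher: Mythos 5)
Your proof is correct, and it is organized around the reproducing-kernel expansion $Z_\ell^q=\sum_j u_j(q)u_j$ in a real orthonormal basis of $E_\ell$, whereas the paper only proves items (1), (4), (5) (citing Stein--Weiss for the rest) and uses a slightly different route for (1) and (5). For item (1), the paper shows $Z_\ell^q$ is real by evaluating $\ip{Z_\ell^q}{\Im Z_\ell^q}=[\Im Z_\ell^q](q)$ and noting the left side has imaginary part $\norm{\Im Z_\ell^q}^2$ while the right side is real; your kernel-expansion argument is cleaner and gives both reality and symmetry in one line, at the cost of first establishing that a real orthonormal basis exists. For item (5), the paper works through the explicit Gegenbauer representation of $Z_\ell^q$ (\cref{Prop: Gegen poly and zonal harm}) together with the parity of $C_\ell^{(\alpha)}$ and $\cos(d(x,-q))=-\cos(d(x,q))$; your argument combining the antipodal isometry $\iota(p)=-p$ via item (3) with the degree-$\ell$ homogeneity of elements of $E_\ell$ is more elementary and avoids the special-function machinery entirely. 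Item (4) matches the paper's proof. Items (2) and (3) are standard and your arguments are fine; the one subtlety you correctly flag is that (3) must precede the $SO(d+1)$-invariance step inside (2), and you have arranged the logic accordingly.
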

\begin{proof}
    We just prove items (1), (4), and (5), as they are not proved in \cite{SteinWeiss1971}

    (1) First, since $\Lap$ maps real functions into real functions, $\Im Z_{\ell}^{q}$ belongs to $E_{\ell}$ as well. Therefore, for the imaginary part $\Im Z_{\ell}^{q}$,
    \[
    [\Im Z_{\ell}^{q}](q) = \ip{Z_{\ell}^{q}}{\Im Z_{\ell}^{q}} = \ip{\Re Z_{\ell}^{q}}{\Im Z_{\ell}^{q}} + i \norm{\Im Z_{\ell}^{q} }^2 \ .
    \]
    However, since the left-hand side is real, we infer that $\Im Z_{\ell}^{q} = 0$, thus $Z_{\ell}^{q}$ is real-valued. As a consequence
    \[
    Z_{\ell}^{q}(p) = \ip*{Z_{\ell}^{p}}{Z_{\ell}^{q}}[L^2(\S[d])] = \ip*{Z_{\ell}^{q}}{Z_{\ell}^{p}}[L^2(\S[d])] = Z_{\ell}^{p}(q) \ .
    \]

    (4) By elliptic regularity of $\Lap$, we know that $Z_{\ell}^{q}$ is smooth, thus $[\hat{L} Z_{\ell}^{q}]$ is smooth too. Moreover, since $\hat{L}$ is Killing, the flow it generates, $\varphi_t$, is an isometry for every $t \in \R$. In addition, $\varphi_t(q) = q$ for all $t \in \R$, thus $(\varphi_t)^*(Z_{\ell}^{q}) = Z_{\ell}^{q}$. A a consequence, for every $y \in \S[d]$,
    \[
    [\hat{L} Z_{\ell}^{q}](y) = \lim_{t \to 0} \frac{(\varphi_t)^*Z_{\ell}^{q}(y) - Z_{\ell}^{q}(y)}{t} = 0 \ .
    \]

    (5) This a direct consequence of Proposition \ref{Prop: Gegen poly and zonal harm} and Proposition \ref{Prop: parity of Gegenbauer polynomial} using that
    \[
    \cos \big( d(x, -q) \big) = \cos \big( \pi - d(x, q) \big) = -\cos \big( d(x, q) \big) \qquad \forall \, x \in \S[d] \ . \qedhere
    \]
\end{proof}

\section{Gegenbauer polynomials}
\label{App: Gegenbauer polynomials}

\subsection{Properties of Gegenbauer polynomials}

We collect and prove here some facts about the family of Gegenbauer (ultraspherical) polynomials $(C_{\ell}^{(\alpha)})_{\ell \in \N}$, $\alpha > 0$. For every $\alpha > 0$, $(C_{\ell}^{(\alpha)})_{\ell \in \N}$ is a family of orthogonal polynomials on $[-1, 1]$ with respect to the weight $w(s) = (1 - s^2)^{\alpha - \frac{1}{2}}$; they are normalized by the following two properties: $C_{\ell}^{(\alpha)}$ is a polynomial of degree $\ell$, and 
\begin{equation} \label{Eq: value of Gegen poly at 1}
    C_{\ell}^{(\alpha)}(1) = \frac{\Gamma(2 \alpha + \ell)}{\Gamma(2\alpha) \Gamma(\ell+1)} \ , \qquad \forall \, \ell \in \N \ .
\end{equation}
They generalize the families of Legendre polynomials ($\alpha = \frac{1}{2}$) and Chebyshev polynomials of the second kind ($\alpha = 1$).

The family $(C_{\ell}^{(\alpha)})_{\ell \in \N}$ has other useful characterizations. For instance, for every $\ell \in \N$, $C_{\ell}^{(\alpha)}$ is the regular solution to the linear differential equation in $y(s)$
\begin{equation} \label{Eq: Gegenbauer diff eq}
    (1 - s^2) y'' - (2 \alpha + 1)s y' + \ell(\ell + 2\alpha) y = 0 \ , \qquad s \in (-1, 1) \ .
\end{equation}
Furthermore, for $\abs{s} \leq 1$, $(C_{\ell}^{(\alpha)}(s))_{\ell \in \N}$ are the coefficients of a certain analytic function in $\abs{x} < 1$:
\begin{equation} \label{Eq: generating function for Gegen. poly}
    \frac{1}{(1 - 2sx + x^2)^\alpha} = \sum_{\ell = 0}^{\infty} C_{\ell}^{(\alpha)}(s) x^{\ell} \ .
\end{equation}
We say that $x \mapsto (1 - 2sx + x^2)^{-\alpha}$ on $\abs{x} < 1$ is the \emph{analytic generating function} of the family $(C_{\ell}^{(\alpha)})_{\ell \in \N}$.

Gegenbauer polynomials $C_{\ell}^{(\alpha)}$ and zonal harmonics $Z_{\ell}^{q}$ are intimately related, as the following proposition shows.

\begin{Prop} \label{Prop: Gegen poly and zonal harm}
    Assume $d \geq 1$ and let $q \in \S[d]$. For every $\ell \in \N$, the following holds 
    \begin{equation} \label{Eq: identity zonal harm - Gegen poly}
        Z_{\ell}^{q} (x) = \frac{\dim(E_{\ell})}{\vol(\S[d])} \big[ C_{\ell}^{(\frac{d-1}{2})} (1) \big]^{-1} C_{\ell}^{(\frac{d-1}{2})} (\cos r) \ , \qquad r = d(x, q) \ .
    \end{equation}
\end{Prop}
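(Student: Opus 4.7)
The plan is to exploit the rotational symmetry of $Z_\ell^q$ to reduce the problem to an ODE in one variable, identify that ODE with Gegenbauer's equation, and finally pin down the normalizing constant.

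First I would show that $Z_\ell^q$ is a radial function centered at $q$: for any isometry $\varphi$ of $\S[d]$ fixing $q$, Proposition A.1(3) gives $\varphi^*(Z_\ell^q) = Z_\ell^q$, so $Z_\ell^q(x)$ depends only on the geodesic distance $r = d(x,q)$. Hence there exists a function $f_\ell \colon [0,\pi] \to \R$ with $Z_\ell^q(x) = f_\ell(r)$.

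Next, in geodesic polar coordinates around $q$ the (positive) Laplacian acts on radial functions as
\begin{equation*}
    \Lap f(r) = -f''(r) - (d-1)\cot(r)\, f'(r) \ .
\end{equation*}
Since $\Lap Z_\ell^q = \lambda_\ell^2 Z_\ell^q = \ell(\ell + d - 1) Z_\ell^q$, the function $f_\ell$ satisfies the ODE
\begin{equation*}
    -f_\ell''(r) - (d-1)\cot(r)\, f_\ell'(r) = \ell(\ell + d - 1) f_\ell(r) \ , \qquad r \in (0, \pi) \ .
\end{equation*}
I would then substitute $s = \cos r$ and $f_\ell(r) = y_\ell(s)$; a direct computation using $f_\ell' = -\sin r \cdot y_\ell'$ and $f_\ell'' = -s\, y_\ell' + (1-s^2) y_\ell''$ converts the ODE into
\begin{equation*}
    (1 - s^2)\, y_\ell''(s) - d\, s\, y_\ell'(s) + \ell(\ell + d - 1)\, y_\ell(s) = 0 \ ,
\end{equation*}
which is precisely the Gegenbauer equation \eqref{Eq: Gegenbauer diff eq} with parameter $\alpha = \frac{d-1}{2}$, since $2\alpha + 1 = d$ and $\ell(\ell + 2\alpha) = \ell(\ell + d - 1)$.

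The regular (polynomial) solution of that equation is, up to a multiplicative constant, $C_\ell^{(\frac{d-1}{2})}$, so $y_\ell(s) = c_\ell \, C_\ell^{(\frac{d-1}{2})}(s)$ for some $c_\ell \in \R$. Finally, setting $x = q$ (i.e.\ $r = 0$, $s = 1$) and using Proposition A.1(2), namely $Z_\ell^q(q) = \frac{\dim(E_\ell)}{\vol(\S[d])}$, together with \eqref{Eq: value of Gegen poly at 1}, yields $c_\ell = \frac{\dim(E_\ell)}{\vol(\S[d])} \big[C_\ell^{(\frac{d-1}{2})}(1)\big]^{-1}$, which is exactly \eqref{Eq: identity zonal harm - Gegen poly}. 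The main technical obstacle is the change-of-variables computation reducing the radial Laplace--Beltrami equation to Gegenbauer's equation; everything else is either an appeal to symmetry or a normalization.
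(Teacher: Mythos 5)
Your argument is correct and follows the same route as the paper: rotational invariance reduces $Z_\ell^q$ to a radial function, the eigenvalue equation becomes a one-dimensional ODE in $r$, the substitution $s = \cos r$ converts it to Gegenbauer's equation with $\alpha = \tfrac{d-1}{2}$, regularity at the poles pins down the solution as a multiple of $C_\ell^{(\frac{d-1}{2})}$, and the normalization at $x = q$ fixes the constant. The paper's proof is slightly more explicit about the boundary conditions inherited from smoothness at $r = 0, \pi$ that force the regular solution, but this is exactly the content you compress into the phrase ``the regular (polynomial) solution.''
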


\begin{proof}
    Proposition \ref{Prop: properties zonal harm} tells us that zonal harmonics depend only on the distance to $q$, therefore, for every $\ell \in \N$, we may write $Z_{\ell}^{q} (x) = v_{\ell}(r)$, $r = d(x, q)$, for some smooth function $v_{\ell}$ defined on $[0, \pi]$. Since $\Lap Z_{\ell}^{q} = \lambda_{\ell}^2 Z_{\ell}^{q}$, we infer that $v_{\ell}$ satisfies
    \begin{equation} \label{Eq: EV problem for vl}
        \begin{cases}
            - \frac{1}{\sin^{d-1}(r)} \partial_r\Big( \sin^{d-1}(r) \Big) v_{\ell} - \frac{(d - 1) \cos r}{\sin r} \partial_r v_{\ell} = \ell(\ell+ d - 1) v_{\ell} \qquad (0, \pi) \\
            v_{\ell}'(0) = v_{\ell}'(\pi) = 0
        \end{cases}
    \end{equation}
    Setting $v_{\ell}(r) = y_{\ell}(\cos r)$ for some soooth function $y_{\ell}(s)$, we get that $y_{\ell}$ must satisfy % \partial_r = (-\sin r) \partial_s
    \begin{equation} \label{Eq: Gegenbaur diff eq with BC}
        \begin{cases}
            (1 - s^2)\, y_{\ell}'' - d s \, y_{\ell}' + \ell(\ell + d - 1) y_{\ell} = 0 \qquad (-1, 1) \\
            \lim_{r \to 0^+} y_{\ell}'(\cos r) \sin r = 0 \\
            \lim_{r \to \pi^+} y_{\ell}'(\cos r) \sin r = 0
        \end{cases}
    \end{equation}
    This is the Gegenbauer differential equation for $\alpha = \frac{d-1}{2}$, and the boundary conditions in $s = -1$ and $s = 1$ force the solutions to be smooth in $[-1, 1]$, thus $y_{\ell}$ must be a scalar multiple of the Gegenbauer polynomial $C_{\ell}^{(\frac{d-1}{2})}$. As a consequence, for every $\ell \in \N$, there exists $c_{l, d} \in \R$ such that
    \[
    Z_{\ell}^{q}(x) = c_{l, \alpha} \, C_{\ell}^{(\frac{d-1}{2})} (\cos r) \ , \qquad r = d(x, q) \ .
    \]
    Imposing that $Z_{\ell}^{q}(q) = \frac{\dim(E_{\ell})}{\vol(\S[d])}$ (see Proposition \ref{Prop: properties zonal harm}), we get what was claimed.
\end{proof}

Most, if not all, of the properties of Gegenbauer polynomials may be inferred from its generating function.
\begin{Prop} \label{Prop: parity of Gegenbauer polynomial}
    For every $\alpha > 0$, and every $\ell \in \N$,
    \[
    C_{\ell}^{(\alpha)}(-s) = (-1)^{\ell} C_{\ell}^{(\alpha)}(s) \qquad \forall \ s \in (-1, 1) \ .
    \]
\end{Prop}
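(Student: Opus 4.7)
The natural approach is to exploit the analytic generating function of the Gegenbauer polynomials recalled in \eqref{Eq: generating function for Gegen. poly}. The key observation is the elementary invariance $1 - 2(-s)(-x) + (-x)^2 = 1 - 2sx + x^2$, which means the generating function is invariant under the simultaneous sign flip $(s,x) \mapsto (-s,-x)$.

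More precisely, I would fix $s \in (-1,1)$ and consider the power series identity
\[
\sum_{\ell = 0}^{\infty} C_{\ell}^{(\alpha)}(s) x^{\ell} = \frac{1}{(1 - 2sx + x^2)^{\alpha}} = \frac{1}{(1 - 2(-s)(-x) + (-x)^2)^{\alpha}} = \sum_{\ell = 0}^{\infty} C_{\ell}^{(\alpha)}(-s) (-x)^{\ell}
\]
for all $|x| < 1$. Equating the coefficients of $x^\ell$ on both sides via uniqueness of power series expansions then gives $C_{\ell}^{(\alpha)}(s) = (-1)^{\ell} C_{\ell}^{(\alpha)}(-s)$, which is the claimed identity after multiplying by $(-1)^\ell$.

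There is no real obstacle here: once the generating function \eqref{Eq: generating function for Gegen. poly} is in hand, the only nontrivial ingredient is the uniqueness of coefficients for convergent power series, which is standard. An alternative would be to argue via the differential equation \eqref{Eq: Gegenbauer diff eq}: since the ODE is invariant under $s \mapsto -s$, the function $s \mapsto C_{\ell}^{(\alpha)}(-s)$ is again a polynomial solution of degree $\ell$, hence a scalar multiple of $C_{\ell}^{(\alpha)}(s)$, and comparing leading coefficients fixes the scalar to be $(-1)^\ell$. Both routes are short; I would favor the generating function argument for its directness.
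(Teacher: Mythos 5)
Your generating-function argument is exactly the one the paper uses: both rest on the observation that $1-2s(-x)+(-x)^2 = 1-2(-s)x+x^2$ and then identify coefficients in the convergent power series. The alternative ODE route you sketch is also sound, but the main argument you give coincides with the paper's proof in substance and presentation.
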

\begin{proof}
    Using the generating function of $C_{\ell}^{(\alpha)}$,
    \[
    \sum_{\ell = 0}^{\infty} (-1)^{\ell}C_{\ell}^{(\alpha)}(s) x^{\ell} = \frac{1}{(1 - 2s(-x) + (-x)^2)^\alpha} = \frac{1}{(1 - 2(-s)x + x^2)^\alpha} = \sum_{\ell = 0}^{\infty} C_{\ell}^{(\alpha)}(-s) x^{\ell} \ .
    \]
    Identifying coefficients of the same order leads to the desired identities.
\end{proof}

In Section \ref{Sec: SDM Green functions} we were interested in the existence of ladder operators for the family $(C_{\ell}^{(\alpha)})_{\ell \in \N}$; here is the proof of that fact.
\begin{Prop} \label{Prop: ladder operators for Gegenbauer poly}
    Let $\alpha > 0$. For every integer $\ell \geq 1$, there are first order differential operators on $(-1, 1)$,
    \begin{equation}
        A_{\ell}^{\alpha, +} \coloneqq \frac{\ell + 2\alpha}{\ell + 1} s + \frac{s^2 - 1}{\ell + 1}\partial_s \ , \qquad A_{\ell}^{\alpha, -} \coloneqq \frac{\ell}{\ell + 2\alpha - 1}s - \frac{s^2-1}{\ell + 2\alpha - 1}\partial_s \ ,
    \end{equation}
    such that
    \[
    A_{\ell}^{\alpha, +} C_{\ell}^{(\alpha)} = C_{\ell+1}^{(\alpha)} \ , \qquad A_{\ell}^{\alpha, - } C_{\ell}^{(\alpha)} = C_{\ell-1}^{(\alpha)} \ .
    \]
\end{Prop}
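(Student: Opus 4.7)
The plan is to reduce both ladder identities to two classical recurrences for Gegenbauer polynomials and then verify the claims by one-line algebraic manipulations. Both recurrences will be extracted from the generating function $G^{(\alpha)}(s,x) \coloneqq (1 - 2sx + x^2)^{-\alpha} = \sum_{\ell \geq 0} C_\ell^{(\alpha)}(s)\, x^\ell$ given in \eqref{Eq: generating function for Gegen. poly}, by differentiating and matching coefficients in $x$.

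First I would differentiate $G^{(\alpha)}$ in $x$: since $(1 - 2sx + x^2)\,\partial_x G^{(\alpha)} = 2\alpha(s - x) G^{(\alpha)}$, substituting the power series and equating coefficients of $x^{\ell}$ yields the three-term recurrence
\begin{equation*}
(\ell + 1) C_{\ell+1}^{(\alpha)}(s) = 2(\ell + \alpha)\, s\, C_\ell^{(\alpha)}(s) - (\ell + 2\alpha - 1)\, C_{\ell-1}^{(\alpha)}(s).
\end{equation*}
The identity $A_\ell^{\alpha,+} C_\ell^{(\alpha)} = C_{\ell+1}^{(\alpha)}$ will follow immediately from this recurrence once the derivative identity below is in hand.

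The main technical step is then to establish
\begin{equation*}
(1 - s^2)(C_\ell^{(\alpha)})'(s) = -\ell\, s\, C_\ell^{(\alpha)}(s) + (\ell + 2\alpha - 1)\, C_{\ell-1}^{(\alpha)}(s).
\end{equation*}
To derive this I would combine three elementary consequences of the generating function: (i) the symmetry $(s - x)\,\partial_s G^{(\alpha)} = x\,\partial_x G^{(\alpha)}$, which, by matching coefficients of $x^\ell$, gives $s(C_\ell^{(\alpha)})'(s) - (C_{\ell-1}^{(\alpha)})'(s) = \ell\, C_\ell^{(\alpha)}(s)$; (ii) the relation $\partial_s G^{(\alpha)} = 2\alpha x\, G^{(\alpha+1)}$, yielding $(C_\ell^{(\alpha)})'(s) = 2\alpha\, C_{\ell-1}^{(\alpha+1)}(s)$; and (iii) the trivial factorisation $(1 - 2sx + x^2)\, G^{(\alpha+1)} = G^{(\alpha)}$, which, combined with (ii), leads after a short computation to $(C_{\ell+1}^{(\alpha)})'(s) = s\,(C_\ell^{(\alpha)})'(s) + (\ell + 2\alpha)\, C_\ell^{(\alpha)}(s)$. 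Shifting the index and eliminating $(C_{\ell-1}^{(\alpha)})'$ between this last relation and the one from (i) produces the desired derivative identity.

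With both identities in hand, the ladder claims are mere substitutions. For the annihilation operator,
\begin{equation*}
A_\ell^{\alpha,-}\, C_\ell^{(\alpha)} = \frac{\ell\, s\, C_\ell^{(\alpha)} + (1 - s^2)(C_\ell^{(\alpha)})'}{\ell + 2\alpha - 1} = \frac{(\ell + 2\alpha - 1)\, C_{\ell-1}^{(\alpha)}}{\ell + 2\alpha - 1} = C_{\ell-1}^{(\alpha)}.
\end{equation*}
For the creation operator, the same derivative identity rewrites $(s^2 - 1)(C_\ell^{(\alpha)})' = \ell\, s\, C_\ell^{(\alpha)} - (\ell + 2\alpha - 1)\, C_{\ell-1}^{(\alpha)}$, so
\begin{equation*}
A_\ell^{\alpha,+}\, C_\ell^{(\alpha)} = \frac{2(\ell + \alpha)\, s\, C_\ell^{(\alpha)} - (\ell + 2\alpha - 1)\, C_{\ell-1}^{(\alpha)}}{\ell + 1} = C_{\ell+1}^{(\alpha)}
\end{equation*}
by the three-term recurrence. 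The sole obstacle in this program is the derivation of the mixed derivative identity, which is classical and follows from generating-function manipulations as sketched; everything else is routine bookkeeping.
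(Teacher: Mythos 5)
Your proof is correct. It follows the same broad strategy as the paper — extract recurrences from the generating function $(1-2sx+x^2)^{-\alpha}$, then verify the ladder identities by substitution — but the route to the key mixed identity $(1-s^2)(C_\ell^{(\alpha)})' = -\ell s\, C_\ell^{(\alpha)} + (\ell+2\alpha-1)\, C_{\ell-1}^{(\alpha)}$ is genuinely different, and your route is cleaner. The paper obtains the two relations $s\,\partial_s C_\ell - \partial_s C_{\ell-1} = \ell\, C_\ell$ and $\partial_s C_{\ell+1} - s\,\partial_s C_\ell = (\ell+2\alpha)\, C_\ell$ by first differentiating the generating function in $s$, then differentiating Bonnet's recursion in $s$, and eliminating. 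But the elimination produces a common factor $2(\alpha-1)$ — so the resulting equations are vacuous when $\alpha = 1$ — and the paper is forced to treat $\alpha = 1$ (Chebyshev of the second kind) as a separate case with explicit trigonometric formulas. Your derivation of the first of these two relations directly from the cross-derivative identity $(s-x)\,\partial_s G^{(\alpha)} = x\,\partial_x G^{(\alpha)}$ avoids the degenerate factor entirely, so your argument works uniformly in $\alpha > 0$ with no case split. One small imprecision worth tightening: you say "(iii), combined with (ii), leads after a short computation to $(C_{\ell+1}^{(\alpha)})' = s\,(C_\ell^{(\alpha)})' + (\ell+2\alpha)\, C_\ell^{(\alpha)}$," but (ii)+(iii) alone only give $(C_{\ell+1}^{(\alpha)})' - 2s\,(C_\ell^{(\alpha)})' + (C_{\ell-1}^{(\alpha)})' = 2\alpha\, C_\ell^{(\alpha)}$; you need to invoke (i) once here before the index shift, not just afterward. (Incidentally, the paper's own displayed version of that relation has a typo — the right-hand side should be $2\alpha\, C_\ell^{(\alpha)}$, not $2\alpha\, C_{\ell-1}^{(\alpha)}$ — but the paper's subsequent computation uses the correct form.)
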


\begin{proof}
    We distinguish two cases: $\alpha \neq 1$ and $\alpha = 1$. We will deal with the case $\alpha = 1$ later.
    
    First we fix $s \in (-1,1)$. The generating function for Gegenbauer polynomials is analytic on $\abs{x} < 1$, thus we can differentiate it with respect to $x$ under the series sign, obtaining
    \[
    \frac{2\alpha(s - x)}{(1 - 2sx + x^2)^{\alpha}} = (1 - 2sx + x^2) \sum_{\ell = 0}^{\infty} (\ell+1) C_{\ell+1}^{(\alpha)}(s) x^\ell \qquad \forall \ x \in (-1, 1) \ ,
    \]
    which is equivalent in terms of generating functions
    \begin{multline*}
        (2\alpha s)C_0^{(\alpha)}(s) + (2\alpha) [sC_1^{(\alpha)}(s) - C_0^{(\alpha)}(s)] x + \sum_{\ell = 2}^{\infty} (2\alpha)[s \cdot C_{\ell}^{(\alpha)}(s) - C_{\ell-1}^{(\alpha)}(s) ] x^{\ell} = \\ C_1^{(\alpha)}(s) + [2C_2^{(\alpha)}(s) -2s C_1^{(\alpha)}(s)]x + \sum_{\ell = 2}^{\infty} [ (\ell+1) C_{\ell+1}^{(\alpha)}(s) -2 \ell s C_{\ell}^{(\alpha)}(s) + (\ell-1)C_{\ell-1}^{(\alpha)}(s)] x^{\ell} \ .
    \end{multline*}
    Identifying same degree coefficients of the series for $\ell \geq 1$ we get
    \begin{equation} \label{Eq: Bonnet rec formula}
        (\ell+1) C_{\ell+1}^{(\alpha)}(s) - 2(\ell+\alpha) s C_\ell^{(\alpha)}(s) + (\ell + 2\alpha - 1) C_{\ell-1}^{(\alpha)}(s) = 0 \ , \qquad \forall \ s \in (-1, 1) \ ,
    \end{equation}
    known as Bonnet’s recursion formula.
    
    Now we work the other way around and start fixing $x \in (-1, 1)$. Since the generating function is analytic in $\abs{s} < 1$, we can differentiate with respect to $s$ under the series sign, obtaining
    \[
    \frac{2\alpha \, x}{(1 - 2sx + x^2)^{\alpha}} = (1 - 2sx + x^2) \sum_{\ell = 0}^{\infty} \partial_s C_{\ell}^{(\alpha)}(s) x^{\ell} \qquad \forall \ s \in (-1, 1) \ .
    \]
    Since this holds for every $x \in (-1, 1)$, this can be translated in terms of generating functions
    \begin{multline*}
        2\alpha \, C_0^{(\alpha)}(s) x + \sum_{\ell = 2}^{\infty} 2\alpha \, C_{\ell-1}^{(\alpha)}(s) x^{\ell} = \\
        \partial_s C_0^{(\alpha)}(s) + [\partial_s C_1^{(\alpha)}(s) -2s \partial_s C_0^{(\alpha)}(s)]x + \sum_{\ell = 2}^{\infty} [\partial_s C_{\ell}^{(\alpha)}(s) - 2s \partial_s C_{\ell-1}^{(\alpha)}(s) + \partial_s C_{\ell-2}^{(\alpha)}(s)] x^{\ell} \ ,
    \end{multline*}
    which leads to the identity
    \begin{equation*} % \label{Eq: differential rec formula}
        \partial_s C_{\ell+1}^{(\alpha)} - 2s\ \partial_s C_{\ell}^{(\alpha)} + \partial_s C_{\ell-1}^{(\alpha)} = 2\alpha \ C_{\ell-1}^{(\alpha)} \qquad \forall \ \ell \geq 1 \ .
    \end{equation*}
    This identity has a similar flavor to Bonnet's recursion formula \eqref{Eq: Bonnet rec formula}, thus we differentiate \eqref{Eq: Bonnet rec formula}:
    \[
    (\ell+1) \ \partial_s C_{\ell+1}^{(\alpha)} - 2(\ell+\alpha) s \ \partial_s C_{\ell}^{(\alpha)} + (\ell + 2\alpha - 1) \ \partial_{s}C_{\ell-1}^{(\alpha)} = 2(\ell + \alpha) \ C_{\ell}^{(\alpha)} \qquad \forall \ \ell \geq 1 \ .
    \]
    We can play with these identities to get another one with either $\partial_s C_{\ell+1}^{(\alpha)}$ or $\partial_{s}C_{\ell-1}^{(\alpha)}$ removed. From that, we get the system of differential identities
    \[
    \begin{cases}
        2(\alpha - 1)s \ \partial_s C_{\ell}^{(\alpha)} - 2(\alpha - 1) \ \partial_s C_{\ell-1}^{(\alpha)} = 2(\alpha - 1)\ell \ C_{\ell}^{(\alpha)} \\
        2(\alpha - 1) \ \partial_{s}C_{\ell+1}^{(\alpha)} - 2(\alpha - 1) s \ \partial_{s} C_{\ell}^{(\alpha)} = 2(\alpha - 1) (\ell+2\alpha) \ C_{\ell}^{(\alpha)}
    \end{cases} \qquad \forall \ \ell \geq 1 \ .
    \]
    Since we are assuming that $\alpha \neq 1$, this system simplifies into
    \[
    \begin{cases}
        s \ \partial_s C_{\ell}^{(\alpha)} - \partial_s C_{\ell-1}^{(\alpha)} = \ell \ C_{\ell}^{(\alpha)} \\
        \partial_{s}C_{\ell+1}^{(\alpha)} - s \ \partial_{s} C_{\ell}^{(\alpha)} = (\ell+2\alpha) \ C_{\ell}^{(\alpha)}
    \end{cases} \qquad \forall \ \ell \geq 1 \ .
    \]
    With the help of index-shifting, we can solve this system of equations for $C_{\ell-1}^{(\alpha)}$ and $C_{\ell+1}^{(\alpha)}$, obtaining the identities
    \begin{align*}
        C_{\ell+1}^{(\alpha)} & = \Big[\frac{\ell + 2\alpha}{\ell+1} s + \frac{s^2 - 1}{\ell+1} \partial_s \Big]C_{\ell}^{(\alpha)} \\
        C_{\ell-1}^{(\alpha)} & = \Big[ \frac{\ell}{\ell+2\alpha - 1} s - \frac{s^2 - 1}{\ell+2\alpha - 1} \partial_s \Big] C_{\ell}^{(\alpha)}
    \end{align*}
    These prove what we wanted for $\alpha \neq 1$.

    We work the details of the case $\alpha = 1$ now. The Gegenbauer polynomials for $\alpha = 1$ are the Chebyshev polynomials of the second kind, $U_{\ell}$. We proceed differently, exploiting the definition of $U_{\ell}(\cos r)$ instead of $U_{\ell}(s)$:
    \[
    U_{\ell}(\cos r) \coloneqq \frac{\sin((\ell+1) r)}{\sin r} \ .
    \]
    If we denote $T_{\ell}(\cos r) \coloneqq \cos(\ell \, r)$, Chebyshev polynomial of the first kind, one easily checks that
    \[
    \sin r \, \partial_r [U_{\ell}(\cos r)] = (\ell+1) T_{\ell+1}(\cos r) - \cos r \, U_{\ell}(\cos r) \ .
    \]
    On the other hand, using that $\sin(\alpha + \beta) = \sin \alpha \ \cos \beta + \cos \alpha \ \sin \beta$, we have that 
    \[
    \begin{cases}
        U_{\ell+1}(\cos r) = \cos r \, U_{\ell}(\cos r) + T_{\ell+1}(\cos r) \\
        U_{\ell-1}(\cos r) = \cos r \, U_{\ell}(\cos r) - T_{\ell+1}(\cos r)
    \end{cases}
    \]
    Substituting the former identity in the latter ones, we obtain
    \begin{align*}
        U_{\ell+1}(\cos r) & = \frac{\ell+2}{\ell+1} \cos r \, U_{\ell}(\cos r) + \frac{\sin r}{\ell+1} \partial_r [U_{\ell}(\cos r)] \\
        U_{\ell-1}(\cos r) & = \frac{\ell}{\ell+1} \cos r \, U_{\ell}(\cos r) - \frac{\sin r}{\ell+1} \partial_r [U_{\ell}(\cos r)]
    \end{align*}
    Finally, performing the change of variables $s = \cos r$.
    \begin{align*}
        U_{\ell+1} & = \frac{\ell+2}{\ell+1} s \, U_{\ell} + \frac{s^2 - 1}{\ell+1} \partial_s U_{\ell} \\
        U_{\ell-1} & = \frac{\ell}{\ell+1} s \, U_{\ell} - \frac{s^2 - 1}{\ell+1} \partial_s U_{\ell}
    \end{align*}
    This proves the $\alpha = 1$ case.
\end{proof}

In Lemma \ref{Lemma: properties of Zl matrix} we claimed that the matrix of coefficients of equation \ref{Eq: linear system for coefficients of perturbation} was diagonally dominant; in the proof of Lemma \ref{Lemma: properties of Zl matrix} we invoked the following proposition.
\begin{Prop} \label{Prop: pointwise asymptotics of Gegen poly}
    Let $\alpha > 0$. For every $\delta > 0$, the following asymptotic is uniform on $\theta \in [\delta, \pi - \delta]$
    \[
    C_{\ell}^{(\alpha)}(\cos \theta) = \ell^{\alpha - 1} \Big( B(\alpha, \tfrac{1}{2}) \frac{2^{\alpha} \Gamma(\alpha)}{\Gamma(2\alpha)} (\sin \theta)^{-\alpha} \cos [(\ell+\alpha)(\theta - \pi)] + \BigO(\ell^{-1}) \Big) \qquad \text{as $\ell \to \infty$} \ .
    \]
\end{Prop}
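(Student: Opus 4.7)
This is a classical Darboux--Szegő asymptotic, and the cleanest path is to apply Darboux's method to the analytic generating function recorded in \eqref{Eq: generating function for Gegen. poly}. Factoring
\[
1 - 2\cos\theta\,z + z^2 = (1-ze^{i\theta})(1-ze^{-i\theta}),
\]
the map $z \mapsto (1-2\cos\theta\,z+z^2)^{-\alpha}$ is holomorphic on the open unit disk and carries exactly two algebraic singularities on its boundary, at $z_\pm = e^{\pm i\theta}$. The hypothesis $\theta \in [\delta, \pi-\delta]$ keeps these two singularities at uniform positive distance from each other, which is what will allow the whole argument to be carried out uniformly in $\theta$.

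\textbf{Step-by-step plan.} First, I would isolate a principal (singular) part near each $z_\pm$ by freezing the regular factor at its value at the singularity: near $z = e^{-i\theta}$ the local model is $(1-e^{-2i\theta})^{-\alpha}(1-ze^{i\theta})^{-\alpha}$, and symmetrically at $z = e^{+i\theta}$. Their Taylor coefficients are explicit,
\[
[z^\ell](1-ze^{\pm i\theta})^{-\alpha} = \frac{\Gamma(\ell+\alpha)}{\Gamma(\ell+1)\Gamma(\alpha)}\, e^{\pm i\ell\theta} = \frac{\ell^{\alpha-1}}{\Gamma(\alpha)}\, e^{\pm i\ell\theta}\bigl(1+O(\ell^{-1})\bigr),
\]
by Stirling. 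The difference between the generating function and the sum of both principal parts is Hölder-continuous on the closed unit disk with a better exponent than either piece alone, so a Cauchy estimate on a contour that stays inside the closed disk except for two small indentations around $z_\pm$ gives that its $\ell$-th Taylor coefficient is $O(\ell^{\alpha-2})$. Next I would evaluate $(1-e^{\pm 2i\theta})^{-\alpha}$ via $1-e^{\pm 2i\theta} = 2\sin\theta\cdot e^{i(\pm\theta \mp \pi/2)}$, combine the two endpoint contributions into a single cosine, and rewrite the resulting prefactor in the claimed shape by invoking the Legendre duplication formula $\Gamma(2\alpha) = 2^{2\alpha-1}\Gamma(\alpha)\Gamma(\alpha+\tfrac{1}{2})/\sqrt{\pi}$ together with $B(\alpha,\tfrac{1}{2}) = \sqrt{\pi}\,\Gamma(\alpha)/\Gamma(\alpha+\tfrac{1}{2})$.

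\textbf{Main obstacle and an alternative.} The delicate point is making the remainder genuinely uniform on $[\delta, \pi-\delta]$, since both the positions and the angles of approach of the two singularities vary with $\theta$. I would handle this by using a $\theta$-dependent contour — keyhole-type indentations of radius comparable to $\delta$ around each $z_\pm$, connected by arcs of radius slightly larger than one where the regularized integrand admits a bound independent of $\theta$. A cleaner alternative, which sidesteps the contour surgery entirely, is to apply the method of steepest descent to the Laplace--Dirichlet integral representation
\[
C_\ell^{(\alpha)}(\cos\theta) = \frac{2^{1-2\alpha}\Gamma(\ell+2\alpha)}{\Gamma(\ell+1)\Gamma(\alpha)^2}\int_0^\pi (\cos\theta + i\sin\theta\cos\varphi)^\ell (\sin\varphi)^{2\alpha-1}\,d\varphi,
\]
whose integrand has modulus strictly less than $1$ except at the endpoints $\varphi \in \{0,\pi\}$. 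Expanding the phase and modulus to second order there, the substitution $\varphi = u/\sqrt{\ell}$ reduces each endpoint contribution to a Gaussian integral of the form $\int_0^\infty u^{2\alpha-1} e^{-\beta u^2}\,du = \tfrac{1}{2}\Gamma(\alpha)\beta^{-\alpha}$ with $\beta = \tfrac{i}{2}\sin\theta\,e^{-i\theta}$, producing the two complex exponentials $e^{\pm i(\ell+\alpha)\theta}$ with the correct $\theta$-dependent prefactor. Uniformity in $[\delta,\pi-\delta]$ is then automatic because $\sin\theta$ and the stationary Hessian are bounded below there; it is in this route that the Beta-function prefactor $B(\alpha,\tfrac{1}{2})$ appears most naturally, which is likely why the statement is phrased in that form.
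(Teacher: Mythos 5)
Your proposal is correct in spirit but takes a genuinely different route from the paper, and carried to completion it would expose an inconsistency in the Proposition as stated. The paper proves this via the reduction $C_\ell^{(\alpha)} = \frac{\Gamma(\alpha+\frac12)\Gamma(\ell+2\alpha)}{\Gamma(2\alpha)\Gamma(\ell+\alpha+\frac12)}P_\ell^{(\alpha-\frac12,\alpha-\frac12)}$ followed by the DLMF asymptotic for Jacobi polynomials plus Stirling-type estimates on the Gamma-function prefactors; you instead apply Darboux's method directly to the generating function $(1-2\cos\theta\,z+z^2)^{-\alpha}$, isolating the two algebraic singularities at $z=e^{\pm i\theta}$. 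That route is classical and sound: freezing the regular factor, reading off $[z^\ell](1-ze^{\pm i\theta})^{-\alpha}=\frac{\Gamma(\ell+\alpha)}{\Gamma(\ell+1)\Gamma(\alpha)}e^{\pm i\ell\theta}$, and using $1-e^{\mp 2i\theta}=2\sin\theta\,e^{\pm i(\pi/2-\theta)}$ gives the two contributions; adding them produces
\[
C_\ell^{(\alpha)}(\cos\theta)=\frac{2^{1-\alpha}}{\Gamma(\alpha)}\,\ell^{\alpha-1}(\sin\theta)^{-\alpha}\cos\!\big[(\ell+\alpha)\theta-\tfrac{\alpha\pi}{2}\big]+\BigO(\ell^{\alpha-2}),
\]
which is the textbook Szeg\H{o} form and is exact for $\alpha=1$ since $U_\ell(\cos\theta)=\sin((\ell+1)\theta)/\sin\theta$. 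Your remarks on uniformity (the two singularities stay separated on $[\delta,\pi-\delta]$; use a $\theta$-dependent contour) are also the right way to make Darboux uniform, and your alternative steepest-descent route on the Laplace integral is likewise viable, though I do not see $B(\alpha,\tfrac12)$ emerge any more ``naturally'' there than in Darboux.

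The one genuine obstacle is your last step, ``rewrite the resulting prefactor in the claimed shape'': that cannot succeed, because the Proposition's displayed formula does not agree with the Szeg\H{o} asymptotic that both your method and the paper's intermediate computation actually produce. Two things are off in the stated formula. First, the cosine argument: the correct phase is $(\ell+\alpha)\theta-\tfrac{\alpha\pi}{2}$, whereas the Proposition has $(\ell+\alpha)(\theta-\pi)$; these differ by $\ell\pi+\tfrac{\alpha\pi}{2}$ and are not equal even up to a global sign (compare $\alpha=1$, where the Proposition would predict $\propto\cos[(\ell+1)\theta]$ instead of $\sin[(\ell+1)\theta]$). Second, the constant: by the duplication formula $\Gamma(2\alpha)=\frac{2^{2\alpha-1}}{\sqrt{\pi}}\Gamma(\alpha)\Gamma(\alpha+\tfrac12)$ one has $\frac{2^\alpha\Gamma(\alpha)}{B(\alpha,\frac12)\,\Gamma(2\alpha)}=\frac{2^{1-\alpha}}{\Gamma(\alpha)}$, which matches Darboux, but the Proposition has $B(\alpha,\tfrac12)$ in the numerator rather than the denominator. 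So the discrepancy you would encounter is not a gap in your argument but a transcription issue in the Proposition and in its appeal to the Jacobi asymptotic; the paper's own intermediate line $(\tfrac{\sin\theta}{2})^\alpha C_\ell^{(\alpha)}(\cos\theta)\approx\frac{\Gamma(\alpha+\frac12)}{\sqrt{\pi}\,\Gamma(2\alpha)}\ell^{\alpha-1}\cos[\cdots]$ already carries the correct constant, and the misprint enters only at the final beta-function rewriting. None of this affects the paper's use of the result: only the magnitude $\abs{C_\ell^{(\alpha)}(\cos\theta)}=\BigO(\ell^{\alpha-1})$, uniformly on $[\delta,\pi-\delta]$, is invoked in \cref{Lemma: properties of Zl matrix}, and both your Darboux computation and the paper's deliver that bound.
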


\begin{proof}
    We combine several identities from \cite{NISTHandbook2010}. From \cite[(18.7.1)]{NISTHandbook2010} we have that
    \begin{equation} \label{Eq: aux 10}
        C_{\ell}^{(\alpha)} = \frac{ \Gamma(\alpha + \frac{1}{2}) }{\Gamma(2\alpha)} \frac{\Gamma(\ell + 2\alpha) }{ \Gamma(\ell + \alpha + \frac{1}{2})} P_\ell^{(\alpha - \frac{1}{2}, \alpha - \frac{1}{2})} \ ,
    \end{equation}
    where $P_{\ell}^{(p, q)}$ is the $\ell$-th Jacobi polynomial with indices $p$ and $q$.

    On the other hand, Jacobi polynomials $P_{\ell}^{(\alpha - \frac{1}{2}, \alpha - \frac{1}{2})}(\cos \theta)$ have the following asymptotic for $\theta \in [\delta, \pi - \delta]$ \cite[(18.15.1)]{NISTHandbook2010} as $\ell \to \infty$,
    \begin{equation} \label{Eq: aux 11}
        \Big( \sin \tfrac{\theta}{2} \cos \tfrac{\theta}{2} \Big)^{\alpha} P_\ell^{(\alpha - \frac{1}{2}, \alpha - \frac{1}{2})} (\cos \theta) = \frac{2^{2(\ell + \alpha)}}{\pi} B(\ell + \alpha + \tfrac{1}{2}, \ell + \alpha + \tfrac{1}{2}) \Big( \cos \big[ (\ell + \alpha) (\theta - \pi) \big] + \BigO(\ell^{-1}) \Big) \ ,
    \end{equation}
    where $B(x, y)$ is the beta function
    \[
    B(x, y) \coloneqq \frac{\Gamma(x) \Gamma(y)}{\Gamma(x + y)} \ .
    \]
    Putting \eqref{Eq: aux 10} and \eqref{Eq: aux 11} together, we have the following asymptotic for $C_{\ell}^{(\alpha)}(\cos \theta)$ as $\ell \to \infty$:
    \begin{equation} \label{Eq: aux 12}
        \Big( \frac{\sin \theta}{2} \Big)^{\alpha} C_{\ell}^{(\alpha)}(\cos \theta) = \frac{2^{2\ell+2\alpha}}{\pi} \frac{\Gamma(\alpha + \frac{1}{2})}{\Gamma(2\alpha)} \frac{ \Gamma(\ell + 2\alpha) \Gamma(\ell + \alpha + \frac{1}{2})}{ \Gamma(2(\ell+\alpha) + 1) } \Big( \cos\big( (\ell+\alpha) (\theta - \pi) \big) + \BigO(\ell^{-1}) \Big)
    \end{equation}
    We may invoke now \cite[(5.11.12)]{NISTHandbook2010},
    \begin{equation} \label{Eq: asymp quotient Gamma function}
        \frac{\Gamma(x + a)}{\Gamma(x + b)} = x^{a-b} \big( 1 + \BigO(x^{-1}) \big) \qquad \text{as $x \to \infty$} \ ,
    \end{equation}
    to get
    \begin{equation} \label{Eq: aux 13}
        \begin{aligned}
            \frac{ \Gamma(\ell + 2\alpha) \Gamma(\ell + \alpha + \frac{1}{2})}{ \Gamma(2(\ell+\alpha) + 1) } & = \Big[ \Gamma(\ell) \ell^{2\alpha} \Gamma(\ell) \ell^{\alpha + \frac{1}{2}} \Big] \frac{(2\ell)^{-2\alpha - 1} }{ \Gamma(2\ell) } \big(1 + \BigO(\ell^{-1}) \big) \\
            & = \frac{\ell^{\alpha - \frac{1}{2}}}{2^{2\alpha + 1}} \frac{\Gamma(\ell)^2}{\Gamma(2\ell)} \big(1 + \BigO(\ell^{-1}) \big) \qquad \text{as $\ell \to \infty$} \ .
        \end{aligned}
    \end{equation}
    Applying Stirling approximation \cite[(5.11.3)]{NISTHandbook2010},
    \begin{equation} \label{Eq: Stirling approx}
        \Gamma(x) = e^{-x} x^x \Big( \frac{2\pi}{x} \Big)^{1/2} \big(1 + \BigO(x^{-1}) \big) \qquad \text{as $x \to \infty$} \ ,
    \end{equation}
    on \eqref{Eq: aux 13}, we have
    \begin{equation} \label{Eq: aux 14}
        \begin{aligned}
            \frac{ \Gamma(\ell + 2\alpha) \Gamma(\ell + \alpha + \frac{1}{2})}{ \Gamma(2(\ell+\alpha) + 1) } & = \frac{\ell^{\alpha - \frac{1}{2}}}{2^{2\alpha + 1}} \frac{(e^{-\ell} \ell^{\ell})^2 \, \frac{2\pi}{\ell} }{ e^{-2\ell} (2\ell)^{2\ell} (\frac{2\pi}{2\ell})^{1/2} } \big(1 + \BigO(\ell^{-1}) \big) \\
            & = \sqrt{\pi} \frac{\ell^{\alpha - 1}}{2^{2\alpha + 2\ell}} \big(1 + \BigO(\ell^{-1}) \big) \qquad \text{as $\ell \to \infty$} \ .
        \end{aligned}
    \end{equation}
    Combining \eqref{Eq: aux 12} and \eqref{Eq: aux 14}, we get
    \begin{equation*}
        \Big( \frac{\sin \theta}{2} \Big)^{\alpha} C_{\ell}^{(\alpha)}(\cos \theta) = \frac{\Gamma(\alpha + \frac{1}{2})}{ \sqrt{\pi} \Gamma(2\alpha) } \ell^{\alpha - 1} \Big( \cos[(\ell + \alpha) (\theta - \pi) ] + \BigO(\ell^{-1}) \Big) \qquad \text{as $\ell \to \infty$} \ .
    \end{equation*}
    Lastly, using that $\Gamma(\frac{1}{2}) = \sqrt{\pi}$ and the definition of the beta function, we finally arrive to
    \[
    C_{\ell}^{(\alpha)}(\cos \theta) = \ell^{\alpha - 1} \Big( B(\alpha, \tfrac{1}{2}) \frac{2^{\alpha} \Gamma(\alpha)}{\Gamma(2\alpha)} (\sin \theta)^{-\alpha} \cos [(\ell+\alpha)(\theta - \pi)] + \BigO(\ell^{-1}) \Big) \qquad \text{as $\ell \to \infty$} \ . \qedhere
    \]
\end{proof}

\section*{Declarations}

The author has no relevant financial or non-financial interests to disclose.

Data sharing is not applicable to this article, as no datasets were generated or analyzed during the current study.

\printbibliography

\end{document}